\DeclareFontFamily{OT1}{rsfs}{}
\DeclareFontShape{OT1}{rsfs}{n}{it}{<-> rsfs10}{}
\DeclareMathAlphabet{\curly}{OT1}{rsfs}{n}{it}
\newtheorem{thm}{Theorem}
\newtheorem{theorem}[thm]{Theorem}
\newtheorem{cor}[thm]{Corollary}
\newtheorem{lem}[thm]{Lemma}
\newtheorem{lemma}[thm]{Lemma}
\newtheorem{prop}[thm]{Proposition}
\DeclareMathOperator{\red}{red}
\DeclareMathOperator{\ev}{ev}
\DeclareMathOperator{\vir}{vir}
\newcommand{\beq}{\begin{equation}}
\newcommand{\eeq}{\end{equation}}
\newcommand\B{\curly B}
\newcommand\tX{\,\widetilde{\!\mathcal X}}
\newcommand{\proj}{\mathbb{P}}
\newcommand{\Z}{\mathbb{Z}}
\newcommand{\rarr}{\rightarrow}
\newcommand{\oh}{{\mathcal{O}}}
\newcommand{\com}{\mathbb{C}}
\newcommand{\Q}{\mathbb{Q}}
\newcommand{\lan}{\langle}
\newcommand{\ran}{\rangle}
\newcommand\C{\mathbb C}
\newcommand{\bZ}{{\mathsf{Z}}}
\newcommand{\bF}{{\mathsf{F}}}
\newcommand{\E}{\mathcal E}
\newcommand\I{\curly I}
\renewcommand\O{\mathcal O}
\newcommand\PP{\mathbb P}
\newcommand\FF{\mathbb F}
\newcommand\II{\mathbb I\udot}
\newcommand\BI{\overline{\,\mathbb I\,}\!\udot}
\newcommand\bI{\overline{I\,}\!\udot}
\newcommand\LL{\mathbb L}
\renewcommand\P{\mathcal P}
\newcommand{\hodge}{\mathbb{E}}
\newcommand{\QMod}{\mathrm{QMod}}
\newcommand{\Mbar}{\overline{M}}
\newcommand{\QQ}{\mathbb{Q}}
\newcommand{\CC}{\mathbb{C}}
\newcommand{\ZZ}{\mathbb{Z}}
\newcommand{\EE}{E\udot}
\newcommand{\EER}{E^{\bullet\mathrm{red}}}
\newcommand{\LLL}{\mathbb{L}}
\newcommand{\calO}{\mathcal{O}}
\newcommand{\calS}{\mathcal{S}}
\newcommand{\rt}[1]{\stackrel{#1\,}{\rightarrow}}
\newcommand{\Rt}[1]{\stackrel{#1\,}{\longrightarrow}}
\newcommand\udot{^{\scriptscriptstyle\bullet}}
\newcommand\ldot{_{\scriptscriptstyle\bullet}}
\newcommand\To{\longrightarrow}
\newcommand\into{\hookrightarrow}
\newcommand\Into{\ar@{^{ (}->}[r]}
\newcommand\ip{\,\lrcorner\,}
\renewcommand\_{^{}_}
\newcommand\take{\backslash}
\newfont{\bigtimesfont}{cmsy10 scaled \magstep5}
\newcommand{\bigtimes}{\mathop{\lower0.9ex\hbox{\bigtimesfont\symbol2}}}
\newcommand\At{\operatorname{At}}
\newcommand\tr{\operatorname{tr}}
\newcommand\id{\operatorname{id}}
\newcommand\Hom{\operatorname{Hom}}
\renewcommand\hom{\curly H\!om}
\newcommand\Ext{\operatorname{Ext}}
\newcommand\ext{\curly Ext}
\newcommand\Spec{\operatorname{Spec}}
\newcommand\Crit{\operatorname{Crit}}
\newcommand\X{\mathcal X}
\newcommand\bX{\,\overline{\!\mathcal X}}
\newcommand\oX{\,\overline{\!X}}
\theoremstyle{remark}
\newtheorem*{rem}{Remark}
\newcommand{\md}{\overline{M}}
\begin{document}

\title{Curves on $K3$ surfaces and modular forms}
\author[Maulik, Pandharipande, Thomas, Pixton]{D. Maulik, R. Pandharipande, and
R. P. Thomas, \\ with an appendix by A. Pixton}
\date{January 2010}
\maketitle

\begin{abstract}
We study the virtual geometry of the moduli spaces
of curves and sheaves on $K3$ surfaces in primitive classes.
Equivalences relating the reduced Gromov-Witten invariants
of $K3$ surfaces 
to characteristic numbers of 
 stable pairs moduli spaces are proven. 
As a consequence, we prove
the Katz-Klemm-Vafa conjecture evaluating $\lambda_g$ integrals (in 
all genera)  
in terms of explicit modular forms. Indeed, all $K3$
invariants in primitive classes are shown to be
governed by modular forms.

The method of proof is by degeneration to elliptically
fibered rational surfaces. New formulas relating reduced
virtual classes on $K3$ surfaces to standard virtual
classes after degeneration are needed for both
maps and sheaves. We also prove a Gromov-Witten/Pairs
correspondence for toric 3-folds.

Our approach uses a result of Kiem and Li  to produce
reduced classes. In Appendix \ref{oldnew}, we answer a number of questions about the relationship
between the Kiem-Li approach, traditional virtual cycles, and symmetric
obstruction theories.

The interplay between the boundary geometry of the moduli spaces of
curves, $K3$ surfaces, and modular forms is
explored in Appendix \ref{Pixton} by A. Pixton.
\end{abstract}

\setcounter{tocdepth}{1}
\tableofcontents
\thispagestyle{empty}

\baselineskip=15pt

\section*{Introduction}

\subsection{Stable maps and reduced classes}
Let $S$ be a complex algebraic $K3$ surface, and let
$\beta \in H_2(S, \ZZ)$ be a nonzero effective curve class.
The moduli space $\Mbar_g(S,\beta)$ of stable maps from connected genus $g$
curves to $S$ representing $\beta$
has expected dimension
$$\dim_{\CC}^{\vir}(\Mbar_g(S,\beta)) 
= \int_\beta c_1(S) +(1-g)\big(\dim_{\CC}(S)-3\big) = g- 1.$$
However, via the holomorphic symplectic form
on $S$, the standard obstruction theory for $\overline{M}_g(S,\beta)$
admits a trivial quotient. 
As a result,
$$[\Mbar_g(S,\beta)]^{\vir}=0 \,. $$
The vanishing reflects the deformation invariance
of Gromov-Witten theory: $S$
admits deformations for which $\beta$ is not of type
$(1,1)$ and thus not represented by  holomorphic curves.

A {\em reduced}
obstruction theory, obtained by removing
 the trivial factor, yields a reduced virtual class\footnote{
See \cite{brl,gwnl} for foundational discussions.}
$$[\Mbar_{g}(S,\beta)]^{\red} \in A_{g}(\Mbar_g(S,\beta),\QQ)$$
of dimension $g$.
A rich Gromov-Witten theory is obtained by 
integrating codimension $g$ tautological 
classes on $\Mbar_{g}(S,\beta)$ 
against $[\Mbar_{g}(S,\beta)]^{\red}$. Such
integrals
are invariant with respect to deformations of $S$ for 
which the class $\beta$ remains of type $(1,1)$.

The class $\beta\in H_2(S,\mathbb{Z})$ is {\em primitive} if
$\beta$ is not divisible.{\footnote{Primitive implies nonzero.}}
While the reduced Gromov-Witten theory of $S$ is defined
for all $\beta$, here we primarily study the primitive
case.

\subsection{Hodge classes}
The rank $g$ Hodge bundle,
 $$\mathbb{E} \rightarrow \Mbar_{g}(S,\beta),$$
with fiber $H^0(C,\omega_C)$ over the point 
$[f: C\rightarrow S] \in \Mbar_{g}(S,\beta),$ 
is well defined for all $g$. The Hodge bundle   
is pulled back from the moduli space of
curves
$$\overline{M}_g(S,\beta) \rarr \overline{M}_g$$
when $g$ is at least $2$.
The top Chern class $\lambda_g$ of $\hodge$
is the
most beautiful and well-behaved integrand
in the reduced theory of $S$. Define
\begin{equation} \label{Rgbeta}
R_{g,\beta} = \int_{[\Mbar_g(S,\beta)]^{\red}}(-1)^g \lambda_g\,.
\end{equation}

Let $X$ be a polarized
 Calabi-Yau 3-fold which admits a
$K3$-fibration,
$$\pi: X \rightarrow \mathbb{P}^1\,. $$
Such a fibration determines a map of the base $\mathbb{P}^1$
to the moduli space of polarized $K3$ surfaces.
The integrals $R_{g,\beta}$ precisely 
relate the Gromov-Witten invariants of $X$ to the
intersection numbers of $\proj^1$ with 
 Noether-Lefschetz divisors in the moduli of $K3$ 
surfaces \cite{gwnl}.

\subsection{Katz-Klemm-Vafa conjecture}\label{kkkcon}
Let $\beta \in H_2(S,{\mathbb Z})$ be a primitive effective curve
class. 
 The Gromov-Witten partition function{\footnote{The partition function
only contains connected contributions. The reduced class suppresses
contributions from
stable maps with disconnected domains.}} for  $\beta$ is
$$\mathsf{Z}_{GW,\beta} = \sum_{g = 0}^{\infty}  R_{g,\beta}\ u^{2g-2}\,.$$
The BPS counts $r_{g,\beta}$ are uniquely defined by
$$\mathsf{Z}_{GW,\beta} = \sum_{g=0}^{\infty} r_{g,\beta}\ u^{2g-2} 
\left(\frac{\sin(u/2)}{u/2}\right)^{2g-2} \,.$$
By deformation invariance{\footnote{
The moduli space of quasi-polarized $K3$ surfaces with
$\lan \beta,\beta \ran =2h-2$ is connected
(and, in fact, is a ball quotient), see \cite{dolga}.}}, 
both $R_{g,\beta}$ and $r_{g,\beta}$ depend only
upon the norm 
$\langle\beta,\beta\rangle$. 
We write $R_{g,h}$ and 
 $r_{g,h}$ for $R_{g,\beta}$ and $r_{g,\beta}$ respectively  when
$\langle \beta,\beta\rangle=2h-2$.

The evaluation of $r_{g,h}$
in terms of modular forms  was conjectured  by S. Katz, A. Klemm, and
C. Vafa \cite{kkv}. The Fourier expansion of the
discriminant modular form is
$$\Delta(q)= q \prod_{n=1}^\infty (1-q^n)^{24}\,.$$
Define the series 
$$\Delta(y,q) = q \prod_{n=1}^{\infty}(1-q^n)^{20}(1-yq^n)^{2}
(1-y^{-1}q^n)^2
$$
where $\Delta(1,q)=\Delta(q)$. Our first result is
the proof of the Katz-Klemm-Vafa conjecture.

\begin{thm} \label{ooo}
The invariants $r_{g,h}$ for primitive curve classes are determined by
$$
\sum_{g=0}^{\infty}\sum_{h=0}^{\infty} (-1)^{g}r_{g,h} \left(
\sqrt{y} - \frac{1}{\sqrt{y}}\right)^{2g} q^{h-1}
= \frac{1}{\Delta(y,q)}\,.
$$
\end{thm}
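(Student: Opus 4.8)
The plan is to move the computation out of the deformation-sensitive reduced Gromov--Witten theory of $K3$ surfaces and into sheaf theory, where the answer is a classical product formula, by combining a degeneration to rational elliptic surfaces with a Gromov--Witten/Pairs correspondence.

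I would first reduce Theorem~\ref{ooo} to a single closed evaluation. Substituting $y=e^{iu}$ turns $\sqrt{y}-1/\sqrt{y}$ into $2i\sin(u/2)$, so the left side becomes $\big(2\sin(u/2)\big)^{2}\sum_{h\ge 0}\mathsf{Z}_{GW,h}\,q^{h-1}$; thus the theorem is equivalent to evaluating the total reduced partition function $\sum_{h}\mathsf{Z}_{GW,h}\,q^{h-1}$ in closed form. By the deformation invariance of the reduced theory recalled above, it suffices to prove this for one $K3$ surface in each norm class, and I would take $S\to\mathbb{P}^{1}$ to be an elliptic $K3$ and $\beta=\sigma+dF$ the class of a section plus $d$ fibers, so that $\langle\beta,\beta\rangle=2d-2$ runs over all admissible norms as $d$ varies and $q$ literally records fiber multiplicity.

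Next I would degenerate the base $\mathbb{P}^{1}$, specializing $S$ to a union $R\cup_{E}R'$ of two rational elliptic surfaces glued along a common smooth fiber $E$ (distributing the $24$ nodal fibers as $12+12$), and run the parallel degeneration for the stable pairs side. The essential new ingredient is a degeneration formula for the \emph{reduced} virtual class: $[\Mbar_g(S,\beta)]^{\red}$ should split as a sum over ways of breaking a curve into relative stable maps to $(R,E)$ and $(R',E)$ in which exactly one factor carries its \emph{reduced} relative obstruction theory and the other carries the ordinary one, with the $d$ fiber classes distributed between the two sides. The asymmetry is structural: the holomorphic symplectic form, and hence the cosection producing the reduced class, survives on only one side of the degeneration. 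I would prove this by carrying the Kiem--Li cosection through the expanded degeneration in families and showing that it concentrates on a single component; summing over all splittings then assembles a product over the fibers, in the spirit of the genus-$0$ argument of Bryan--Leung.

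With both theories pushed onto relative invariants of $(R,E)$, I would prove the reduced Gromov--Witten/Pairs correspondence $\mathsf{Z}_{GW,\beta}^{\red}=\mathsf{Z}_{P,\beta}^{\red}$ (after the standard change of variables) by reducing it, through the rubber calculus over $E$, to the toric Gromov--Witten/Pairs correspondence proven elsewhere in the paper applied to the building blocks of $R$. On the pairs side the moduli spaces for primitive $\beta$ are smooth --- deformation equivalent to Hilbert schemes of points on the surface --- so the relevant characteristic numbers reduce to topological invariants ($\chi_y$-type genera) whose generating series is the Göttsche/Kawai--Yoshioka infinite product, which is exactly $1/\Delta(y,q)$; the final identification of this product with the Jacobi form $\Delta(y,q)$ and the reorganization of the genus expansion is carried out in Pixton's appendix. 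The main obstacle is the reduced degeneration formula above: ordinary degeneration formulas do not apply, since the reduced class is neither pulled back from nor cut out inside any ambient standard virtual class and the $K3$ degenerates into surfaces ($R$, $R'$) that carry no reduced theory at all, so one genuinely must track the one-dimensional trivial quotient of the obstruction theory through the specialization and check its compatibility with the splitting of curve classes and with gluing over $E$; a secondary difficulty is upgrading the toric Gromov--Witten/Pairs correspondence to the relative and equivariant form in which it must be applied.
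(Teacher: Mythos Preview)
Your overall architecture is exactly the paper's: pass to an elliptic $K3$, degenerate to two rational elliptic surfaces glued along a fiber, reduce both sides to the standard (not reduced) relative theories of $(R,E)$, invoke the toric Gromov--Witten/Pairs correspondence for $R\times\C$, and read off the answer from Kawai--Yoshioka's Euler characteristic formula on the pairs side. So the plan is right.

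However, your proposed mechanism for the reduced degeneration formula is wrong, and this is the technical heart of the argument. You assert that the reduced class splits with ``exactly one factor carrying its reduced relative obstruction theory and the other carrying the ordinary one,'' and that ``the holomorphic symplectic form\ldots survives on only one side.'' Neither is correct, and you contradict yourself a few lines later when you note that $R$ and $R'$ carry no reduced theory at all. Indeed $h^{2,0}(R)=0$, so there is no cosection and no reduced class on either component. What actually happens (Theorems \ref{rtrt} and \ref{dmdm}, via Proposition \ref{propiso} and Lemma \ref{reducedcompsplit}) is that \emph{both} factors carry their \emph{standard} virtual classes, and the one-dimensional trivial piece of the $K3$ obstruction theory is absorbed by the conormal bundle of the diagonal in the gluing locus: the map $\mathcal{O}[1]\to N^\vee_{\Delta/E\times E}[1]$ (or $N^\vee_{\Delta/\C\times\C}[1]$ on the 3-fold) is an isomorphism. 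Concretely, the symplectic form on $S$ limits to a \emph{log} symplectic form on $R_1\cup_E R_2$ with poles along $E$ on both sides; the residue along $E$ is what identifies the cosection with the diagonal conormal. If you tried to implement your ``one side reduced'' picture you would be unable to define the reduced side.

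Two smaller corrections. First, the paper runs the whole argument on the 3-fold $X=S\times\C$ with its $\C^*$-action (Lemmas \ref{gth} and \ref{gthr} convert between the surface integrals $R_{g,\beta}$ and the residue invariants of $X$); the stable pairs theory and the correspondence are 3-fold constructions, not surface ones, and you should say so. Second, Pixton's appendix plays no role in the proof: it verifies the formula independently for $g\le 3$ via boundary expressions for $\lambda_g$. The final identification with $1/\Delta(y,q)$ is immediate from Kawai--Yoshioka once the correspondence \eqref{gwp} is in hand.
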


By
Theorem \ref{ooo}, the invariants $r_{g,h}$ are integers.
The formula  may also be directly written 
for the integrals $R_{g,h}$.
For $n\geq 1$, let $E_{2n}$ be the Eisenstein series
$$E_{2n}(q) = 1 - \frac{4n}{B_{2n}}\sum_{k\geq 1} \frac{k^{2n-1}q^{k}}{1-q^{k}}\ ,$$
where $B_{2n}$ is the corresponding  Bernoulli number.

\begin{cor}\label{ttt}
For primitive curve classes,
$$\sum_{g = 0}^\infty \sum_{h = 0}^\infty R_{g,h}\ u^{2g-2}q^{h-1} 
= \frac{1}{u^{2} \Delta(q)}\cdot
\mathrm{exp}\left(\sum_{g=1}^\infty u^{2g} \frac{|B_{2g}|}{g\cdot(2g)!} 
E_{2g}(q)\right)\ 
.$$
\end{cor}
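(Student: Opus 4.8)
The plan is to deduce Corollary~\ref{ttt} from Theorem~\ref{ooo} by the purely formal substitution $y=e^{iu}$, after which everything is an elementary manipulation of generating series. First, from the Gopakumar--Vafa form of the partition function, for $\langle\beta,\beta\rangle=2h-2$ one has
$$\sum_{g\ge 0}R_{g,h}\,u^{2g-2}=\sum_{g\ge 0}r_{g,h}\,u^{2g-2}\Big(\frac{\sin(u/2)}{u/2}\Big)^{2g-2}=\sum_{g\ge 0}r_{g,h}\,\big(2\sin(u/2)\big)^{2g-2},$$
using $u^{2g-2}(\sin(u/2)/(u/2))^{2g-2}=(2\sin(u/2))^{2g-2}$; multiplying by $q^{h-1}$ and summing over $h$ gives an identity in $\QQ((q))[[u]]$. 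On the other hand, substituting $y=e^{iu}$ in Theorem~\ref{ooo} and using $\sqrt y-1/\sqrt y=e^{iu/2}-e^{-iu/2}=2i\sin(u/2)$, so that $(-1)^g(\sqrt y-1/\sqrt y)^{2g}=(2\sin(u/2))^{2g}$, turns Theorem~\ref{ooo} into the statement $\sum_{g,h}r_{g,h}\,(2\sin(u/2))^{2g}q^{h-1}=1/\Delta(e^{iu},q)$. Dividing by $(2\sin(u/2))^2$ and comparing with the previous display yields
$$\sum_{g,h\ge 0}R_{g,h}\,u^{2g-2}q^{h-1}=\frac{1}{(2\sin(u/2))^2\,\Delta(e^{iu},q)}\,.$$

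It then remains to verify the elementary identity
$$(2\sin(u/2))^2\,\Delta(e^{iu},q)=u^2\,\Delta(q)\cdot\exp\!\Big(-\sum_{g\ge 1}u^{2g}\,\frac{|B_{2g}|}{g\cdot(2g)!}\,E_{2g}(q)\Big),$$
which I would prove by taking logarithms and matching coefficients of $u^{2g}$ separately in the $q$-constant and $q$-positive parts. Using $\Delta(e^{iu},q)/\Delta(q)=\prod_{n\ge 1}\big((1-e^{iu}q^n)(1-e^{-iu}q^n)/(1-q^n)^2\big)^2$ and expanding each logarithm, the $q$-positive part becomes $2\sum_{n,m\ge 1}m^{-1}q^{nm}\,(2\sin(mu/2))^2=4\sum_{g\ge 1}\tfrac{(-1)^{g+1}u^{2g}}{(2g)!}\sum_{m\ge 1}\tfrac{m^{2g-1}q^m}{1-q^m}$, which by the defining formula for $E_{2g}$ and the sign relation $|B_{2g}|=(-1)^{g+1}B_{2g}$ equals $-\sum_{g\ge 1}u^{2g}\tfrac{|B_{2g}|}{g(2g)!}(E_{2g}(q)-1)$. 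The $q$-constant part reduces to $\log\big((2\sin(u/2))^2/u^2\big)=2\log\big(\sin(u/2)/(u/2)\big)$, which by the Euler product $\sin x/x=\prod_{n\ge 1}(1-x^2/n^2\pi^2)$ and the classical identity $\zeta(2g)=|B_{2g}|(2\pi)^{2g}/(2\,(2g)!)$ equals $-\sum_{g\ge 1}u^{2g}\tfrac{|B_{2g}|}{g(2g)!}$, accounting for the constant terms $E_{2g}(q)\big|_{q=0}=1$. Adding the two contributions gives the identity, hence the corollary.

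I expect no genuine obstacle: all the geometric content is already carried by Theorem~\ref{ooo}, and Corollary~\ref{ttt} is merely its transcription into the variables $(u,q)$. The only points needing care are the repeated bookkeeping of signs (the powers of $(-1)^g$ and the sign of $B_{2g}$), the precise Bernoulli normalization in the stated definition of $E_{2g}$, and the remark that the substitution $y=e^{iu}$ is legitimate because, once the factor $(2\sin(u/2))^{-2}$ is cleared, both sides of the identity are even power series in $u$ with coefficients in $\QQ((q))$, so there is no convergence issue.
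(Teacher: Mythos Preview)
Your proposal is correct and follows essentially the same route as the paper: substitute $y=e^{iu}$ in Theorem~\ref{ooo} to reach $\sum R_{g,h}u^{2g-2}q^{h-1}=(2\sin(u/2))^{-2}\Delta(e^{iu},q)^{-1}$, then take logarithms and match the $q$-constant and $q$-positive parts separately. The only cosmetic difference is that the paper simply quotes the expansion $\log\big(\sin(u/2)/(u/2)\big)=\sum_{g\ge1}(-1)^gB_{2g}u^{2g}/(2g\cdot(2g)!)$, whereas you derive it from the Euler product for $\sin$ and the closed form of $\zeta(2g)$.
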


Theorem 1 specializes in genus 0 to the rational curve counts
on
$K3$ surfaces
predicted by S.-T. Yau and E. Zaslow \cite{yauz}.
The Yau-Zaslow formula was proven for primitive classes
in \cite{beu,brl}.

Of course, the integrals $R_{g,\beta}$ may also be considered
in the non-primitive case. A complete conjecture is explained
in \cite{gwnl,clay} based on \cite{kkv}. While the genus 0 integrals $R_{0,\beta}$ have
been calculated for
all classes $\beta$ in \cite{gwyz}, new methods appear to be
required in higher genus.

\subsection{Descendents}
Let $\beta\in H_2(S,\mathbb{Z})$ be a primitive
effective curve class.
The moduli space of stable maps from connected genus $g$ curves with $r$ ordered
marked points $\overline{M}_{g,r}(S,\beta)$ comes with $r$ evaluation maps 
$$\text{ev}_i: \overline{M}_{g,r}(S,\beta) \rightarrow S\,.$$
Pulling back cohomology classes on $S$ 
via $\text{ev}_i$
gives \emph{primary} classes on
$\overline{M}_{g,r}(S,\beta)$.
{\em Descendent} classes are obtained from the
Chern classes of the cotangent lines
$$ L_i \rarr \overline{M}_{g,r}(S,\beta)$$
at the marked points.

Let $\gamma_1, \ldots, \gamma_r \in H^*(S,{\mathbb{Z}})$, and
let $$\psi_i = c_1(L_i) \in H^2(\overline{M}_{g,r}(S,\beta),\mathbb{Q}).$$
The insertion $\tau_k(\gamma)$ corresponds 
to the class $\psi_i^k \cup \text{ev}_i^*(\gamma)$ on the moduli space
of maps. 
Let
\begin{equation} \Big\langle \tau_{k_1}(\gamma_{1}) \cdots \label{fvf12}
\tau_{k_r}(\gamma_{r})\Big\rangle^{S}_{g,\beta} 
= \int_{[\overline{M}_{g,r}(S,\beta)]^{\red}} 
\prod_{i=1}^r \psi_i^{k_i} \cup \text{ev}_i^*(\gamma_{i})
\end{equation}
denote the reduced descendent
Gromov-Witten invariants. 
By convention, the descendent
vanishes if the degree of the integrand does not
match the dimension $g+r$ of the reduced virtual class.

If only descendents of classes in $H^0(S,\mathbb{Z})$ and
$H^4(S,\mathbb{Z})$ appear in \eqref{fvf12},  the bracket
for primitive $\beta$ depends 
only upon the norm 
$$\langle \beta,\beta\rangle=2h-2\,$$
by deformation invariance.
Since the classes in $H^2(S,\mathbb{Z})$ are {\em not}
monodromy invariant, the bracket \eqref{fvf12} may
depend upon $\beta$ if descendents of $H^2(S,\mathbb{Z})$
are present.
When possible, we will replace the subscript $\beta$ of the descendent
bracket 
by $h$.

\subsection{Point insertions}

The evaluation of Theorem \ref{ooo} extends naturally to the
integrals 
$$\Big\langle (-1)^{g-k} \lambda_{g-k} \tau_0(\mathsf{p})^{k}\Big\rangle^{S}_{g, h}=
\int_{[\Mbar_{g,k}(S,\beta)]^{\red}}(-1)^{g-k} \lambda_{g-k}
\prod_{i=1}^k \text{ev}_i^*(\mathsf{p})
\  ,
$$
where $\lambda_i$ is the $i^{th}$ Chern class of the Hodge
bundle $\hodge$ and
$\mathsf{p} \in H^4(S,\mathbb{Z})$ is the point class.


\begin{thm}\label{kkvpoints} For primitive classes on $K3$ surfaces, we have
\begin{multline*}
 \sum_{g= 0}^\infty \sum_{h = 0}^\infty
\Big\langle (-1)^{g-k} \lambda_{g-k} \tau_0(\mathsf{p})^{k}\Big\rangle^{S}_{g, h} u^{2g-2}
q^{h-1}
=\\ 
\frac{1}{u^{2} \Delta(q)}\cdot
\mathrm{exp}\left(\sum_{g=1}^\infty u^{2g} \frac{|B_{2g}|}{g(2g)!} 
E_{2g}(q)\right)\\
\cdot
 \Big( \sum_{m= 1}^\infty q^{m} \sum_{d | m} \frac{m}{d} 
\big(2 \sin ({du}/{2})\big)^{2}\Big)^{k} \,.
\end{multline*}
\end{thm}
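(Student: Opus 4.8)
The plan is to deduce Theorem~\ref{kkvpoints} from Theorem~\ref{ooo} (equivalently Corollary~\ref{ttt}) by analyzing the effect of the point insertions $\tau_0(\mathsf p)^k$ on the reduced descendent series. First I would recall the structure behind Corollary~\ref{ttt}: the pure $\lambda_g$ series $\bZ_{GW}$ is governed by $1/\Delta(y,q)$, and the passage from the $r_{g,h}$ to the $R_{g,h}$ introduces the exponential of the Eisenstein sum via the standard BPS/Hodge-integral rewriting. The key geometric input is a \emph{divisor/point-insertion equation} for the reduced theory: inserting $\tau_0(\mathsf p)$ on $\overline M_{g,k}(S,\beta)$ should, after summing over $\beta$ (equivalently over $h$), act as an explicit differential-type operator on the generating series. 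Concretely, I expect each point insertion to contribute the universal factor
\[
\bF(u,q)\;=\;\sum_{m=1}^\infty q^m \sum_{d\mid m}\frac{m}{d}\,\big(2\sin(du/2)\big)^2,
\]
so that the $k$-fold insertion multiplies the $k=0$ answer (which is exactly Corollary~\ref{ttt}) by $\bF(u,q)^k$. The main work is to prove this multiplicativity, i.e.\ that the point insertions decouple and each produces the same factor $\bF$.

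The order of steps I would follow: (1) Establish the $k=0$ case, which is precisely Corollary~\ref{ttt}, hence already available. (2) Prove that inserting a single $\tau_0(\mathsf p)$ and summing over $h$ multiplies the series by $\bF(u,q)$. For this I would use the same degeneration-to-an-elliptic-surface technology used for Theorem~\ref{ooo}: degenerate $S$ to a rational elliptic surface $R$ glued to a second piece, run the reduced degeneration formula (the ``reduced-to-standard'' comparison quoted in the introduction), and track where the point class can go. On the rational elliptic surface side, the genus/degree bookkeeping for a point-constrained curve in the fiber direction produces exactly $\sum_{d\mid m}(m/d)(2\sin(du/2))^2$ at ``level $m$'' — this is the local contribution of a point lying on one of the $\beta$-classes restricted to fibers, and the $\sin$ factors come from the BPS form of the genus expansion of the local $\mathbb P^1$ (equivalently, the TQFT vertex for an $\mathcal O\oplus\mathcal O(-1)$-type geometry over a rational curve). (3) Iterate: because the reduced virtual class has dimension $g+k$ and each $\mathsf p$ cuts down by the ``curve meets a fixed point'' condition independently, show the insertions are mutually transverse in the degeneration, so the $k$-fold insertion simply raises $\bF$ to the $k$-th power while leaving the $\lambda_{g-k}$-twisted background series unchanged. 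One must check that lowering the Hodge class from $\lambda_g$ (in Theorem~\ref{ooo}) to $\lambda_{g-k}$ is exactly compensated by the $k$ point insertions at the level of dimension and of the BPS rewriting — i.e.\ that the substitution $\lambda_g \rightsquigarrow \lambda_{g-k}\,\tau_0(\mathsf p)^k$ is the ``correct'' descendent whose generating function stays modular.

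The hard part will be step~(2): making precise the claim that a point insertion contributes the universal factor $\bF(u,q)$ and not something $\beta$-dependent or genus-entangled. The subtlety is that $\mathsf p\in H^4(S)$ is monodromy-invariant (so the answer may depend on $h$ only), but in the degeneration the point must be placed on one of the two components, and one has to control the relative invariants with the point insertion on the non-$K3$ piece (the rational elliptic surface) through the full reduced degeneration formula, including the matching over the relative divisor and the contributions of multiple-cover/disconnected configurations that the $\sin$ expansion resums. Establishing that this relative computation produces exactly $\sum_m q^m\sum_{d\mid m}(m/d)(2\sin(du/2))^2$ — presumably via the TQFT/Hurwitz-type evaluation on the elliptic surface together with the genus-$0$ and genus-$1$ base cases, and then propagating through the GW/Pairs correspondence if it is cleaner on the sheaf side — is where essentially all the real content lies; once that factor is identified, multiplicativity in $k$ and the reduction to Corollary~\ref{ttt} are formal.
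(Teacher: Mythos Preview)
Your overall architecture---reduce to Corollary~\ref{ttt} for $k=0$ and show that each $\tau_0(\mathsf p)$ contributes an independent multiplicative factor $\bF(u,q)$---is exactly what the paper does. However, your proposed implementation differs from the paper's in a way that would make step~(2) substantially harder, and your guess for the origin of the $\sin$ factors is off.

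The paper does \emph{not} use the degeneration $S\rightsquigarrow R_1\cup_E R_2$ to handle the point insertions. Instead it uses the degeneration to the normal cone of a smooth elliptic fiber $E\subset S$, namely $S\rightsquigarrow S\cup_E(E\times\PP^1)$, together with the Lee--Leung reduced degeneration formula. The virtue of this choice is that the $S$-side still carries the full reduced class, so sending \emph{all} $k$ point insertions to the bubble $E\times\PP^1$ leaves the $S$-side equal to the $k=0$ series on the nose (Proposition~\ref{bubbleoffpoints}). With your $R\cup_E R$ degeneration the reduced class becomes a product of standard classes on both sides, the Hodge insertion and the point insertions get entangled across the two rational elliptic surfaces, and there is no clean way to peel off the $k=0$ answer. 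The ``decoupling'' you want in step~(3) is achieved in the paper by first bubbling all $k$ points onto one copy of $E\times\PP^1$, then degenerating $E\times\PP^1$ into $k$ copies so that each carries a single point (using the triviality of the remaining relative invariants, Lemma~\ref{vanishing}).

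The identification of $\bF(u,q)$ is also different from what you sketch. It is not a vertex of $\mathcal O\oplus\mathcal O(-1)$ type. The paper converts the relative $E\times\PP^1$ invariant to an absolute $2$-point invariant on $E\times\PP^1$ (Lemma~\ref{absoluterelative}), then degenerates $E$ to a nodal union $P_1\cup P_2$ of rational curves. A combinatorial/Hodge-vanishing argument forces the dual graph to be a single cycle, reducing everything to $1$-pointed relative invariants of $\PP^1$ and the triple Hodge integral on $\overline M_{g,1}$; the product of the two known generating functions $\frac{1}{u^2}\mathcal S(du)^2/\mathcal S(u)$ and $\frac{1}{u^2}\mathcal S(u)$ gives $(2\sin(du/2))^2/u^2$ (Lemma~\ref{mm3}), and summing over the divisor $d\mid m$ and the distribution of degree yields $\bF$. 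The GW/Pairs correspondence is not invoked for this computation.
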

The last factor is related to the point insertions. In the $k=0$ case,
when no points are inserted, Theorem \ref{kkvpoints} specializes to
 Theorem \ref{ooo} by Corollary \ref{ttt}.

\subsection{Quasimodular forms} \label{qmforms}
The ring of {\em quasimodular forms} with possible poles at
$q=0$ is the algebra generated by the Eisenstein series
$E_2$ over the ring of $\mathbf{SL}(2,\mathbb{Z})$ 
modular forms with possible poles at $q=0$. 
The ring of quasimodular forms is closed under $q\frac{d}{dq}$.
See \cite{bghz} for a basic treatment.

By deformation invariance,
the full descendent theory of algebraic $K3$ surfaces is
captured by elliptically fibered $K3$ surfaces.
Let $S$ be an elliptically fibered $K3$ surface with 
section. Let
$$\mathsf{s}, \mathsf{f} \in H_2(S, \mathbb{Z})$$
denote the section and fiber classes.
A descendent potential function for the
reduced theory of $K3$ surfaces in primitive classes is defined
by
$$\bF^{S}_{g}\big(\tau_{k_1}(\gamma_{l_1}) \cdots
\tau_{k_r}(\gamma_{l_r})\big)=
\sum_{n=0}^\infty 
\Big\langle \tau_{k_1}(\gamma_{l_1}) \cdots
\tau_{k_r}(\gamma_{l_r})\Big\rangle^{S}_{g,\mathbf{s}+ h \mathbf{f} } 
\ q^{h-1}
$$
for $g\geq 0$.  
For arbitrary insertions, we prove the following result.

\begin{thm} \label{qqq}
$\bF_{g}^S\big(\tau_{k_1}(\gamma_{1})
\cdots
\tau_{k_r}(\gamma_{r})\big)
$
is the Fourier expansion in $q$
of a quasimodular form with pole at $q=0$ of
order at most $1$.
\end{thm}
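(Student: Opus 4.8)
The plan is to reduce the general descendent statement to a finite, well-controlled list of building blocks whose modularity can be established by hand, and then to transport modularity through the operations that generate everything else. First I would use the degeneration/splitting structure already advertised in the introduction: degenerate the elliptically fibered $K3$ surface $S$ to a union of (rational) elliptic surfaces glued along an elliptic fiber, so that the reduced invariants $\langle \cdots\rangle^S_{g,\mathbf s + h\mathbf f}$ are expressed, via the degeneration formula for reduced classes, in terms of relative Gromov--Witten invariants of $\mathbb P^2$ blown up at nine points (and of the rubber over the elliptic curve). The fiber class $\mathbf f$ lives on each component, and summing over the ways the homology class and the genus distribute across the normal-crossing degeneration is exactly what produces the $q$-series $\bF^S_g(\cdots)$. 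After this step the problem becomes: show that the generating series of relative invariants of the rational elliptic surface, summed over multiplicity along the fiber, is quasimodular with a pole of order at most one.

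Next I would handle the descendent and primary insertions. Insertions of $\tau_k(\gamma)$ for $\gamma\in H^0$ or $H^4$ are monodromy invariant and are dealt with by the topological recursion relations / dilaton–string equations together with the fact that, on a surface, $\psi$-classes can be traded for boundary and $\kappa$-classes; since the boundary again introduces only lower-complexity invariants (fewer markings, or a splitting), an induction on the number and weight of insertions reduces to the case of no descendents. Insertions of $H^2(S)$ classes are the subtle ones because they are not monodromy invariant; here I would expand $\gamma$ in the basis $\mathbf s,\mathbf f$ and the orthogonal lattice, use the divisor equation for $\mathbf f$ and $\mathbf s$ (which differentiates the $q$-series by $q\frac{d}{dq}$, an operation preserving quasimodularity), and observe that the orthogonal part only contributes through the intersection pairing in a way that, after summing over $h$, again lands in the same ring. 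Thus modularity for the base case propagates to all insertions, because every reduction step either applies $q\frac{d}{dq}$, multiplies by a fixed quasimodular form, or passes to a boundary stratum already known by induction.

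For the base case itself I would combine two inputs. One is the Katz--Klemm--Vafa evaluation (Theorem~\ref{ooo}, equivalently Corollary~\ref{ttt} and Theorem~\ref{kkvpoints}): the $\lambda_g$- and point-insertion series are \emph{explicitly} $1/\Delta(q)$ times the exponential of a combination of Eisenstein series $E_{2g}$, which visibly lies in the ring of quasimodular forms with a simple pole at $q=0$. The other is the structure of the stable pairs / local $\mathbb P^2$-blowup side supplied by the Gromov--Witten/Pairs correspondence for toric $3$-folds asserted in the introduction, which lets one compute the relative invariants of the rational elliptic surface as coefficients in a rational function of $q$ and the pairs variable; extracting the fiber-degree generating function and matching it against the known $\lambda_g$ series pins down that these relative series are themselves quasimodular. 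Finally I would track the pole order: each component of the degeneration contributes at most the $1/\Delta$ coming from the primitive class on that piece, and gluing along a single elliptic fiber does not worsen this, so the total pole at $q=0$ has order at most $1$.

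\medskip

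\textbf{Main obstacle.} The hard part will be the base case's modularity in the presence of \emph{arbitrary} (not just $\lambda_g$ or point) insertions on the rational elliptic surface: one must show that the full relative descendent series in the fiber direction is quasimodular, not merely the $\lambda_g$-weighted specialization guaranteed by KKV. I expect this to require genuinely using the Gromov--Witten/Pairs correspondence to convert the sum over fiber multiplicities into the Fourier coefficients of a rational function with denominator a power of $\Delta$, and then invoking the classical fact that such functions are quasimodular of bounded pole order — the bookkeeping of which insertions survive the degeneration and how the $\psi$-classes interact with the rubber will be the delicate point.
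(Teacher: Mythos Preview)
Your proposal has a genuine gap, and it is precisely the one you flag as the ``main obstacle'': you degenerate $S$ to $R_1\cup_E R_2$ and then need quasimodularity of the full relative descendent theory of the rational elliptic surface $R/E$ in the fiber direction. Your suggested fix---pass through the Gromov--Witten/Pairs correspondence and note that the stable pairs side is a ``rational function''---does not work as stated. The GW/Pairs correspondence gives rationality in the \emph{box-counting} variable $y$, not in $q$; nothing in that correspondence tells you the $q$-series is quasimodular. So the obstacle you identify is real, and your plan does not overcome it.

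The paper sidesteps this entirely by making two different choices. First, it does \emph{not} use the $R_1\cup_E R_2$ degeneration for this theorem; instead it degenerates $S$ to the normal cone of a single fiber, $S\rightsquigarrow S\cup_E (E\times\PP^1)$. The bubble $(E\times\PP^1)/E$ has its relative theory computable, by localization and the absolute/relative correspondence, in terms of the descendent Gromov--Witten theory of the elliptic curve $E$---and quasimodularity for $E$ is a theorem of Okounkov--Pandharipande. This is the actual source of modularity, not KKV. Second, the paper's induction is on $(g,r)$ and splits into two cases. If some insertion is a point class, send it to the bubble; the bubble then carries positive genus, so the $S/E$ side has strictly smaller $g$ and the induction closes. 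If \emph{no} insertion is a point class, the dimension constraint forces the tautological class $\alpha\in R^*(\overline M_{g,r})$ to have degree $\ge g$, and the strong Getzler--Ionel vanishing (Faber--Pandharipande) says any such $\alpha$ is a pushforward from $\partial\overline M_{g,r}$; the reduced-class splitting formula then reduces to lower $(g,r)$. You are missing this tautological-ring step entirely, and without it your TRR/string/dilaton reductions will not terminate: eventually you will be left with primary invariants carrying only divisor and identity insertions, and those do not reduce further by the equations you list.

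A smaller point: your base case is not KKV. The paper's induction bottoms out at genus $0$ with at most two markings, which is just Yau--Zaslow plus string/dilaton/divisor; Theorem~\ref{ooo} is logically independent of Theorem~\ref{qqq}.
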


The simplest of the $K3$ series is the count
of genus $g$ curves passing through $g$ points,
\begin{equation}
\label{htth}
\bF_{g}^S ( \tau_0(p)^g ) 
= 
\frac{1}{\Delta(q)}
\cdot
\left( -\frac{1}{24}\ 
q\frac{d}{dq} E_2\right)^g \ ,
\end{equation}
first calculated{\footnote{Our indexing conventions
differ slightly from those adopted in \cite{brl}.}} 
by J. Bryan and C. Leung \cite{brl}.
Formula \eqref{htth} is also a specialization of
Theorem 3.

In the non-primitive case, we conjecture the genus $g$ reduced descendent
potential
to be a quasimodular form of higher level.
A precise statement is made in Section \ref{ccllyy}.

\subsection{Stable pairs on $K3$ surfaces}
We will relate the reduced Gromov-Witten invariants
of $K3$ surfaces to integrals over the moduli spaces of
sheaves on $K3$ surfaces. 

Let $S$ be a $K3$ surface. 
A {\em pair} $(F,s)$ consists of a sheaf $F$ on $S$  supported
in dimension 1 together with a section $s\in H^0(S,F)$.  A pair $(F,s)$
 is {\em stable} if 
\begin{enumerate}
\item[(i)]
the sheaf $F$ is {pure}, 
\item[(ii)] the section $\oh_S \stackrel{s}{\rightarrow} F$ has 0-dimensional
cokernel.
\end{enumerate}
Purity here simply means
 every nonzero subsheaf of $F$ has support of dimension 1.
As a consequence,
 the scheme theoretic support $C\subset S$ of  $F$ is a 
curve.
The discrete invariants of a stable pair are the 
holomorphic Euler characteristic
$\chi(F)\in \mathbb{Z}$ and the class{\footnote{$[F]$ 
is the sum of the classes of the irreducible
1-dimensional curves on which $F$ is supported weighted
by the generic length of $F$ on the
curve. Equivalently,
$[F]=c_1(F)$.
}}
 $[F]\in H_2(S,\mathbb{Z})$.

Let $\beta\in H_2(S,\mathbb{Z})$ be a nonzero effective
curve class.
Let $P_n(S,\beta)$ be the moduli space of stable pairs satisfying
$$\chi(F)=n, \ \ [F]=\beta.$$
After appropriate choices \cite{pt1}, pair stability coincides
with stability arising from geometric
invariant theory in Le Potier's study \cite{LeP}. Hence, the moduli 
space $P_n(S,\beta)$ is a projective scheme.

The class $\beta$ is {\em irreducible} if $\beta$ is
not a sum of two nonzero effective curve classes.{\footnote{An irreducible class
is primitive. By deforming  $S$,  every primitive curve class $\beta\in H_2(S,\mathbb{Z})$
can be made irreducible.}
A basic result proven in \cite{ky,pt3} is the following.

\begin{prop} \label{hhtt} If $\beta$ is irreducible, $P_n(S,\beta)$ is
 nonsingular of dimension $n+\langle \beta,\beta \rangle +1$.
\end{prop}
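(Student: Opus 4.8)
The plan is to analyze the deformation theory of a stable pair $(F,s)$ on the $K3$ surface $S$ and show that, when $\beta$ is irreducible, the obstruction space vanishes and the dimension of the tangent space is constant. The deformation-obstruction theory of stable pairs is governed by the two-term complex $I\udot = \{\O_S \to F\}$ in the derived category, and the relevant groups are the hyperext groups $\Ext^i_S(I\udot,I\udot)_0$ (trace-free part) for $i=1,2$, which control deformations and obstructions respectively. So the first step is to set up this complex and the associated long exact sequences relating $\Ext^i(I\udot,I\udot)$ to $\Ext^i(I\udot,F)$ and $H^i(\O_S)$.

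The key computation is to show the obstruction group $\Ext^2_S(I\udot,I\udot)_0$ vanishes when $\beta$ is irreducible. Here I would use Serre duality on the $K3$ surface: since $K_S = \O_S$, one has $\Ext^2_S(I\udot,I\udot)_0 \cong \Hom_S(I\udot,I\udot)_0^{\vee}$, and the latter vanishes because a stable pair is simple — the only endomorphisms of $I\udot$ are scalars, so the trace-free part is zero. The irreducibility of $\beta$ is exactly what forces the support curve $C$ to be reduced and irreducible (after the deformation remark, one can assume this), which rules out nontrivial idempotent-type endomorphisms coming from decomposing the support; this is where irreducibility enters essentially. Granting the vanishing of obstructions, $P_n(S,\beta)$ is smooth, and its dimension equals $\dim \Ext^1_S(I\udot,I\udot)_0$.

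The final step is to compute this dimension. By the vanishing of $\Ext^0_0$ and $\Ext^2_0$, the dimension equals $-\chi(I\udot,I\udot)_0 = -\chi(I\udot,I\udot) + \chi(\O_S,\O_S)$, and $\chi(\O_S,\O_S) = \chi(\O_S) = 2$. The Euler pairing $\chi(I\udot,I\udot)$ is computed by Riemann–Roch on $S$ in terms of the Mukai vector $v(I\udot) = \ch(I\udot)\sqrt{\operatorname{td}_S} = (1, -\beta, 1-n)$ (using $\ch_0 = 1$, $\ch_1 = -c_1(F) = -\beta$, and the degree-4 term pinned down by $\chi(F) = n$). The Mukai pairing then gives $\chi(I\udot,I\udot) = -\langle v,v\rangle = -(\langle \beta,\beta\rangle - 2(1)(1-n)) = -\langle\beta,\beta\rangle - 2n + 2$, whence $\dim \Ext^1_0 = \langle\beta,\beta\rangle + 2n - 2 + 2 - \cdots$; after carefully tracking the trace terms this reduces to $n + \langle\beta,\beta\rangle + 1$, matching the claim. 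The main obstacle I anticipate is not any single computation but the bookkeeping of trace-free parts and the precise identification of $v(I\udot)$ in $H^*(S)$ so that the Mukai pairing yields exactly the stated formula; the conceptual input — simplicity of stable pairs plus the $K3$ Serre duality — is what makes smoothness work, and that is where I would be most careful to invoke irreducibility of $\beta$ correctly.
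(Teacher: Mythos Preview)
The paper does not supply a proof of this proposition; it simply cites \cite{ky,pt3}. So there is no ``paper's proof'' to compare against directly, but your outline contains a genuine error worth flagging.

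Your identification of the tangent and obstruction spaces of $P_n(S,\beta)$ with $\Ext^1_S(I\udot,I\udot)_0$ and $\Ext^2_S(I\udot,I\udot)_0$ is a 3-fold phenomenon, established in \cite{pt1} for that setting; on a \emph{surface} the natural deformation theory of a pair has tangent space $\Ext^0_S(I\udot,F)$ and obstruction space $\Ext^1_S(I\udot,F)$, and these do not agree with the trace-free self-Exts. This is visible already in your dimension count: the Mukai formula gives $\dim\Ext^1(I\udot,I\udot)_0 = \langle v,v\rangle + 2 = 2n+\langle\beta,\beta\rangle$, which equals the target $n+\langle\beta,\beta\rangle+1$ only when $n=1$. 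For a concrete check, take $\beta$ a $(-2)$-class (so $h=0$) and $n=2$: then $P_2(S,\beta)\cong\PP(H^0(\O_{\PP^1}(1)))\cong\PP^1$ has dimension $1$, while your formula gives $2$. The extra tangent direction in $\Ext^1(I\udot,I\udot)_0$ corresponds to deforming the complex $I\udot$ to a torsion-free sheaf of the form $I_p(-C)$, whose cokernel in $\O_S$ is not pure---so it is a genuine deformation of the complex that leaves the stable-pair locus. In particular your vanishing $\Ext^2(I\udot,I\udot)_0=0$ is correct but proves smoothness of the wrong moduli space.

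Even with the correct obstruction space $\Ext^1(I\udot,F)$, the Serre-duality-plus-simplicity argument does not immediately succeed: one always has a surjection $\Ext^1(I\udot,F)\twoheadrightarrow\Ext^2(F,F)\cong\C$, so the obstruction space is never zero. The arguments in \cite{ky,pt3} instead proceed by identifying $P_n(S,\beta)$, for irreducible $\beta$, with the relative Hilbert scheme of $n+h-1$ points on the universal curve over $|\beta|\cong\PP^h$ (every curve in the linear system being integral), and then establishing smoothness of that space directly.
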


When studying stable pairs, we will often assume
$\beta$ is irreducible.
In the irreducible case, $P_n(S,\beta)$ depends, up to
deformation equivalence, only  upon the norm of $\beta$.
We will use the notation $P_n(S,h)$
when $\langle \beta,\beta \rangle=2h-2$.

\subsection{Euler characteristic} \label{jjje}
Let $\beta\in H_2(S,\mathbb{Z})$ be an irreducible effective curve class
with norm $\langle \beta,\beta \rangle=2h-2$.

Let $\Omega_{P}$ be the cotangent bundle of
the moduli space $P_n(S,h)$.
Define the partition function
\begin{eqnarray*}
\bZ_{h}^{P}(y) & =  &
\sum_n \int_{P_n(S,h)} c_{n+2h-1}(\Omega_P)\ y^n \\
& = &
\sum_{n} (-1)^{n+2h-1} \mathsf{e}(P_n(S,h))\  y^n.
\end{eqnarray*}
Here, $\mathsf{e}$ denotes the topological Euler
characteristic.
We have written the stable pairs partition
function in the variable $y$ instead of the traditional $q$
since the latter will be reserved for the Fourier
expansions of modular forms.{\footnote{The conflicting uses of
$q$ seem impossible to avoid. The possibilities for confusion are
great.}} 
Since $P_n(S,h)$ is empty if $n<1-h$, we see $\mathsf{Z}^P_h$ is
a Laurent series in $y$.

The topological Euler characteristics of 
$P_n(S,h)$ have been calculated by T. Kawai and K. Yoshioka.
By Theorem 5.80 of \cite{ky},
\begin{equation*}
\sum_{h=0}^\infty \sum_{n=1-h}^\infty  
\mathsf{e}(P_n(S,h))\  y^n q^{h-1} = \\
\left(\sqrt{y}-\frac{1}{\sqrt{y}}\right)^{-2}
\frac{1}{\Delta(y,q)} \,.
\end{equation*}
We require the signed Euler characteristics,
$$
\sum_{h=0}^\infty \bZ^P_{h}(y) \ q^{h-1} = 
\sum_{h=0}^\infty \sum_{n=1-h}^\infty (-1)^{n+2h-1} 
\mathsf{e}(P_n(S,h))\  y^n q^{h-1}. 
$$
Therefore,
$\sum_{h=0}^\infty \bZ^P_{h}(y) \ q^{h-1}$ equals
\begin{equation}\label{jjy}
-\left(\sqrt{-y}-\frac{1}{\sqrt{-y}}\right)^{-2}
\frac{1}{\Delta(-y,q)}
\,.
\end{equation}

\subsection{Correspondence}\label{kkkcon2}
To prove Theorem \ref{ooo}, we formulate and prove
a Gromov-Witten/Pairs correspondence in the setting
of reduced classes.

Let $\beta\in H_2(S,\mathbb{Z})$ be an irreducible effective curve class.
We write the Gromov-Witten partition
function as
$$\bZ^{GW}_h(u)  =
\sum_{g= 0}^\infty  r_{g,h}\ u^{2g-2} 
\left( \frac{\sin
({u/2})}{u/2}\right)^{2g-2}\,.$$
Our Gromov-Witten/Pairs correspondence for the reduced theories of
the 3-fold $S\times\C$ implies\footnote{The standard Gromov-Witten/Pairs conjecture
of \cite{pt1} applies to virtual classes for 3-fold theories. Our 
 analogue is for reduced classes (in a 
$\com^*$-equivariant context).
The $\com^*$-equivariant theories of $S\times \com$ are
equivalent to the theories of $S$.
}
\begin{equation}\label{gwp}
\bZ^{GW}_h(u)  =  \bZ^{P}_h(y) 
\end{equation}
after the substitution $-e^{iu}=y$.
Together with the Euler characteristic
calculation \eqref{jjy},
the correspondence \eqref{gwp}
immediately yields Theorem \ref{ooo}.

To complete the proof of Theorem \ref{ooo}, we 
must establish the reduced Gromov-Witten/Pairs correspondence
for $S\times \com$.
There are two main ideas in the argument:
\begin{enumerate}
\item[(i)] Let $R$ be the rational elliptic surface
obtained by blowing-up the base locus of
a pencil of cubics in $\proj^2$. Let $E\subset R$
be a nonsingular member of the pencil.
Using special degenerations of elliptically fibered
$K3$ surfaces $S$ to unions of rational elliptic surfaces $R\cup_E R$,
we prove a new formula relating the reduced
virtual classes of $S\times \com$ to the standard 
virtual classes of $R\times \com$.
We prove the formula separately for stable maps and stable pairs.

\begin{figure}[h]
\begin{center}
\includegraphics{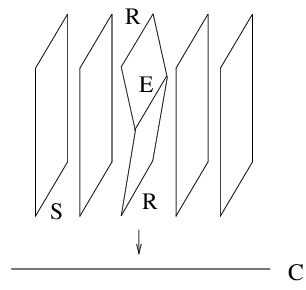} \vspace{-4mm}
\caption{A degeneration of $K3$ surfaces \label{fig}}
\end{center}
\end{figure}

\item[(ii)]
Since $R$ is isomorphic to $\proj^2$ blown-up at
$9$ points, $R\times \com$ is deformation equivalent to a toric 3-fold.
We prove a Gromov-Witten/Pairs correspondence for 
toric 3-folds following \cite{moop}.
\end{enumerate}
Together (i) and (ii) yield the correspondence \eqref{gwp}
and complete the proof of Theorem \ref{ooo}.

We have no direct approach to the $\lambda_g$ integrals
$R_{g,\beta}$ on $\overline{M}_{g}(S,\beta)$.
The moduli space of stable maps has contracted components
and subtle virtual contributions. The nonsingularity of
 the corresponding moduli spaces
of stable pairs is remarkable.  
Theorem \ref{ooo} provides a model use of the Gromov-Witten/Pairs
correspondence.

Part (i) constitutes the technical heart of the paper.
The primitivity of $\beta\in H_2(S,\mathbb{Z})$ is
crucial. 
In Section \ref{npd}, we state a
degeneration formula
in the non-primitive case which leads to much more
subtle invariants of $R$.
Unfortunately, 
the toric correspondence (ii)
is not sufficient to conclude a
Gromov-Witten/Pairs correspondence for non-primitive
classes $\beta\in H_2(S,\mathbb{Z})$.
The non-primitive degeneration formula will
be pursued in a sequel \cite{mpt2}.

\subsection{Point insertions for stable pairs}
Let $\beta\in H_2(S,\mathbb{Z})$ be an irreducible effective
curve class of norm $\langle \beta,\beta\rangle =2h-2$.

The linear system of curves of class $\beta$ is $h$-dimensional.
Let
\begin{equation} \label{rho}
\rho: P_n(S,h) \rightarrow \proj^h
\end{equation}
be the canonical morphism obtained by sending $(F,s)$
to the support of $F$. 
A point incidence condition for stable pairs corresponds
to the  $\rho$ pull-back of a hyperplane $H\subset \proj^h$.
The integral for stable pairs associated to
$k$ point conditions is defined by
$$ \mathsf{C}^k_{n,h}=
\int_{P_n(S,h)} c_{n+2h-1-k} (\Omega_P) \cup \rho^*(H^k)
\,.$$
By Bertini, the subvariety
 $$P^k_n(S,h) = \rho^{-1}(H_1) \cap \ldots \cap \rho^{-1}(H_k) \subset P_n(S,h)$$
is nonsingular of dimension $n+2h-1-k$ for generic hyperplanes.
Using Gauss-Bonnet,
the 
Euler characteristics of the spaces $P^k_n(S,h)$ are expressible in terms of
the integrals $\mathsf{C}^k_{n,h}$ by the formula
\begin{equation}\label{bttr}
\mathsf{e}\left(P^k_n(S,h)\right) = (-1)^{n+2h-1-k} \sum_{i=0}^{n+2h-1-k}  
(-1)^i \binom{i+k-1}{k-1} \mathsf{C}^{k+i}_{n,h}\,.
\end{equation}
In fact, equation \eqref{bttr} may be easily 
inverted to express $\mathsf{C}^k_{n,h}$
in terms of the Euler characteristics.

\begin{thm} \label{pst} The point conditions for irreducible classes
on $K3$ surfaces are evaluated by
\begin{multline*}
\sum_{n} \sum_{h=0}^\infty
\mathsf{C}^k_{n,h}  (-y)^n q^{h-1}
= \\
\frac{(-1)^{k+1}}{\Delta(y,q)} \cdot
\frac{ \Big( \sum_{m= 1}^\infty q^{m} \sum_{d | m} \frac{m}{d} 
\left( y^d -2+ y^{-d}\right)\Big)^{k}}
{y-2+y^{-1}} 
 \,.
\end{multline*}
\end{thm}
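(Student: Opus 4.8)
The plan is to strip off the Chern classes, reducing Theorem \ref{pst} to a computation of topological Euler characteristics, and then to perform that computation fibrewise over the linear system, refining the Kawai--Yoshioka count. By Bertini the incidence locus $P^k_n(S,h)=\rho^{-1}(L)$, with $L=\mathbb{P}^{h-k}\subset\mathbb{P}^h$ a generic linear subspace of codimension $k$ --- equivalently the stable pairs whose support curve passes through $k$ generic points of $S$ --- is nonsingular with normal bundle $\rho^*\mathcal{O}(1)^{\oplus k}$ in $P_n(S,h)$, so that restricting $c_{n+2h-1-k}(\Omega_P)$ to it and using adjunction yields exactly the Gauss--Bonnet relation \eqref{bttr}. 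Inverting \eqref{bttr} as noted in the text, each $\mathsf{C}^k_{n,h}$ is a fixed linear combination of the $\mathsf{e}(P^{k'}_n(S,h))$ with $k'\ge k$; hence Theorem \ref{pst} is equivalent to evaluating the Euler-characteristic generating functions $\sum_{n,h}\mathsf{e}(P^k_n(S,h))\,y^n q^{h-1}$ for all $k$. In view of \eqref{jjy} (the $k=0$ case, i.e.\ the pairs side of Theorem \ref{ooo}), the real content is to show that imposing one further generic point incidence --- that is, restricting the fibration $\rho$ to a generic hyperplane --- multiplies $\sum_{n,h}\mathsf{C}^k_{n,h}(-y)^nq^{h-1}$ by the universal factor $-G(y,q)$, where $G(y,q)=\sum_{m\ge1}q^m\sum_{d\mid m}\tfrac md(y^d-2+y^{-d})=\sum_{d\ge1}q^d(y^d-2+y^{-d})/(1-q^d)^2$; iterating $k$ times then gives $(-G)^k$ times the $k=0$ answer, which is the claimed formula.

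\emph{The fibrewise count.} Since $\beta$ is irreducible every member of $|\beta|=\mathbb{P}^h$ is an integral curve $C$ of arithmetic genus $h$, and the fibre of $\rho$ over $[C]$ is the moduli of stable pairs on $C$ with $\chi=n$ --- equal to $\mathrm{Sym}^{n+h-1}C$ when $C$ is smooth, and in general built from symmetric products of $C$ and of its partial normalisations via the compactified-Jacobian description and the Fourier--Mukai reduction underlying \cite{ky}. Stratify $\mathbb{P}^h$ by the equisingularity type of $C$: on a $K3$ surface the $\delta$-nodal locus has the expected codimension $\delta$, the deeper strata are lower-dimensional and are bookkept exactly as in \cite{ky}, and a generic linear slice $L$ meets every stratum transversally. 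By additivity of Euler characteristics over the stratification and multiplicativity over each stratum with its fibre, $\mathsf{e}(P^k_n(S,h))=\sum_\Sigma\mathsf{e}(\Sigma\cap L)\cdot\mathsf{e}(\text{fibre over }\Sigma)$; recording each stratum contribution by its Chern--Schwartz--MacPherson data in $\mathbb{P}^h$, a generic codimension-$k$ section replaces the class $[\mathbb{P}^i]$ by $[\mathbb{P}^{i-k}]$, so the effect on generating functions is the explicit operator $\sum_i a_i x^i\mapsto\sum_{i\ge k}a_i(i-k+1)$. It then remains to feed in the refinement of Theorem 5.80 of \cite{ky} that records this linear-system/CSM data --- equivalently the behaviour of the $\rho$-fibration under hyperplane sections --- and to verify that one application produces the multiplier $-G(y,q)$: here the elliptic-fibration/Göttsche product structure behind the Kawai--Yoshioka formula makes the factor $q^d(y^d-2+y^{-d})/(1-q^d)^2$ visible as the contribution of a single incidence point to the relative symmetric-power count.

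The main obstacle is precisely this last step --- re-running the Kawai--Yoshioka computation with the support morphism $\rho$ carried along and pinning the one-hyperplane multiplier to $-G(y,q)$ on the nose; everything preceding it (adjunction, \eqref{bttr}, additivity and multiplicativity of Euler characteristics, transversality by Bertini) is formal. As an alternative to this direct refinement, one can instead transport the already-available toric stable-pairs invariants with point insertions through the reduced Gromov--Witten/Pairs correspondence and the $K3$ degeneration of part (i), which is also the route yielding the Gromov--Witten point-insertion formula of Theorem \ref{kkvpoints}.
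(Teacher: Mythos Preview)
Your primary approach --- stripping to Euler characteristics via \eqref{bttr} and then refining the Kawai--Yoshioka count to track the support morphism $\rho$ --- has a genuine gap exactly where you locate it. The paper says so explicitly: ``We do not know a direct approach along the lines of \cite{ky} for determining the integrals $\mathsf{C}^k_{n,h}$ or the Euler characteristics of $P^k_{n}(S,h)$.'' Your fibrewise stratification is plausible in outline, but the step ``feed in the refinement of Theorem~5.80 of \cite{ky}'' is not a step: no such refinement is available, and producing one would mean rerunning the Fourier--Mukai/infinite-product argument of \cite{ky} while carrying the linear-system CSM data, which nobody has done. The assertion that one hyperplane section multiplies the generating function by $-G(y,q)$ is precisely the content to be proved, not a lemma you can invoke.

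The paper's actual route is your ``alternative,'' but executed in a specific order you do not quite spell out. First, Theorem~\ref{kkvpoints} (the Gromov--Witten point formula) is proved entirely on the Gromov--Witten side, by degenerating $S$ to the normal cone of an elliptic fibre $E$ and evaluating the relative invariants of $(E\times\PP^1)/E$ through explicit Hodge-integral calculations (Section~\ref{pint}); this is where the factor $\sum_{m\ge1}q^m\sum_{d\mid m}\tfrac{m}{d}(2\sin(du/2))^2$ is produced. Then Theorem~\ref{pst} follows from Theorem~\ref{kkvpoints} by the already-established equivariant Gromov--Witten/Pairs correspondence for $S\times\C$ (Theorem~\ref{redgwpt}), which converts the Gromov--Witten generating function to the stable-pairs one under $-e^{iu}=y$. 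In short: the point-insertion formula is computed on the map side, not the sheaf side, and transported across; the sheaf-theoretic direct computation you sketch remains open.
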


Point conditions in the reduced
Gromov-Witten theory of $S$ are evaluated by Theorem \ref{kkvpoints}.
We derive Theorem \ref{kkvpoints} from Theorem \ref{ooo} using
degeneration and exact Gromov-Witten calculations for Hodge integrals.
Theorem \ref{kkvpoints} then implies 
Theorem \ref{pst} by the equivariant Gromov-Witten/Pairs
correspondence for $S\times \com$.

We do not know a direct approach along the lines of \cite{ky}
for determining the integrals $\mathsf{C}^k_{n,h}$ or the
Euler characteristics of $P^k_{n}(S,h)$.

\subsection{Plan of the paper}
We start, in Section \ref{ggww}, 
with a precise statement of the
Gromov-Witten/Pairs correspondence for the
reduced theory of
 $S\times \com$
with primary insertions, leaving many of the proofs for later Sections.
Elliptically fibered $K3$ surfaces are reviewed in Section \ref{ellk3}.
 The
degeneration formulas
in terms of the standard virtual classes of the
rational elliptic surface
are proven in Section \ref{spd} for stable pairs and in Section \ref{gwd} for Gromov-Witten theory. We give full details for stable pairs and a briefer account for the more standard
Gromov-Witten theory.

The Gromov-Witten/Pairs
correspondence for toric 3-folds is established
 in Section \ref{gwptor},
completing the proof of Theorem \ref{ooo}. Theorems \ref{kkvpoints}
and \ref{pst} are proven in Section \ref{pint}.
The quasimodularity of Theorem \ref{qqq} is obtained in Section \ref{qmod}
from a boundary
induction in the tautological ring of the moduli space of
curves using the strong form of Getzler-Ionel vanishing proven in 
\cite{fpm}. 

Our approach uses a result of Kiem-Li \cite{KL} to construct
reduced classes\footnote{A more detailed account of the
deformation theory in \cite{KL} has appeared very recently \cite{KL2}.} . In Appendix \ref{oldnew}, we compare the
Kiem-Li method to standard virtual cycle techniques. 
The inquiry leads naturally to
a counterexample
to a question of 
Behrend and Fantechi concerning symmetric obstruction theories
that is explained in Section \ref{oldnewl}.

In Appendix \ref{Pixton}, by A. Pixton \cite{pix}, the interplay between Theorem 1
and boundary expressions for $\lambda_g$ in low genus are explored.

\subsection*{Acknowledgements}
Much of the work presented here was 
completed at MSRI in 2009 
during a program on modern moduli in algebraic geometry. 
We thank the organizers
for creating a stimulating environment.
We thank  J. Bryan,
B. Conrad, C. Faber, H. Flenner, D. Huybrechts,
B. Bakker, D. Joyce, A. Klemm, A. Marian, D. Oprea,  
E. Scheidegger and S. Yang for may related conversations.
Correspondence with A. Boocher and
D. van Straaten lead to the example in Appendix
 \ref{oldnew}. We are particularly grateful
to Jun Li for discussions and an advanced copy of \cite{LiWu}.

D.M. was partially supported by a Clay research fellowship. 
R.P. was partially supported by DMS-0500187
and the Clay Institute. R.T. was partially supported by an
EPSRC programme grant.

\section{Reduced Gromov-Witten/Pairs correspondence}
\label{ggww}
\subsection{Stable maps}
Let $S$ be a complex projective $K3$ surface, and let $\beta\in H_2(S,\mathbb{Z})$
be a primitive effective curve class. 
Consider the noncompact Calabi-Yau 3-fold
$$X =S \times \CC$$
equipped with the $\CC^*$-action defined by scaling the second factor.  
Let $$\iota: S  \rightarrow X$$ denote the inclusion given by the identification 
$S = S \times \{0\}$.

Let $\Mbar_{g}(X, \iota_{*}\beta)$ be the moduli space of connected genus $g$ stable
maps  to $X$ representing the
class $\iota_*\beta$.  
Since $X$ is a Calabi-Yau $3$-fold, 
the moduli space has expected dimension $0$ 
with respect to the standard obstruction theory.
Since $S$ has a holomorphic symplectic form, $\Mbar_{g}(X, \iota_{*}\beta)$
 admits a reduced obstruction theory and reduced virtual class, 
$$[\Mbar_{g}(X, \iota_{*}\beta)]^{\red} \in A_1(\Mbar_{g}(X, \iota_{*}\beta),
\mathbb{Q})\,. $$
The construction of the reduced theory exactly follows
Section 2.2 of \cite{gwnl}.
Although $\Mbar_g(X,\iota_*\beta)$ is not 
compact, the $\CC^*$-fixed locus 
$$\Mbar_{g}(X, \iota_{*}\beta)^{\com^*}\subset \Mbar_{g}(X, \iota_{*}\beta)$$
is compact, so we can consider the reduced residue invariants{\footnote{See
\cite{BryP} for a discussion of residue Gromov-Witten theories.}} 
$$N_{g,\beta} = \int_{[\Mbar_{g}(X,\iota_*\beta)]^{\mathrm{red}}} 1 \in \QQ(t)\,.$$
Here, $t$ is the first Chern class of the standard representation of
$\com^*$ and the 
 generator of $H^*_{\com^*}(\bullet)$,
the $\com^*$-equivariant cohomology of a point. The relationship between
 the residue invariants of $S\times\C$ and the invariants \eqref{Rgbeta} of $S$ is the following. 
\begin{lemma}\label{gth}
$N_{g,\beta} = \frac{1}{t} R_{g,\beta} .$
\end{lemma}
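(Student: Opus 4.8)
The plan is to relate the reduced virtual class on $X = S \times \CC$ to the reduced virtual class on $S$ via the $\CC^*$-localization formula, and then to identify the contributions. First I would observe that the $\CC^*$-fixed locus of $\Mbar_g(X,\iota_*\beta)$ consists exactly of stable maps whose image lies in the fixed surface $S = S \times \{0\}$; indeed, since $\beta$ is a genuine curve class on $S$, a nonconstant stable map to $X$ representing $\iota_*\beta$ that is $\CC^*$-fixed cannot have any component mapping nontrivially into the $\CC$-direction (such a component would sweep out an orbit), so the whole map factors through $S$, and the marked/nodal incidence with the $\CC$-direction is rigid. This gives a canonical identification
$$\Mbar_g(X,\iota_*\beta)^{\CC^*} = \Mbar_g(S,\beta)$$
as Deligne-Mumford stacks.

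Next I would analyze the virtual normal bundle of this fixed locus. The key point is that the deformation theory of a map $f\colon C \to S \subset X$ splits, according to the $\CC^*$-weights, into the part tangent to $S$ (weight $0$) and the part in the normal $\CC$-direction (weight $1$). The weight-$0$ part recovers the deformation theory of $\Mbar_g(S,\beta)$, and passing from the standard to the reduced obstruction theory on $X$ removes precisely the same one-dimensional (weight-$0$) trivial quotient coming from the holomorphic symplectic form that produces $[\Mbar_g(S,\beta)]^{\red}$ — this is exactly the content of the assertion that the reduced theory on $X$ "follows Section 2.2 of \cite{gwnl}." The weight-$1$ (moving) part of the obstruction theory of $f$ in $X$ is governed by $H^\bullet(C, f^*N_{S/X}) = H^\bullet(C, \O_C)$, twisted by the weight-$1$ character. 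Thus the virtual normal bundle is $-(H^0(C,\O_C) - H^1(C,\O_C))$ with $\CC^*$-weight $1$; its equivariant Euler class is
$$e(N^{\vir}) = \frac{e(H^1(C,\O_C)\otimes t)}{e(H^0(C,\O_C)\otimes t)} = \frac{c_{\mathrm{top}}(\hodge^\vee \otimes t)}{t},$$
where $\hodge$ is the Hodge bundle (since $H^1(C,\O_C)$ is dual to $H^0(C,\omega_C)$) and $H^0(C,\O_C) = \CC$ contributes the single factor $t$ in the denominator.

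Then I would apply the virtual localization formula:
$$N_{g,\beta} = \int_{[\Mbar_g(X,\iota_*\beta)]^{\red}} 1 = \int_{[\Mbar_g(S,\beta)]^{\red}} \frac{1}{e(N^{\vir})} = \int_{[\Mbar_g(S,\beta)]^{\red}} \frac{t}{c_{\mathrm{top}}(\hodge^\vee \otimes t)}.$$
The reduced class on $S$ has dimension $g$. Expanding $c_g(\hodge^\vee \otimes t) = \sum_{i=0}^g c_i(\hodge^\vee) t^{g-i} = \sum_{i=0}^g (-1)^i \lambda_i t^{g-i}$, and using $1/c_g(\hodge^\vee\otimes t) = t^{-g}(1 + (\text{nilpotent terms in } \lambda_i/t))$, only the top Chern term $c_g(\hodge^\vee) = (-1)^g\lambda_g$ can soak up the full dimension $g$ against the reduced class. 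Carrying out the expansion, the coefficient that survives is exactly $\int_{[\Mbar_g(S,\beta)]^{\red}} (-1)^g\lambda_g = R_{g,\beta}$, and bookkeeping the powers of $t$ gives $N_{g,\beta} = \frac{1}{t}R_{g,\beta}$.

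The main obstacle I expect is not any single computation but the careful justification that the reduced obstruction theory on $X$ restricts on the $\CC^*$-fixed locus to the reduced obstruction theory on $S$ in the weight-zero part while contributing the full Hodge-bundle factor in the moving part — i.e. that "removing the trivial factor" commutes with $\CC^*$-localization and that the trivial factor sits in weight zero. This requires unwinding the construction of \cite{gwnl} (and the Kiem-Li framework) $\CC^*$-equivariantly; once that compatibility is in hand, the rest is a standard localization and Chern-class expansion. I would therefore structure the proof as: (1) identify the fixed locus; (2) $\CC^*$-equivariantly decompose the (reduced) perfect obstruction theory and match the fixed part with $[\Mbar_g(S,\beta)]^{\red}$; (3) compute $e(N^{\vir})$ via $H^\bullet(C,\O_C)$; (4) apply virtual localization and extract the $\lambda_g$ term.
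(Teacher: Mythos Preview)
Your approach is exactly the paper's: virtual localization with respect to the $\CC^*$-action on $X=S\times\CC$, identifying the fixed locus with $\Mbar_g(S,\beta)$ and computing the virtual normal bundle from $H^\bullet(C,f^*N_{S/X})$. However, you have the sign of the virtual normal bundle inverted, and this is not harmless.

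The moving part of the virtual \emph{tangent} is
\[
N^{\vir}=H^0(C,f^*N)-H^1(C,f^*N)\cong t-\hodge^\vee\otimes t,
\]
not its negative. Hence
\[
e(N^{\vir})=\frac{t}{c_g(\hodge^\vee\otimes t)},\qquad
\frac{1}{e(N^{\vir})}=\frac{c_g(\hodge^\vee\otimes t)}{t}
=\frac{t^g-\lambda_1 t^{g-1}+\cdots+(-1)^g\lambda_g}{t}.
\]
Integrating this polynomial over the $g$-dimensional class $[\Mbar_g(S,\beta)]^{\red}$ kills every term except $(-1)^g\lambda_g/t$, giving $R_{g,\beta}/t$ directly. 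Your inverted expression $t/c_g(\hodge^\vee\otimes t)$ would require expanding a geometric series in the $\lambda_i$, and the degree-$g$ piece of that expansion is \emph{not} $(-1)^g\lambda_g$ alone (e.g.\ in genus~$2$ you would pick up $\lambda_1^2$ as well), so your claim that ``only the top Chern term can soak up the full dimension'' fails for that integrand. Fixing the sign, your argument is identical to the paper's.
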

\begin{proof} The result is a direct consequence of the
virtual localization formula of \cite{GP},
\begin{eqnarray*}
N_{g,\beta} & = & 
\int_{[\Mbar_{g}(X,\iota_*\beta)^{\com^*}]^{\red}} 
\frac{1}{e(\text{Nor}^{vir})} \\
& = &  
\int_{[\Mbar_{g}(S,\beta)]^{\red}} \frac{t^g-\lambda_1t^{g-1}+ \lambda_2t^{g-2} -\ldots
+(-1)^g \lambda_g}{t} \\
& = &
\frac{1}{t}{R_{g,\beta}} \,.
\end{eqnarray*}
The first equality is by localization. The denominator
on the right is the equivariant Euler class of the
virtual normal bundle. Over a stable map $[f\colon C\to S]$,  
the virtual normal bundle has
fiber $$H^0(C,f^*N)-H^1(C,f^*N)\ ,$$
 where $N$ is the normal bundle to $S$ in $X$. Since
$N\cong t$, we have $$\text{Nor}^{vir}\cong t-\mathbb E^\vee\otimes t \ ,$$ 
from which the above formula follows.
\end{proof}

If $\beta$ is irreducible, then $\overline{M}_{g}(X,\iota_*(\beta)) =
\overline{M}_g(S,\beta) \times \com$
and the reduced virtual class is pulled back from the projection 
to the first factor. In the irreducible case, Lemma \ref{gth}
is immediate. An alternative
proof of Lemma \ref{gth} for primitive $\beta$ is obtained
by deforming to the irreducible case.

\subsection{Stable pairs}
Let $P_{n}(X, \iota_{*}\beta)$ the moduli space of stable
pairs $(F,s)$ on $X=S\times \C$ with
 $$\chi(F)=n, \ \ [F]=\beta.$$
We will construct in Section \ref{spred}  
a reduced virtual class in dimension $1$, 
$$[P_{n}(X, \iota_{*}\beta)]^{\red} \in A_1(P_{n}(X, \iota_{*}\beta),
\mathbb{Q})\,. $$
Again, we  consider the reduced residue invariants
$$P_{n,\beta} = \int_{[P_{n}(X,\iota_*\beta)]^{\red}} 1 \in \QQ(t)\,.$$

By deformation invariance of the reduced theory, the invariant
$P_{n,\beta}$ can be computed when $\beta$ is 
irreducible.{\footnote{A primitive $(1,1)$-class
$\beta$ on a $K3$ surface $S$ can always be deformed
through curve classes to an irreducible $(1,1)$-class
on another $K3$ surface $S'$.}}
By standard arguments{\footnote{The only subtlety
is to show the deformations of such pairs on
$X$ remain supported scheme-theoretically on the fibers of
the projection $X\rarr \com$. The result
follows from the tangent space analysis of 
Lemma C.7. of \cite{pt3}.}
$$P_{n}(X,\iota_*\beta)= P_n(S,\beta) \times \com$$
in the irreducible case.
By Proposition \ref{hhtt}, $P_n(X,\iota_*\beta)$
is nonsingular of dimension $n + \langle \beta, \beta \rangle+2$.
The obstruction bundle of the standard deformation
theory \cite{HT, pt1} of $P_{n}(X,\iota_*\beta)$ has fiber 
$$\text{Ext}^2(I^\bullet,I^\bullet)_0 \cong 
\text{Ext}^1(I^\bullet, I^\bullet\otimes K_X)^*_0 
$$
over the moduli point of the pair 
$$
I\udot=\{\O_X\Rt{s}F\}\,.
$$
Here,
$K_X$ is the canonical bundle and
the isomorphism is by Serre duality.
Since $\text{Ext}^1(I^\bullet, I^\bullet)_0$ is the tangent
space to $P_n(S,\beta) \times \com$,
the moduli of stable pairs on $X$, and
$K_X^*$ is trivial with the standard representation,
the obstruction bundle is
$$(\Omega_{P_n(S,\beta)} \oplus -t ) \otimes t\  \cong\, (\Omega_P \otimes t)
\oplus \com.$$
The reduced class is obtained by removing the trivial factor
$\com$, as we show in Section \ref{spsym}.

\begin{lemma}\label{gthr}
$P_{n,\beta} = \frac{1}{t} (-1)^{n+\langle \beta,\beta \rangle+1} 
\mathsf{e}(P_n(S,\beta))$.
\end{lemma}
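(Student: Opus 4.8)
The plan is to compute $P_{n,\beta}$ by equivariant virtual localization exactly as in the proof of Lemma \ref{gth}, using the structural results assembled immediately above. First I would invoke deformation invariance of the reduced theory to reduce to the case where $\beta$ is irreducible; this is harmless since $P_{n,\beta}$ depends only on the deformation class and a primitive $(1,1)$-class can be deformed to an irreducible one. In the irreducible case we have the clean product description $P_n(X,\iota_*\beta) = P_n(S,\beta)\times\CC$, with $P_n(S,\beta)$ nonsingular of dimension $n+\langle\beta,\beta\rangle+1$ by Proposition \ref{hhtt}. The $\CC^*$-action scales the $\CC$-factor, so the fixed locus is $P_n(S,\beta)\times\{0\} = P_n(S,\beta)$, which is compact; hence the residue invariant makes sense and equals, by the virtual localization formula of \cite{GP},
$$P_{n,\beta} = \int_{[P_n(S,\beta)]^{\red}} \frac{1}{e(\mathrm{Nor}^{vir})}\,,$$
where $[P_n(S,\beta)]^{\red}$ is just the fundamental class of the smooth space $P_n(S,h)$.

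Next I would identify the virtual normal bundle. From the deformation-theoretic analysis recalled above, the (reduced) obstruction theory of $P_n(X,\iota_*\beta)$ over the fixed locus has tangent part $\Omega_P$ (with moving weight coming from the $\CC$-direction) and obstruction part $\Omega_P\otimes t$, after the trivial $\CC$-summand has been removed to pass to the reduced class — precisely the content of Section \ref{spsym} as quoted. Thus the virtual normal bundle is the moving part of $T^{vir}$, namely $-\,\Omega_P\otimes t$ up to the trivial tangent direction $t$ and $-t$ contributions which cancel; concretely $\mathrm{Nor}^{vir} \cong t - \Omega_P\otimes t$ after removing the reduced factor, so that
$$\frac{1}{e(\mathrm{Nor}^{vir})} = \frac{1}{t}\Big(1 + c_1(\Omega_P) t^{-1} + \dots + c_{n+2h-1}(\Omega_P)t^{-(n+2h-1)}\Big)^{-1}\cdot(\text{top Chern contribution})\,,$$
and the only term surviving integration against the $(n+2h-1)$-dimensional $P_n(S,h)$ is $\frac{1}{t}\,c_{n+2h-1}(\Omega_P)$ with the appropriate sign $(-1)^{n+2h-1}=(-1)^{n+\langle\beta,\beta\rangle+1}$ from expanding $e(-\Omega_P\otimes t) = (-1)^{\dim}c_{\dim}(\Omega_P) + (\text{lower order in }t)$. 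Then $\int_{P_n(S,h)} c_{n+2h-1}(\Omega_P) = c_{top}(\Omega_{P_n(S,h)})[P_n(S,h)] = (-1)^{n+2h-1}\mathsf{e}(P_n(S,h))$ by the Gauss–Bonnet/Poincaré–Hopf theorem, giving
$$P_{n,\beta} = \frac{1}{t}(-1)^{n+\langle\beta,\beta\rangle+1}\,\mathsf{e}(P_n(S,\beta))\,,$$
as claimed. (Note the two signs combine: $e(\mathrm{Nor}^{vir})$ contributes one factor of $(-1)^{n+2h-1}$ and Gauss–Bonnet another, but only one net sign appears because $c_{top}(\Omega_X) = (-1)^{\dim X}c_{top}(T_X)$; I would track this carefully so the final sign matches the statement.)

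The main obstacle is not the localization computation, which is formally identical to Lemma \ref{gth}, but the justification that the reduced obstruction theory on $P_n(X,\iota_*\beta)$ restricts on the $\CC^*$-fixed locus to the bundle claimed — i.e. that removing the trivial $\CC$-summand from $(\Omega_P\otimes t)\oplus\CC$ is compatible with the equivariant structure and with Kiem–Li's construction of the reduced class. This is exactly what Section \ref{spred} and Section \ref{spsym} are there to supply, so in the write-up I would simply cite those forward references for the construction and compatibility of the reduced obstruction theory, and restrict the present argument to the localization bookkeeping and the sign analysis. A secondary point requiring a word of care is the non-compactness of $P_n(X,\iota_*\beta)$: the residue invariant is defined via the compact fixed locus, and one must check (as in the footnote about Lemma C.7 of \cite{pt3}) that there are no fixed loci "at infinity" — but in the irreducible case the explicit product structure $P_n(S,\beta)\times\CC$ makes this transparent.
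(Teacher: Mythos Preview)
Your approach is essentially the same as the paper's: both reduce to the irreducible case, use the smoothness of $P_n(X,\iota_*\beta)=P_n(S,\beta)\times\C$ so that the reduced virtual class is the fundamental class capped with the Euler class of the reduced obstruction bundle $\Omega_P\otimes t$, and then localize. The paper's version is shorter because it writes directly
\[
P_{n,\beta}=\int_{P_n(X,\iota_*\beta)}e(\Omega_P\otimes t)=\int_{P_n(S,\beta)}\frac{e(\Omega_P)}{t}
\]
rather than going through the virtual normal bundle formalism.

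One small point: your sign discussion is more tangled than necessary. There is only \emph{one} sign in the computation, coming from $c_{\mathrm{top}}(\Omega_P)=(-1)^{\dim}c_{\mathrm{top}}(T_P)$ and Gauss--Bonnet. The object you should be expanding is $e(\Omega_P\otimes t)=\sum_i c_i(\Omega_P)t^{r-i}$ (with $r=n+2h-1$), whose $t^0$-term is $c_r(\Omega_P)$ with no extra sign; your ``$e(-\Omega_P\otimes t)=(-1)^{\dim}c_{\dim}(\Omega_P)+\cdots$'' step introduces a spurious sign that you then have to cancel. Since $1/e(\mathrm{Nor}^{\vir})=e(\Omega_P\otimes t)/t$, the integral over $P_n(S,\beta)$ picks out $\tfrac{1}{t}\int c_r(\Omega_P)=\tfrac{1}{t}(-1)^r\mathsf{e}(P_n(S,\beta))$ in one step.
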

\begin{proof} We calculate the residue of the
top Chern class of the reduced obstruction bundle,
\begin{eqnarray*}
P_{n,\beta} & = & 
\int_{P_{n}(X,\iota_*\beta)} 
e(\Omega_P \otimes t) \\
& = &  
\int_{P_{n}(S,\beta)} \frac{e(\Omega_P)}{t} \\
& = &
\frac{1}{t}(-1)^{n+\langle \beta,\beta \rangle+1} 
  \mathsf{e}(P_n(S,\beta))       \,.
\end{eqnarray*}
The second equality comes from localisation. We have omitted all of the
terms in $e(\Omega_P \otimes t)$ which do not contribute.
\end{proof}

\subsection{Point insertions}
For both theories of $X$,
we can define reduced residue invariants with point insertions.
For Gromov-Witten theory, define{\footnote{We have dropped $X$ from the bracket to simplify
the notation.}}
$$\Big\langle \tau_0(\mathsf{p})^k \Big\rangle^{GW}_{g,\beta}
=\int_{[\Mbar_{g,k}(X,\iota_*\beta)]^{\red}} \prod_{i=1}^k \text{ev}_i^*(\mathsf{p}) \ \in \QQ(t)\ $$
where the evaluation maps are taken to $S$
$$\text{ev}_i: \overline{M}_{g,k} (X, \iota_*(\beta)) \rarr S \ $$
and $\mathsf{p}\in H^4(S,\mathbb{Z})$ is the point class.

For stable pairs, the product $P_n(X,\iota_*\beta) \times X$ is equipped
with a universal sheaf $\mathbb{F}$.
Define operations
$$\tau_0(\mathsf{p}): A_*^{\com^*}( P_n(X,\iota_*\beta)) \rarr A_*^{\com^*}( P_n(X,\iota_*\beta))$$
by the slant product
$$\tau_0(\mathsf{p})( \bullet) = \pi_{P*}\big( \pi^*_S(\mathsf{p}) \cdot \text{ch}_2(\mathbb{F})
\cap \pi_P^*(\bullet)\big)\ ,$$
where 
$\pi_P$ and $\pi_S$ are
the projections of $P_n(X,\iota_*\beta) \times X$
 to the first factor and to
$S$ (via the second factor). 
Notice that $\text{ch}_2(\mathbb F)$ is the pull-back via the map $\rho$ 
of \eqref{rho} of the universal
curve in $S\times\PP^h$.
Define the residue invariants
$$\Big\langle \tau_0(\mathsf{p})^k  \Big\rangle^{P}_{n,\beta}
= \int_{P_n(X,\iota_*\beta)}  
\tau_0(\mathsf{p})^k \Big( [P_n(X,\iota_*\beta)]^{\red} \Big) \ 
\in \QQ(t)\ $$
following 
Section 6.1 of \cite{pt2}.

The calculations of Lemmas \ref{gth} and \ref{gthr} immediately extend to yield the 
following formulas,
\begin{eqnarray*}
\Big\langle \tau_0(\mathsf{p})^k  \Big\rangle^{GW}_{g,\beta} & =& t^{k-1}
\Big\langle (-1)^{g-k} \lambda_{g-k} \tau_0(\mathsf{p})^k\Big\rangle^{S}_{g, \beta}\ , \\ 
\Big\langle \tau_0(\mathsf{p})^k  \Big\rangle^{P}_{n,\beta} & =&
 t^{k-1}\int_{P_n(S,\beta)} c_{n+2h-1-k} (\Omega_P) \cup \rho^*(H^k)\,.
\end{eqnarray*}

\subsection{Correspondence}
The reduced Gromov-Witten/Pairs correspondence is stated in terms of the
generating series
\begin{eqnarray*}
\mathsf{Z}_{\beta}^{GW}\left( \tau_{0}(\mathsf{p})^k, u \right) & =& 
\sum_{g=0}^\infty
\Big\langle \tau_0(\mathsf{p})^k  \Big\rangle^{GW}_{g,\beta} u^{2g-2}\ , \\
\mathsf{Z}_{\beta}^{P}\left( \tau_{0}(\mathsf{p})^k, y \right) & =& 
\sum_{n}
\Big\langle \tau_0(\mathsf{p})^k  \Big\rangle^{P}_{g,\beta} \ y^n \,.
\end{eqnarray*}
The stable pairs series  is a Laurent function in $y$ since $P_n(X,\iota_*\beta)$
is empty for sufficiently negative $n$.
The above partition functions specialize to the 
partition function 
$\mathsf{Z}_{\beta}^{P}$ and 
$\mathsf{Z}_{\beta}^{GW}$ of Sections \ref{jjje} and \ref{kkkcon2}
when $k=0$ and $t=1$.

\begin{theorem}\label{redgwpt}
For primitive
$\beta\in H_2(S,\mathbb{Z})$,
\begin{enumerate}
\item[(i)] $\mathsf{Z}_{\beta}^{P}\left( \tau_{0}(\mathsf{p})^k\right)$
 is a rational function of $y$.
\item[(ii)]
After the variable change $-e^{iu}=y$,  
$$\mathsf{Z}_{\beta}^{GW}\left( \tau_{0}(\mathsf{p})^k\right)  = \mathsf{Z}_{\beta}^{P}
\left( \tau_{0}(\mathsf{p})^k\right)\,.$$
\end{enumerate}
\end{theorem}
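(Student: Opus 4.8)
The plan is to prove Theorem \ref{redgwpt} by reduction to the already-established toric case via the degeneration machinery outlined in parts (i) and (ii) of Section \ref{kkkcon2}. First I would use deformation invariance of both reduced residue theories to replace the primitive class $\beta$ on a general $K3$ surface $S$ by an irreducible class $\mathbf{s}+h\mathbf{f}$ on an elliptically fibered $K3$ surface with section, so that $\Mbar_g(X,\iota_*\beta)=\Mbar_g(S,\beta)\times\com$ and $P_n(X,\iota_*\beta)=P_n(S,\beta)\times\com$ split as products and the $\com^*$-equivariant theories of $S\times\com$ reduce to honest (non-residue) theories of $S$ twisted by $t$; this is exactly the content of Lemmas \ref{gth} and \ref{gthr} and their point-insertion analogues. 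The point insertions $\tau_0(\mathsf{p})$ are compatible with this splitting because, as noted, $\mathrm{ch}_2(\mathbb{F})$ is the $\rho$-pullback of the universal curve, so the point classes become pullbacks from the $\proj^h$ linear system on either side.

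Next I would degenerate the elliptically fibered $K3$ surface $S$ to a union $R\cup_E R$ of two rational elliptic surfaces glued along a smooth fiber $E$, as in Figure \ref{fig}. The essential technical input — proven separately for stable maps in Section \ref{gwd} and for stable pairs in Section \ref{spd} — is a formula expressing the reduced virtual class of $S\times\com$ (equivalently, the residue invariants $N_{g,\beta}$, $P_{n,\beta}$ and their point-decorated versions) in terms of the \emph{standard} virtual classes of $R\times\com$ relative to $E\times\com$. Here primitivity of $\beta$ is crucial: the class splits across the degeneration in a controlled way (the section part must lie entirely on one side, leaving only fiber classes to distribute), so the degeneration formula has finitely many terms and does not involve the subtle relative invariants that appear in the non-primitive case (deferred to \cite{mpt2}). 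I would run the usual degeneration/gluing formula for both Gromov-Witten and stable-pairs residue invariants, matching the two sides term by term.

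Then, since $R$ is $\proj^2$ blown up at nine points, $R\times\com$ is deformation equivalent to a toric $3$-fold, and the relative geometry $(R\times\com,E\times\com)$ can likewise be handled within (or reduced to) toric and relative-toric Gromov-Witten/Pairs correspondences. I would invoke the Gromov-Witten/Pairs correspondence for toric $3$-folds established in Section \ref{gwptor} following \cite{moop}, together with the rationality statement it provides, to conclude that each relative building block satisfies the GW/Pairs correspondence after the substitution $-e^{iu}=y$, and in particular that the stable-pairs side is a rational function of $y$ (part (i)). Reassembling via the degeneration formula of the previous step — which is a finite sum of products of these matched building blocks — yields $\mathsf{Z}^{GW}_\beta(\tau_0(\mathsf{p})^k)=\mathsf{Z}^P_\beta(\tau_0(\mathsf{p})^k)$ under $-e^{iu}=y$, proving part (ii), and rationality of each piece gives part (i).

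The main obstacle is unquestionably the new degeneration formula of step two: relating a \emph{reduced} virtual class — built by hand via the Kiem-Li construction \cite{KL} to kill the trivial quotient coming from the holomorphic symplectic form — to \emph{ordinary} virtual classes on the degenerate fibers, which have no such reduction. The issue is that the cosection/trivial factor does not simply restrict to the special fiber; one must understand precisely how the reduced obstruction theory behaves in the family $\mathcal{S}\to\Delta$ degenerating $S$ to $R\cup_E R$, track the extra factor of the normal-bundle twist that appears on the total space, and show it accounts for exactly the expected discrepancy between reduced and standard classes. Carrying this out compatibly and in parallel for the two different moduli problems (stable maps, with their contracted components and delicate obstruction contributions, and stable pairs, where one exploits the nonsingularity of Proposition \ref{hhtt}) is the technical heart of the paper; the toric correspondence and the formal degeneration bookkeeping are, by comparison, applications of existing technology.
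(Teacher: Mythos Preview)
Your outline matches the paper's strategy closely, but there is one genuine gap and one factual slip worth flagging.

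The slip: in the degeneration $S\rightsquigarrow R_1\cup_E R_2$ the section $\mathsf{s}$ does \emph{not} lie entirely on one side. It breaks into a section on each $R_i$, so the splitting data are $\beta_i=\mathsf{s}+h_i\mathsf{f}$ with $h_1+h_2=h$ on both sides (Theorems~\ref{rtrt}(iii) and \ref{dmdm}(iii)). Primitivity enters not by confining the section, but by forcing intersection multiplicity $1$ with the relative divisor $E$, which is what keeps the relative conditions simple and the degeneration formula tractable.

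The gap: you write that the relative geometry $(R\times\com,E\times\com)$ ``can likewise be handled within (or reduced to) toric and relative-toric Gromov-Witten/Pairs correspondences.'' This is not automatic. The fiber $E$ is a smooth elliptic curve, so $(R,E)$ is \emph{not} a toric pair, and Section~\ref{gwptor} only gives the absolute toric correspondence (with primary insertions) for $Y=R\times\com$. The paper bridges this with a further degeneration to the normal cone,
\[
Y\ \rightsquigarrow\ Y\cup_{E\times\com}Z,\qquad Z=E\times\PP^1\times\com,
\]
and argues separately that the relative theory of $(Z,E\times\com)$ in the relevant (degree~$1$ over $\PP^1$) classes is essentially trivial: it reduces to the nonequivariant theory of $(E\times\PP^1\times E,\,E\times E)$, where holomorphic symplectic vanishing forces degree~$0$ over both elliptic factors, and one is left with the local-curves correspondence. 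Crucially, the $(Z,E\times\com)$ contribution is \emph{invertible} in the degeneration formula, so knowing the correspondence for $Y$ and for $(Z,E\times\com)$ yields it for $(Y,E\times\com)$. Without this step your argument does not close; you should either supply it or cite it explicitly.
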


Theorem \ref{redgwpt} is not a specialization of the
Gromov-Witten/Pairs correspondence for 3-folds conjectured in \cite{pt1}.
The main difference is the occurrence of the reduced class.
Since the reduced class suppresses contributions from stable
maps with disconnected domains,
the correspondence here may be viewed here
as concerning only {\em connected} curves.
Theorem \ref{redgwpt} will be proven in Sections \ref{ellk3}-\ref{gwptor}.

\section{Elliptically fibered $K3$ surfaces}
\label{ellk3}

\subsection{Elliptic fibrations}
We fix here some notation which will be used throughout the paper.
Let $S$ be an elliptically  fibered $K3$ surface 
\begin{equation}\label{mqq}
\pi: S \rarr \proj^1
\end{equation}
with a section. We assume $\pi$ is smooth 
except for 24 nodal rational fibers.
Let 
 $$\mathsf{s},\mathsf{f} \in H_2(S,\ZZ)$$
 denote the classes of the section and the elliptic fiber.
The intersection pairings are
$$\langle \mathsf{s}, \mathsf{s} \rangle =-2, \ \ \ \langle \mathsf{s}, \mathsf{f} \rangle = 1, \ \ \
\langle \mathsf{f}, \mathsf{f} \rangle = 0 \,.$$  

By deformation invariance, the reduced Gromov-Witten and stable pairs theories for primitive effective classes depend only on the norm
$\langle\beta,\beta\rangle = 2h-2$.
By deformation invariance, we can fully
capture the both theories 
for 
primitive classes on all algebraic $K3$ surfaces 
by studying
$$\beta = \mathsf{s} + h\mathsf{f}$$
on elliptically fibered $K3$ surfaces $S$.

\subsection{Rational elliptic surface} \label{rez}
A rational elliptic surface $R$ is obtained by blowing-up the
9 points of the base locus of a generic pencil of cubics. The pencil
determines a map
$$\pi: R \rarr \proj^1$$
with nonsingular elliptic fibers (except for 12 nodal rational fibers).
Let $D\subset R$ be one of the 9 sections of $\pi$, and let $E\subset R$
be 
a fixed elliptic fiber with distinguished point
\begin{equation} \label{pointp}
p = E \cap D.
\end{equation}

Let $R_1$ and $R_2$
be two copies of a rational elliptic surface $R$.
Let $D_1=D_2$, $E_1=E_2$, and $p_1=p_2$ be identical
choices of the auxiliary data.
A reducible surface
\begin{equation*}
R_{1} \cup_E R_{2}
\end{equation*}
is obtained by attaching $R_1$ and $R_2$
along the respective fibers $E_i$ 
(with the corresponding distinguished points $p_i$ 
identified).  
The singular surface 
is elliptically fibered over a broken rational curve,
\begin{equation}\label{nwq}
R_{1} \cup_E R_{2} \rightarrow
\proj^1 \cup \proj^1\,.
\end{equation}
The fibration \eqref{nwq} has
 a distinguished section $D_1 \cup D_2$.

\subsection{Degeneration} \label{jjyy}
The fibration \eqref{nwq} is a degeneration of 
\eqref{mqq}. More precisely,
there exists a family of fibrations
\begin{equation}
\calS \stackrel{\pi}{\rarr} \mathcal{C}  \rightarrow B \label{rrrt}
\end{equation}
 over a pointed curve $(B, 0)$ 
with the following properties:
\begin{enumerate}
\item[(i)]
$\calS$ is a nonsingular 3-fold, and $\mathcal{C}$ is
a nonsingular surface.
\item[(ii)] $\pi$ has a section.
\item[(iii)]
When specialized to  nonzero $\xi \in B$, we obtain a
nonsingular elliptically fibered $K3$ surface of
the form \eqref{mqq}. 
\item[(iv)]
When specialized to $0\in B$,
we obtain \eqref{nwq}.
\item[(v)] The relative canonical bundle $\omega_{\calS/B}$ is trivial.
\end{enumerate}

Denote by $\epsilon$ the  degenerating family of $K3$ surfaces obtained from
composing \eqref{rrrt},
$$\epsilon: \calS \rarr B\,. $$
Since the section and fiber classes are globally defined by (ii),
the sub-lattice of $H^2(\calS_\xi,\ZZ)$ spanned by 
 $\mathsf{s}$ and $\mathsf{f}$ is fixed by the monodromy of $\epsilon$  around
$0\in B$.

The degenerating family \eqref{rrrt} will play an essential role
in
our proof of Theorem \ref{redgwpt}.

\section{Reduced stable pairs} \label{spd}

\subsection{Definitions} \label{spdef}
Let $S$ be a complex algebraic $K3$ surface, and let
$\beta\in H_2(S,\mathbb{Z})$ be an effective curve class.
Let
$$X=S\times\C.$$ We include $S$ as
the fiber over $0\in \C$,
$$\iota\colon S\into X\,.$$

Let $P_n(X,\iota_*\beta)$ be the quasi-projective
moduli space
of stable pairs  $(F,s)$ on $X$ with 
holomorphic Euler characteristic and class
$$\chi(F)=n, \ \ \ \ \ [F]=\iota_*\beta\in H_2(X,\Z)\,.$$
Strictly speaking, to construct $P_n(X,\iota_*(\beta))$,
we apply 
Le Potier's results \cite{LeP} 
to the projective 3-fold $$\oX=S\times\PP^1$$ to obtain 
a projective moduli space containing $P_n(X,\iota_*\beta)$ as an open 
subscheme.


We can also consider stable pairs on families of $K3$ surfaces.
Let
$$ \epsilon\colon\calS \rarr B$$
be a smooth{\footnote{We will later consider
families degenerating to $R_1\cup_E R_2$ as in Section \ref{jjyy}.}}
family of $K3$ surfaces, and let
$$\X=\calS\times\C \rarr B$$
be the corresponding family of  3-folds. We consider $\calS$ as a subvariety via
the inclusion
$$
\iota\colon\calS\times\{0\}\into\calS\times\C=\X.
$$
Let $\beta$ be a section of the local system 
with fiber $H_2(\calS_\xi,\mathbb{Z})$ over $\xi \in B$.

By making $B$ smaller if necessary, we can choose a holomorphic 2-form
which is symplectic on every fiber of $\calS\to B$,
\begin{equation} \label{holo2form}
\sigma\in H^0(\epsilon_* \Omega^2_{\calS/B}).
\end{equation}
In particular, $\omega_{\X/B}$ is trivial.
By \cite{LeP}, there is a family of moduli spaces
$$\P\to B$$
 representing the functor 
 taking $B$-schemes $A\to B$ to the set of flat families of stable pairs in the class
 $\iota_*\beta$ on the fibers  of
 $$A\times_B\X\to A.$$ 
In addition, 
there is a universal sheaf $\FF$ on $\P\times_B\X$, flat over $\P$, 
with a global section $\mathbb S$, such that the restriction of
\begin{equation} \label{}
\O_{\P\times_B\X}\Rt{\mathbb S}\FF
\end{equation}
to the fiber over any closed point $(F,s)\in\P$ over $\xi\in B$ is the corresponding stable pair $\O_{\X_\xi}\Rt{s}F$.

\subsection{Standard obstruction theory}
\label{nnzz}
As in \cite{pt1}, given a stable pair $(F,s)$ on $X$, 
we let $I\udot\in D^b(X)$ denote the complex of sheaves
$$
I\udot=\{\O_X\Rt{s}F\}
$$
in degrees $0$ and $1$. When the section is onto, 
$I\udot$ is quasi-isomorphic to the
 kernel $\I_C$, the ideal sheaf of the Cohen-Macaulay curve $C$ which is 
the scheme theoretical support of $F$.
Similarly we let
$$
\II=\{\O_{\P\times_B\X}\Rt{\mathbb S}\FF\}
$$
denote the universal complex.

From the perspective of \cite{pt1}, the trace-free 
Ext groups,
$$\Ext^1(I\udot,I\udot)_0 \ \ \ \text{and} \ \ \ \Ext^2(I\udot,I\udot)_0$$
  provide deformation and obstruction spaces for the stable pair $(F,s)$. 
More generally, let $\LL_{\P/B}^\vee$ denote the derived dual 
of the truncated relative cotangent complex of $\P$, and consider the map
\begin{equation} \label{dualobsthy}
\LL_{\P/B}^\vee\To R\pi\_{\P*}(R\hom(\II,\II)_0)[1],
\end{equation}
given by the image of the relative Atiyah class of $\II$ under the projection
\begin{eqnarray*}
\Ext^1(\II,\II\otimes\LL_{(\P\times_B\X)/B}) &\!\!\!\To\!\!\!&
\Ext^1(\II,\II\otimes\pi_\P^*\LL_{\P/B})_0
\\ &=& \Hom(\pi_\P^*\LL_{\P/B}^\vee,R\hom(\II,\II)_0[1]) \\
&=& \Hom(\LL_{\P/B}^\vee,R\pi\_{\P*}R\hom(\II,\II)_0[1]).
\end{eqnarray*}
Here $\pi\_\P$ and $\pi\_\X$ are the projections 
from $\P\times_B\X$ to $\P$ and $\X$ respectively. 


\begin{prop} \label{perf}
The map \eqref{dualobsthy} is a perfect theory for the morphism
 $\P\rarr B$.
\end{prop}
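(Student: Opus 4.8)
The plan is to establish Proposition \ref{perf} by the now-standard recipe for perfect obstruction theories coming from Atiyah classes, following the template of Huybrechts--Thomas \cite{HT} and \cite{pt1}, adapted to the relative situation over $B$. First I would verify that the complex $R\pi\_{\P*}(R\hom(\II,\II)_0)[1]$ has the correct amplitude: on each fiber over a closed point $(F,s)\in\P$ lying over $\xi\in B$, base change identifies its cohomology in degrees $0$ and $1$ with $\Ext^1(I\udot,I\udot)_0$ and $\Ext^2(I\udot,I\udot)_0$ respectively, while $\Ext^0(I\udot,I\udot)_0=0$ by stability (the only endomorphisms of a stable pair are scalars, which are killed by the trace-free condition) and $\Ext^{\geq 3}(I\udot,I\udot)_0$ vanishes because $I\udot$ has a two-term locally free resolution and the fibers $\X_\xi = S_\xi\times\C$ have the relevant cohomological dimension --- here one uses that $F$ is set-theoretically supported on a compact curve inside $S_\xi\times\{0\}$, so the higher $\Ext$'s are computed as if on a quasi-projective surface crossed with $\C$, and one invokes the Serre-duality/support argument exactly as in Lemma C.7 and the surrounding discussion of \cite{pt3}. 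Thus the object is (the shift of) a two-term perfect complex of $\O_\P$-modules, globally, by cohomology-and-base-change.

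Next I would check that the map \eqref{dualobsthy} is an obstruction theory in the sense of Behrend--Fantechi, i.e. that $h^0$ is an isomorphism and $h^{-1}$ is surjective. This is where the Atiyah-class construction does its work: the relative Atiyah class $\At(\II)\in\Ext^1(\II,\II\otimes\LL_{(\P\times_B\X)/B})$ decomposes, via the Kodaira--Spencer component of $\LL_{(\P\times_B\X)/B}$ along the $\P$-direction, into the map displayed in the excerpt, and the general principle (proved in \cite{HT} for ideal sheaves and extended in \cite{pt1} for stable-pair complexes $I\udot$) is precisely that this map is an obstruction theory for the moduli functor. Concretely, one must match the truncated relative cotangent complex $\LL_{\P/B}$ against the deformation theory of the pair relative to $B$: first-order relative deformations of $(F,s)$ are governed by $\Ext^1(I\udot,I\udot)_0$ and obstructions lie in $\Ext^2(I\udot,I\udot)_0$, these being the deformation/obstruction groups worked out in \cite{pt1} (with the trivializing choice $\omega_{\X/B}$ from \eqref{holo2form} ensuring nothing extra enters), and the Atiyah-class map realizes exactly this comparison. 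The only new feature relative to \cite{HT,pt1} is that everything is done over the base $B$ rather than over $\Spec\C$; since $\epsilon$ is smooth and $\omega_{\X/B}$ trivial, the relative constructions are literally parallel, and the functoriality of the Atiyah class in families gives the claim with no genuinely new input.

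The main obstacle --- and the step I would spend the most care on --- is controlling the cohomological amplitude of $R\hom(\II,\II)_0$ on the noncompact $3$-fold $\X=\calS\times\C$, specifically ruling out contributions in negative degrees ($\Ext^0\neq 0$ away from the stable locus, which is excluded by stability, is easy; the subtle point is $\Ext^{>2}$ and the properness needed for $R\pi\_{\P*}$ to behave well). The resolution is that the universal sheaf $\FF$ is flat over $\P$ and set-theoretically supported on $\calS\times\{0\}$ over a bounded family of compact curves, so although $\X$ is noncompact, the pushforward $R\pi\_{\P*}$ is applied to a complex with proper support over $\P$; combined with the two-term resolution $I\udot=\{\O_X\to F\}$ this bounds the amplitude and makes the base-change arguments go through. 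One should also remark, as the authors do elsewhere, that the deformations of such pairs stay scheme-theoretically supported on the fibers of $\X\to\C$ (Lemma C.7 of \cite{pt3}), which is what makes the relative-over-$B$ deformation theory identify cleanly with the fiberwise stable-pairs deformation theory; with that in hand, Proposition \ref{perf} follows from the cited results of \cite{HT,pt1} applied relatively over $B$.
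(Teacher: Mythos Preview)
Your proposal is correct in outline and cites the right sources: the argument ultimately reduces to \cite{HT} and \cite{pt1}, with the only new ingredient being the noncompactness of the fibers of $\pi\_\P$. Where you differ from the paper is in how you handle that noncompactness. You argue directly that $R\hom(\II,\II)_0$ has proper support over $\P$ (being supported on the curve underlying $\FF$), so that $R\pi\_{\P*}$ behaves well and the amplitude can be checked fiberwise. The paper instead compactifies $\X$ to $\bX=\calS\times\PP^1$, extends the universal pair trivially, and observes that $R\hom(\BI,\BI)_0$ is supported on $\X\subset\bX$; then relative Serre duality down the \emph{projective} fibers of $\overline\pi\_\P$ applies verbatim, and \cite[Lemma~2.10]{pt1} gives the two-term locally free model.

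Both approaches are valid, but the compactification buys something you will want later: it makes the Serre duality
\[
(E\udot)^\vee[1]\cong E\udot
\]
between the two complexes in \eqref{Serre} completely transparent, which is exactly what is used in Section~\ref{spsym} to show the obstruction theory is symmetric. Your proper-support argument would also yield this (it is essentially Serre duality with supports), but you would need to say so explicitly rather than gesture at ``Serre-duality/support'' --- and your invocation of Lemma~C.7 of \cite{pt3} is slightly off, since in the paper that lemma is cited only to justify that deformations of pairs stay fiberwise supported, not for any duality or amplitude statement.
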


\begin{proof}
The result is proved in \cite[Section 2.3]{pt1} and \cite[Theorem 4.1]{HT} for \emph{projective}
morphisms $\pi\_\P$. Since the fibers of our $\pi\_\P$ are noncompact, we need a small
modification to check that the complexes
\begin{equation} \label{Serre}
R\pi\_{\P*}(R\hom(\II,\II)_0\otimes\omega_{\pi\_\P})[2]\ ,  \quad
R\pi\_{\P*}(R\hom(\II,\II)_0)[1]
\end{equation}
are still naturally dual to each other.  The relative canonical bundle
$$\omega_{\pi\_\P}=\pi_\X^*\omega^{}_{\X/B}$$ 
is trivial in our situation. The rest of the proofs in \cite{HT, pt1} go through
as before.

The proper way to deal with the noncompactness is to work with local cohomology in place
of $R\pi\_{\P*}$. However, we follow
a simpler approach obtained by compactifying the
fibers of $\X \to B$ by
$$\bX=\calS\times\PP^1\to B\,.$$

Pairs extend trivially over $\bX\take\X$ by pushing forward the sheaf and section, allowing
us to view $\P$ as the moduli space
of stable pairs on the fibers of $\bX\to B$ whose underlying sheaf has support in
$\X\subset\bX$. Suppressing the pushforward maps, we get a universal pair $(\FF,\mathbb
S)$ on
$\P\times_B\bX$ and a universal complex
$$
\BI=\{\O_{\P\times_B\bX}\Rt{\mathbb S}\FF\}
$$
whose restriction to $\P\times_B\X$ is $\II$.

Let $\overline\pi\_\P$ denote the projection $\P\times_B\bX\to\P$. 
Since $R\hom(\BI,\BI)_0$ is supported on $\X\subset\bX$, the two complexes \eqref{Serre}
are
\begin{equation} \label{Serre2}
R\overline\pi\_{\P*}(R\hom(\BI,\BI)_0\otimes\omega_{\overline\pi\_\P})[2]\ ,
\quad
R\overline\pi\_{\P*}(R\hom(\BI,\BI)_0)[1].
\end{equation}
Therefore the usual relative Serre duality down the projective fibers of $\overline\pi\_\P$
applies to give the duality \eqref{Serre}.
In particular, by \cite[Lemma 2.10]{pt1}, the first complex is quasi-isomorphic to a complex
$$
E\udot=\{E^{-1}\to E^0\}\simeq R\pi\_{\P*}(R\hom(\II,\II)_0\otimes\pi_\X^*\omega^{}_{\X/B})[2]
$$
of locally free sheaves on $\P$ in degrees $-1$ and $0$. We denote the second, the dual
of the first, by 
$$E\ldot=\{E_0\to E_1\}$$ in degrees $0$ and $1$.
 Dualising \eqref{dualobsthy},
 we obtain the more familiar form,
\begin{equation} \label{obsthy}
E\udot\To\LL_{\P/B}.
\end{equation}
As in \cite{HT,pt1}, the morphism \eqref{obsthy} is surjective on $h^{-1}$ and an isomorphism on $h^0$,
verifying the axioms \cite{BF} of a perfect relative obstruction theory.
\end{proof}

\subsection{Trivial quotient}
\label{spred}
For the 3-fold $X=S\times\C$, the obstruction theory constructed in
Section \ref{nnzz}
has virtual cycle equal to 0 because of the
existence of 
a trivial factor $\com$ 
obstructing extensions of $I\udot$ along
deformations of $S$ which take $\beta$ out of the $(1,1)$ locus. 
To construct a nonzero virtual cycle,
we must remove
this trivial piece of the obstruction theory.

The obstruction sheaf of the deformation-obstruction theory \eqref{dualobsthy} is the
degree 1 cohomology sheaf\footnote{Here $\ext^i_{\pi\_\P}$ denotes the $i$th cohomology
sheaf of $R\pi_{\P*}R \hom$. We abbreviate the latter to  $R\hom_{\pi\_P}$,
the
derived functor of $\hom_{\pi_{\P}}=\pi_{\P*}\hom$.}
\begin{equation} \label{obs}
\text{Ob}=\ext^2_{\pi\_\P}(\II,\II)_0\,.
\end{equation}
As in \eqref{Serre2}, we also have 
$$
\text{Ob}=\ext^2_{\overline\pi\_\P}(\BI,\BI)_0\,.
$$
Consider the image of the relative Atiyah class of $\BI$ under the map
\begin{eqnarray*}
\Ext^1(\BI,\BI\otimes\LL_{(\P\times_B\bX)/B}) &\!\!\!\To\!\!\!&
\Ext^1(\BI,\BI\otimes\pi_{\bX}^*\LL_{\bX/B})\_0
\\ &\!\!\!\To\!\!\!& H^0(\ext^1_{\overline\pi\_\P}(\BI,\BI\otimes\pi_{\bX}^*\Omega_{\bX/B})).
\end{eqnarray*}
Cup product with its image $\At$ defines the map
$$
\ext^2_{\overline\pi\_\P}(\BI,\BI)_0\xymatrix{\ar[r]^{\cup\At\,}&}
\ext^3_{\overline\pi\_\P}(\BI,\BI\otimes\pi_{\bX}^*\Omega_{\bX/B})
\Rt{\tr}R^3\overline\pi\_{\P*}(\pi_{\bX}^*\Omega_{\bX/B}).
$$
 Pulling back 
the fiberwise symplectic form $\sigma$ of \eqref{holo2form}
to $\P\times_B\bX\to B$ gives a section $\bar\sigma$ of $\pi\_{\P_*}
(\pi_{\bX}^*\Omega^2_{\bX/B})$. Wedging with 
$\bar\sigma$,
the upshot is a map from \eqref{obs} to $\O_\P$:
\begin{equation} \label{map}
\text{Ob}\To R^3\overline\pi\_{\P*}(\pi_{\bX}^*\Omega^3_{\bX/B})=
R^3\overline\pi\_{\P*}\omega_{\overline\pi\_{\P}}=\O_\P.
\end{equation}

\begin{prop} \label{not0}
The map \eqref{map} is onto.
\end{prop}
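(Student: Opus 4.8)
The plan is to verify surjectivity of \eqref{map} fiberwise, that is, to check that for each closed point $(F,s)\in\P$ lying over $\xi\in B$, the induced linear map on fibers
$$
\Ext^2_{\X_\xi}(I\udot,I\udot)_0 \To \com
$$
is nonzero. Since $\O_\P$ has rank one, Nakayama reduces the proposition to this pointwise statement. The map in question is the composition: cup with the Atiyah class, trace, then pair with the fiberwise symplectic form $\sigma$. Unwinding the construction, the fiberwise map sends an obstruction class $e\in \Ext^2(I\udot, I\udot)_0$ to the image of $e\cup\At(I\udot)$ in $H^3(\X_\xi,\Omega^3_{\X_\xi})\cong \com$, after contracting with $\sigma$; equivalently it is (up to sign and scalar) the composition of the semiregularity-type map $\Ext^2(I\udot,I\udot)_0\to H^2(\X_\xi,\Omega^2_{\X_\xi})$ with the isomorphism $H^2(\X_\xi,\Omega^2_{\X_\xi})\otimes H^0(\Omega^2)\to H^3(\Omega^3)\cong\com$ given by wedge product. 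So the real content is that this semiregularity map is surjective.

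First I would reduce to the case that the section $s$ is surjective, so that $I\udot\simeq \I_C$ for a Cohen-Macaulay curve $C\subset \X_\xi$; the general stable pair case differs only by a zero-dimensional cokernel supported away from the relevant cohomology and can be handled by the same local-to-global argument, or one simply observes that $\X_\xi = S\times\com$ retracts onto $S$ and all the relevant sheaf cohomology is computed on $S$ itself. On $S$, the key computation is the composition
$$
\Ext^2_S(\I_C,\I_C)_0 \To H^2(S,\O_S)
$$
obtained by $\cup\At(\I_C)$ followed by trace; dualizing via Serre duality this is identified with a map $H^0(S,\O_S)\to \Ext^1_S(\I_C,\I_C\otimes\omega_S)^*_0$. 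Because $S$ is $K3$ with $\omega_S\cong\O_S$, and because $\At(\I_C)$ is closely tied to $c_1(\I_C)=-[C]$, standard manipulations (as in the treatment of the reduced class in \cite{gwnl}, Section 2.2, and in \cite{KL}) show this composite is nonzero — indeed it is essentially pairing against the extension class of the normal exact sequence, detected by the symplectic form.

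The cleanest route is probably to cite the parallel construction for stable maps: the map \eqref{map} is the stable-pairs analogue of the quotient of the obstruction theory described in Section 2.2 of \cite{gwnl}, and the surjectivity there follows from the fact that the holomorphic 2-form $\sigma$ induces an isomorphism between the "extra" obstruction direction and $\O_\P$. Concretely, the composition with $\sigma$ factors through the map $\Ext^2(I\udot,I\udot)_0\to H^2(S,\O_S)$, and one exhibits an explicit obstruction class — coming from deforming $S$ in a direction taking $\beta$ out of type $(1,1)$ — on which this map is nonzero; such a class exists because $H^2(S,\O_S)\ne 0$ and the curve $C$ genuinely obstructs in that direction. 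The main obstacle I expect is the bookkeeping in identifying the abstract composite (Atiyah class, trace, wedge with $\sigma$) with the concrete semiregularity map and then checking that the latter is nonzero rather than merely that it could be — i.e., making precise that the symplectic form $\sigma$ pairs nontrivially with the relevant Hodge component, which ultimately rests on $[C]\ne 0$ together with nondegeneracy of $\sigma$. Once that identification is in place the nonvanishing is immediate.
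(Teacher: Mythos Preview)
Your strategy is essentially the paper's: reduce to a fiberwise nonvanishing and exhibit an explicit obstruction class coming from a first-order deformation $\kappa_S\in H^1(T_S)$ of $S$ that moves $\beta$ out of type $(1,1)$. The paper takes precisely $\bar\kappa\circ\At(\bI)\in\Ext^2(\bI,\bI)_0$ (working on the compactification $\oX=S\times\PP^1$) and shows its image under the map \eqref{map} is $2\int_\beta\kappa_S\ip\sigma\ne0$.

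Two points where your outline diverges unhelpfully. First, the reduction to $s$ surjective (so $I\udot\simeq\I_C$) is unnecessary and doesn't simplify anything: the computation goes through uniformly for any stable pair using only $\mathrm{ch}_2(\bI)=-\iota_*\beta$. Second, the retraction to $S$ and the resulting map $\Ext^2_S(\I_C,\I_C)_0\to H^2(S,\O_S)$ is not quite the right object --- the pair lives on the 3-fold, and the paper works on $\oX$ throughout so that Serre duality applies cleanly.

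The ``bookkeeping'' you flag as the main obstacle is in fact one line: by \cite[Proposition 4.2]{BuF}, $\tr\big(\bar\kappa\circ\At(\bI)\circ\At(\bI)\big)=2\bar\kappa\ip\mathrm{ch}_2(\bI)$, and then the homotopy formula $\bar\kappa\ip(\mathrm{ch}_2\wedge\bar\sigma)=(\bar\kappa\ip\mathrm{ch}_2)\wedge\bar\sigma+(\bar\kappa\ip\bar\sigma)\wedge\mathrm{ch}_2$ converts this into the desired pairing with $\beta$. You should invoke Buchweitz--Flenner directly rather than route through semiregularity heuristics.
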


\begin{proof}
Since the higher $\ext^i_{\overline\pi\_\P}(\BI,\BI)_0$ sheaves on $B$
vanish for $i\ge3$,
we can work at closed points.
For a stable pair $(F,s)$ on 
 $$\bX_\xi=\oX=S\times\PP^1\ ,$$
we must show the composition
\begin{multline} \label{cohobs}
\Ext^2(\bI,\bI)_0\xymatrix{\ar[r]^{\cup\At(\bI)\,}&}
\Ext^3(\bI,\bI\otimes\Omega_{\oX})\Rt{\tr}
\\ H^3(\Omega_{\oX})
\Rt{\cup\bar\sigma}H^{3,3}(\oX)\Rt{\sim}\C
\end{multline}
is onto. Here, $\bar\sigma$ is the pull-back 
of the holomorphic symplectic form $\sigma$ \eqref{holo2form} from
the $K3$ surface $S$ to $\oX$, and $\bI$ is the complex $\{\O_{\oX}\to
F\}$ on $\oX$.

To show the map \eqref{cohobs} is surjective, 
we exhibit a class in $\Ext^2(\bI,\bI)_0$ on which
the composition is  nonzero. 
Choose a first order deformation $\kappa_S\in H^1(T_S)$
of $S$ which, via the holomorphic 
symplectic form
$$\sigma\colon T_S\cong\Omega_S\ ,$$
corresponds to a class $\kappa_S\ip\sigma\in H^{1,1}(S)$ 
whose pairing with $\beta$ is nonzero,
\begin{equation} \label{nonzero}
\int_\beta \kappa_S\ip\sigma\ne0.
\end{equation}
Let
$\bar\kappa\in H^1(T_{\oX})$
denote the pull-back of the Kodaira-Spencer class $\kappa_S$ to $\oX$. 
Let
\begin{equation}\label{h555}
\bar\kappa\circ\At(\bI)\,\in\,\Ext^2(\bI,\bI)
\end{equation}
be the cup product of 
$\bar\kappa$ with $\At(\bI)\in\Ext^1(\bI,\bI\otimes\Omega_{\oX})$
followed by the contraction of $T_{\oX}$ with $\Omega_{\oX}$. By \cite{HT},
 the element \eqref{h555}
is the obstruction to deforming $\bI$ to first order 
with the deformation $\kappa$ of
$\oX$, and in fact lies in $\Ext^2(\bI,\bI)_0\subset\Ext^2(\bI,\bI)$ since the determinant
$\O_{\oX}$ of $\bI$ deforms trivially.
        
By \cite[Proposition 4.2]{BuF},
$
\tr\big(\bar\kappa\circ\At(\bI)\circ\At(\bI)\big)\in H^3(\Omega_{\oX})
$
equals
$
2\bar\kappa\ip \mathrm{ch}_2(\bI).
$
Therefore the image of $\bar\kappa\circ\At(\bI)$ under the map \eqref{cohobs} is 
\begin{equation}\label{bb5}
2\int_{\oX}(\bar\kappa\ip\mathrm{ch}_2(\bI))\wedge\bar\sigma\ =\ 
-2\int_{\oX}(\bar\kappa\ip\bar\sigma)\wedge\mathrm{ch}_2(\bI),
\end{equation}
by the homotopy formula 
\begin{equation}\label{htyy}
0\ =\ \bar\kappa\ip(\mathrm{ch}_2\wedge\bar\sigma)\ =\
(\bar\kappa\ip\mathrm{ch}_2)\wedge\bar\sigma+
(\bar\kappa\ip\bar\sigma)\wedge\mathrm{ch}_2.
\end{equation}
Since $\mathrm{ch}_2(\bI)$ is Poincar\'e dual to $-\iota_*\beta$, we conclude
\eqref{bb5} equals
$$
2\int_\beta\kappa_S\ip\sigma,
$$
which by construction \eqref{nonzero} is nonzero.
\end{proof}

\subsection{Symmetric obstruction theories}\label{spsym}
By Proposition \ref{perf}, the two term complex of locally free sheaves
$E\udot$ over $\P$, quasi-isomorphic to 
$$R\pi\_{\P*}(R\hom(\II,\II)_0
\otimes\pi_\X^*\omega^{}_{\X/B})[2]\ ,$$ provides a perfect obstruction theory for 
$\P \rarr B$.
Via the trivialisation of $\omega_{\X/B}$ and the equality between \eqref{Serre}
and \eqref{Serre2}, the Serre duality of \eqref{Serre2} shows that $E\udot$ is isomorphic
to its own derived dual shifted by $[1]$,
\begin{equation} \label{symm}
(E\udot)^\vee[1]\cong E\udot.
\end{equation}
Moreover the pairing between $E\udot$ and $E\udot[1]$ is given by trace, 
which satisfies
$$\tr(a\cup b)=\tr(b\cup a)\ .$$ Hence,
 the isomorphism \eqref{symm} is also equal
to its own dual \cite[Lemma 1.23]{BF2},
and the deformation-obstruction theory of Proposition \ref{perf} is \emph{symmetric}
in the sense of \cite{bdt, BF2}.

Since $E\udot$ is an obstruction theory,  $h^0(E\udot)=\Omega_{\P/B}$. By definition,
the obstruction sheaf $\text{Ob}$ is $h^1$ of the dual
complex $E_{\scriptscriptstyle\bullet}=(E\udot)^\vee$. Since
$E\udot$ is symmetric,
we have
$$
\text{Ob}=\Omega_{\P/B}.
$$
A map $\text{Ob}\to\O_\P$ is therefore equivalent to a section of the tangent sheaf
\begin{equation} \label{vfield}
T_{\P/B}=\hom(\Omega_{\P/B},\O_\P)=h^{-1}(E\udot).
\end{equation}

In our case, the product geometry provides 
a section of  $T_{\P/B}$ 
by moving all stable pairs by the vector field $\partial_t$ lifted from the second
factor $\C$ of
$$\X = \mathcal{S} \times \C\,.$$
Explicitly, the section is 
\begin{equation} \label{tangent}
\spreaddiagramcolumns{2pc}
\xymatrix{
\O_\P \rto^(.35){\partial_t\ip\At(\II)} & \ext^1_{\pi\_\P}(\II,\II)_0} \,
\end{equation}
where the sheaf on the right is 
the 0th cohomology of $E\udot$. 
By the duality \eqref{Serre}  and the vanishing of
higher $\ext$s,
$$
\ext^1_{\pi\_\P}(\II,\II)_0\ =\ \hom(\ext^2_{\pi\_\P}(\II,\II)_0,\O_\P),
$$
just as in \eqref{vfield}. Therefore \eqref{tangent} gives
\begin{equation} \label{map2}
\ext^2_{\pi\_\P}(\II,\II)_0\to\O_\P.
\end{equation}

\begin{lem} \label{same}
The map \eqref{map2} is the same as \eqref{map}.
\end{lem}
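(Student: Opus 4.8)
The plan is to trace both maps back to a single source, namely the Atiyah class of the universal complex, and to identify them as two different contractions of that class with, respectively, the Kodaira--Spencer direction $\partial_t$ in the base $\C$ and the fiberwise symplectic form $\sigma$. The key observation is that both \eqref{map} and \eqref{map2} are built from cup product with $\At(\II)$ followed by trace; the only difference is whether one caps the resulting $\Omega_{\bX/B}$-valued class against $\partial_t \ip \bar\sigma \in \pi\_{\P*}(\pi_{\bX}^*\Omega_{\bX/B})$ in one step, or performs the symplectic pairing $\bar\sigma$ and the tangent-vector pairing $\partial_t$ as two separate steps. So the heart of the proof is a commutative-diagram chase showing these two routes agree.

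First I would set up the common framework: work on $\P \times_B \bX$ with the universal complex $\BI$, and recall that $\At(\BI) \in \Ext^1(\BI, \BI \otimes \LL_{(\P\times_B\bX)/B})$ projects to a relative Atiyah class $a \in H^0(\ext^1_{\overline\pi\_\P}(\BI,\BI\otimes\pi_{\bX}^*\Omega_{\bX/B}))$. The obstruction map \eqref{map} is then $\cup a$ into $\ext^3_{\overline\pi\_\P}(\BI,\BI\otimes\pi_{\bX}^*\Omega^2_{\bX/B})$ (after one more cup with $a$), trace to $R^3\overline\pi\_{\P*}(\pi_{\bX}^*\Omega^2_{\bX/B})$, then wedge with $\bar\sigma$ to land in $R^3\overline\pi\_{\P*}\omega_{\overline\pi\_\P} = \O_\P$. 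For \eqref{map2}, I would unwind \eqref{tangent}: the section $\partial_t \ip \At(\II) \in \ext^1_{\pi\_\P}(\II,\II)_0$ is $\partial_t$ contracted against the same Atiyah class, and \eqref{map2} is obtained by Serre-dualizing, i.e. by pairing $\ext^2_{\pi\_\P}(\II,\II)_0$ against this section via the trace pairing of \eqref{Serre}. Writing that trace pairing out explicitly exhibits it as: cup with $\At(\II)$, trace, then contract with $\partial_t$ — which is exactly the $\bar\sigma$-wedge of \eqref{map} with the symplectic form replaced by the tangent vector $\partial_t$.

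The reconciliation is then a statement about the $K3$ geometry of the fibers: under the trivialization $\omega_{\X/B} \cong \O_\X$ induced by $\bar\sigma$, contracting an $\Omega^2_{\bX/B}$-valued class with the symplectic form and then pairing the residual $\Omega_{\bX/B}$ with $\partial_t$ (the route of \eqref{map}) equals pairing directly with $\partial_t$ after using $\bar\sigma$ to identify $T_{\bX/B}$-valued and $\Omega_{\bX/B}$-valued obstruction classes (the route of \eqref{map2}). Concretely I would invoke the same homotopy/Leibniz identity used in \eqref{htyy}, $0 = \partial_t \ip (\mathrm{ch}\wedge\bar\sigma) = (\partial_t\ip\mathrm{ch})\wedge\bar\sigma + (\partial_t\ip\bar\sigma)\wedge\mathrm{ch}$, together with the Buchweitz--Flenner identity $\tr(\kappa\circ\At\circ\At) = 2\,\kappa\ip\mathrm{ch}_2$ already cited in the proof of Proposition \ref{not0}, to see that both maps send a class $\phi \in \ext^2$ to (a universal multiple of) $\int_{\bX/B} (\partial_t \ip \bar\sigma) \wedge \mathrm{ch}_2$-type expressions with the same normalization. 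A clean way to finish is to note that the symmetric structure \eqref{symm} is precisely the Serre-duality pairing built from $\tr$ and $\bar\sigma$, so that the identification $\text{Ob} = \Omega_{\P/B}$ and the identification of \eqref{map} as ``$\cup\partial_t$ followed by this pairing'' are tautologically compatible — the content of the lemma is that the \emph{a priori} Atiyah-class definition of \eqref{map} coincides with the \emph{a posteriori} vector-field definition \eqref{map2}, and this is exactly the functoriality of the Atiyah class under the two contractions.

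The main obstacle I anticipate is purely bookkeeping: keeping straight the several derived dualities and shifts (the $[1]$ in $(E\udot)^\vee[1] \cong E\udot$, the $[2]$ and the $\omega$-twist in the definition of $E\udot$, and the degree shifts relating $\ext^1$, $\ext^2$, $\ext^3$), and checking that the trace pairing used to Serre-dualize \eqref{tangent} into \eqref{map2} is literally the same trace that appears in \eqref{map}, with matching signs and constants. I would handle this by reducing everything to closed points of $\P$ (legitimate since the relevant higher $\ext$ sheaves vanish, as already noted in the proof of Proposition \ref{not0}), where the comparison becomes the finite-dimensional statement that two natural maps $\Ext^2(\bI,\bI)_0 \to \C$, both factoring through $\tr \circ (-\cup\At(\bI))$, differ only by whether one inserts $\partial_t$ or $\bar\sigma$ first — and these agree because contraction and wedge on the fixed $K3$-times-$\PP^1$ fiber commute up to the homotopy identity \eqref{htyy}. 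No genuinely new idea beyond what is already deployed for Proposition \ref{not0} should be required; the lemma is essentially the assertion that the obstruction-theoretic cosection and the geometric $\C^*$-vector-field cosection are one and the same.
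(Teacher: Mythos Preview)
Your core strategy is correct and matches the paper's: unwind both maps as ``cup with $\At(\II)$, then trace,'' observe they differ only in whether one contracts with $\partial_t$ before or after wedging with $\bar\sigma$, and reconcile the two via the homotopy formula $a\ip(b\wedge c) = (a\ip b)\wedge c \pm b\wedge(a\ip c)$. That is exactly what the paper does, in about three lines.

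However, your middle paragraph takes a wrong turn. The Buchweitz--Flenner identity $\tr(\kappa\circ\At\circ\At)=2\kappa\ip\mathrm{ch}_2$ and the appearance of $\mathrm{ch}_2$ are \emph{irrelevant} here: those were used in Proposition~\ref{not0} to evaluate the map on one \emph{specific} obstruction class (namely $\bar\kappa\circ\At(\bI)$) in order to show surjectivity. For Lemma~\ref{same} you must show the two maps agree on an \emph{arbitrary} section $f$ of $\ext^2_{\pi_\P}(\II,\II)_0$, and for general $f$ there is no $\mathrm{ch}_2$ in sight. Your sentence ``both maps send a class $\phi\in\ext^2$ to (a universal multiple of) $\int_{\bX/B}(\partial_t\ip\bar\sigma)\wedge\mathrm{ch}_2$-type expressions'' is simply false --- and note that $\partial_t\ip\bar\sigma=0$ anyway, since $\bar\sigma$ is pulled back from the $K3$ factor.

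The paper's proof is just this: the Serre pairing is composition followed by trace and the trivialization $\omega_{\X/B}\cong\O_\X$ via $\sigma\wedge dt$, so \eqref{map2} sends $f\mapsto\tr\big(f\cup(\partial_t\ip\At(\II))\big)\wedge(\sigma\wedge dt)$, while \eqref{map} sends $f\mapsto\tr\big(f\cup\At(\II)\big)\wedge\sigma$. These two $3$-forms agree by the homotopy formula, using $\partial_t\ip\sigma=0$ and $\partial_t\ip dt=1$. Your final paragraph essentially says this correctly; just delete the Chern-character detour and the unnecessary reduction to closed points, and you have the paper's proof.
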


\begin{proof}
The perfect pairing between the two complexes \eqref{Serre}
is provided by composition of the derived Homs in \eqref{Serre2} followed
by the trace, 
\begin{equation} \label{int}
R\overline\pi\_{\P*}(R\hom(\BI,\BI)\otimes\omega_{\overline\pi\_\P})[3]\Rt{\tr}
R\overline\pi\_{\P*}\omega_{\overline\pi\_\P}[3]\To R^3\overline\pi\_{\P*}
\omega_{\overline\pi\_\P}.
\end{equation}
The final map takes the highest nonvanishing cohomology sheaf of the complex.
Since the fibers of $\overline\pi\_\P$ are projective, last sheaf is $\O_\P$, as required.

To prove the Lemma, we must show sections $f$ of $\ext^2_{\pi\_\P}(\II,\II)_0$
satisfy
$$
\tr(f\cup\At(\II))\wedge\sigma\ =\ \tr(f\cup(\partial_t\ip\At(\II)))\wedge(\sigma\wedge
dt),
$$
where $\sigma\wedge dt$ is the trivialisation of $\omega_\X$. The result
follows from the homotopy formula $a\ip(b\wedge c)=(a\ip b)\wedge c\pm b\wedge(a\ip c)$ used before.
\end{proof}

In particular, since the map \eqref{tangent} is clearly pointwise
injective for pairs with curve class $\beta$ (supported in the $K3$ fibers of our
threefolds), we recover Proposition \ref{not0}.

\subsection{Reduced obstruction theory} \label{rdot}
We now assume that our smooth family of $K3$ surfaces
$$ \epsilon\colon\calS \rarr B $$
has base a nonsingular curve $B$, and that $\beta$ is
of type $(1,1)$ on every fiber $\calS_\xi,\ \xi \in B$.

Following the notation of Section \ref{spdef}, 
let
\begin{equation}\label{nnn6}
\X=\calS\times\C \rarr B\  
\end{equation}
be a family of 3-folds, and let
 $$\P\to B$$
be the associated family of moduli spaces
of stable pairs in class $\iota_*\beta$ on
the fibers of \eqref{nnn6}.
We will construct and 
prove the deformation invariance of the
reduced virtual class on the family $\P\to B$.

Since $B$ is nonsingular, every perfect obstruction theory 
$$E\udot\to\LL_{\P/B}$$ 
for $\P\to B$
induces a  perfect absolute 
obstruction theory for $\P$ by virtue of the exact triangle
\begin{equation} \label{PKS}
\Omega_B\to\LL_{\P}\to\LL_{\P/B},
\end{equation}
where we have suppressed a pull-back map. 
Using the composition $E\udot\to\LL_{\P/B}
\to\Omega_B[1]$, we define
$$
\E\udot=\operatorname{Cone}\big(E\udot\to\Omega_B[1]\big)[-1]\,.
$$
We have diagram of exact triangles
\begin{equation} \label{ladder}
\spreaddiagramrows{-.8pc}
\xymatrix{
\E\udot \rto\dto & E\udot \rto\dto & \Omega_B[1] \ar@{=}[d] \\
\LL_{\P} \rto & \LL_{\P/B} \rto & \Omega_B[1]\,. \!\!}
\end{equation}
By  computation with the long exact sequence in cohomology sheaves of the
above diagram, $\E\udot\to\LL_\P$ is an isomorphism on $h^0$ 
and surjective on $h^{-1}$.
Since 
$$E\udot=\{E^{-1}\to E^0\}$$ is a complex of locally free 
sheaves, the induced map
$E\udot\to\Omega_B[1]$ can be represented \emph{locally} by a genuine map of complexes
$$
\spreaddiagramrows{-.8pc}
\xymatrix{\big\{E^{-1} \rto\dto & E^0\big\} \\
\Omega_B\,.\!\!\!}
$$
Hence, $\E\udot$ is locally
represented by the 2-term complex of locally free sheaves
\begin{equation}\label{gg23}
\E\udot\cong\{E^{-1}\to E^0\oplus\Omega_B\}.
\end{equation}
Since $\P$ is quasi-projective, 
$\E\udot$ is  \emph{globally} a 2-term complex of locally free sheaves.
So $\E\udot\to\LL_\P$ is indeed a perfect obstruction theory. \medskip

From the perfect relative obstruction theory of Proposition \ref{perf},
$$E\udot=\big(R\hom_{\pi_\P}(\II,\II)_0[1]\big)^\vee\,,$$
we obtain a perfect absolute obstruction theory $\E\udot$ for $\P$.
From the dual of the top row of \eqref{ladder}
we have the long exact sequence of cohomology sheaves 
$$
0\to\ext^1_{\pi\_\P}(\II,\II)_0\to T_\P\to T_B
\to\ext^2_{\pi\_\P}(\II,\II)_0\to\text{Ob}_\P \to 0.
$$
Here $\ext^1_{\pi\_\P}(\II,\II)_0=T_{\P/B}$ is the relative tangent sheaf of $P/B$, and
$T_\P$ is the absolute tangent sheaf. 
Similarly,
$\text{Ob}_\P$
 is the obstruction sheaf $h^1((\E\udot)^\vee)$ of the absolute obstruction theory
$\E\udot$, the quotient of the relative obstruction sheaf 
$$\text{Ob}=\ext^2_{\pi\_\P}(\II,\II)_0$$
by the image of $T_B$.

By Proposition \ref{not0}, the map \eqref{map} is a surjection Ob$\,\to\O_\P$.
To
apply the construction of
Kiem-Li to $\E\udot\to\LL_\P$, we must show the map \eqref{map}
 annihilates $T_B$
and so descends to a surjection $\text{Ob}_\P\to\O_\P$. 
To do so we need a description of the
composition $E\udot\to\LL_{\P/B}\to\Omega_B[1]$. \medskip

By the description \eqref{dualobsthy} of 
$(\LL_{\P/B})^\vee \to (E\udot)^\vee$,
the dual of $$E\udot\to\LL_{\P/B}\to\Omega_B[1]$$
 is the composition
\begin{equation} \label{compP}
\spreaddiagramcolumns{-.6pc}
\xymatrix{T_B[-1] \rto & \LL^\vee_{\P/B} \rrto^(.32){\operatorname{At}_{\P/B}} &&
R\hom_{\pi\_\P}(\II,\II)[1],}
\end{equation}
which actually factors through the trace-free 
part $R\hom_{\pi\_\P}(\II,\II)_0[1]$ as
in \cite[Theorem 4.1]{HT} since all of the complexes 
$\II$ have fixed trivial determinant.
Here, the first map is the Kodaira-Spencer
class of the fibers of $\P\to B$ obtained from the exact triangle
\eqref{PKS}. 
The second is cup product with the image $\operatorname{At}_{\P/B}$ of
the relative Atiyah class $\operatorname{At}_{\X\times_B\P}$ of $\II$ under the map
$\Ext^1(\II,\II\otimes\LL_{\X\times_B\P})\to
\Ext^1(\II,\II\otimes\pi_\P^*\LL_{\P/B})$.

We can construct a similar composition
\begin{equation} \label{compX}
\spreaddiagramcolumns{-.6pc}
\xymatrix{T_B[-1] \rto & R\pi\_{\P*}\LL^\vee_{\X/B} \rrto^(.4){\operatorname{At}_{\X/B}}
&& R\hom_{\pi\_\P}(\II,\II)[1]}
\end{equation}
from the projection $\operatorname{At}_{\X/B}$ of
$\operatorname{At}_{\X\times_B\P}$ under $\LL_{\X\times_B\P}\to\LL_{\X/B}$. 
The
first map of \eqref{compX} 
is the Kodaira-Spencer class of the fibers of $\X\to B$  obtained from
($\pi\_{\P*}\pi^*_{\X}$ applied to) the exact triangle
\begin{equation} \label{XKS}
\Omega_B\to\LL_\X\to\LL_{\X/B}\,.
\end{equation}

\begin{prop} \label{coincide} The two composition
 \emph{(\ref{compP})} and \emph{(\ref{compX})} coincide.
\end{prop}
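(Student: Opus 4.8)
The plan is to show that both compositions \eqref{compP} and \eqref{compX} arise from a single Atiyah class construction, differing only in which quotient of the relative cotangent complex $\LL_{\X\times_B\P}$ one pushes forward along. The starting point is the observation that $\X\times_B\P = (\calS\times\C)\times_B\P$, so there is a commutative diagram of exact triangles relating $\LL_{(\X\times_B\P)/B}$ to $\LL_{\X/B}$ and $\LL_{\P/B}$ (pulled back to the product), with $\Omega_B[1]$ appearing in both "absolute versus relative" triangles \eqref{PKS} and \eqref{XKS}. First I would set up the master object: the relative Atiyah class $\At_{\X\times_B\P}\in\Ext^1(\II,\II\otimes\LL_{(\X\times_B\P)/B})$, whose projections under the two natural maps $\LL_{(\X\times_B\P)/B}\to\pi_\P^*\LL_{\P/B}$ and $\LL_{(\X\times_B\P)/B}\to\pi_\X^*\LL_{\X/B}$ give $\At_{\P/B}$ and $\At_{\X/B}$ respectively. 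This is exactly the setup of \cite[Section 4]{HT}.

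The key point is then purely diagrammatic. Both triangles \eqref{PKS} and \eqref{XKS} fit into a larger commutative diagram because the Kodaira--Spencer class of $\P\to B$ and the Kodaira--Spencer class of $\X\to B$ are \emph{compatible}: the deformation of $\P$ over $B$ is induced from the deformation of $\X$ over $B$ via the fact that $\P$ represents the relative moduli functor of stable pairs on the fibers of $\X\to B$. Concretely, there is a map $\pi_\P^*\LL_{\P/B}\to R\pi_{\P*}\pi_\X^*\LL_{\X/B}[1]$ — this is precisely the classifying map for how pairs obstruct against deformations of the ambient threefold, i.e.\ the Atiyah-class map of Proposition \ref{perf} restricted to the ambient directions — and this map carries the Kodaira--Spencer class $T_B[-1]\to\LL^\vee_{\P/B}$ of \eqref{PKS} to (the pushforward of) the Kodaira--Spencer class $T_B[-1]\to R\pi_{\P*}\LL^\vee_{\X/B}$ of \eqref{XKS}. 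I would verify this compatibility by noting that both Kodaira--Spencer classes are extracted from the \emph{same} boundary map for the same base $B$: applying $R\pi_{\P*}$ to the octahedron relating $\LL_{(\X\times_B\P)/B}$, $\pi_\X^*\LL_{\X/B}$, and $\pi_\P^*\LL_{\P/B}$ over $B$ shows the two $\Omega_B[1]$ terms literally agree. Then cup product with $\At_{\X\times_B\P}$ (which is the common ancestor of $\At_{\P/B}$ and $\At_{\X/B}$, since both are its images under maps \emph{out of} $\LL_{(\X\times_B\P)/B}$, and cup product is natural in that variable) intertwines the two compositions, giving the desired equality after pushing forward and taking the trace-free part.

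**The main obstacle** I anticipate is organizing the octahedral/diagram chase cleanly: one must keep careful track of the three relative cotangent complexes — $\LL_{\X/B}$, $\LL_{\P/B}$, and $\LL_{(\X\times_B\P)/B}$ — and the two "absolute" ones $\LL_\X$, $\LL_\P$, and make sure the pullback functors $\pi_\P^*$, $\pi_\X^*$, and $R\pi_{\P*}$ are inserted consistently so that the two maps $T_B[-1]\to R\hom_{\pi\_\P}(\II,\II)[1]$ really land in the same hom-space before one claims they coincide. The subtle technical check is that the map $\LL_{(\X\times_B\P)/B}\to\pi_\P^*\LL_{\P/B}$ has, after $R\pi_{\P*}$, a section compatible with the one coming from $\LL_{(\X\times_B\P)/B}\to\pi_\X^*\LL_{\X/B}$; this is where the product structure $\X=\calS\times\C$ and the fact that $\P$ is \emph{globally} the relative moduli space (not just fiberwise) are used. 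Everything else — the naturality of the Atiyah class under maps of the target, and the trace projection onto the trace-free summand — is formal, following \cite{HT} verbatim. I would therefore structure the proof as: (1) recall the master Atiyah class and its two projections; (2) identify the two Kodaira--Spencer classes via the octahedron over $B$; (3) conclude by naturality of cup product.
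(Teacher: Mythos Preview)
Your proposal is correct and takes essentially the same approach as the paper: both recognize that \eqref{compP} and \eqref{compX} factor through a common master composition built from the full Atiyah class $\operatorname{At}_{\X\times_B\P}$ of $\II$, and reduce the claim to a compatibility of Kodaira--Spencer classes checked via commutative diagrams of cotangent complexes. The paper organizes the diagram chase slightly differently---rather than a single octahedron, it introduces an explicit third composition $T_B[-1]\to R\pi_{\P*}\LL^\vee_{\X\times_B\P}\to R\hom_{\pi_\P}(\II,\II)[1]$ using the Kodaira--Spencer class of the inclusion $\X\times_B\P\subset\X\times\P$, and then shows in two separate short commutative diagrams of exact triangles that this equals each of \eqref{compP} and \eqref{compX}---which sidesteps exactly the bookkeeping you flag as the main obstacle.
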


\begin{proof} We relate both maps to a third, constructed in the same way using the full
Atiyah class
\begin{equation} \label{compXP}
\spreaddiagramcolumns{-.6pc}
\xymatrix{T_B[-1] \rto & R\pi\_{\P*}\LL^\vee_{\X\times_B\P}
\rrto^(.42){\operatorname{At}_{\X\times_B\P}} && 
R\hom_{\pi\_\P}(\II,\II)[1]}\,.
\end{equation}
Here, the first map is the Kodaira-Spencer class in 
$\Ext^1(\LL_{\X\times_B\P},\Omega_B)$
of the inclusion 
$$i\colon\X\times_B\P\subset\X\times\P $$ 
coming from the exact
triangle
\begin{equation} \label{XPKS}
\Omega_B\to Li^*\LL_{\X\times\P}\to\LL_{\X\times_B\P}\,.
\end{equation}
The composition \eqref{compXP} coincides with \eqref{compP} by
 the following
commutative diagram of exact triangles on $\X\times_B\P$ relating 
the Kodaira-Spencer
classes \eqref{XPKS} and \eqref{PKS},
$$
\spreaddiagramrows{-.8pc}
\spreaddiagramcolumns{-.5pc}
\xymatrix{
& \LL_\X \ar@{=}[r]\dto & \LL_\X \dto \\
\Omega_B \rto \ar@{=}[d] & Li^*\LL_{\X\times\P} \rto\dto & \LL_{\X\times_B\P} \dto\\
\Omega_B \rto & \LL_\P \rto & \LL_{\P/B}.\!\!}
$$
We have suppressed
several pull-back maps.
The central row gives rise to \eqref{compXP} while the 
bottom row induces \eqref{compP}.

Similarly we have the following
diagram relating the Kodaira-Spencer classes \eqref{XPKS} and \eqref{XKS},
$$
\spreaddiagramrows{-.8pc}
\spreaddiagramcolumns{-.5pc}
\xymatrix{
& \LL_\P \ar@{=}[r]\dto & \LL_\P \dto \\
\Omega_B \rto\ar@{=}[d] & Li^*\LL_{\X\times\P} \rto\dto & \LL_{\X\times_B\P} \dto\\
\Omega_B \rto & \LL_\X \rto & \LL_{\X/B}.\!\!}
$$
The central row gives rise to \eqref{compXP} while the bottom row induces \eqref{compX},
so the two compositions coincide.
\end{proof}

By Proposition \ref{coincide},
we may use the description \eqref{compX} of the map 
$$T_B\to\ext^2_{\pi\_\P}(\II,\II)_0$$
to compute the composition with 
$$\ext^2_{\pi\_\P}(\II,\II)_0\to\O_\P\,.$$ 
As
in Proposition \ref{not0}, 
we use the extension $\BI$ of $\II$ over $\bX\supset\X$. The
result is
\spreaddiagramcolumns{.5pc}
\begin{eqnarray} \nonumber
T_B && \hspace{-6mm} \To\!\!\xymatrix{R^1\overline\pi\_{\P*}(\LL^\vee_{\bX/B})
\rto^{\operatorname{At}_{\bX/B}} & \ext^2_{\overline\pi\_\P}(\BI,\BI)_0
\rto^(.7){\operatorname{At}_{\bX/B}} & } \\ \label{arrowss}
&& \ext^3_{\overline\pi\_\P}(\BI,\BI\otimes\pi_{\bX}^*\Omega_{\bX/B})
\Rt{\tr} R^3\overline\pi\_{\P*}(\pi_{\bX}^*\Omega_{\bX/B})
\Rt{\wedge\bar\sigma} R^3\overline\pi\_{\P*}(\omega\_{\overline\pi\_\P}).
\end{eqnarray}
Working locally over $\P$, we will show the composition vanishes
 when applied
to any section of $T_B$. 
Let 
$$\mathsf{KS}\in R^1\overline\pi\_{\P*}(T_{\bX/B})$$ denote the associated
Kodaira-Spencer class. By \cite[Proposition 4.2]{BuF} (applied to $\mathcal N=\O_\P$),
the image of our section under the first four maps above can be computed as
\begin{equation}\label{btt4}
\mathsf{KS}\ip \mathrm{ch}_2(\BI)\in R^3\overline\pi\_{\P*}(\pi_{\bX}^*\Omega_{\bX/B}).
\end{equation}
The class \eqref{btt4} is the (1,3)-part of the derivative down the vector field
along $B$ of $\iota_*(\beta)$. By assumption the class
$\iota_*\beta$ is of pure type (2,2) over all of $B$, so the class \eqref{btt4} is zero.
We have proven the following result.

\begin{prop} \label{kkll}
We have a surjection $\text{\em Ob}_\P \rightarrow \O_\P$
extending the surjection $\text{\em Ob}\rightarrow \O_\P$ of \eqref{map}.
\end{prop}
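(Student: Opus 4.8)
The plan is to combine Proposition \ref{not0}, which gives the surjection $\text{Ob}\to\O_\P$ on the relative obstruction sheaf, with Proposition \ref{coincide}, which identifies the map $T_B\to\ext^2_{\pi\_\P}(\II,\II)_0$ with the cup-product construction \eqref{compX} built from the Atiyah class $\operatorname{At}_{\bX/B}$. The point is that $\text{Ob}_\P$ is, by definition, the cokernel of $T_B\to\text{Ob}=\ext^2_{\pi\_\P}(\II,\II)_0$, so a surjection $\text{Ob}_\P\to\O_\P$ extending $\text{Ob}\to\O_\P$ exists precisely when the composite $T_B\to\text{Ob}\to\O_\P$ vanishes. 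Thus the whole statement reduces to checking that this composite is zero.

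First I would work locally over $\P$ and write out the composite as the chain of maps displayed in \eqref{arrowss}, using the extension $\BI$ of $\II$ over $\bX\supset\X$ (exactly as in the proof of Proposition \ref{not0}, so that relative Serre duality down the projective fibers of $\overline\pi\_\P$ is available and all the $\ext^i$ with $i\ge3$ vanish). Next I would apply \cite[Proposition 4.2]{BuF} with $\mathcal N=\O_\P$ to evaluate the image of a local section of $T_B$ under the first four arrows: its image in $R^3\overline\pi\_{\P*}(\pi_{\bX}^*\Omega_{\bX/B})$ is $\mathsf{KS}\ip\mathrm{ch}_2(\BI)$, where $\mathsf{KS}\in R^1\overline\pi\_{\P*}(T_{\bX/B})$ is the Kodaira-Spencer class of the chosen section. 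Finally I would identify this class: contracting the Kodaira-Spencer class into $\mathrm{ch}_2(\BI)$ computes the $(1,3)$-component of the derivative of the Hodge type of $\iota_*\beta$ along the direction in $B$; since by hypothesis $\beta$ is of type $(1,1)$ on every fiber $\calS_\xi$ (equivalently $\iota_*\beta$ is of pure type $(2,2)$ over all of $B$), this derivative is zero. Hence the composite $T_B\to\O_\P$ is zero, $\text{Ob}\to\O_\P$ descends to the quotient $\text{Ob}_\P$, and surjectivity of the descended map is automatic since the original was surjective and $\text{Ob}_\P$ is a quotient of $\text{Ob}$.

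The main obstacle is bookkeeping rather than a genuine difficulty: one has to be careful that all the maps in \eqref{arrowss} are the right ones (in particular that the first map really is $\operatorname{At}_{\bX/B}$ applied to the Kodaira-Spencer class, which is where Proposition \ref{coincide} is used to replace $\operatorname{At}_{\P/B}$ by the geometrically transparent $\operatorname{At}_{\bX/B}$), and that the noncompactness of the fibers of $\X\to B$ is handled by compactifying to $\bX=\calS\times\PP^1$ so that the trace map to $R^3\overline\pi\_{\P*}\omega_{\overline\pi\_\P}=\O_\P$ makes sense. Once the identification of the image with $\mathsf{KS}\ip\mathrm{ch}_2(\BI)$ is in place, the vanishing is immediate from the $(1,1)$ assumption on $\beta$, exactly paralleling the homotopy-formula computation in Proposition \ref{not0} but now with the conclusion that the relevant integral is zero instead of nonzero.
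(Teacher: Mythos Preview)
Your proposal is correct and follows essentially the same route as the paper: reduce to showing the composite $T_B\to\text{Ob}\to\O_\P$ vanishes, use Proposition~\ref{coincide} to replace $\operatorname{At}_{\P/B}$ by $\operatorname{At}_{\bX/B}$, compactify to $\bX$, apply \cite[Proposition~4.2]{BuF} to identify the image with $\mathsf{KS}\ip\mathrm{ch}_2(\BI)$, and conclude vanishing from the $(1,1)$ hypothesis on $\beta$. The paper's argument is identical in structure and in the key inputs.
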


\subsection{Reduced classes} \label{rc}
From the perfect absolute obstruction theory  $\E\udot$,
the constructions of   \cite{BF,LiTian} produce 
a normal cone $C$ and an  embedding
$$C\subset \E_1$$ 
into the total space of the vector bundle $\E_1$ dual to $\E^{-1}$. Without loss of generality
we may assume that $\E_1 \cong E_1$ (as in \eqref{gg23}),
so $$C\subset E_1\,.$$ 
Restricting \eqref{obsthy} to a fiber $$\iota\_\xi\colon\P_\xi\into\P$$
over $\xi\in B$ 
yields the perfect obstruction theory
$$
E\udot|_{\P_\xi}\To\LL_{\P_\xi},
$$
a normal cone $C_\xi$, and an embedding $C_\xi\subset E_1|_{\P_\xi}$. 
By \cite{BF}, the cone $C$ specialises to the cone $C_\xi$,
\begin{equation} \label{special}
[C_\xi]=\iota_\xi^![C].
\end{equation}
The cone $C_\xi$ lies on $C$ and
relation \eqref{special} is valid on $C$.
Intersecting $C_\xi$ with the zero section of $E_1|_{\P_\xi}$ yields
 the usual virtual cycle $[\P_\xi]^{vir}$ employed in \cite{pt1}
(and which vanishes here).

A reduced virtual class is obtained by the following
construction.
Let
$$
F_1\subset \E_1
$$
on $\P$
denote the locally free kernel of the surjective composition \eqref{map}
$$
\E_1\to \text{Ob}_\P\to\O_\P.
$$
By results of 
Kiem and Li \cite{KL}, the normal cone $C\subset\E_1$ lies in 
$F_1\subset \E_1$ \emph{as a cycle} (rather than scheme theoretically\footnote{In Appendix
\ref{oldnew} we explain why, for our particular moduli space $\P$ and obstruction theory
$\E\udot$,
replacing $\E_1$ by $F_1$ gives a genuine perfect obstruction theory. Therefore
$C$ lies in $F_1$ scheme theoretically. This is not the case for general obstruction
theories, however, and is not necessary for what follows.}). 
Therefore we may view $C$ as a cycle in $F_1$ and
$C_\xi$ as a cycle in $F_1|_{\P_\xi}$.
We define
$$[\P]^{\mathrm{red}} = 0^![C] \in A_2(\P, \mathbb{Z})$$ and by
intersecting $C$ with the zero section $0$ of $F_1$ and
$$
[\P_\xi]^\mathrm{red}=0_\xi^![C_\xi] \ \in A_1(\P_\xi,\mathbb{Z})
$$
by intersecting $C_\xi$ with the zero section
$0_\xi$
of $F_1|_{\P_\xi} \subset E_1|_{\P_\xi}$.
The deformation invariance of $[\P_\xi]^\mathrm{red}$
is a consequence of the equation
$$
\iota_\xi^![\P]^\mathrm{red}=[\P_\xi]^\mathrm{red},
$$
obtained by the identity \eqref{special} on $C$.


\subsection{Reduced invariants}
Since $X=S\times\C$ is not compact, neither is the
 moduli space $P_n(X,\iota_*\beta)$.
However, the fixed point set 
$$P_n(X,\beta)^{\com^*}\subset P_n(X,\beta)$$  of
the $\com^*$-action induced by scaling the second factor of $X$ 
is compact.
We can therefore define invariants by residues.

The $\C^*$-action on $P_n(X,\beta)$ lifts to the perfect obstruction theory 
\eqref{obsthy}. 
Since the map \eqref{map} is easily seen to be $\C^*$-invariant, 
we obtain a $\com^*$-equivariant reduced virtual class.
We define
$$P_{n,\beta} = \int_{[P_n(X,\beta)]^\mathrm{red}} 1 \,,$$
 where the right side is the $\com^*$-equivariant
residue.

By Lemma \ref{gthr}, the integral
can be evaluated as $$P_{n,\beta}=\frac1t(-1)^{n+\langle \beta,\beta \rangle+1} 
\mathsf{e}(P_n(S,\beta))$$ 
when $\beta \in H_2(S,\mathbb{Z})$ is irreducible.

\subsection{Degenerating family of $K3$ surfaces}\label{dk3s}
We now consider the family of $K3$ surfaces 
$$\epsilon: \calS \rarr B$$
over a pointed curve $(B,0)$
defined in
Section \ref{jjyy}.
The family $\epsilon$ satisfies conditions (i-v) of Section
\ref{jjyy} and 
has special fiber 
$$\calS_0 =R_1\cup_E R_2\,.$$ As before, let
\begin{equation}\label{yayaya}
\X= \calS \times \C \to B\,. 
\end{equation}
Denote
the special fiber by $X[0]=\calS_0\times\C$,
and let 
$$X[0]=Y_1\cup_{E\times\C}Y_2$$ denote the 
decomposition where $Y_i= R_i \times \C$.

Following the notation of \cite{LiWu}, let $\B=\B(\beta,n)$
 denote the Artin stack of $(\beta,n)$-decorated semistable models 
of $\X/B$, with the associated
 universal family 
\begin{equation} \label{uu77}
\tX\to\B\,.
\end{equation}
The stack $\B$ has a (non-representable) morphism
to $B$ with the fiber over $0\in B$  denoted by
$\B_0$. Away from $\B_0$, the universal family \eqref{uu77}
is just the family
of quasi-projective schemes 
$$(\X\take X[0])\to B\take\{0\}\,.$$
 Replacing the special fiber $X[0]$ is the union over
all $k$ of the $k$-step semistable models
$$
X[k]=\Big(R_1\cup_E(E\times\PP^1)\cup_E\ldots\cup_E(E\times\PP^1)\cup_E R_2\Big)\times\C
$$
with automorphisms $(\C^*)^k$ covering the identity on $X[0]$
($k$ is the number of extra components 
$(E\times\PP^1)\times\C$ in the semistable model). 
The decoration is an assignment of $H_2$ classes and integers 
for each component of the fibers of $\widetilde\X/\B$, 
satisfying standard gluing and continuity conditions described 
in \cite{LiWu}. 
In particular, on the nonsingular fibers, the decoration is simply $(\beta,n)$.

A relative stable pair on the special fiber is
\begin{equation}\label{relpair}
\O_{X[k]} \stackrel{s}{\rightarrow} F
\end{equation}
where
$F$ is a sheaf on $X[k]$ with holomorphic
Euler characteristic $\chi(F)=n$ and 
class which pushes down to
$$\iota_*\beta\in H_2(X[0],\Z).$$
The stability conditions for the pair are
\begin{enumerate}
\item[(i)] $F$ is pure with finite locally free resolution,
\item[(ii)] $F$ is transverse to the singular loci $E_i$ of $X[k]$,
$$\text{Tor}_j(F,\O_{E_i})=0\ \ \ \text{for all $i$ and $j>1$} \ ,$$
\item[(iii)] the section $s$ has 0-dimensional cokernel supported
away from the singular loci $E_i$ of $X[k]$, and
\item[(iv)] the pair \eqref{relpair} has only finitely many automorphisms covering
the automorphisms $(\C^*)^k$ of $X[k]/X[0]$,
\end{enumerate}
see \cite{LiWu,pt1}.  

There is a Deligne-Mumford moduli stack 
$$\P\to\B$$ of 
stable pairs on the fibers of $\tX\to\B$
whose restriction to each component of $X[k]$ has support and holomorphic Euler characteristic equal to the decoration. 
There is  universal complex 
$\II$ over 
$$\tX\times_\B\P$$ by condition (iv). 
Composing with $\B\to B$ gives
$\P\to B$ which, away from the special
fiber, 
is the quasi-projective moduli space studied
in Sections \ref{spdef}-\ref{spsym}.
 
As before, $\P$ is an open subset of a \emph{proper} Deligne-Mumford stack formed by considering relative stable pairs
on the compactification  $\bX\to\B$ given
by replacing the $\C$ factor in \eqref{yayaya} by $\PP^1$ everywhere.

The universal complex $\II$ is \emph{perfect}
due to condition (i) above. The deformation theory 
and Serre duality of \cite{HT,pt1}
go through exactly as before. Let 
$\pi\_{\tX},\pi\_\P$ denote the projections from
$\tX\times_\B\P$ to $\tX$ and $\P$ respectively. 
Just as in \eqref{dualobsthy}, the Atiyah class of $\II$ 
gives a perfect obstruction theory
\begin{equation}\label{BrokenObsThy}
E\udot=R\pi\_{\P*}(R\hom(\II,\II)_0\otimes\pi_{\tX}^*\omega^{}_{\tX/\B})
[2]\to\LL_{\P/\B}\,.
\end{equation}
Moreover, Serre duality applies to give the duality \eqref{symm}:
$$
(E\udot)^\vee[1]\cong E\udot.
$$
Therefore the relative obstruction sheaf over $\P/\B$ is $\ext^2_{\pi\_\P}(\II,\II)_0$
with a map
\begin{equation} \label{mappp}
\ext^2_{\pi\_\P}(\II,\II)_0\to\O_\P
\end{equation}
defined exactly as in \eqref{map2}. 
The map coincides, as before,
with the map defined{\footnote{Since 
$\tX/\B$ is a reduced local complete
intersection morphism, $\LL_{\tX/\B}=\Omega_{\tX/\B}$.}}
in \eqref{map}. 
While the proof
of Proposition \ref{not0} is valid with the right notion of 
Chern classes for
perfect complexes on $X[k]$, the dual description \eqref{map2} is
technically easier. 
Since the vector field $\partial_t$
is nowhere zero on $\P$, the map \eqref{mappp} is again a surjection.

\subsection{Degeneration of the reduced class} \label{tinder}
Let
\begin{equation}\label{ke44}
\X \rarr B
\end{equation}
be the degenerating family of 3-folds considered in Section 
\ref{dk3s} above.
Let 
$\beta=\mathsf{s}+h\mathsf{f}$
be a vertical curve class. 
Let $$\P \rarr \B$$ be the moduli space of stable pairs on the fibers of 
\eqref{ke44} with holomorphic Euler characteristic $n$ and 
class $\beta$.
Let $\P_0$ be the special fiber of the composition 
$$\P\to\B\to B$$ parameterizing stable
pairs on semistable degenerations $X[k]$ of 
$$X[0]=Y_1\cup_{E\times\C}Y_2\,. $$

Given data
$\eta=(n_1,n_2,h_1,h_2)$
defining a splitting 
$$
h=h_1+h_2, \quad n+1=n_1+n_2,
$$
we can construct
the moduli spaces $\P_{\eta_1}$ and $\P_{\eta_2}$ of \emph{relative} stable pairs on $Y_1$ and $Y_2$ of classes 
$(\mathsf{s}+h_1\mathsf{f},n_1)$ and $(\mathsf{s}+h_2\mathsf{f},n_2)$ 
respectively. 
By restriction of relative stable pairs to the boundary divisor, 
$\P_{\eta_i}$ 
maps to $E\times\C$. We define
\beq \label{fibrep}
\P_\eta=\P_{\eta_1}\times\_{E\times\C} \P_{\eta_2}\ 
\eeq
which embeds into $\P_0$. In fact, $\P_0$ the union (not disjoint!) of 
the $\P_\eta$
over all possible splitting types $\eta$. 

The perfect obstruction theory $E\udot\to\LL_{\P/\B}$  
\eqref{BrokenObsThy} 
for $\P \rarr \B$
fits into the following commutative diagram of exact triangles:
\beq \label{dg1}
\spreaddiagramrows{-.8pc}
\xymatrix{
\E\udot \rto\dto & E\udot \rto\dto & \LL_\B[1] \ar@{=}[d] \\
\LL_{\P} \rto & \LL_{\P/\B} \rto & \LL_\B[1].\!\!}
\eeq
The bottom row induces
the map $E\udot\to\LL_\B[1]$ whose cone we define to be $\E\udot[1]$.

Since $\B$ is nonsingular, 
$h^{-1}(\LL_\B)=0$. But $\B$ has nontrivial isotropy groups,
so $h^1(\LL_\B)$ is nonzero.
However, stable pairs have no continuous automorphisms (since
$\P$ is a Deligne-Mumford stack with no continuous stabilizers
by condition (iv)) so
$$h^0(\LL_{\P/\B})\to h^1(\LL_\B)$$ is onto. 
From the long
exact sequences in cohomology of the above diagram,
$\E\udot$ has cohomology only
in degrees $-1$ and $0$. Just as in \eqref{ladder}, we conclude 
 $\E\udot\to\LL_{\P}$
is a perfect absolute obstruction theory for $\P$.

Restriction to $\P_0\subset\P$ yields $E\udot|_{\P_0}\to
\LL_{\P/\B}\big|_{\P_0}$ which we can 
compose with $\LL_{\P/\B}\big|_{\P_0}\to\LL_{\P_0/\B_0}$ to 
give a perfect obstruction theory
$$
E\udot|_{\P_0}\To\LL_{\P_0/\B_0}\,
$$
for $\P_0 \rarr \B_0$.
Just as in \eqref{dg1}, we can
construct a perfect absolute obstruction theory for $\P_0$ via the diagram
\beq \label{dg11}
\spreaddiagramrows{-.8pc}
\xymatrix{
\E_0\udot \rto\dto & E\udot|_{\P_0} \rto\dto & \LL_{\B_0}[1] \ar@{=}[d] \\
\LL_{\P_0} \rto & \LL_{\P_0/\B_0} \rto & \LL_{\B_0}[1],\!\!}
\eeq
which defines $\E_0\udot$ and the map to $\LL_{\P_0}$.

The top row of \eqref{dg11} and the pull-back to $\P_0$ of \eqref{dg1} 
give the diagram of exact triangles
$$
\spreaddiagramrows{-.8pc}
\xymatrix{
&& \LL_{\B_0/\B} \dto\ar@{=}[r] & \LL_{\B_0/\B} \dto \\
\E\udot|_{\P_0} \rto\dto & E\udot|_{\P_0} \ar@{=}[d]\rto & \LL_\B\big|_{\B_0}[1] \rto\dto & \E\udot|_{\P_0}[1] \dto \\
\E\udot_0 \rto & E\udot|_{\P_0} \rto & \LL_{\B_0}[1] \rto & \E_0\udot[1].}
$$
Since $\B_0\subset\B$ is the pull-back from $B$ of 
the divisor $\{0\}\subset B$ with associated line bundle $L_0$, we have 
 $\LL_{\B_0/\B}\cong L^\vee_0[1]$. Therefore,
 the rightmost column of the above diagram gives the exact triangle
\beq \label{dg3}
\E\udot|_{\P_0}\To\E_0\udot\To L^\vee_0[1],
\eeq
relating the obstruction theories of $\P$ (pulled back to $\P_0$) and $\P_0$.


\medskip
There is a divisor $\B_\eta\subset\B$ in the
stack of decorated semistable models whose pull-back to 
$\P$ is $\P_\eta$. 
The associated line bundles
$L_\eta$ satisfy
$$
\bigotimes_\eta L_\eta=L_0.
$$
We can replace $\P_0\subset\P$ over $\B_0\subset\B$ 
by $\P_\eta\subset\P$ over
$\B_\eta\subset\B$  and $L_0$ by $L_\eta$ in the above diagrams. 
The result is a perfect obstruction theory $\E_\eta\udot\to\LL_{\P_\eta}$ 
sitting in an exact triangle:
\beq\label{dg4}
\E\udot|_{\P_\eta}\To\E_\eta\udot\To L_\eta^\vee[1].
\eeq
\smallskip

The map  $\O_\P[1]\to E\udot$ of \eqref{map2}, extended to singular $K3$s in \eqref{mappp}, was shown in Section \ref{rdot} to lift to the absolute obstruction theory 
\begin{equation}\label{jjj333}
\O_\P[1]\to\E\udot
\end{equation}
over the nonsingular locus $B \setminus\{0\}$. Since $\II$ is a perfect complex the same
proof extends to the singular $K3$s, as far as \eqref{arrowss}. To finish off we must
show that the composition \eqref{arrowss} is zero. By the usual homotopy formula \eqref{htyy}
we compute the composition as
$$
-2(\mathsf{KS}\ip\bar\sigma)\cup\mathrm{ch}_2(\BI)\,\in\,
R^3\overline\pi\_{\P*}(\omega\_{\overline\pi\_\P})\,\cong\,\O_\P.
$$
Even in the singular geometry, the above cup product
equals the integral of $2\mathsf{KS}\ip\bar\sigma$ 
over the class $\iota_*\beta$. The integral
vanishes since  $\beta$ is always 
of type $(1,1)$ in the family $B$.
Hence, the lift \eqref{jjj333} extends over all of $\B$.

Let $\E^{\mathrm{red}}$ be the
cone of \eqref{jjj333}. 
Restricting \eqref{jjj333} to $\P_0$ and $\P_\eta$ and using (\ref{dg3}, \ref{dg4}) 
yields the compositions
\begin{eqnarray*}
\O_{\P_0}[1]\to\E\udot|_{\P_0}\to\E\udot_0, \\
\O_{\P_\eta}[1]\to\E\udot|_{\P_\eta}\to\E\udot_\eta.
\end{eqnarray*}
Taking the cones defines the respective
reduced theories $\E^{\mathrm{red}}_0$ and $\E_\eta^{\mathrm{red}}$. By \eqref{dg3} and \eqref{dg4}, we obtain the exact triangles
\beq\label{7}
\E^{\mathrm{red}}|_{\P_0}\To\E^{\mathrm{red}}_0\To L_0^\vee[1]\ ,
\eeq
\beq\label{8}
\E^{\mathrm{red}}|_{\P_\eta}\To\E^{\mathrm{red}}_\eta\To L_\eta^\vee[1]\,.
\eeq


\vspace{10pt}

We have now worked out the compatibilities of the reduced obstruction
 theories
for $\P$, $\P_\eta$, and  $\P_0$. 
We now turn to the compatibility
between the reduced obstruction theory of $\P_\eta$ and 
the usual obstruction theories of $\P_{\eta_1}$ and $\P_{\eta_2}$.

Consider a point
$[I\udot]\in \P_\eta$ of the moduli space corresponding to a 
stable pair on $X[k]$.
A decomposition of  $X[k]$ as $X_1\cup_{E\times\C}X_2$ yields
\beq\label {O12}
\xymatrix{
0\rto&\O_X\rto&\O_{X_1}\oplus\O_{X_2}\rto^(.55){(1,-1)}&\O_{E\times\C}\rto&0.}
\eeq
The stable pair
$I\udot$ restricts to stable pairs $I_1\udot$ and $I_2\udot$ 
over $X_1$ and $X_2$ respectively.
On $E\times\C$, $I\udot$ restricts to the ideal sheaf $\I_p$ of a point 
in the intersection $(\mathsf{s}\cap E)\times\C$. 
Tensoring \eqref{O12} with the 
perfect complex $R\hom(I\udot,I\udot)_0$ 
and taking sheaf cohomology gives 
the exact triangle on the bottom row of the following main diagram,
$$
 \spreaddiagramcolumns{-1pc} \xymatrix{
\LL^\vee_{\P_\eta/\B_\eta}[-1] \rto\dto & \LL^\vee_{\P_{\eta_1}/\B_{\eta_1}\,\times\,
\P_{\eta_2}/\B_{\eta_2}}[-1] \rto\dto & \LL^\vee_{\P_\eta/(\P_{\eta_1}\times\P_{\eta_2})}
\dto \\
\!R\!\Hom_{X[k]}(I\udot,I\udot)_0 \rto & \bigoplus_{i=1}^2R\!\Hom_{X_i}(I_i\udot,I_i\udot)_0
\rto & R\!\Hom_{E\times\C}(\I_p,\I_p)_0.}
$$
Here, $\B_{\eta_i}$ denotes the
stack of expanded degenerations of $(Y_i,E\times\C)$ decorated 
by $\eta_i$, so 
 $$\B_\eta
=\B_{\eta_1}\times\B_{\eta_2}\,.$$ 
The top row is the exact triangle of dual cotangent complexes 
for the fiber product structure \eqref{fibrep} relative to $\B_\eta$ (all restricted
to the point $I\udot\in\P$). 
The vertical arrows are the dual perfect obstruction theories 
provided by \eqref{dualobsthy}.

The last term of the top row has first cohomology
sheaf $(J/J^2)^\vee$ at $p$, where $J$ is the ideal of 
$$\P_\eta\subset\P_{\eta_1}\times\P_{\eta_2}\,.$$
Since $\P_\eta$ is the basechange of $\P_{\eta_1}\times\P_{\eta_2}$ to
 the diagonal in
$(E\times\C)^{2}$, the conormal bundle to the diagonal in
$(E\times\C)^{2}$ surjects onto $J/J^2$. 
The right hand vertical arrow is the dual
of this surjection (at $p$). The normal bundle to the diagonal,
$$T_p(E\times\C)\cong R\Hom_{E\times\C}(\I_p,\I_p)_0[1]\ ,$$
is identified with its
 image in $T_{(p,p)}\big((E\times\C)\times(E\times\C)\big)$ by the map 
$(1,-1)$. Comparing with the map $(1,-1)$ in \eqref{O12} shows 
 the diagram is commutative.

After composing the coboundary map of the exact triangle
occurring on  the bottom row
of the main diagram with the map \eqref{map2}
$$
\Ext^2(I\udot,I\udot)_0\To\C, 
$$
we obtain the following morphism
\begin{multline} \label{redcom}
\ T_pE\hookrightarrow T_p(E\times\C)\To R\Hom(I\udot,I\udot)_0[2]
\Rt{h^2}\\ \Ext^2(I\udot,I\udot)_0\To\C.
\end{multline}

\begin{prop} \label{propiso} The composition \eqref{redcom} is an isomorphism.
\end{prop}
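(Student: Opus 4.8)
The plan is to check that the composition \eqref{redcom} is a map between $1$-dimensional vector spaces and that it is nonzero; surjectivity (equivalently injectivity) then gives the isomorphism. Both source and target are manifestly $1$-dimensional: $T_pE$ has dimension $1$ since $E$ is a curve, and the target $\com$ is $1$-dimensional by construction. So the content is nonvanishing.

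First I would identify the coboundary map $T_p(E\times\C)\to R\Hom(I\udot,I\udot)_0[2]$ of the bottom exact triangle in the main diagram more explicitly. By the commutativity of the main diagram already established (the $(1,-1)$ maps match), this coboundary is the obstruction-theoretic incarnation of the deformation that moves the point $p$ on $\mathsf s\cap E$ inside $E\times\C$, i.e. it is cup product with the Atiyah/Kodaira--Spencer class associated to deforming $I\udot$ by sliding the transverse intersection point. Concretely, composing with $h^2$ and landing in $\Ext^2(I\udot,I\udot)_0$, the image of a tangent vector $v\in T_pE$ is $v\circ\At(I\udot)$ in the notation of \eqref{h555}, the obstruction to deforming $I\udot$ along the corresponding first-order deformation of $X[k]$ that separates the components and moves $p$.

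Next I would apply exactly the computation of Proposition \ref{not0} (and its extension over singular $K3$'s in Section \ref{dk3s}), which evaluates the composition with \eqref{map2}, i.e.\ $\Ext^2(I\udot,I\udot)_0\to\com$, on such a class. By \cite[Proposition 4.2]{BuF} and the homotopy formula \eqref{htyy}, the output is $2\int(\bar\kappa\ip\bar\sigma)\wedge\mathrm{ch}_2(I\udot)$, which for the deformation $\kappa$ here computes to $2\int_\beta \kappa\ip\sigma$: the pairing of $\beta$ with the $(1,1)$-class obtained by contracting the holomorphic symplectic form with the Kodaira--Spencer vector $v$. Since $v$ is the tangent direction along $E$, the contraction $v\ip\sigma$ restricted to the fiber $E$ is (up to nonzero scalar) the class of a point on $E$, and $\langle\beta,[\mathrm{pt}]_{\text{on }E}\rangle = \beta\cdot\mathsf f = 1 \ne 0$ because $\beta=\mathsf s+h\mathsf f$ meets the fiber class $\mathsf f$ once. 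Hence the composition sends a generator of $T_pE$ to a nonzero element of $\com$, and being a nonzero linear map between $1$-dimensional spaces it is an isomorphism.

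The main obstacle I expect is the bookkeeping in the second step: one must verify that the coboundary of the bottom triangle in the main diagram really does correspond to the "slide $p$ along $E\times\C$" deformation in the expanded-degeneration geometry $X[k]$, and that restricting this class to $E\times\C$ reproduces the ideal sheaf $\I_p$ deformation used above—so that the Atiyah-class computation of Proposition \ref{not0} applies verbatim with the right notion of Chern character on the perfect complex $I\udot$ over the singular $X[k]$. Once the class is correctly identified, the nonvanishing is the same $K3$-geometry computation (contraction of $\sigma$, the homotopy formula \eqref{htyy}, and $\beta\cdot\mathsf f=1$) that already underlies Proposition \ref{not0}, so no genuinely new input is needed.
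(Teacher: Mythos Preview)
Your plan has a genuine gap at the key identification step. The coboundary map
\[
T_p(E\times\C)\ \cong\ \Ext^1_{E\times\C}(\I_p,\I_p)_0\ \To\ \Ext^2_X(I\udot,I\udot)_0
\]
is the connecting homomorphism of the Mayer--Vietoris triangle obtained by tensoring $R\hom(I\udot,I\udot)_0$ with \eqref{O12}. It is \emph{not} of the form $v\mapsto v\circ\At(I\udot)$ for a global Kodaira--Spencer class $v\in H^1(T)$: the vector $v$ here is a tangent vector at a single point $p$, not a cohomology class, so the Buchweitz--Flenner formula and the computation $\int_\beta(\kappa\ip\sigma)$ from Proposition~\ref{not0} do not apply as written. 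Your paragraph promoting $v$ to ``the first-order deformation of $X[k]$ that separates the components'' is exactly the point that would need proof, and it is not a bookkeeping detail---it is the entire content of the Proposition. Without that identification, the subsequent Chern-character and homotopy-formula manipulation has no input to act on.

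The paper sidesteps this completely by Serre-dualizing. Using the dual description \eqref{tangent} of the map \eqref{map2} and the dual sequence \eqref{omega3} of \eqref{O12}, the statement becomes the assertion that
\[
\C\Rt{\partial_t}\Ext^1_X(I\udot,I\udot)_0\To T_p(E\times\C)\To T_p\C
\]
is an isomorphism. This is almost tautological: the translation deformation $\partial_t$ of $I\udot$ restricts to the translation deformation $\partial_t$ of $\I_p$, which projects to a generator of $T_p\C$. The only subtlety is that under Serre duality the summands $TE$ and $T\C$ of $T_p(E\times\C)$ get swapped (the pairing is by wedge), so the dual of the composition landing in $T_p\C$ is exactly the original composition \eqref{redcom} starting from $T_pE$. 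No Atiyah class or Chern-character computation is needed at all.
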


\begin{proof}
The result is straightforward using the dual description \eqref{tangent}
\beq \label {dt}
\C\Rt{\partial_t}\Ext^1(I\udot,I\udot)_0
\eeq
of the map \eqref{map2}. 
The map is obtained from the deformation of $I\udot$ given by translation
in the $\C$-direction.

The dual of the sequence \eqref{O12} is
\beq\label{omega3}
0\to\omega_{X_1}\oplus\omega_{X_2}\to\omega_X\to\O_{E\times\C}\to0,
\eeq
where of course $\omega_X\cong\O_X$.

Tensoring with the perfect complex $R\hom(I\udot,I\udot)_0$ and taking sheaf cohomology gives the exact triangle
$$
\bigoplus_{i=1}^2R\Hom_{X_i}(I_i\udot,I_i\udot\otimes\omega_{X_i})_0\to
R\Hom_X(I\udot,I\udot)_0\to R\Hom_{E\times\C}(\I_p,\I_p)_0.
$$
Since the complex of sheaves $R\hom(I\udot,I\udot)_0$ is derived dual to itself (modulo
a shift), this exact triangle is the Serre dual of the bottom row of the main
diagram
 modulo a shift.

By the construction of the triangle using \eqref{omega3}, we see 
the deformation $\partial_t$ in ($H^1$ of) the second term maps 
to the corresponding
deformation $\partial_t$ in the third term. 
Hence, the composition of \eqref{dt} and
the second map in the exact triangle,
\beq\label{comred}
\C\Rt{\partial_t}\Ext^1_X(I\udot,I\udot)_0\to\Ext^1_{E\times\C}(\I_p,\I_p)_0=
T_p(E\times\C)\to T_p\C,
\eeq
is an isomorphism.

Under Serre duality, the splitting $T(E\times\C)\cong TE\oplus T\C$ is dual to the splitting
$T(E\times\C)\cong T\C\oplus TE$ in the opposite order, since the pairing
between the two spaces is by wedging (and the triviality of $\Lambda^2T(E\times\C))$.
Therefore, 
\eqref{comred} is precisely Serre dual to the composition \eqref{redcom}.
\end{proof}

After passing to absolute perfect obstruction theories,
the dual of the main diagram yields
\beq \label{dg7}
\xymatrix{
\E_1\udot\oplus\E_2\udot \rto\dto & \E_\eta\udot \rto\dto & \Omega_{E\times\C}[1] \dto \\
\LL_{\P_{\eta_1}\times\P_{\eta_2}} \rto & \LL_{\P_\eta} \rto &
\LL_{\P_\eta/(\P_{\eta_1}\times\P_{\eta_2})}\,,}
\eeq
at the point $[I\udot]\in\P_\eta$. Here $\E_i\udot$ is the absolute obstruction
theory for $\P_{\eta_i}$ derived in the usual way \eqref{dg1} from the relative obstruction
theory $\LL^\vee_{\P_{\eta_1}/\B_{\eta_1}}\to R\Hom_{X_i}(I_i\udot,I_i\udot)_0[1]$
of \eqref{dualobsthy}.

We have already shown the map \eqref{tangent} factors through the absolute
obstruction theory,
\beq \label{map3}
\C[1]\to\E_\eta\udot.
\eeq
By Proposition \ref{propiso}, the composition of \eqref{map3} and
$\E_\eta\udot\to\Omega_{E\times\C}[1]$ 
is isomorphic to the inclusion $\Omega_E\into\Omega_{E\times\C}$ (all shifted by $[1]$).
Since the latter is nontrivial, we can divide out the 
two top right hand terms of \eqref{dg7}
by  $\C\cong\Omega_E$ to give
\beq \label{fp}
\E_1\udot\oplus\E_2\udot\to\E_\eta^{\mathrm{red}}\to N^\vee[1],
\eeq
where $N^\vee$ is the conormal bundle of the diagonal $\C\into\C\times\C$.

\begin{thm}\label{rtrt}
The reduced virtual class of the moduli
space of pairs $\P$ of the degeneration $$\epsilon: \mathcal{X} \rarr B$$
with holomorphic Euler
characteristic $n$ and primitive class  
$$\beta = \mathsf{s} + h \mathsf{f}$$
satisfies three basic properties:
\begin{enumerate}
\item[(i)] For all $\xi \in B$,
 $$\iota_{\xi}^{!}[\P]^{\mathrm{red}} 
= [P_{n}(\mathcal{X}_\xi,\beta)]^{\mathrm{red}}\,.$$
\item[(ii)] For the special fiber,
 $$[\P_0]^{\mathrm{red}} 
= \sum_{\eta} \iota_{\eta*}[P_{\eta}]^{\mathrm{red}}\,.$$
\item[(iii)] The factorizations
$$[P_{n}(\mathcal{X}_0,\eta)]^{\mathrm{red}} =
[P_{n_1}((R_1/E)\times \com,\beta_1)]^{\mathrm{vir}} \times_\C
[P_{n_2}((R_2/E)\times \com,\beta_2)]^{\mathrm{vir}}$$
hold for $\beta_i= \mathsf{s}+h_i \mathsf{f}$.
\end{enumerate}
\end{thm}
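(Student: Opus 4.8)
The plan is to deduce all three assertions from the compatible family of reduced obstruction theories assembled in Sections~\ref{rdot}--\ref{tinder}, together with Propositions~\ref{not0}, \ref{coincide}, \ref{propiso} and the standard virtual-cycle formalism. As recorded in Appendix~\ref{oldnew}, for our particular moduli space $\P$ (and likewise for $\P_0$, for each $\P_\eta$, and for the smooth fibers) the reduced theory is an honest two-term perfect obstruction theory: the cone lies, scheme-theoretically, in the sub-bundle $F_1=\ker(\E_1\to\O_\P)$. Consequently the operations and compatibilities of \cite{BF}, and the degeneration apparatus of Jun Li in the stable-pairs form of \cite{LiWu}, apply directly, and the only genuinely new point is that the reduction \eqref{jjj333} is compatible with restriction to fibers, with degeneration to $\P_0$, and with the splitting $\P_0=\bigcup_\eta\P_\eta$ --- but this is exactly what the exact triangles \eqref{7}, \eqref{8} and \eqref{fp} say.

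For (i) I would observe that over the punctured base $B\setminus\{0\}$ the stack $\B$ of decorated semistable models is just $B\setminus\{0\}$, so there $\P$ is the quasi-projective moduli space of Sections~\ref{spdef}--\ref{spsym} and $\E^{\mathrm{red}}$ restricts on the fiber $\P_\xi$ to the reduced obstruction theory of $P_n(\mathcal X_\xi,\beta)=P_n(S_\xi,\beta)\times\com$. Thus $\iota_\xi^![\P]^{\mathrm{red}}=[P_n(\mathcal X_\xi,\beta)]^{\mathrm{red}}$ is precisely the Behrend--Fantechi specialization $\iota_\xi^![C]=[C_\xi]$ of the normal cone inside $F_1$, cf.\ \eqref{special}. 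For the central fiber $\xi=0$ the same specialization applies once one knows that the reduced obstruction theory $\E_0^{\mathrm{red}}$ of $\P_0$ is the fiberwise restriction of $\E^{\mathrm{red}}$ in the appropriate sense; this is the content of the triangle \eqref{7}, together with the observation that, since $\B_0\subset\B$ is pulled back from the Cartier divisor $\{0\}\subset B$, the line bundle $L_0$ is trivial on $\P_0$, so that $\E^{\mathrm{red}}|_{\P_0}\to\E^{\mathrm{red}}_0\to\O_{\P_0}[1]$ is the usual relative-to-absolute modification. Hence $\iota_0^![\P]^{\mathrm{red}}=[\P_0]^{\mathrm{red}}$.

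Part (ii) is the cycle-level degeneration formula of \cite{LiWu}, and I expect it to be the main obstacle. One must show that the normal cone of $\P_0$ in $\P$, formed with respect to $\E^{\mathrm{red}}$, decomposes over the non-disjoint strata $\P_\eta$ exactly as in Jun Li's degeneration formula, so that pushing the resulting cycle identity forward along the closed immersions $\iota_\eta$ produces $[\P_0]^{\mathrm{red}}=\sum_\eta\iota_{\eta*}[\P_\eta]^{\mathrm{red}}$. The inputs are the relation $\bigotimes_\eta L_\eta=L_0$ among the line bundles of the divisors $\B_\eta\subset\B$ and the triangles \eqref{7}, \eqref{8}, which together express that the trivial factor was removed $B$-globally and hence compatibly with every stratum; the work is to verify that the stratification and normal-cone bookkeeping of \cite{LiWu}, written for ordinary virtual classes, is untouched by the passage to the reduced obstruction theory.

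Finally, for (iii) I would start from the refined diagram \eqref{dg7} and Proposition~\ref{propiso}, which yield the triangle \eqref{fp}, namely $\E_1\udot\oplus\E_2\udot\to\E_\eta^{\mathrm{red}}\to N^\vee[1]$, where $\E_i\udot$ is the absolute obstruction theory of $\P_{\eta_i}=P_{n_i}((R_i/E)\times\com,\beta_i)$ defining $[\P_{\eta_i}]^{\mathrm{vir}}$ and $N$ is the normal bundle of the diagonal $\com\into\com\times\com$. Since the boundary evaluations $\P_{\eta_i}\to E\times\com$ are constant along $E$ (they land at the fixed point $(\mathsf{s}\cap E)\times\com$, because $\mathsf{s}$ meets $E$ transversally in one point), the fiber product \eqref{fibrep} defining $\P_\eta$ is the base change of $\P_{\eta_1}\times\P_{\eta_2}$ along precisely this diagonal $\com\into\com\times\com$, the gluing in the $E$-direction having been absorbed by the removal of the trivial factor. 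Applying the virtual-pullback/excess-intersection compatibility of \cite{BF} to an obstruction theory sitting in a triangle of the shape \eqref{fp} then gives $[\P_\eta]^{\mathrm{red}}=\Delta^!\big([\P_{\eta_1}]^{\mathrm{vir}}\times[\P_{\eta_2}]^{\mathrm{vir}}\big)$, which is the product $\times_\com$ in the statement. Combining (i), (ii) and (iii) establishes Theorem~\ref{rtrt}.
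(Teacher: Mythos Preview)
Your proposal is correct and follows essentially the same approach as the paper: invoke the Li--Wu degeneration formalism, replacing the standard compatibilities \eqref{dg3} and \eqref{dg4} by the reduced compatibilities \eqref{7} and \eqref{8}, and read off (iii) from the exact triangle \eqref{fp}. One small caveat: your opening appeal to Appendix~\ref{oldnew} (that the reduced theory is an honest perfect obstruction theory on $\P_0$ and each $\P_\eta$) goes a bit beyond what is established there---Theorem~\ref{jj99} treats only the smooth family---but, as your own detailed steps show, this is not actually needed, since the Kiem--Li cycle-level construction together with the triangles \eqref{7}, \eqref{8}, \eqref{fp} already suffices.
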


\begin{rem}
By the fiber product
in (iii) we really mean $$\Delta^!\left([P_{n_1}((R_1/E)\times \com,\beta_1)]^{\mathrm{vir}}
\times
[P_{n_2}((R_2/E)\times \com,\beta_2)]^{\mathrm{vir}}\right),$$ where $\Delta\colon\C\to\C^2$
is the diagonal and both virtual cycles have obvious maps to $\C$.
However,
since all of the cycles involved can be taken to be linear combinations of products
of varieties with $\C$, this agrees with the resulting linear combination of fiber products
over $\C$.
\end{rem}

\begin{proof}
This follows the
proof of the parallel statements in \cite{LiWu} 
for the degeneration formula for the standard obstruction 
theory.
We use the compatibilities \eqref{7} and \eqref{8} of reduced obstruction theories 
in place of the standard compatibilities \eqref{dg3} and \eqref{dg4} of usual obstruction
theories. 
The statement (iii) is immediate from the exact triangle \eqref{fp}.
\end{proof}

Finally, we note all steps in the proof of Theorem \ref{rtrt}
respect the $\com^*$-action on
$$\mathcal{X} = \calS \times \com$$
given by scaling of the second factor. As a result, Theorem \ref{rtrt}
holds for the reduced and ordinary virtual classes in $\com^*$-equivariant
cycle theory.

\section{Reduced Gromov-Witten} \label{gwd}

\subsection{Stable maps to the fibers of $\epsilon$} We again work with the family
$\epsilon\colon\calS\to B$ of Section \ref{jjyy}. Denote the moduli
space of connected stable maps to the fibers of $\epsilon$,
as constructed in \cite{junli1, junli2}, by
\begin{equation}\label{ntt3}
\Mbar_g(\epsilon, \beta) \rightarrow B\,.
\end{equation}
Over nonzero $\xi \in B$,
the moduli space is simply $\Mbar_g(\calS_\xi,\beta)$.  
Over $0\in B$, the moduli space parameterizes
stable predeformable maps from a genus $g$ curve to an 
expanded target degeneration of $\calS_0=R_1\cup_E R_2$
of the form $$R_{1} \cup_{E} (E \times \mathbb{P}^1) \cup_{E}
\dots \cup_E (E \times \mathbb{P}^1)\cup_{E} R_{2}.$$  
Here, we have inserted a non-negative number of copies of 
$E \times \mathbb{P}^1$ at the singular locus of $\calS_0$, attached at $E \times \{0\}$ and 
$E \times \{\infty\}$.
We will denote the standard inclusion of the fiber over $\xi \in B$ by
$$\iota_\xi: \Mbar_g(\calS_\xi,\beta) \hookrightarrow \Mbar_g(\epsilon,\beta).$$

If  $\beta= \mathsf{s} + h \mathsf{f}$, then
the moduli space \eqref{ntt3}
has a simple structure. A stable map to any fiber $\calS_\xi$ in class $\beta$
has degree $1$ over the base of the elliptic fibration
$$\pi: \calS_{\xi} \rarr \proj^1\,.$$
Since the section $\mathsf{s}$ is rigid, the map consists of a fixed genus $0$ curve
mapping isomorphically to $\mathsf{s}$ attached to possibly higher genus curves mapping
to the fibers of $\pi$.
When $\xi=0$, the intersection of the genus 0 curve with the singular locus of the expanded
degeneration always 
has multiplicity $1$ at the distinguished point $p\in E$, see
 \eqref{pointp}.

\subsection{Relative maps} \label{relmapp}
Let $\beta = \mathsf{s} + h \mathsf{f}$, and let
 $\Mbar_{g}(R/E, \beta)$ denote the moduli space \cite{junli1} of 
stable relative maps to expanded degenerations of $(R,E)$ with multiplicity $1$ along the relative divisor $E$.
There is an evaluation map
\begin{equation}\label{L234}
\Mbar_{g}(R/E, \beta) \rarr E
\end{equation}
determined by the location of the relative point.
In fact, since $\mathsf{s}$ is rigid, the evaluation map {\em always} has value
 $p\in E$.

A stable map to $\calS_0$ can be split (non-uniquely) into relative stable maps to $(R_1,E)$ and $(R_2,E)$.  
Let 
$$\eta = (g_1,g_2,h_1,h_2)$$
denote a quadruple of non-negative integers satisfying 
$$g = g_1+g_2,\ \ \  h= h_1+h_2\,. $$  
Define $\Mbar_g(\calS_0, \eta)$ to be the fibered product over the evaluation maps \eqref{L234}
on both sides,
$$\Mbar_{g_1}(R_1/E_1, \mathsf{s}+ h_1 \mathsf{f}) \times_{E} \Mbar_{g_2}(R_2/E_2,
\mathsf{s}+ h_2 \mathsf{f}  )\,.$$
Since the evaluations maps both factor through the point $p\in E$, 
$$\Mbar_g(\calS_0, \eta) =
\Mbar_{g_1}(R_1/E_1, \mathsf{s}+ h_1 \mathsf{f}) 
\times \Mbar_{g_2}(R_2/E_2,\mathsf{s}+ h_2 \mathsf{f}  )\,. $$
By standard results \cite{junli1,junli2},
we have an embedding
$$\iota_{\eta}: \Mbar_g(\calS_0,\eta) \hookrightarrow \Mbar_g(\epsilon, \beta),$$
and the full moduli space to $\calS_0$ is the union
$$\Mbar_g(\calS_0,\beta) = \bigcup_{\eta} \Mbar_g(\calS_0,\eta).$$

In \cite{junli2}, the embedding $\iota_{\eta}$ is explicitly realized as given by a Cartier pseudo-divisor (in the sense of Fulton): there exists a line bundle $L_{\eta}$ on $\Mbar_g(\epsilon,\beta)$ 
with section $s_\eta \in \Gamma(L_\eta)$ whose zero locus is $\Mbar_g(\calS_0,\eta)$.  
If $(L_0,s_0)$ denotes the pseudo-divisor given by pulling back the Cartier divisor 
$0\in B$, we also have the identity
\begin{equation}\label{product}
(L_0, s_0) = \bigotimes_{\eta}\ (L_\eta,s_\eta).
\end{equation}

\subsection{Obstruction theories} \label{obth}

We follow the construction of the perfect obstruction theory 
on $\Mbar_g(\epsilon,\beta)$,
\begin{equation}
\EE_{\epsilon}\rightarrow \LLL_{\Mbar_g(\epsilon,\beta)}\ , \label{aaa4}
\end{equation}
presented in \cite{junli2}. 
We will always take $\beta = \mathsf{s} + h \mathsf{f}$.
Since the multiplicity of the stable map at the singular locus of $\calS_0$ 
is always $1$, the obstruction theory of \cite{junli2} simplifies substantially.

Let $\B$ denote the nonsingular Artin stack parameterizing expanded target degenerations of $\calS$ over $B$.
We first describe the relative obstruction theory for the morphism
$$\phi: \Mbar_g(\epsilon,\beta) \rightarrow \B \,.$$  

For convenience, we just describe the tangent and obstruction spaces at a closed point of the moduli space. For the general case see \cite{junli2}. 
 Fix $\xi \in B$, an expanded target degeneration{\footnote{Unless we
are over the special point $\xi=0$, we have 
$\widetilde{S}_\xi = \calS_\xi$. Nontrivial degenerations occur only over $0\in B$.}} 
$$\widetilde{S}_\xi \rightarrow \calS_\xi,$$
 and a stable map
$$f: C \rightarrow \widetilde{S}_\xi \,.$$  
The tangent and obstruction spaces relative to the morphism $\phi$ 
are given by
the cohomology groups in degrees $0$ and $1$ of the complex of vector spaces
\begin{equation} \label{RHomC}
R\Hom_{C}(f^*\Omega_{\widetilde{S}_\xi} \rightarrow \Omega_{C}, \calO_{C}).
\end{equation}
Here, the complex $f^*\Omega_{\widetilde{S}_\xi} \rightarrow \Omega_{C}$ 
is obtained from the map $f$ and is placed in degrees $-1$ and $0$.  
Following the method explained in Section \ref{tinder}, the absolute obstruction theory $\EE_{\epsilon}$ is then easily obtained from the relative
obstruction theory since $\B$ is nonsingular and $\Mbar_g(\epsilon,\beta)$ has no continuous
automorphisms.

We can similarly  construct perfect obstruction theories $\EE_0$ and $\EE_\eta$ for 
$\Mbar_g(\calS_0, \beta)$ and $\Mbar_{g}(\calS_0, \eta)$ respectively.  Just as in Section
\ref{tinder}, both  are related to 
$\EE_{\epsilon}$ via exact triangles (cf. \eqref{dg3} and \eqref{dg4})
\begin{equation}\label{compatibilityzero}
L_{0}^{\vee} \rightarrow  \iota_{0}^{*}\EE_{\epsilon}\rightarrow \EE_{0} \rightarrow
L_{0}^{\vee}[1]\ ,
\end{equation}
\begin{equation*} \label{compatibilityeta}
L_{\eta}^{\vee} \rightarrow  \iota_\eta^{*}\EE_{\epsilon}\rightarrow \EE_{\eta} \rightarrow
L_{\eta}^{\vee}[1]\ ,
\end{equation*}
where $L_0$ and $L_\eta$ are the line bundles in \eqref{product}.

In order to split the associated virtual class for $\EE_\eta$ into contributions from $R_1$ and $R_2$, 
we will require
 an exact triangle relating $\EE_\eta$ to the obstruction theories 
$\EE_{1}, \EE_{2}$ associated to each moduli space of relative stable maps
$\Mbar_{g_{i}}(R_i/E_i, \mathsf{s} + h_i \mathsf{f})$. This is the analogue of the exact
triangle \eqref{dg7} for stable pairs:
\begin{equation}\label{compatibilitysplitting}
N_{\Delta/E\times E}^{\vee} \rightarrow\EE_{1} \boxplus \EE_{2}\rightarrow \EE_{\eta} \rightarrow 
N_{\Delta/E\times E}^{\vee}[1].
\end{equation}
Here, $N_{\Delta/E\times E}^{\vee}$ denotes the conormal bundle to the diagonal of $E\times E$, 
pulled back to $\Mbar_g(\calS_0,\eta)$ via the evaluation maps.

\subsection{Reduced classes}\label{redcl}

The moduli space $\Mbar_g(\epsilon,\beta)$ carries
both an absolute obstruction theory \eqref{aaa4} and
an obstruction theory relative to $\B$ \eqref{RHomC}.
To define a reduced class as in Section \ref{rc}, we explain 
how to construct a 1-dimensional quotient of the 
relative obstruction space when $\beta = \mathsf{s} + h \mathsf{f}$.
We present a uniform treatment over all $\xi\in B$. However, unless $\xi=0$,
all structures involved with the singularities of $\calS_\xi$ are trivial.
 By an elementary analysis, the relative obstruction space over $\B$
equals the absolute
obstruction space since there is {\em no obstruction} to deforming
connected maps along
with deformations of the fibers of $\epsilon$.  
Hence, we obtain a 1-dimensional quotient of the
absolute obstruction theory of $\overline{M}_g(\epsilon,\beta)$ also.
\medskip

Let $\Omega_{\widetilde{S}_\xi}^{\mathrm{log}}$ denote the sheaf of differentials with logarithmic poles 
allowed along the singular locus
(the residues along each branch are required to add to zero).  
The sheaf $\Omega_{\widetilde{S}_\xi}^{\mathrm{log}}$ is
 locally free of rank $2$.  
Alternately, $\Omega_{\widetilde{S}_\xi}^{\mathrm{log}}$  is
 the sheaf of differentials for the log structure on $\widetilde{S}_\xi$ associated to the 
 smoothing of $\widetilde{S}_\xi$.  The surface $\widetilde{S}_\xi$ is log $K3$:
we have an isomorphism
$$\wedge^{2}\Omega_{\widetilde{S}_\xi}^{\mathrm{log}}\cong \calO_{\widetilde{S}_\xi}\,.
$$
Hence, there is a nondegenerate symplectic pairing on $\Omega_{\widetilde{S}_\xi}^{\mathrm{log}}$. 

Similarly, let $\Omega_{C}^{\mathrm{log}}$ denote the sheaf of differentials on $C$ 
with simple poles 
allowed only at the nodes mapping to the singular locus of $\widetilde{S}_\xi$
(again with the matching residue condition).  In a neighborhood of such nodes, 
$\Omega_{C}^{\mathrm{log}}$ is locally free.  
We may view $\Omega_{C}^{\mathrm{log}}$
as the sheaf of differentials associated to the log structure on $C$ pulled
back from 
$\widetilde{S}_\xi$ via $f$.  

\begin{lemma}\label{quasilog}
The natural map of complexes
$$[f^*\Omega_{\widetilde{S}_\xi}\rightarrow \Omega_C] \rightarrow 
[f^*\Omega_{\widetilde{S}_\xi}^{\mathrm{log}}\rightarrow \Omega_C^{\mathrm{log}}]$$
is a quasi-isomorphism.
\end{lemma}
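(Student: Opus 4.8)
The plan is to verify the quasi-isomorphism on stalks, reducing everything to a single local model at a node of the domain curve lying over the double locus of $\widetilde{S}_\xi$. Away from $f^{-1}(\operatorname{Sing}\widetilde{S}_\xi)$ there is nothing to do: the log structures on $\widetilde{S}_\xi$ and on $C$ are trivial there, since the nodes of $C$ at which $\Omega_C^{\mathrm{log}}$ acquires a pole are precisely those mapping into $\operatorname{Sing}\widetilde{S}_\xi$, so $f^*\Omega_{\widetilde{S}_\xi}\to f^*\Omega^{\mathrm{log}}_{\widetilde{S}_\xi}$ and $\Omega_C\to\Omega^{\mathrm{log}}_C$ are isomorphisms and the displayed morphism is an isomorphism of complexes. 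It remains to work in a formal neighbourhood of a point $q\in f^{-1}(\operatorname{Sing}\widetilde{S}_\xi)$.

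Here I would use the description of $\overline{M}_g(\epsilon,\beta)$ carried out above: for $\beta=\mathsf{s}+h\mathsf{f}$ the map has degree one over the base of the elliptic fibration, the section is rigid, and predeformability forces every meeting of $C$ with the double locus to occur at a node of $C$ of contact order $1$ on each branch — in particular no component and no smooth point of $C$ maps into $\operatorname{Sing}\widetilde{S}_\xi$. Thus, formally locally, $\mathcal O_{C,q}\cong R:=k[[u,v]]/(uv)$, $\mathcal O_{\widetilde{S}_\xi,f(q)}\cong k[[x,y,e]]/(xy)$, and $f^{\#}$ is $x\mapsto u$, $y\mapsto v$, $e\mapsto g(u)+h(v)$ with $g(0)=h(0)$.

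The core is the explicit computation in this model. One has $f^*\Omega_{\widetilde{S}_\xi}=\langle dx,dy,de\rangle_R/(v\,dx+u\,dy)$ and $\Omega_C=\langle du,dv\rangle_R/(v\,du+u\,dv)$, while $f^*\Omega^{\mathrm{log}}_{\widetilde{S}_\xi}=\langle\omega,de\rangle_R$ and $\Omega^{\mathrm{log}}_C=\langle\tau\rangle_R$ are free of ranks $2$ and $1$, with $\omega=f^*\!\tfrac{dx}{x}$ and $\tau=\tfrac{du}{u}=-\tfrac{dv}{v}$. The differential $df$ sends $dx\mapsto du$, $dy\mapsto dv$ and so is surjective onto $\Omega_C$; the log differential sends $\omega\mapsto\tau$ and so is surjective onto $\Omega^{\mathrm{log}}_C$. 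Hence $\mathcal H^0$ vanishes on both sides and the comparison is trivially an isomorphism there. A short computation gives $\ker(df)=R\cdot\bigl(de-g'(u)\,dx-h'(v)\,dy\bigr)$ and $\ker(df^{\mathrm{log}})=R\cdot\bigl(de-(u\,g'(u)-v\,h'(v))\,\omega\bigr)$, each free of rank $1$. Since the comparison map sends $dx\mapsto u\omega$, $dy\mapsto-v\omega$, $de\mapsto de$, it carries the first generator to the second and is therefore an isomorphism on $\mathcal H^{-1}$ as well. This finishes the local check, and with it the lemma.

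The step I expect to be the real obstacle is the $\mathcal H^{-1}$ bookkeeping: both $\Omega_{\widetilde{S}_\xi}\to\Omega^{\mathrm{log}}_{\widetilde{S}_\xi}$ and the differential $df$ have torsion kernels supported on the double locus, so one should present $f^*\Omega_{\widetilde{S}_\xi}$ via the free resolution $0\to R\xrightarrow{(v,u,0)}R^{3}\to f^*\Omega_{\widetilde{S}_\xi}\to0$ and track these torsion contributions with care; the content of the lemma is exactly that passing to logarithmic differentials clears them without affecting cohomology. A subsidiary point, needed to justify reducing to the single model above, is the claim that nothing maps into $\operatorname{Sing}\widetilde{S}_\xi$ except through the nodes of $C$, which rests on predeformability together with the special shape $\beta=\mathsf{s}+h\mathsf{f}$.
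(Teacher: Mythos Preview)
Your argument is correct: the stalk-by-stalk reduction is valid, the local model is the right one (predeformability plus the specific class $\beta=\mathsf{s}+h\mathsf{f}$ indeed forces every preimage of the double locus to be a node of contact order~$1$), and your explicit identification of the two kernels is accurate.

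The paper proceeds differently, and more structurally. Rather than computing $\mathcal{H}^{-1}$ and $\mathcal{H}^0$ by hand, it records the two four-term exact sequences
\[
0 \to f^*\O_E \to f^*\Omega_{\widetilde S_\xi} \to f^*\Omega_{\widetilde S_\xi}^{\mathrm{log}} \to f^*\O_E \to 0
\quad\text{and}\quad
0 \to \O_p \to \Omega_C \to \Omega_C^{\mathrm{log}} \to \O_p \to 0,
\]
in which the outer terms are respectively the torsion subsheaf and the residue cokernel of the passage from ordinary to logarithmic differentials. Because $f$ meets the double locus transversally, $f^*\O_E=\O_p$ and the vertical maps on the outer terms are isomorphisms; hence the cone of the comparison map of two-term complexes has acyclic kernel and cokernel complexes $[\O_p\xrightarrow{\sim}\O_p]$. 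This packages exactly the same content as your calculation: your torsion bookkeeping in $\mathcal{H}^{-1}$ is what the left-hand $\O_p$ records, and your surjectivity check on $\mathcal{H}^0$ is what the right-hand $\O_p$ records. The paper's approach is shorter and makes the mechanism (torsion and residue cancel in matched pairs) transparent without coordinates; your approach has the virtue of being entirely self-contained and of verifying directly that the pulled-back sequence stays exact, which the paper leaves implicit.
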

\begin{proof}
The result follows from the diagram of exact sequences
$$
\spreaddiagramrows{-.8pc}
\xymatrix{
0 \rto & f^*\O_E \rto\ar@{=}[d] & f^*\Omega_{\widetilde S_\xi} \rto\dto &
f^*\Omega_{\widetilde S_\xi}^{\mathrm{log}} \rto\dto & f^*\O_E \ar@{=}[d] \rto & 0 \\
0 \rto & \O_p \rto & \Omega_C \rto & \Omega_C^{\mathrm{log}} \rto & \O_p \rto & 0.\!}
$$
The leftmost terms are the torsion subsheaves of the cotangent sheaves and the rightmost terms are the residues of the logarithmic forms.
\end{proof}

The reduced obstruction space is given by the kernel of the following composition of
 morphisms:
\begin{align*}
\psi: \text{Ext}^1([f^*\Omega_{\widetilde{S}_\xi} \rightarrow \Omega_{C}], \calO_{C})
&=
\text{Ext}^1([f^*\Omega_{\widetilde{S}_\xi}^{\mathrm{log}} 
\rightarrow \Omega_{C}^{\mathrm{log}}], \calO_{C})\\
&\cong \text{Ext}^1([(f^*\Omega_{\widetilde{S}_\xi}^{\mathrm{log}})^{\vee} 
\rightarrow \Omega_{C}^{\mathrm{log}}], \calO_{C})\\
&\rightarrow \text{Ext}^1([\omega_C^{\vee} \rightarrow  \Omega_{C}^{\mathrm{log}}], \calO_C)\\
&\rightarrow H^1(C, \omega_C) = \CC.
\end{align*}
Lemma \ref{quasilog} yields the first equality.
 The second equality is a consequence of the symplectic pairing.
 The third map is constructed via  the pull-back 
\begin{equation}\label{mmy}
  f^*(\Omega_{\widetilde{S}_\xi}^{\mathrm{log}}) \rarr \omega_C \,.
\end{equation}
The last map is obtained from the vanishing of the composition
$$\omega_{C}^{\vee} \rightarrow (f^*\Omega_{\widetilde{S}_\xi}^{\mathrm{log}})^{\vee}
\rightarrow f^*\Omega_{\widetilde{S}_\xi}^{\mathrm{log}} \rightarrow \Omega_{C}^{\mathrm{log}}\
.$$
Here we use the fact that the symplectic structure vanishes when pulled back to $C$.
\begin{lemma}\label{surjection}
The map $\psi$ is surjective.
\end{lemma}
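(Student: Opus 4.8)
The statement is pointwise on $\Mbar_g(\epsilon,\beta)$, so I would fix a stable map $f\colon C\to\widetilde{S}_\xi$; since the target $H^1(C,\omega_C)\cong\CC$ is one--dimensional, it suffices to exhibit a single class in $\Ext^1([f^*\Omega_{\widetilde{S}_\xi}\to\Omega_C],\calO_C)$ on which $\psi$ does not vanish. The plan is to mirror the proof of Proposition \ref{not0} for stable pairs: the class will be the obstruction to lifting $f$ along a first order deformation of the (log) $K3$ target that carries $\beta$ out of type $(1,1)$, and the surjectivity will follow from evaluating $\psi$ on it.

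First I would choose the deformation. Because $\beta=\mathsf s+h\mathsf f$ is nonzero (e.g. $\langle\beta,\mathsf f\rangle=1$), the relevant pairing on the (log) Hodge structure is nondegenerate, and the symplectic form $\sigma$ gives the isomorphism $(\Omega^{\mathrm{log}}_{\widetilde{S}_\xi})^\vee\cong\Omega^{\mathrm{log}}_{\widetilde{S}_\xi}$ (equivalently $\wedge^2\Omega^{\mathrm{log}}_{\widetilde{S}_\xi}\cong\calO_{\widetilde{S}_\xi}$), one can pick a class $\kappa\in H^1\!\big(\widetilde{S}_\xi,(\Omega^{\mathrm{log}}_{\widetilde{S}_\xi})^\vee\big)$ with $\langle\kappa\ip\sigma,\beta\rangle\neq0$. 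Pulling $\kappa$ back to $C$ and composing with the natural morphism from $f^*(\Omega^{\mathrm{log}}_{\widetilde{S}_\xi})^\vee$ into the derived dual of $[f^*\Omega^{\mathrm{log}}_{\widetilde{S}_\xi}\to\Omega^{\mathrm{log}}_C]$, standard deformation theory (as in the proof of Proposition \ref{not0}) produces a class $\mathrm{ob}(\kappa)\in\Ext^1([f^*\Omega_{\widetilde{S}_\xi}\to\Omega_C],\calO_C)$ which is exactly the obstruction to deforming $f$ together with the deformation $\kappa$ of the target.

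The main step, and the point that really needs checking, is the evaluation $\psi(\mathrm{ob}(\kappa))=\pm\langle\kappa\ip\sigma,\beta\rangle$. Here I would run the argument of \eqref{cohobs}--\eqref{bb5}: passing to logarithmic bundles via Lemma \ref{quasilog} and using the symplectic isomorphism turns the cup product with $\mathrm{ob}(\kappa)$ followed by the trace into the wedge of Chern--character data with $\kappa\ip\sigma$, and the homotopy identity \eqref{htyy} --- which is precisely the vanishing of the composition $\omega_C^\vee\to(f^*\Omega^{\mathrm{log}}_{\widetilde{S}_\xi})^\vee\to f^*\Omega^{\mathrm{log}}_{\widetilde{S}_\xi}\to\Omega^{\mathrm{log}}_C$ used to define the last arrow of $\psi$ --- identifies $\psi(\mathrm{ob}(\kappa))$ with the image of $\kappa\ip\sigma$ under
$$H^1\!\big(\widetilde{S}_\xi,\Omega^{\mathrm{log}}_{\widetilde{S}_\xi}\big)\xrightarrow{\,f^*\,}H^1\!\big(C,f^*\Omega^{\mathrm{log}}_{\widetilde{S}_\xi}\big)\xrightarrow{\,\eqref{mmy}\,}H^1(C,\omega_C)=\CC,$$
which is cap product with the fundamental class $[C]$, i.e.\ the pairing with $\beta$. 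This is nonzero by the choice of $\kappa$, so $\ker\psi$ has codimension one and $\psi$ is surjective.

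The hard part will be making this identification rigorous over the singular fiber: one must check that the Chern--character and fundamental--class manipulations and the homotopy formula go through for perfect complexes on an expanded target $\widetilde{S}_\xi=R_1\cup_E(E\times\proj^1)\cup\cdots\cup_E R_2$, that the log Calabi--Yau condition $\wedge^2\Omega^{\mathrm{log}}_{\widetilde{S}_\xi}\cong\calO_{\widetilde{S}_\xi}$ is compatible with the gluings along the $E$'s, and that integration against $[C]$ still computes the pairing with $\beta$ there. As in Section \ref{tinder} all of this should be tame, because the multiplicity of $f$ along each copy of $E$ is $1$ and the section component $C_0\cong\proj^1$ meets the singular locus transversally at the single point $p$, so the computation reduces to the smooth case; over the smooth fibers $\xi\neq0$ the lemma is precisely the Gromov--Witten analogue of Proposition \ref{not0} (the construction of the reduced class on $\Mbar_g(S,\beta)$ of \cite{gwnl}), and heuristically $\psi$ then measures the nonvanishing obstruction $H^1(\proj^1,N_{\mathsf s/\widetilde{S}_\xi})\cong H^1(\proj^1,\omega_{\proj^1})=\CC$ carried by the rigid $(-2)$--section $\mathsf s$.
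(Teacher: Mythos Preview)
Your approach is the natural Gromov--Witten analogue of Proposition \ref{not0}: pick a first--order deformation $\kappa$ of the (log) $K3$ target moving $\beta$ out of type $(1,1)$, form the obstruction class $\mathrm{ob}(\kappa)$, and show $\psi(\mathrm{ob}(\kappa))=\langle\kappa\ip\sigma,\beta\rangle\neq0$. Over smooth fibers this is indeed the standard construction, and the paper says as much. Over the singular fiber, however, the paper takes a completely different and much more elementary route. Rather than developing log Hodge theory on $\widetilde S_0$ to find $\kappa$, or justifying the Chern--character and integration identities you invoke on an expanded degeneration, the paper simply observes that $\psi$ factors through $H^1(C,f^*\Omega^{\mathrm{log}}_{\widetilde S_0})\to H^1(C,\omega_C)$ and proves surjectivity of this sheaf--theoretic map directly: choose a separating node $p$ of $C$ lying over the singular locus, normalize $C$ at $p$ into $C_1\cup C_2$, and chase the commutative square obtained from $0\to\O_C\to\O_{C_1}\oplus\O_{C_2}\to\O_p\to0$ tensored with $f^*\Omega^{\mathrm{log}}_{\widetilde S_0}$ and $\omega_C$. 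The restriction of \eqref{mmy} to the fiber at $p$ is onto since $f$ is an embedding near $p$, and $H^0(\omega_C|_p)\to H^1(C,\omega_C)$ is onto by standard curve theory; surjectivity follows.

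What the comparison buys: your route is conceptually uniform with the stable pairs side and explains \emph{why} the co-section exists (it is the semiregularity map), but it leaves real work to do on the degenerate fiber --- the very work the paper's footnote in Section \ref{redcl} says it is deliberately avoiding (the Ran--Manetti type obstruction analysis). The paper's argument is local, needs no Hodge theory on the broken surface, and exploits only the fact that $C$ has a separating node mapping to $E$ with multiplicity one; it is shorter and requires no further foundations.
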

\begin{proof}
For $\xi \ne 0$, the claim is part of the usual construction of the reduced class.
We discuss only the singular case  $\widetilde{S}_0$.
The map
 $\psi$ is induced by   
\begin{equation}\label{mmyy}
\text{Ext}^1\big((f^*\Omega_{\widetilde{S}_0}^{\mathrm{log}})^\vee , \calO_C\big) \cong
H^1\big(C,f^*\Omega_{\widetilde{S}_0}^{\mathrm{log}}\big) \rightarrow H^1(C,\omega_C),
\end{equation}
where the latter arrow is obtained from \eqref{mmy}.
We will prove \eqref{mmyy} is surjective.

Pick a connected component of the singular locus 
of $\widetilde{S}_0$ and take the corresponding separating 
node $p$ of $C$.  
Let $C_1$ and $C_2$ denote the connected components of the normalization of $C$ at $p$.  
Consider the sequence
$$0\rightarrow \calO_C \rightarrow \calO_{C_1} \oplus \calO_{C_2} \rightarrow 
\calO_p\rightarrow 0\,. $$
Tensoring with the bundles 
$f^*\Omega_{\widetilde{S}_0}^{\mathrm{log}}$
and $\omega_C$, taking cohomology and using the map \eqref{mmy} gives
the commutative diagram
$$
\spreaddiagramrows{-.5pc}
\xymatrix{
H^0\left(f^*\Omega_{\widetilde S_0}^{\mathrm{log}}\big|_{p}\right) \rto\dto &
H^1\big(C,f^*\Omega_{\widetilde S_0}^{\mathrm{log}}\big) \dto \\
H^0(\omega_C|_p) \rto & H^1(C,\omega_C)\,.\!}
$$
The left hand arrow is surjective since $C$ is an embedding in a neighborhood of $p$. The lower arrow is surjective by standard curve theory. Therefore the right hand arrow
\eqref{mmyy} is also surjective.
\end{proof}

By taking duals and applying the above construction in families, we obtain a morphism
$$\gamma: \calO_{\Mbar_g(\epsilon,\beta)}[1]\rightarrow \EE_{\epsilon}$$
for which the induced map on obstruction sheaves
$$h^1((\EE_\epsilon)^{\vee}) \rightarrow \calO_{\Mbar_g(\epsilon,\beta)}$$ is surjective.
We refer to $\EER_{\epsilon} = \mathrm{Cone}(\gamma)$ as the 
associated reduced obstruction theory, even though \emph{a priori} not all the conditions of an obstruction theory may be satisfied.\footnote{With
greater effort, standard reduced obstruction theories could
surely be constructed here. 
For families of nonsingular $K3$ surfaces, the most
difficult aspect of the construction of the reduced theory
is the obstruction study of Ran \cite{Ran} and Manetti \cite{Man}. In order
to avoid such a study for the broken $K3$ surface over
$0\in B$, we restrict ourselves to a weak reduced theory which
is technically simpler and sufficient for our
purposes.}
Just as in Section \ref{rc} the results of Kiem-Li allow us to construct a reduced virtual class from $\EER_{\epsilon}$.

\subsection{Degeneration of the reduced class}
We can define the reduced obstruction theory 
for $\Mbar_g(\calS_0,\beta)$ and $\Mbar_g(\calS_0,\eta)$ via the compositions
$$
\gamma_0: \calO[1]\rightarrow \iota^*_{0}\EE_{\epsilon}\rightarrow \EE_{0}\ , $$
$$
\gamma_{\eta}: \calO[1]\rightarrow \iota^*_{\eta}\EE_{\epsilon}\rightarrow \EE_{\eta}\,.
$$
From the exact triangles \eqref{compatibilityzero}, we deduce 
the induced co-section of the obstruction space is surjective in both cases.
We can take the cones of $\gamma_0$ and $\gamma_\eta$
to obtain the reduced obstruction theories
$\EER_{0}$ and $\EER_{\eta}$
respectively.
The compatibility statements:
\begin{equation}\label{reducedcompzero}
L_{0}^{\vee} \rightarrow  \iota_0^{*}\EER_{\epsilon}
\rightarrow \EER_{0} \rightarrow
L_{0}^{\vee}[1]\ ,
\end{equation}
\begin{equation*}
L_{\eta}^{\vee} \rightarrow  \iota_\eta^{*}\EER_{\epsilon}
\rightarrow \EER_{\eta} \rightarrow
L_{\eta}^{\vee}[1]\ ,
\end{equation*}
continue to hold.

\begin{lemma}\label{reducedcompsplit}
We have a quasi-isomorphism on $\Mbar_{g}(\calS_0,\eta)$,
$$\EE_{1} \boxplus \EE_{2}\cong \EER_{\eta},$$
compatible with the structure maps to the cotangent complex of $\Mbar_{g}(\calS_0,\eta)$.
\end{lemma}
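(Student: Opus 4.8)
The plan is to follow the stable pairs argument of Section~\ref{tinder}, in particular Proposition~\ref{propiso} and the passage from \eqref{dg7} to \eqref{fp}. The only structural input is the exact triangle \eqref{compatibilitysplitting},
$$
N_{\Delta/E\times E}^{\vee}\To\EE_{1}\boxplus\EE_{2}\To\EE_{\eta}\overset{r}{\To}N_{\Delta/E\times E}^{\vee}[1],
$$
together with the reduced cosection $\gamma_\eta\colon\calO[1]\to\EE_\eta$ of Section~\ref{redcl}. Because the evaluation maps $\Mbar_{g_i}(R_i/E_i,\mathsf{s}+h_i\mathsf{f})\to E$ are constant with value $p$, the line bundle $N_{\Delta/E\times E}^{\vee}$ is canonically trivial with fibre $(T_pE)^\vee$. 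First I would reduce the lemma to the single assertion that $r\circ\gamma_\eta\colon\calO[1]\to N_{\Delta/E\times E}^{\vee}[1]$ is an isomorphism. Granting this, the octahedral axiom applied to the composite $\EE_1\boxplus\EE_2\to\EE_\eta\to\EER_\eta$ identifies its cone with the cone of $r\circ\gamma_\eta$, hence with $0$; so $\EE_1\boxplus\EE_2\to\EER_\eta$ is a quasi-isomorphism, automatically compatible with the maps to $\LLL_{\Mbar_g(\calS_0,\eta)}$ since each factor of the composite respects the obstruction-theory structure. This is the exact analogue of the way Theorem~\ref{rtrt}(iii) follows from \eqref{fp}.

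The substance is therefore the isomorphism $r\circ\gamma_\eta$, which is the Gromov--Witten counterpart of Proposition~\ref{propiso}. Dualizing, it suffices to show that at a closed point $[f\colon C\to\widetilde S_0]$ the composite of linear maps
$$
T_pE\;=\;N_{\Delta/E\times E}\;\To\;\mathrm{Ob}_\eta\;\overset{\psi}{\To}\;H^1(C,\omega_C)=\CC
$$
is nonzero, where the first arrow is the obstruction to smoothing the node $p'$ of $C$ over $p$ — equivalently, to deforming off the diagonal in $E\times E$, read off from the Serre dual of \eqref{compatibilitysplitting} — and the second is the cosection $\psi$ of Section~\ref{redcl}. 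Since the section $\mathsf{s}$ meets $E$ transversally at $p$, on the section component $C_0^i\cong D_i$ the log-normal coordinate of $\widetilde S_0$ pulls back to a local coordinate of $C_0^i$ vanishing at $p'$; hence the pull-back \eqref{mmy} sends the log-normal codirection at $p$ to a generator of $\omega_C$ at $p'$ and kills the codirection along $E$, while the symplectic identification $\wedge^2\Omega_{\widetilde S_0}^{\mathrm{log}}\cong\calO_{\widetilde S_0}$ interchanges these two one-dimensional directions up to the nonzero scalar $\sigma(\partial_E,\partial_D)$. Finally the coboundary $H^0(\omega_C|_{p'})\to H^1(C,\omega_C)$ is an isomorphism: $p'$ is a separating node and the dimension count $h^0(\omega_{C'}(p'))+h^0(\omega_{C''}(p'))=g_1+g_2=h^0(\omega_C)$ (as in the proof of Lemma~\ref{surjection}) forces the last map of $\psi$ to be the surjection $\CC\twoheadrightarrow\CC$. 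Composing the three nonzero maps shows $r\circ\gamma_\eta$ is a nonvanishing scalar.

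I expect the main obstacle to be bookkeeping rather than geometry: one must match the two \emph{a priori} different descriptions of the one-dimensional factor — the conormal $N_{\Delta/E\times E}^{\vee}$ split off in \eqref{compatibilitysplitting}, built from the cotangent complex of the fibered-product structure and the log differentials of $\widetilde S_0$, against $\gamma_\eta$, built from $\sigma$ and \eqref{mmy} — through the various exact triangles and cones, exactly as Lemma~\ref{same} and Proposition~\ref{coincide} do for stable pairs via the homotopy formula \eqref{htyy}. I would carry this out by first proving the Gromov--Witten analogue of Lemma~\ref{same}, namely that $\psi$ agrees with the dual of the deformation that spreads $f$ across the double locus of $\widetilde S_0$, and then tracing \eqref{compatibilitysplitting} and its Serre dual as in Proposition~\ref{propiso} to see that the gluing obstruction $T_pE\to\mathrm{Ob}_\eta$ feeds precisely that deformation into $\psi$. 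Once these identifications are in place the nonvanishing above finishes the argument, and the whole construction is $\CC^*$-equivariant because it is canonical and the scalar $\sigma(\partial_E,\partial_D)$ does not vary in moduli.
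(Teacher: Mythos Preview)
Your proposal is correct and follows essentially the same approach as the paper: reduce to showing the induced map $\calO[1]\to N_{\Delta/E\times E}^{\vee}[1]$ is an isomorphism, dualize at a closed point, and verify that the symplectic pairing on $\Omega_{\widetilde S_0}^{\mathrm{log}}$ swaps the $E$-direction with the dual of the section direction so that the composition through $H^0(\omega_C|_p)\cong H^1(C,\omega_C)$ is nonzero. The bookkeeping you flag in your last paragraph is handled in the paper more directly than you anticipate: rather than tracing analogues of Lemma~\ref{same} and Proposition~\ref{coincide}, the paper simply factors the composition \eqref{ccb} through the fibers at $p$ as in \eqref{vvp}, which makes the symplectic swap immediately visible without further diagram chasing.
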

\begin{proof}
By the second compatibility sequence \eqref{compatibilitysplitting} for 
$\EE_{\eta}$,
we have a natural map
\begin{equation}\label{jjjj}
\EE_1\boxplus\EE_2 \rightarrow \EER_{\eta}\,.
\end{equation}
If the induced map
\begin{equation}\label{nhh}
\calO[1] \rightarrow N_{\Delta/E\times E}^{\vee}[1]
\end{equation}
is a quasi-isomorphism, then \eqref{jjjj} is also a
quasi-isomorphism.

We prove the quasi-isomorphism statement
\eqref{nhh}
 after taking duals and passing to closed points.  
After fixing a curve, expanded target, and map 
$$f: C \rightarrow \widetilde{S}_0,$$ we
will prove the composition
\begin{equation}\label{ccb}
{T}_{p}E \rightarrow \text{Ext}^{1}
(f^*\Omega_{\widetilde{S}_0}^{\mathrm{log}},\calO_C)
\cong
 \text{Ext}^{1}
((f^*\Omega_{\widetilde{S}_0}^{\mathrm{log}})^\vee,\calO_C)
\rightarrow H^1(C,\omega_C)
\end{equation}
is an isomorphism.

As in the proof of Lemma \ref{surjection}, we will use the isomorphism
$$H^0(\omega_C|_p)\rightarrow H^1(C,\omega_C)$$
to factor the 
 composition in terms of the fibers of the vector bundles 
$f^*\Omega_{\widetilde{S}}^{\mathrm{log}}$ and $\omega_C$ over the point $p$.
In fact, the
composition \eqref{ccb}
 is the same as the composition 
\begin{equation}\label{vvp}
T_{p}E \rightarrow \left(\Omega_{\widetilde{S}_0}^{\mathrm{log}}\right)^{\vee}\Big|_{p}
\cong \Omega_{\widetilde{S}_0}^{\mathrm{log}}\big|_{p} \rightarrow \omega_{C}|_{p}\,.
\end{equation}
From an explicit local description of the symplectic pairing on 
$\Omega_{\widetilde{S}_0}^{\mathrm{log}}$,
the tangent direction along the elliptic fiber $E$ is
 identified with the dual of the tangent direction along the section $C$.  
As a consequence, composition \eqref{vvp} is an isomorphism.
\end{proof}

\begin{thm}\label{dmdm}
The reduced virtual class of maps to the degeneration $$\epsilon: \calS \rarr B$$
for primitive  $\beta = \mathsf{s} + h \mathsf{f}$
satisfies three basic properties:
\begin{enumerate}
\item[(i)] For all $\xi \in B$,
 $$\iota_{\xi}^{!}[\Mbar_{g}(\epsilon,\beta)]^{\mathrm{red}} 
= [\Mbar_{g}(\calS_\xi,\beta)]^{\mathrm{red}}\,.$$
\item[(ii)] For the special fiber,
 $$[\Mbar_g(\calS_0,\beta)]^{\mathrm{red}} 
= \sum_{\eta} \iota_{\eta*}[\Mbar_{g}(\calS_0,\eta)]^{\mathrm{red}}\,.$$
\item[(iii)] The factorizations
$$[\Mbar_{g}(\calS_0,\eta)]^{\mathrm{red}} = 
[\Mbar_{g_1}(R_1/E,\beta_1)]^{\mathrm{vir}} \times
[\Mbar_{g_2}(R_2/E,\beta_2)]^{\mathrm{vir}}$$
hold for $\beta_i= \mathsf{s}+h_i \mathsf{f}$.
\end{enumerate}
\end{thm}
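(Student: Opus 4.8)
The plan is to run the argument of Theorem~\ref{rtrt} essentially verbatim, replacing the stable-pairs obstruction theories by the Gromov--Witten ones and the stable-pairs compatibility triangles by their analogues from Section~\ref{redcl}. Concretely, one follows Jun Li's construction of the degeneration formula for the standard virtual class \cite{junli1,junli2} (and its refinement in \cite{LiWu}), but everywhere with the reduced obstruction theory $\EER_\epsilon$ of \eqref{aaa4} in place of $\EE_\epsilon$. The three inputs are: (a) the cone-specialization property \eqref{special} of \cite{BF}, which is a statement about the honest \emph{relative} theory over the nonsingular stack $\B$ and is therefore unaffected by passing to reduced classes; (b) the compatibility triangle $L_0^\vee\to\iota_0^*\EER_\epsilon\to\EER_0\to L_0^\vee[1]$ and its $\eta$-counterpart from \eqref{reducedcompzero}; and (c) Lemma~\ref{reducedcompsplit}, $\EE_1\boxplus\EE_2\cong\EER_\eta$.

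For (i), the co-section $\gamma\colon\calO[1]\to\EE_\epsilon$ built in Section~\ref{redcl} induces a surjection of obstruction sheaves onto $\calO$, so Kiem--Li \cite{KL} produces $[\Mbar_g(\epsilon,\beta)]^{\red}$ by intersecting the intrinsic normal cone, viewed as a cycle inside the kernel sub-bundle $F_1$ of the surjection $E_1\to\calO$, with the zero section. Restricting $\EER_\epsilon$ along $\iota_\xi$ gives the reduced obstruction theory of the fiber $\calS_\xi$ (for $\xi\neq0$ this is the usual one of \cite{gwnl}; for $\xi=0$ it is $\EER_0$, built from $\gamma_0$), and \eqref{special} read inside $F_1$ yields $\iota_\xi^![\Mbar_g(\epsilon,\beta)]^{\red}=[\Mbar_g(\calS_\xi,\beta)]^{\red}$, exactly as in Section~\ref{rc}. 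For (ii), the special fiber $\Mbar_g(\calS_0,\beta)$ is the zero locus of the pseudo-divisor $(L_0,s_0)$ pulled back from $0\in B$, and $(L_0,s_0)=\bigotimes_\eta(L_\eta,s_\eta)$ with $V(s_\eta)=\Mbar_g(\calS_0,\eta)$ by \eqref{product}. Feeding this factorization of the defining section into the triangles \eqref{reducedcompzero} and running the refined intersection computation of \cite{LiWu,junli2} word for word produces $[\Mbar_g(\calS_0,\beta)]^{\red}=\sum_\eta\iota_{\eta*}[\Mbar_g(\calS_0,\eta)]^{\red}$.

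For (iii), note that because $\mathsf{s}$ is rigid both evaluation maps \eqref{L234} are constant with value $p\in E$, so $\Mbar_g(\calS_0,\eta)$ is a genuine product $\Mbar_{g_1}(R_1/E,\beta_1)\times\Mbar_{g_2}(R_2/E,\beta_2)$ and the conormal bundle $N^\vee_{\Delta/E\times E}$ appearing in \eqref{compatibilitysplitting} is precisely the one-dimensional piece removed by the reduction. Lemma~\ref{reducedcompsplit} then identifies $\EER_\eta$ with the external sum $\EE_1\boxplus\EE_2$ of the two standard relative perfect obstruction theories, compatibly with the maps to $\LLL_{\Mbar_g(\calS_0,\eta)}$; hence the reduced virtual class on the product is the product of the two standard virtual classes, which is (iii). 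Finally, every construction above is manifestly equivariant for the $\com^*$-action scaling the $\com$-factor and trivial on $\calS$, so all three identities hold in $\com^*$-equivariant cycle theory.

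The main obstacle is the one already flagged in Section~\ref{redcl}: $\EER_\epsilon$ has only been shown to be a \emph{weak} reduced obstruction theory --- we have a co-section inducing a surjection on obstruction sheaves, which is enough for Kiem--Li, but not a verification of all the Behrend--Fantechi axioms. So one must check that the Kiem--Li cycle living in $F_1$ behaves well under the two operations used above, namely refined Gysin pullback along $\iota_\xi$ and intersection with the pseudo-divisors $(L_\eta,s_\eta)$. The saving grace is that both the cone-specialization \eqref{special} and the bundle-of-cones formalism involve only the relative obstruction theory over the nonsingular $\B$, which \emph{is} honest; once the functoriality of the Kiem--Li construction under these two operations is recorded, the rest is the bookkeeping of \cite{LiWu} and \cite{junli2}, with the conormal bundles of the diagonal handled by Lemma~\ref{reducedcompsplit}.
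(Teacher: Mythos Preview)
Your approach is correct and is essentially the paper's own: follow Li's degeneration argument \cite{junli2} verbatim, substituting the reduced compatibility triangles \eqref{reducedcompzero} for the standard ones \eqref{compatibilityzero}, and deduce (iii) directly from Lemma~\ref{reducedcompsplit}. One small correction: your closing remark about $\com^*$-equivariance is misplaced here, since Theorem~\ref{dmdm} is about stable maps to the surface family $\epsilon\colon\calS\to B$ itself, with no $\com$-factor to scale --- the $\com^*$-equivariant refinement belongs to the stable-pairs Theorem~\ref{rtrt} on $\calS\times\com$, not to this statement.
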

\begin{proof}
The proof exactly follows the
proof of the parallel statements in J. Li's papers \cite{junli2} 
for the degeneration formula for the standard obstruction 
theory.
We use the compatibility of reduced obstruction theories \eqref{reducedcompzero}
 instead of \eqref{compatibilityzero}. 
 The statement (iii) is immediate from Lemma \ref{reducedcompsplit}.
\end{proof}

 \subsection{Non-primitive degeneration} \label{npd} 
We announce here a degeneration formula for arbitrary (not
necessarily primitive)
classes 
$$\beta  = m \mathsf{s} + h \mathsf{f}\ $$ 
with respect to the family 
$\epsilon$. Proofs and applications will appear
in \cite{mpt2}.

As before, 
a stable map to $\mathcal{S}_0$ can be split (non-uniquely) into
relative stable maps to $(R_1,E)$ and $(R_2,E)$.
The distribution of genera and fiber classes is given
by the data
$$\eta = (g_1,g_2,h_1,h_2)\,.$$
Let $\beta_i= m \mathsf{s} + h_i \mathsf{f}$ as before.
The additional data of an ordered partition $\gamma$ of $m$
$$\gamma= (\gamma_1, \ldots, \gamma_\ell), \ \ \sum_{i=1}^\ell \gamma_i =m$$
specifies the multiplicities with which $l$ distinct points on the two sides map to
the relative divisor.

The moduli spaces of relative maps{\footnote{The
superscript $\bullet$ denotes the moduli of maps
with possibly disconnected
domain curves. The map, however, is required to
be nonconstant on every connected component of the domain.
}}
$$\overline{M}^\bullet_{g_i}(R_i/E, \beta_i)_\gamma$$
are well-defined
and admit boundary evaluation maps
\begin{equation}\label{gtmm3}
\text{ev}_{\mathrm{rel}}:\overline{M}^\bullet_{g_i}(R_i/E, \beta_i)_\gamma \rarr
E^{\ell}\,.
\end{equation}
We define $\overline{M}^\bullet_{g}(\mathcal{S}_0,\eta)_\gamma$
by the fiber product of the boundary evaluations,
\begin{equation}\label{fred5}
\overline{M}^\bullet_{g}(\mathcal{S}_0,\eta)_\gamma
= 
\overline{M}^\bullet_{g_1}(R_1/E, \beta_1)_\gamma
\times_{E^{\ell}} \overline{M}^\bullet_{g_2}(R_2/E, \beta_2)_\gamma \,.
\end{equation}

After a construction of the reduced virtual class
for the family $\epsilon$,
Parts (i) and (ii) of Theorem \ref{dmdm} hold 
without change.
\begin{enumerate}
\item[(i)] For all $\xi \in B$,
 $$\iota_{\xi}^{!}[\Mbar_{g}^\bullet(\epsilon,\beta)]^{\mathrm{red}} 
= [\Mbar_{g}^\bullet(\calS_\xi,\beta)]^{\mathrm{red}}\,.$$
\item[(ii)] For the special fiber,
 $$[\Mbar^\bullet_g(\calS_0,\beta)]^{\mathrm{red}} 
= \sum_{\eta}\sum_{\gamma} \iota_{\eta,\gamma,*} 
\left(\frac{1}{|\text{Aut}(\gamma)|}
[\Mbar^\bullet_{g}(\calS_0,\eta)_\gamma]^{\mathrm{red}} \right)\, . $$
$\text{Aut}(\gamma)$ is the symmetry group
permuting equal parts of $\gamma$.
\end{enumerate}
 However,
the factorization rule (iii) is 
more interesting.

The class $\mathsf{s}$ determines, by intersection,
 a line bundle $\mathcal{L}$ of degree 1 on
the relative $E \subset R_1$.
Since the class $\mathsf{f}$ restricts to the trivial
line bundle on $E$,
the image of the boundary evaluation must lie in the
subvariety
$V \subset  E^{\ell}$ defined by
$$V= \left\{ \ (p_1, \ldots, p_\ell) \ | \ 
\mathcal{O}_E\Big(\sum_{i}^{\ell}\gamma_i p_i\Big) 
\cong {\mathcal{L}}^m \ \right\}\,.$$
The subvariety $V$ is nonsingular{\footnote{However,
$V$ need not be connected.}} of pure codimension 1.

The product of the boundary evaluation maps \eqref{gtmm3} yields
\begin{equation}\label{gtmm3f}
\text{ev}_{V\times V}:\overline{M}^\bullet_{g_1}(R_1/E, \beta_1)_\gamma 
\times 
\overline{M}^\bullet_{g_2}(R_2/E, \beta_2)_\gamma 
\rarr V \times V\,.
\end{equation}
Let $\Delta\subset V \times V$ be the diagonal. We also denote the inclusion map by $\Delta$.
By definition \eqref{fred5},
\begin{equation*}
\overline{M}_{g}^\bullet(\mathcal{S}_0,\eta)_\gamma
\cong \text{ev}^{-1}_{V\times V} (\Delta)\,.
\end{equation*}
The following rule replaces part (iii) of Theorem \ref{dmdm}.

\begin{enumerate}
\item[(iii)] The reduced virtual class of
$\Mbar^\bullet_{g}(\calS_0,\eta)_\gamma$ is given
by the Gysin pull-back of the standard virtual
classes of $\overline{M}_{g_i}(R_i/E, \beta_i)_\gamma$,
\begin{equation*}
[\Mbar_{g}^\bullet(\calS_0,\eta)_\gamma]^{\mathrm{red}} =
\Delta^! \Big(
[\overline{M}^\bullet_{g_1}(R_1/E, \beta_1)_\gamma]^{\mathrm{vir}} 
\times 
[\overline{M}^\bullet_{g_2}(R_2/E, \beta_2)_\gamma]^{\mathrm{vir}} 
\Big)\,.
\end{equation*}
\end{enumerate}
A detailed discussion will be given in \cite{mpt2}.

While the reduced Gromov-Witten invariants of $S$ with
disconnected domains vanish, the disconnected
theory is more natural for the factorization (iii).
Disconnected maps to $(R_1,E)$ and $(R_2,E)$
may glue to produce a connected map to $\calS_0$.

\section{Toric Gromov-Witten/Pairs correspondence}
\label{gwptor}

\subsection{Toric 3-folds}
Let $V$ be a nonsingular toric 3-fold acted upon by a
3-dimensional algebraic torus $\mathsf{T}$.
The conjectural Gromov-Witten/Pairs correspondence of 
\cite{pt1,pt2,pt3} for toric varieties equates the 
$\mathsf{T}$-equivariant Gromov-Witten theory of $V$ 
to
$\mathsf{T}$-equivariant stable pairs theory of $V$.
Since both sides can be defined by $\mathsf{T}$-equivariant
residues, $V$ is not required to be compact.
We refer the reader to Sections
3.1-3.2 of \cite{pt1} for background.

\begin{thm}
The $\mathsf{T}$-equivariant Gromov-Witten/Pairs correspondence
with primary field insertions 
holds for $V$.
\end{thm}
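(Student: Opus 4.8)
The plan is to follow the strategy of \cite{moop}, with the equivariant Donaldson--Thomas vertex used there replaced by the equivariant stable pairs vertex of \cite{pt3}. First I would apply $\mathsf{T}$-equivariant virtual localization to both the Gromov--Witten and the stable pairs theories of $V$: on the Gromov--Witten side this is the standard localization of \cite{GP}, and on the stable pairs side it is the localization employed in \cite{pt2,pt3}. In both theories the $\mathsf{T}$-fixed loci are organized along the toric graph of $V$, and the $\mathsf{T}$-residue invariant of $V$ factors into a product of \emph{edge} contributions and \emph{vertex} contributions, the latter being the fully equivariant one-, two-, and three-leg vertices $\mathsf{W}^{GW}$ and $\mathsf{W}^{P}$. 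A primary insertion $\tau_0(\gamma)$ with $\gamma\in H^*_{\mathsf{T}}(V)$ localizes to a sum over the $\mathsf{T}$-fixed points weighted by the restrictions $\gamma|_{p}$; for Gromov--Witten theory this is immediate, and for stable pairs it holds because $\mathrm{ch}_2(\mathbb{F})$ is supported along the $\mathsf{T}$-invariant curves. In this way the correspondence for $V$ with primary insertions is reduced to a correspondence between the leg-decorated Gromov--Witten and stable pairs vertices after the substitution $-e^{iu}=y$, together with the rationality of the stable pairs side.

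Second I would prove the vertex correspondence by the gluing/degeneration argument of \cite{moop}. Both equivariant theories carry a degeneration (TQFT) structure for degenerations of $\com^2\times\proj^1$ along a fiber $\com^2\times\{0\}$, via the relative stable maps of \cite{junli1,junli2} and the relative stable pairs of \cite{LiWu}; the reduced-class bookkeeping of Sections \ref{spd}--\ref{gwd} plays no role here, since the relevant obstruction theories on toric targets are the ordinary ones. As in \cite{moop}, it then suffices to establish the correspondence for the \emph{capped} building blocks --- the cap $\com^2\times\proj^1$ relative to a fiber, together with enough gluing identities to reconstruct the general vertex from caps and tubes. The cap correspondence is in turn deduced from the Gromov--Witten/Pairs correspondence for the relative geometry of local curves: the Gromov--Witten series are the local $\proj^1$ evaluations of Bryan--Pandharipande, while the stable pairs series follow from the stable pairs vertex computations of \cite{pt3} together with the absolute/relative and degeneration formulas for stable pairs \cite{LiWu}. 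Matching these series after the variable change completes the vertex correspondence and hence the theorem.

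The main obstacle will be the same as in \cite{moop}: proving rationality of the equivariant stable pairs partition functions of the open building blocks --- so that the analytic continuation implicit in $-e^{iu}=y$ is meaningful --- and pinning down the precise normalization matching of the vertices, including the ``degree zero'' contributions relating stable pairs to ideal sheaves. Once rationality and the cap correspondence are in place, the extension to an arbitrary nonsingular toric $V$ and the incorporation of primary insertions are formal: a $\tau_0$ insertion only alters the equivariant weights attached to vertices and edges and is compatible with all the gluing relations.
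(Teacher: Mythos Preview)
Your high-level strategy --- follow \cite{moop} with the Donaldson--Thomas vertex replaced by the stable pairs vertex --- is exactly what the paper does, and your treatment of primary insertions via localization is fine. But the reduction you sketch is incomplete in a way that matters. The \cite{moop} argument does \emph{not} reduce everything to the cap $\C^2\times\PP^1$ with ``caps and tubes'': the formal reduction lands on relative invariants of $\C^2\times\PP^1$ \emph{and} of $A_n\times\PP^1$ for $n=1,2$, and the multi-leg vertices genuinely require the $A_n$ geometries. You cannot reconstruct the 2- and 3-leg capped vertices from 1-leg data alone, so the appeal to Bryan--Pandharipande local-curve evaluations and \cite{pt3} does not close the argument.

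What the paper actually proves new on the stable pairs side is a package of vanishing/divisibility results for $A_n\times\PP^1$ that parallel those used in \cite{GWDT,mo2} for ideal sheaves. The key technical lemma is that the $q^n$ coefficient of the 1-leg stable pairs vertex $\mathsf{W}^P_\mu$ is divisible by $s_1+s_2$ whenever $n>|\mu|$; this is the stable pairs analogue of the corresponding Donaldson--Thomas vanishing, and it is proved by a direct combinatorial analysis of the $\mathsf{T}$-fixed labelled box configurations. With these vanishings in hand, the full theory of $A_n\times\PP^1$ reduces, modulo $(s_1+s_2)^2$, to specific 2-point relative invariants, and those are matched to 2-point functions in the quantum cohomology of $\mathrm{Hilb}(A_n)$ via an open-set isomorphism between the rubber moduli of stable pairs and the moduli of stable maps to the Hilbert scheme. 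Since the endpoint is a Hilbert-scheme computation independent of which sheaf theory you use, the correspondence follows. Your proposal should be revised to include the $A_n$ step and the $(s_1+s_2)$-divisibility lemma; the rationality issue you flag is handled along the way rather than being the principal obstacle.
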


\begin{proof}
The $\mathsf{T}$-equivariant 
Gromov-Witten/Donaldson-Thomas correspondence with primary
field insertions
has been established for toric $V$ in \cite{moop}. The strategy of
\cite{moop} is to match the capped 
Gromov-Witten and Donaldson-Thomas vertices. The 1-leg case
was previously proven in \cite{GWDT} using \cite{BryP,OP6}.
The Gromov-Witten and Donaldson-Thomas theory
of $A_n$-resolutions  was studied in \cite{dmt,mo1,mo2}.
Once the local theories of $A_n$-resolutions are matched, the method of 
\cite{moop} is completely formal, relying only on properties of localization and degeneration
of the relevant moduli spaces.

To prove the Gromov-Witten/Pairs correspondence, we take 
precisely the same path. 
All the required results
for the Gromov-Witten capped vertex have already
been proven. Only the parallel results for the 
local stable pairs theory of $A_n$-resolutions
over curves are required.
For the latter, we follow, step by step, the
arguments from \cite{mo2,GWDT}.

Stable pairs have the same formal
properties (with respect to localization and
degeneration) as the Donaldson-Thomas theory
of ideal sheaves.  Moreover, the relative moduli spaces in each 
theory both involve the Hilbert scheme of points on a surface.  Thus, 
there is no difficulty in
following the arguments. Indeed, stable
pairs are simpler to treat since the support
is of pure dimension 1.  

Let us first review the basic steps in the proof of the Gromov-Witten/ Donaldson-Thomas correspondence, afterwards indicating which 
aspects
 require modification.

\begin{itemize}

\item \emph{Reduction to special geometries}

In \cite{moop}, a formal procedure
 is given to reduce 
invariants with primary insertions for arbitrary toric threefolds to relative invariants on the threefolds
\begin{equation*} 
\CC^2\times\proj^{1} \ \text{ and } \  A_n\times\proj^1 \text{ for
 $n=1,2$\ .}
\end{equation*}
The arguments in \cite{moop} rely on the degeneration formalism, localization for relative moduli spaces, and dimension counts - all of which behave the same for stable maps, ideal sheaves, and stable pairs.  In particular, the formal procedure
 can be applied to stable pairs to reduce the correspondence to the case of these specific 3-folds.

We view $\C^2$ as $A_0$. 
For the three special geometries, 
the proofs in \cite{GWDT,mo2} are based on the  following
principles.

\item \emph{Vanishing results}

Let $\sigma$ be a holomorphic symplectic
form on $A_n$ fixed by a 1-dimensional torus action on $A_n$.
Let $$\mathsf{T}_0\subset \mathsf{T}$$ 
be the 2-dimensional subtorus which
acts on the toric 3-fold
$A_n \times \proj^1$
and fixes the pull-back of $\sigma$.
\begin{enumerate}
\item[(i)] The $\mathsf{T}_0$-equivariant theory of
$A_n \times \proj^1$ 
vanishes unless the curve class $\beta$ is 
contracted over $A_n$.
\item[(ii)] In case the curve class $\beta$ is
contracted over $A_n$, the $\mathsf{T}_0$-equivariant theory of
$A_n \times \proj^1$ vanishes unless
the holomorphic Euler characteristic $\chi$ is minimal.
\end{enumerate}

\item \emph{Reduction to specific computations}

Using the vanishing results (i-ii), 
the full theory of
$A_n\times\proj^1$ is reduced to the calculation of specific 2-point 
invariants on the relative theory of $A_n\times\proj^1$
modulo $(s_1+s_2)^{2}$, where $s_1+s_2$ is the equivariant weight of the holomorphic symplectic form $\sigma$.  For $\CC^2$, the
argument is given in \cite{GWDT}. 
For $A_n$, the argument is given in \cite[Propositions 4.3 and 4.4]{mo2}.
In each case, the proof consists of comparing the Nakajima and fixed-point bases of $\text{Hilb}(A_n)$ and applying the vanishing statement along with certain dimension counts.
Again the argument is completely formal since relative insertions 
behave in the same manner for stable pairs and ideal sheaves.

\item \emph{Reduction to Hilbert scheme geometry}

The last step is to prove that the specific calculations
discussed above
can be identified with 2-point invariants
in the quantum cohomology of $\text{Hilb}(A_n,d)$ via a 
basic  matching result.

Consider the moduli space of genus 0, 2-pointed stable maps
to the Hilbert scheme of $d$ points of $A_n$ of curve class
$\beta$, 
$$\overline{M}_{0,\{0,\infty\}}
(\text{Hilb}(A_n,d), \beta)\ . $$
There is an open set
$$U_{n,d,\beta} \subset \overline{M}_{0,\{0,\infty\}}
(\text{Hilb}(A_n,d), \beta)$$
corresponding to the locus
where the domain is a simple chain of rational curves.
\begin{enumerate}
\item[(iii)] The canonical map from $U_{n,d,\beta}$
to the moduli space of ideal sheaves of   
$A_n \times \proj^1$- rubber relative  to
$0,\infty \in \proj^1$ is an isomorphism which
respects the obstruction theory.
\end{enumerate}

For any 2-point invariant, we study
 the contribution of virtual localization 
to any given fixed-point locus. 
If the locus does not lie in the open set of (iii), 
then the vanishing results imply the
contribution is divisible by $(s_1+s_2)^{2}$ and
 so is irrelevant for the computation.
 
\end{itemize}

To complete the proof of the Gromov-Witten/Pairs
correspondence, we follow the same 4 steps with stable pairs instead
of ideal sheaves.  Since the endpoint of the argument is 
 given by calculating
2-point invariants on $\text{Hilb}(A_n,d)$, the correspondence 
is independent of which sheaf theory we use.

All arguments in the above outline are formal except for 
the vanishing statements and the matching with the 
Hilbert scheme geometry.  In particular, it remains
to establish properties (i-iii) for
the stable pairs theory of the 3-fold $A_n\times \proj^1$.

For (i), a direct approach is obtained by constructing
a trivial quotient of the obstruction sheaf following Section \ref{spd}.
Property (ii) is equivalent to a vanishing for
the 1-leg stable pairs vertex which can be checked explicitly
from the localization formulas of \cite{pt2}. The proof is
presented in Section \ref{fff3}.
Property (iii) is immediate since  $U_{n,d,\beta}$
maps to the locus where the moduli of ideal sheaves
is isomorphic to the moduli of stable pairs.
\end{proof}

\subsection{1-leg stable pairs vertex}
\label{fff3}
We prove the necessary divisibility statement for stable pairs 
invariants for 
$$X = \CC^2\times\PP^1\ .$$  
The first two factors of the torus 
$\mathsf{T} = (\CC^*)^{3}$
act on $\CC^2$ by scaling the coordinates, and the last  factor
 acts on $\proj^1$ in the standard manner.
The
stable  invariants take values in
the equivariant cohomology ring $\QQ(s_1,s_2,s_3)$.  
Via localization \cite{pt2}, 
the contribution of the virtual class 
can be decomposed into vertex and edge terms associated to 
the moment polytope of $X$.
Given a partition $\mu$, the vertex contribution is 
given by a Laurent series
$$\mathsf{W}_{\mu}^{P}(q; s_1,s_2,s_3) \in \QQ(s_1,s_2,s_3)(\!(q)\!).$$

\begin{lemma}
The $q^{n}$ coefficient of $\mathsf{W}_{\mu}^{P}$ is divisible by
$s_1+s_2$ for $n > |\mu|$.
\end{lemma}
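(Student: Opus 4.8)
The plan is to restrict to the subtorus $\mathsf{T}_0 = \ker(s_1+s_2)\subset\mathsf{T}$ and exploit the $\mathsf{T}_0$-invariant holomorphic symplectic form $\sigma = dz_1\wedge dz_2$ on the $\C^2 = \C^2_{z_1,z_2}$ factor of $\C^3 = \C^2\times\C_{z_3}$, in the spirit of Sections~\ref{spd}--\ref{spsym}. First I would write out the localization formula of \cite{pt2} for the $1$-leg vertex: for a partition $\mu$ with $|\mu|=d$,
$$
\mathsf{W}^P_\mu(q;s_1,s_2,s_3)\;=\;\sum_Q \frac{q^{\chi(Q)}}{e(T^{\mathrm{vir}}_Q)},
$$
the sum running over the (isolated) $\mathsf{T}$-fixed stable pairs $Q$ on $\C^3$ asymptotic along the $z_3$-axis to the monomial cylinder $Z_\mu\times\C_{z_3}$, where $Z_\mu\subset\C^2$ is the monomial subscheme of $\mu$. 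Every such $Q$ is obtained from $\O_{Z_\mu\times\C_{z_3}}$ by adding a finite ``defect'' supported near the origin; in the standard renormalization $\chi(Q)=|\mu|+\ell(Q)$ with $\ell(Q)\ge 0$ the length of the defect, so $\chi(Q)=n>|\mu|$ precisely when $\ell(Q)>0$. Here $T^{\mathrm{vir}}_Q = \mathrm{Ext}^1(I^\bullet_Q,I^\bullet_Q)_0-\mathrm{Ext}^2(I^\bullet_Q,I^\bullet_Q)_0$ is the virtual tangent space, whose $\mathsf{T}$-character is the explicit vertex character of \cite{pt2}.

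Next I would run the trivial-quotient construction of Section~\ref{spsym} $\mathsf{T}_0$-equivariantly, fibrewise over $\C_{z_3}$. The pullback of $\sigma$ is a $\mathsf{T}_0$-invariant $2$-form, nondegenerate along the fibres of $\C^3\to\C_{z_3}$; Serre duality together with contraction against $\sigma$ then produces at each fixed point $Q$ a distinguished subrepresentation of the obstruction space $\mathrm{Ext}^2(I^\bullet_Q,I^\bullet_Q)_0$ of $\mathsf{T}_0$-weight $0$ — equivalently of $\mathsf{T}$-weight $s_1+s_2$, the weight of $\sigma$ — dual, as in \eqref{map2}, to the Hamiltonian (symplectic) translations of the defect in the $\C^2$ directions. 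The crucial point, and the place where the explicit formula of \cite{pt2} enters, is that this subrepresentation has rank exactly $\ell(Q)$, in particular is nonzero whenever $\ell(Q)>0$, and that it exceeds the multiplicity of the weight $\pm(s_1+s_2)$ in the tangent space $\mathrm{Ext}^1(I^\bullet_Q,I^\bullet_Q)_0$. Granting this, $1/e(T^{\mathrm{vir}}_Q)$, read as the ratio of Euler classes of obstruction and tangent weight spaces, is regular along $\{s_1+s_2=0\}$ and vanishes there to order $\ell(Q)$. Summing over the finitely many $Q$ with $\chi(Q)=n>|\mu|$ shows the $q^n$-coefficient of $\mathsf{W}^P_\mu$ lies in $(s_1+s_2)\,\QQ(s_1,s_2,s_3)$ with no pole along $\{s_1+s_2=0\}$, which is the assertion.

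The main obstacle is precisely this multiplicity statement: one must verify from the vertex character of \cite{pt2}, for a general monomial configuration $Q$ with nonempty defect, that the net $\mathsf{T}_0$-weight-$0$ contribution lands in the obstruction term $\mathrm{Ext}^2(I^\bullet_Q,I^\bullet_Q)_0$ rather than $\mathrm{Ext}^1$, and with the predicted positive multiplicity $\ell(Q)$. A convenient way to organize this conceptually is to observe that on $\{s_1+s_2=0\}$ the fibrewise Hilbert schemes $\mathrm{Hilb}(\C^2)$ are holomorphic symplectic with $\mathsf{T}_0$-invariant form, so that the $\mathsf{T}_0$-equivariant deformation--obstruction theory of the pairs moduli over $\C^3$ becomes symmetric in its $\mathsf{T}_0$-fixed directions, forcing the extra length $\ell(Q)$ into a trivial $\mathsf{T}_0$-summand of the obstruction bundle; one then lifts this to the full $\mathsf{T}$-equivariant statement using the explicit description of the fixed pairs as thickenings of $Z_\mu\times\C_{z_3}$. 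Since the vanishing then holds term by term in the localization sum, there is no issue of cancellation of poles between distinct fixed points.
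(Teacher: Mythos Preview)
Your proposal has a genuine gap, and the approach is different from the paper's in a way that matters.

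The paper's proof is a direct combinatorial computation. It writes out the explicit virtual tangent character $H(t_1,t_2,t_3)$ of each $\mathsf T$-fixed stable pair $\mathcal Q$, observes that divisibility of $\mathsf w_{\mathcal Q}$ by $s_1+s_2$ is equivalent to the constant term of $H(t,t^{-1},u)$ being strictly negative, and then evaluates that constant term by encoding $\mathcal Q$ as a chain of skew diagrams $\rho_k=\mu\setminus\nu_k$. The constant term is shown to equal
\[
-c_0(\rho_{-1})+\tfrac12\sum_r\Big(d_r(\rho_0)^2-\sum_{k\le 0}(d_r(\rho_k)-d_r(\rho_{k-1}))^2\Big),
\]
and a short case analysis shows this is strictly negative whenever $|\mathcal Q|>|\mu|$. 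No geometry of symplectic forms enters.

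Your conceptual approach, by contrast, tries to import the trivial-quotient mechanism of Sections \ref{spd}--\ref{spsym}. But that mechanism was built for a $3$-fold $S\times\C$ with $S$ holomorphic symplectic and the curve class supported \emph{in} $S$; here the geometry is $\C^2\times\C_{z_3}$ with the curve class supported along the $z_3$-axis, \emph{transverse} to the symplectic surface. The translation vector field $\partial_t$ in Section \ref{spsym} moved the entire pair inside the Calabi--Yau; your ``Hamiltonian translations of the defect in the $\C^2$ directions'' do not move the underlying cylinder $Z_\mu\times\C_{z_3}$, so they are not deformations of the stable pair and do not produce the map $\O\to\mathrm{Ext}^1(I^\bullet,I^\bullet)_0$ you need. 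Indeed, the paper itself (Section \ref{gwptor}) separates two vanishing statements: property (i), for curve classes non-contracted over $A_n$, is handled by the reduced-class construction you invoke; property (ii), the $\chi$-minimality vanishing which is exactly this lemma, is explicitly \emph{not} handled that way but by the combinatorial check.

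Your central claim---that the $\mathsf T_0$-trivial piece of $\mathrm{Ext}^2$ has rank exactly $\ell(\mathcal Q)$ and dominates the corresponding piece of $\mathrm{Ext}^1$---is left as ``the main obstacle,'' and the heuristic you offer (symmetry of the obstruction theory over $\mathrm{Hilb}(\C^2)$) does not pin down the sign or the multiplicity. In fact the paper's formula shows the relevant constant term is \emph{not} $-\ell(\mathcal Q)$ in general; it is bounded above by $-c_0(\rho_{-1})$ with an additional nonpositive correction, and the argument genuinely uses the condition $|\mathcal Q|>|\mu|$ (not merely $\ell(\mathcal Q)>0$, which is the condition your normalization $\chi(Q)=|\mu|+\ell(Q)$ would give). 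To close your argument you would end up computing the weight-zero part of the vertex character anyway---which is the paper's proof.
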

\begin{proof}
The proof here is nearly identical to 
the calculation in \cite{GWDT} and \cite{mnop2}
 with a slightly different combinatorial formula.
We again follow notation from \cite{pt2}.

By definition, $\mathsf{W}_{\mu}^{P}(q;s_1,s_2,s_3)$ is  a weighted sum 
$$\mathsf{W}_{\mu}^{P}(q;s_1,s_2,s_3) = \sum_{\mathcal{Q}}\mathsf{w}_{\mathcal{Q}}(s_1,s_2,s_3)q^{|\mathcal{Q}|}$$
over $\mathsf{T}$-invariant stable pairs $\mathcal{Q}$ on $\CC^{3}$ with limiting profile in the $x_{3}$- direction given by the subscheme of $\CC^{2}$ defined by monomial ideal $\mu[x_1,x_2]$ associated to $\mu$.  We prove the divisibility statement for each summand with $|\mathcal{Q}| > |\mu|$.

Such a $\mathsf{T}$-invariant stable pair $\mathcal{Q}$ has the following description.
Let 
$$M = \CC[x_3,x_{3}^{-1}]\otimes\frac{\CC[x_1,x_2]}{\mu[x_1,x_2]}\ .$$
We have a finitely generated $\mathsf{T}$-invariant $\CC[x_1,x_2,x_3]$-module 
$$\CC[x_3]\otimes\frac{\CC[x_1,x_2]}{\mu[x_1,x_2]} \subset 
{Q} \subset M.$$
Let $F(t_1,t_2,t_3)$ denote the Laurent series 
indexing the torus weights of ${Q}$,  and let 
$G(t_1,t_2)$ denote the same for $\frac{\CC[x_1,x_2]}{\mu[x_1,x_2]}$.   
The equivariant vertex contribution for 
$\mathcal{Q}$ is obtained from the Laurent polynomial
\begin{align*}
H(t_1,t_2,t_3)= F - \frac{\overline{F}}{t_{1}t_{2}t_{3}} &+ F\overline{F}\prod_{i=1}^{3}\frac{(1-t_{i})}{t_i} \\ 
&- \frac{1}{1-t_{3}}\left(G + \frac{\overline{G}}{t_{1}t_{2}} - G\overline{G}\frac{(1-t_1)(1-t_2)}{t_{1}t_{2}}\right)
\end{align*}
where $\overline{F} = F(1/t_{1},1/t_{2},1/t_{3})$ and similarly for $G$.  More precisely, $\mathsf{w}_{\mathcal{Q}}(s_1,s_2,s_3)$ is a product 
of linear factors associated to monomials in the above
 expression.
The divisibility of the
 vertex contribution by $s_1+s_2$ is equivalent 
to the negativity of 
the constant term in $H(t, t^{-1},u)$.

The $\mathsf{T}$-weights of $Q$ define a labelled box configuration, which can be described by a sequence of skew Young diagrams of the form
$$\rho_{k} = \mu\backslash \nu_{k}$$
for Young diagrams $\nu_{k} \subset \mu$ satisfying inclusions
$$\emptyset = \rho_{-m} \subset \rho_{-m+1} \cdots \subset \rho_{-1} \subset \rho_{0} = \rho_{1} = \cdots = \mu\ .$$
Here, $\rho_{k}$ contains a box $(a,b) \in \ZZ_{\geq 0}^{2}$ if $(a,b, k)$ is a weight of $Q$.
We have 
$$|\mathcal{Q}| = \sum_{k=-m}^{-1} |\rho_{k}|\ .$$ 
If $|\mathcal{Q}| > |\mu|$, we must have $\rho_{-1}\neq 0$.

Let $c_{r}(\rho_k)$ denote the number of boxes $(a,b) \in \rho_{k}$ for which $a-b = r$, and let
$d_r(\rho_k) = c_r(\rho_k) - c_{r+1}(\rho_k)$.
The constant term of $H(t,t^{-1},u)$ is given by
\begin{equation}\label{h677}
-c_{0}(\rho_{-1}) + \frac{1}{2}\sum_{r}\left(d_{r}(\rho_{0})^{2} - \sum_{k \leq 0}
(d_{r}(\rho_k)-d_r(\rho_{k-1}))^{2}\right).
\end{equation}
Since for each $r$, we have
$$\sum_{k\leq 0} d_{r}(\rho_{k})-d_{r}(\rho_{k-1}) = d_{r}(\rho_0) \ \in \{-1,
0,1\}\ , $$
the second term of \eqref{h677} is non-positive, 
and the entire expression is bounded above by 
$-c_{0}(\rho_{-1}) \leq 0$.

There are two cases.
If $c_{0}(\rho_{-1}) > 0$, then we are done.
If not,  since $\rho_{-1} \neq \emptyset$, 
there must be a box of $\rho_{-1}$ which minimizes
$|r|$. If $a>b$ for such a box, then
$$c_{r-1}(\rho_0)= c_r(\rho_0)$$
and $d_{r-1}(\rho_0)=0$. But,
$$c_{r-1}(\rho_{-1})< c_r(\rho_{-1})$$
so $d_{r-1}(\rho_{-1})<0$
and the second term of \eqref{h677} is strictly
negative. A similar logic applies if the 
$|r|$-minimal box satisfies $a<b$.
\end{proof}

\subsection{Proof of Theorems \ref{ooo} and \ref{redgwpt}}
Let $S$ be a $K3$ surface with primitive effective curve class 
$\beta \in H_2(S,\mathbb{Z})$ satisfying
$$\langle \beta, \beta \rangle = 2h-2\,.$$
Following the definitions of Section \ref{ggww}, 
to establish Theorem \ref{redgwpt}, we must prove:
\begin{enumerate}
\item[(i)] $\mathsf{Z}_{\beta}^{P}\left( \tau_{0}(\mathsf{p})^k\right)$
 is a rational function of $y$.
\item[(ii)]
After the variable change $-e^{iu}=y$,  
$$\mathsf{Z}_{\beta}^{GW}\left( \tau_{0}(\mathsf{p})^k\right)  = \mathsf{Z}_{\beta}^{P}
\left( \tau_{0}(\mathsf{p})^k\right)\,.$$
\end{enumerate}

We consider both the reduced Gromov-Witten and the reduced
stable pairs theory
of the 3-fold 
$$X = S \times \com\ $$
equivariant with respect to the scaling of $\com$.
Properties (i) and (ii) above are
reduced Gromov-Witten/Pairs correspondence for
$X$ with point insertions.

Let $R$ be the rational elliptic surface considered in
Section \ref{rez}, and let
$$Y = R \times \com\,.$$
By Theorems \ref{rtrt} and \ref{dmdm}, the reduced theory of $X$
may be calculated on the broken 3-fold
$$Y \cup_{E\times \com} Y \,.$$
The original $k$ point conditions are distributed
to two sides (the precise distribution will not matter). 
In order to prove the reduced Gromov-Witten/Pairs correspondence
for $X$, we need only establish the
 standard Gromov-Witten/Pairs correspondence
for the relative geometry $(Y,E\times \com)$ equivariant
with respect to the scaling of $\com$.

Since $R$ is deformation equivalent to a toric surface,
the toric Gromov-Witten/Pairs correspondence
implies the Gromov-Witten/Pairs correspondence
for $Y$ with primary field insertions
 equivariant with respect to the scaling of $\com$.

The final step is to deduce the Gromov-Witten/Pairs
correspondence for $(Y,E\times \com)$ from the
established correspondence for $Y$.
Let 
$$Z = E \times \proj^1 \times \com$$
and let $E\times \com \subset Z$ be a fiber over $\proj^1$.
The degeneration
\begin{equation}\label{ht334}
Y\,\rightsquigarrow\ Y\cup_{E\times \com} Z
\end{equation}
is obtained from the deformation to the normal cone of $E\times \com \subset Y$.
We will prove the correspondence for $(Y,E\times \com)$
from the correspondences for $Y$ and $(Z, E\times \com)$
via the degeneration formula.
In the degeneration \eqref{ht334}, all point
insertions are kept on $Y$.

The relative geometry $(Z,E\times \com)$ with no point insertions
is very simple.
We are only interested in curve classes on $Z$ which are degree 1
over the $\proj^1$ factor. The dimension of the moduli spaces on both the 
Gromov-Witten and stable pairs sides is 2. Hence, a point condition
in the relative divisor $E\times \com$ {\em must} be imposed
to produce nonvanishing invariants (which are then nonequivariant
constants). The theories of $(Z,E\times \com)$ equivariant
with respect to the scaling of $\com$ are equal to the
corresponding nonequivariant theories
of 
\begin{equation}\label{kkzzw}
(E \times \proj^1 \times E, E \times E)\,.
\end{equation}
The curve classes we are considering are degree 0 over the last
$E$ factor. Since $E \times E$ is holomorphic symplectic, the
reduced class constructions on both the Gromov-Witten
and stable pairs sides lead to vanishing unless the curve class is
also degree 0 over the first $E$ factor.
Finally, the match between the Gromov-Witten and stable
pairs theory for the relative geometry \eqref{kkzzw}
is a consequence of the Gromov-Witten/Pairs correspondence
for local curves.

We have now proven the Gromov-Witten correspondence
in the relevant curve classes for two out of the three
geometries in the degeneration \eqref{ht334}. Moreover,
the calculation for $(Z, E\times \com)$ indicated
above yields invertibility of the corresponding factor
in the degeneration formula in both Gromov-Witten and
stable pairs theory. We conclude the required Gromov-Witten/Pairs
correspondence for $(Y, E\times \com)$ holds.
We have completed the proof of Theorem \ref{redgwpt}.

By Lemma \ref{gth}, Lemma \ref{gthr},
and the Euler characteristic calculations of Kawai-Yoshioka,
Theorem \ref{ooo}
 is a consequence of Theorem \ref{redgwpt}
with no point insertions,
see Section \ref{kkkcon2}. \qed

\subsection{Proof of Corollary \ref{ttt}}
Let $\calS(u)= \frac{\sin(u/2)}{u/2}$.
We have
\begin{align*}
\sum_{g,h \geq 0} 
R_{g,h} u^{2g-2}q^{h-1} &= \sum_{g,h\geq 0}
r_{g,h} (u\calS(u))^{2g-2}q^{h-1}\\
&= (u\calS(u))^{-2}\sum_{g,h\geq 0} r_{g,h} (-1)^{g} ( e^{iu/2} - e^{-iu/2} )^{2g}q^{h-1}\\
&=(u\calS(u))^{-2}\frac{1}{\Delta(e^{iu},q)}\,.
\end{align*}
The first two equalities are by definition. The third is consequence of
Theorem \ref{ooo}.

Corollary \ref{ttt} is then a consequence of the following identity,
\begin{align*}
\log\left(\frac{1}{\calS(u)^{2}}\cdot\frac{\Delta(q)}{\Delta(e^{iu},q)}\right) &=
-2\log\calS(u) +  \sum_{n\geq 1}4\log(1-q^n)
\\ &  \ \ \ \ \  - 2 \sum_{n\geq 1}\left(
\log(1-e^{iu}q^n) +\log(1-e^{-iu}q^n)\right)\\
&=-2\log\calS(u) + 4\sum_{n,g,k \geq 1} \frac{(-1)^{g}k^{2g-1}}{(2g)!}u^{2g}q^{kn}\\
&=\sum_{g\geq1} u^{2g} \frac{(-1)^{g+1}B_{2g}}{g\cdot(2g)!} E_{2g}(q)\,.
\end{align*}
For the last equality, we have used the expansion
$$\log\calS(u) = \sum_{g\geq 1} \frac{(-1)^g B_{2g}}{2g\cdot(2g)!} u^{2g}\,.$$
Note also $(-1)^{g+1}B_{2g} = |B_{2g}|$ for $g\geq 1$. \qed



\section{Point insertions}
\label{pint}

\subsection{Overview}

Our strategy for proving Theorem \ref{kkvpoints} governing
point insertions is by degeneration.
We take the $K3$ surface, as before,
to be fibered
$$\pi: S \rarr \proj^1$$
with a section, and we take the primitive curve class to be of the form
$$\beta = \mathsf{s} + h\cdot \mathsf{f}.$$
Fix a nonsingular elliptic fiber $E$ of $\pi$ and consider the degeneration 
to the normal cone of $E$,
$$S_{0} = S \cup_{E} (E \times \PP^1).$$

We will use the degeneration to the normal cone
to reduce the integrals
of Theorem \ref{kkvpoints} to Theorem \ref{ooo} and 
calculations in the relative Gromov-Witten theory of $E \times \PP^1$.

\subsection{Degeneration formula}
We view $E\times \PP^1$ as elliptically fibered
$$\pi: E\times \PP^1 \rarr \PP^1\ $$
with a section (contracted over $E$).
Let $\mathsf{s},\mathsf{f} \in H^2(E\times \PP^1,\mathbb{Z})$ be the
classes of the section and fiber respectively.
Let 
$$\overline{M}_{g,r}((E\times \PP^1)/0, \mathsf{s}+h \mathsf{f}), \ \ \
\overline{M}_{g,r}((E\times \PP^1)/\{0, \infty\},\  \mathsf{s}+h \mathsf{f})$$
denote the moduli spaces of stable relative maps to the 
targets
$$(E\times \PP^1)/ E_0, \ \ \  (E\times \PP^1)/(E_0 \cup E_\infty)\,.$$
 Here, $E_0, E_\infty \subset E\times \PP^1$
are the fibers over $0,\infty \in \PP^1$.

Since the class $\mathsf{s}+ h \mathsf{f}$ has intersection number 1
with the fibers $E_0$ and $E_\infty$, the relative conditions
are given by cohomology classes on $E$. We will only
require the identity and point classes
$$\mathsf{1}, \omega \in H^*(E,\mathbb{Z})\,.$$
We denote the relative conditions by subscripts after the
relative moduli space. For example,
$$\overline{M}_{g,r}((E\times \PP^1)/\{0,\infty\},\ \mathsf{s}+h \mathsf{f})
_{\omega,\mathsf{1}}$$
denotes the moduli space with the relative conditions $\omega$
and $\mathsf{1}$ 
imposed
over $0$ and $\infty$ respectively.

We will use bracket notation for the relative invariants of
$E\times \PP^1$. The insertion $\hodge^\vee(1)$
stands for 
the Chern polynomial of the dual of the Hodge bundle,
\begin{multline*}
\Big\langle \omega \ \Big|\ \hodge^{\vee}(1)\ \tau_0(\mathsf{p})\ \Big|\ \mathsf{1}
 \ \Big\rangle^{(E \times \PP^{1})/\{0, \infty\}}_{g,\mathsf{s}+h\mathsf{f}} = \\
\int_{[\overline{M}_{g,r}((E\times \PP^1)/\{0,\infty\},\  \mathsf{s}+h \mathsf{f})
_{\omega,\mathsf{1}}]^{vir}}
\Big(1-\lambda_1 + \lambda_2 - \ldots +(-1)^g \lambda_g\Big) \cdot
\text{ev}_1^*(\mathsf{p}) \,.
\end{multline*}
Denote the generating function of relative invariants by
\begin{multline*}
\Big\langle \omega\  \Big|\  \hodge^{\vee}(1)\ \tau_0(\mathsf{p})\ \Big|
\ \mathsf{1}\ \Big \rangle^{(E \times \PP^{1})/\{0,\infty\}} =\\ 
\sum_{g=0}^\infty \sum_{h=0}^\infty 
\Big\langle \omega\ \Big|\ \hodge^{\vee}(1)\ 
\tau_0(\mathsf{p})\ \Big|\ \mathsf{1}
\Big \rangle^{(E \times \PP^{1})/\{0,\infty\}}_{g, \mathsf{s} + 
h\cdot \mathsf{f}} u^{2g-2} q^{h-1}\,.
\end{multline*}
The integrals of Theorem \ref{kkvpoints} may be written as
\begin{eqnarray*}
\Big\langle \hodge^\vee(1) \ \tau_0(\mathsf{p})^{k}\Big\rangle^{S} & = &
\sum_{g=0}^\infty \sum_{h=0}^\infty  
\Big\langle \hodge^\vee(1) \ \tau_0(\mathsf{p})^{k}\Big\rangle^{S}_{g, h} u^{2g-2}
q^{h-1} \\
& = & 
\sum_{g= 0}^\infty \sum_{h = 0}^\infty
\Big\langle (-1)^{g-k} \lambda_{g-k} \ \tau_0(\mathsf{p})^{k}\Big\rangle^{S}_{g, h} u^{2g-2}
q^{h-1}\,. 
\end{eqnarray*}

\begin{prop}\label{bubbleoffpoints} We have
\begin{multline*}
 \Big\langle \hodge^\vee(1) \ \tau_0(\mathsf{p})^{k}\Big\rangle^{S}
= \\
\Big\langle \hodge^\vee(1) \Big\rangle^{S} 
(u^2q)^k
\left( 
\Big\langle \omega\  \Big|\  \hodge^{\vee}(1)\ \tau_0(\mathsf{p})\ \Big|
\ \mathsf{1}\ \Big \rangle^{(E \times \PP^{1})/\{0,\infty\}}
\right)^{k}\,.
\end{multline*}
\end{prop}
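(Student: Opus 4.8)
The plan is to apply the standard degeneration formula for Gromov-Witten invariants to the degeneration to the normal cone $S_0 = S \cup_E (E \times \PP^1)$, with the $\lambda_g$-type insertion $\hodge^\vee(1)$ and $k$ point insertions distributed entirely to the $E\times\PP^1$ side. First I would recall that under this degeneration the class $\beta = \mathsf{s} + h\mathsf{f}$ distributes as $\mathsf{s} + h_1\mathsf{f}$ on $S$ and $\mathsf{s} + h_2\mathsf{f}$ on $E\times\PP^1$ with $h = h_1+h_2$, since $\beta$ is degree $1$ over the $\PP^1$ base and the matching over the relative divisor $E$ forces multiplicity $1$; the only cohomology classes on $E$ that can appear at the relative node are $\mathsf{1}$ and $\omega$, and by dimension/degree reasons the reduced class on $S$ pairs with the insertion $\mathsf{1}$ dual to $\omega$. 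A key point is that the Hodge class behaves well under degeneration: $\lambda_g$ of the glued curve is the product $\lambda_{g_1}\lambda_{g_2}$ of the Hodge classes on the two pieces (using that the node is separating, so $H^0(\omega_C) = H^0(\omega_{C_1})\oplus H^0(\omega_{C_2})$), which translates into multiplicativity of $\hodge^\vee(1) = \sum (-1)^i\lambda_i$ across the node.

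The combinatorial structure of the formula is where the real content lies. Since each point insertion $\tau_0(\mathsf{p})$ is supported on a fiber of $\pi$ and these fibers degenerate into the $E\times\PP^1$ component, all $k$ point conditions are naturally placed on the $E\times\PP^1$ side. Moreover I expect that each point insertion, when pushed to the normal-cone component, produces its own copy of $(E\times\PP^1)/\{0,\infty\}$ bubble: the factor $(u^2 q)^k$ records that each such bubble contributes one extra unit of fiber class $\mathsf{f}$ (the $q$) and shifts the genus count by the appropriate power of $u$ coming from $2g-2$ additivity under gluing along a genus-$1$-matched node. The two-sided relative invariant $\langle \omega \mid \hodge^\vee(1)\,\tau_0(\mathsf{p}) \mid \mathsf{1}\rangle^{(E\times\PP^1)/\{0,\infty\}}$ appears because each bubble is relative over both $0$ and $\infty$, matched to its neighbors by the identity/point pairing on $H^*(E)$; the $k$-fold product structure follows from iterating the degeneration $k$ times (or equivalently from the rigidity of the section $\mathsf{s}$ which forces the chain of bubbles to be glued linearly, each contributing independently). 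The factor $\langle \hodge^\vee(1)\rangle^S$ is the remaining invariant of $S$ itself with class $\mathsf{s} + h_1\mathsf{f}$ and no point insertions, which is exactly the generating series of Theorem \ref{ooo}.

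The steps, in order: (1) set up the degeneration formula for $\overline{M}_{g,k}(S,\beta)^{\red}$ under $S \rightsquigarrow S_0$, tracking the reduced virtual class — here I would invoke that the reduced class of $S$ degenerates to the sum over splittings of (reduced class on $S$) $\times$ (standard virtual class on $E\times\PP^1$), in analogy with Theorems \ref{rtrt} and \ref{dmdm}, since $E\times\PP^1$ carries a holomorphic symplectic form only in the ``trivial'' directions and the relevant part of its Gromov-Witten theory is the ordinary one relative to $E_0$; (2) identify which relative insertions survive: by the dimension of the reduced class on $S$ and the fact that a degree-$1$ section meets the relative $E$ in a single point, only the pairing $\omega$ (on the bubble side) against $\mathsf{1}$ (on the $S$ side) contributes, and the distinguished point $p \in E$ is forced; (3) use separatedness of the node to factor $\hodge^\vee(1)$ multiplicatively; (4) iterate to peel off one $(E\times\PP^1)/\{0,\infty\}$ bubble per point insertion, bookkeeping the genus and fiber-degree shifts into $(u^2q)^k$; (5) assemble the generating functions. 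The main obstacle I anticipate is step (2) together with the precise bookkeeping in step (4): one must verify carefully that the only nonzero relative contributions are the ``minimal'' ones — that no higher-genus or higher-fiber-degree pieces of $S$'s reduced theory can absorb a point insertion, which relies on the structural description of $\overline{M}_g(\epsilon,\beta)$ for $\beta = \mathsf{s}+h\mathsf{f}$ (the section is rigid, the fibers contribute the genus) — and that the power of $u$ coming from the Euler characteristic additivity $2g-2 = \sum(2g_i-2) + (\text{number of nodes})$ combines with the $\lambda$-insertions to give exactly $(u^2q)^k$ and not some other normalization. The actual evaluation of the two-sided bubble invariant is deferred to subsequent propositions, so here I only need the formula in the displayed product form.
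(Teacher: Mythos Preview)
Your approach is exactly the paper's: degenerate $S$ to $S\cup_E(E\times\PP^1)$, push all $k$ point insertions to the bubble, then further degenerate $E\times\PP^1$ into a chain to separate the points one per component. The multiplicativity of $\hodge^\vee(1)$ across separating nodes and the reduced/standard split of the virtual class are as you say.

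The gap is precisely the ``main obstacle'' you flag in steps (2) and (4), and it is not resolved by rigidity of the section alone. When you iterate the degeneration you get, a priori, a sum over all distributions of genus and fiber degree to every component of the chain, and over both diagonal pairings $(\mathsf{1},\omega)$ and $(\omega,\mathsf{1})$ at every node. To collapse this sum to the displayed product you need a specific vanishing: on $(E\times\PP^1)/E_0$ in class $\mathsf{s}+h\mathsf{f}$, the series
\[
\Big\langle \omega \,\Big|\, \hodge^\vee(1)\Big\rangle^{(E\times\PP^1)/0}
\quad\text{and}\quad
\Big\langle \mathsf{1} \,\Big|\, \hodge^\vee(1)\,\tau_0(\mathsf{p})\Big\rangle^{(E\times\PP^1)/0}
\]
are both exactly $1/(u^2q)$, i.e.\ only the $(g,h)=(0,0)$ term survives. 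This is Lemma~\ref{vanishing} in the paper, and it is what forces every chain component without a point insertion (and the terminal cap) to be trivial. The proof is short but not formal: for $h=0$ the invariant becomes a relative $\PP^1$ invariant with an extra obstruction-bundle factor of $(-1)^g\lambda_g$, so the integrand contains $\lambda_g^2=0$ for $g>0$; for $h>0$ one embeds into $E\times E\times\PP^1$ and uses that the Gromov--Witten theory of the abelian surface $E\times E$ vanishes for noncontracted curve classes. Neither of these follows from the rigidity of $\mathsf{s}$, which only tells you the relative point is pinned at $p\in E$.

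One small correction: the factor $(u^2q)^k$ is pure bookkeeping from the indexing convention $u^{2g-2}q^{h-1}$ on each factor --- multiplying $k+1$ such series for $k$ gluings produces a shift of $(u^2q)^k$ --- and has nothing to do with the genus of $E$.
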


\begin{proof}
For the degeneration of $S$ to $S_0$, 
there already exists a degeneration formula for the reduced Gromov-Witten theory of $S$ 
proven by Lee and Leung \cite{jleec1,jleec2}.{\footnote{An
algebraic proof can be obtained by the methods of Section \ref{gwd}.}}
The formula expresses the reduced invariants of $S$ in terms of 
the reduced relative Gromov-Witten theory of the pair $(S,E)$ and 
the standard relative Gromov-Witten theory of the pair
$(E \times \PP^1, E)$.  

We denote the generating series of reduced relative invariants of $S/E$
by
$$\Big\langle \hodge^{\vee}(1)\ \Big|\  \mathbf{1}\ \Big\rangle^{S/E} 
= \sum_{g=0}^\infty \sum_{h=0}^\infty 
\Big\langle \hodge_{g}^{\vee}(1)\ \Big|
\ \mathsf{1} \ \Big\rangle^{S/E}_{g,h}  u^{2g-2}q^{h-1}\,.$$
The degeneration formula then gives the identity
\begin{multline}\label{hhtt4}
 \Big\langle \hodge^\vee(1) \ \tau_0(\mathsf{p})^{k}\Big\rangle^{S} = \\
\Big\langle \hodge^{\vee}(1)\ \Big|\ \mathsf{1}\ \Big\rangle^{S/E}  u^2q\ 
\Big\langle \omega\  \Big|\  \hodge^{\vee}(1)\ \tau_0(\mathsf{p})^{k} \Big
\rangle^{(E \times \PP^{1})/0}\,.
\end{multline}
For the first term on the right, we easily see
$$\Big\langle \hodge^{\vee}(1)\ \Big|\ \mathsf{1}\ \Big\rangle^{S/E}=
\Big\langle \hodge^\vee(1) \Big\rangle^{S}.$$
Using the degeneration of $E\times \PP^1$ to 
$$(E \times \PP^1) \cup_{E} (E\times\PP^1) \cup \dots \cup_{E} (E \times \PP^1)\ ,$$
the second invariant on the right side
of \eqref{hhtt4} can be calculated,
\begin{multline} \label{hhtt55}
\Big\langle \omega\  \Big|\  \hodge^{\vee}(1)\ \tau_0(\mathsf{p})^{k}
\Big\rangle^{(E \times \PP^{1})/0} = \\
\left( \Big\langle \omega \  |\ 
 \hodge^{\vee}(1)\ \tau_0(\mathsf{p})\ \Big|\ \mathsf{1}\ 
 \Big\rangle^{(E \times \PP^{1})/\{0,\infty\}} 
\right)^{k} (u^2q)^k \Big\langle \omega\ \Big|\ 
\hodge^{\vee}(1)\ \Big\rangle^{(E\times\PP^{1})/0}\,.
\end{multline}
The Proposition then follows from  Lemma \ref{vanishing}
below applied to equations \eqref{hhtt4}-\eqref{hhtt55} for $k \geq 0$. 
\end{proof}

\begin{lemma}\label{vanishing} The following two series are trivial:
$$\Big\langle \omega \ \Big|\ \hodge^{\vee}(1) \Big
\rangle^{(E\times\PP^{1})/0} = \frac{1}{u^2q}\ , \ \ \ \ \
\Big\langle \ \mathsf{1}\ \Big|\
 \hodge^{\vee}(1)\ \tau_0(\mathsf{p})\ \Big \rangle^{(E\times\PP^{1})/0} = 
\frac{1}{u^2q}\,.$$
\end{lemma}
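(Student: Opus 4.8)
The plan is to compute both relative generating series directly by explicit localization/dimension-count arguments on the moduli of relative stable maps to $(E\times\PP^1)/0$, using the fact that the only curve class in play is $\mathsf{s}+h\mathsf{f}$ (degree $1$ over the $\PP^1$ base), whose restriction to the section $\mathsf{s}$ is rigid. First I would use the product structure $E\times\PP^1$ together with the holomorphic $1$-form on $E$: pulling back $d z$ from $E$ gives a trivial quotient of the obstruction theory (exactly as in the $K3$ reduced constructions, but here it is a genuine extra factor since $E\times\PP^1$ is not Calabi--Yau in the relevant sense only along $E$). This forces the standard virtual class to vanish unless the curve is fiberwise constant over the $E$-direction, i.e.\ unless the map factors through $\{pt\}\times\PP^1$ on each component not contracted to a point. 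Combined with the presence of the relative point insertion $\omega$ over $0$ in the first series, and $\omega$ together with $\tau_0(\mathsf{p})$ in the second, one reduces the sum to a single geometrically rigid configuration: the section $\mathsf{s}$ (a fixed $\PP^1$) with genus $0$, plus $h$ fibers $\mathsf{f}$ attached, all forced to sit over the distinguished point $p\in E$ where the section meets $E_0$.

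The key steps, in order: (1) set up the reduced/trivial-quotient argument to kill all contributions with nonconstant $E$-component; (2) run the dimension count showing that, after imposing the relative conditions $\omega$ (resp.\ $\omega$ and $\tau_0(\mathsf{p})$) and inserting the full Hodge polynomial $\hodge^\vee(1)$, the virtual dimension is nonnegative only for the extremal configuration described above, and that in that configuration the $\lambda_i$ with $i\geq 1$ do not contribute (the Hodge bundle is pulled back from contracted genus-$g$ components over the fibers, and a Mumford-type relation or the vanishing $\lambda_g\lambda_{g-1}=0$ style input kills everything except the leading $1$); (3) identify what remains with a product of rubber integrals over $E\times\PP^1$ whose known evaluation collapses the whole series, leaving only the $g=0$, $h=1$ term, which contributes $u^{-2}q^{0}\cdot q = $ the normalization $\frac{1}{u^2q}$ after the overall $u^{2g-2}q^{h-1}$ bookkeeping. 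The multiplicativity of relative invariants under the degeneration of $E\times\PP^1$ into a chain of copies of $(E\times\PP^1)$, already used in the proof of Proposition~\ref{bubbleoffpoints}, reduces any would-be higher contribution to powers of these same series, so triviality propagates.

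Concretely, for the first series I would argue that $\langle\,\omega\mid\hodge^\vee(1)\,\rangle^{(E\times\PP^1)/0}$ receives a contribution only from the map whose image is $\mathsf{s}$ with a single fiber $\mathsf{f}$ over $p$ (genus $0$, $h=1$), since any additional fiber or genus raises the dimension beyond what the single relative point $\omega$ and the Hodge classes can cut down — here one uses that $\hodge^\vee(1)$ has mixed degree but its top-degree part $(-1)^g\lambda_g$ needs a $g$-dimensional complementary class, which the relative geometry cannot supply once $h>1$. The residual integral is $1$, and the prefactor $u^{2g-2}q^{h-1}=u^{-2}q^{0}$ together with the convention that pairs the $h$ in $\beta=\mathsf{s}+h\mathsf{f}$ with $q^{h-1}$ gives $\frac{1}{u^2q}$ after matching conventions with \eqref{hhtt4}. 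The second series is handled identically, with the extra insertion $\tau_0(\mathsf{p})$ absorbed by the node at $p$ on the $E_\infty$ side, again leaving a single rigid configuration contributing $\frac{1}{u^2q}$.

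\textbf{Main obstacle.} The hard part will be step (2): making the dimension count and the vanishing of the higher $\lambda_i$ contributions genuinely rigorous in the relative/rubber setting, i.e.\ controlling the contracted positive-genus components that can appear over the elliptic fibers and showing their Hodge-integral contributions vanish against the relative constraints. This is exactly where the interplay between the Hodge bundle, the rubber calculus over $E\times\PP^1$, and the rigidity of the section must be pinned down; I expect one invokes the exact Hodge-integral evaluations for $E\times\PP^1$ (the same inputs used to derive \eqref{htth} and Theorem~\ref{qqq}) rather than reproving them here.
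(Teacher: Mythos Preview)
Your proposal has a genuine gap in two places that would prevent the argument from going through.

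First, the surviving term is misidentified. You claim the unique nonvanishing contribution comes from $(g,h)=(0,1)$, a section plus a single fiber. But a genus~$0$ domain cannot map nonconstantly to the elliptic fiber $E$, so there is no stable map of genus~$0$ in class $\mathsf{s}+\mathsf{f}$ at all. The only survivor is $(g,h)=(0,0)$ (the bare section through the relative point), and the factor $q^{-1}$ in $\frac{1}{u^2q}$ comes directly from $q^{h-1}$ at $h=0$, with no convention-matching needed.

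Second, and more seriously, your vanishing mechanism is not right. Adding fibers does \emph{not} raise the virtual dimension: since $c_1(E\times\PP^1)\cdot\mathsf{f}=0$, the virtual dimension is independent of $h$, and for every $(g,h)$ the relevant Hodge insertion is exactly $(-1)^g\lambda_g$ on a $g$-dimensional class. So the dimension count you propose in step~(2) cannot kill the $h>0$ terms, and the ``trivial quotient from the $1$-form on $E$'' in step~(1) is not a cosection of the obstruction sheaf in the standard sense (the trivial summand $\O_C$ of $f^*T_{E\times\PP^1}$ sits in the tangent direction, not the obstruction direction; passing to $H^1$ gives $H^1(C,\O_C)$, which has no canonical map to $\C$ unless $f^*dz\ne0$, and even then one must argue carefully in the relative setting).

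The paper's proof is short and uses two clean inputs instead. For $h=0$ the map factors through $\{p\}\times\PP^1$, the normal direction contributes an extra $(-1)^g\lambda_g$, and $\lambda_g^2=0$ kills all $g>0$. For $h>0$ one \emph{absorbs} the $(-1)^g\lambda_g$ as the obstruction bundle for an added trivial $E$-factor, rewriting the invariant on $E\times E\times\PP^1$ in class $(0,h,1)$; since the Gromov--Witten theory of the abelian surface $E\times E$ is trivial for noncontracted classes, everything with $h>0$ vanishes. This is the step your proposal is missing: the abelian-surface vanishing (equivalently, the holomorphic $2$-form on $E\times E$, not the $1$-form on a single $E$) is what actually forces the $h>0$ terms to zero.
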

\begin{proof}
We only calculate the first. The argument for the second is
the same.
For dimension reasons, the Hodge class insertion is $(-1)^{g}\lambda_{g}$.
Consider the curve class $\mathsf{s} + h  \mathsf{f}$.  

\begin{enumerate}
\item[$\bullet$]
If $h = 0$, the invariant 
can be expressed in terms of the relative
 Gromov-Witten theory of $\PP^{1}$
 with an obstruction bundle term
 which is given by another $(-1)^g\lambda_{g}$ insertion.  
Since $\lambda_{g}^{2}=0$ for $g > 0$, only the $g=0$ term survives.
\item[$\bullet$]
If $h >0$, we can express the invariant in terms of 
the relative Gromov-Witten theory of  
$E \times E\times \PP^1$, with degree $(0,h,1)$, 
where again the obstruction bundle associated to the degree $0$ 
direction contributes the original $(-1)^g\lambda_g$ insertion.  
However, since the Gromov-Witten theory of abelian surfaces is 
trivial for noncontracted curves, all such terms vanish.
\end{enumerate}

Only the $g=0$ and $h=0$ terms of the series
$\Big\langle \omega \ \Big|\ \hodge^{\vee}(1) \Big
\rangle^{(E\times\PP^{1})/0}$ are nonvanishing. Direct calculation
shows the nonvanishing term is 1.
\end{proof}

The next result replaces the relative invariants of $E\times\PP^1$ 
in Proposition \ref{bubbleoffpoints}
with absolute invariants.

\begin{lemma}\label{absoluterelative} We have
$$\Big\langle \hodge^{\vee}(1)\ \tau_0(\mathsf{p})^{2} \Big
\rangle^{E\times\PP^1} = 
2 \Big\langle \omega \ \Big|\  \hodge^{\vee}(1)\ \tau_0(\mathsf{p})\ 
\Big|\ \mathsf{1} \ \Big\rangle^{(E \times \PP^{1})/\{0,\infty\}}\,.$$
\end{lemma}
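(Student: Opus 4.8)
The identity compares an absolute invariant of $E\times\PP^1$ with point insertions against a relative invariant of $(E\times\PP^1)/\{0,\infty\}$, so the natural tool is the degeneration formula together with the structure of the relative theory in fiber class $\mathsf{s}+h\mathsf{f}$. The plan is to degenerate $E\times\PP^1$ to a chain by deforming to the normal cone of a fiber, and then to use the vanishing statements of Lemma \ref{vanishing} to collapse the degeneration sum to the single term predicted on the right-hand side, with the combinatorial factor of $2$ arising from the two ways of distributing the two point insertions.

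\textbf{Step 1: degenerate and apply the degeneration formula.} First I would apply the degeneration of $E\times\PP^1$ to $(E\times\PP^1)\cup_E(E\times\PP^1)$ obtained from the deformation to the normal cone of a nonsingular fiber $E_0$, splitting off the two point insertions $\tau_0(\mathsf p)$, one to each side. Since $\mathsf{s}+h\mathsf f$ meets the fiber in degree $1$, the relative conditions over the splitting divisor are cohomology classes on $E$, and by degree/dimension reasons only the identity $\mathsf 1$ and point $\omega$ classes of $H^*(E)$ contribute in the diagonal splitting. The Hodge insertion $\hodge^\vee(1)=1-\lambda_1+\cdots$ splits multiplicatively across the node by the standard behaviour of the Hodge bundle under gluing (the Hodge bundle of a nodal curve is the direct sum of those of the components). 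This expands $\langle \hodge^\vee(1)\,\tau_0(\mathsf p)^2\rangle^{E\times\PP^1}$ as a sum over ways to put one $\tau_0(\mathsf p)$ on each factor and over the relative splitting class.

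\textbf{Step 2: kill all but one term using Lemma \ref{vanishing}.} The key point is that in the degeneration sum, a factor carrying $\hodge^\vee(1)$ with \emph{no} point insertion and relative boundary class $\omega$ or $\mathsf 1$ is exactly one of the two series shown in Lemma \ref{vanishing} to equal $\tfrac{1}{u^2q}$, i.e.\ its only nonvanishing contribution is the trivial $g=0$, $h=0$ term. Hence in each summand, one of the two factors must be the "full" relative invariant $\langle \omega\,|\,\hodge^\vee(1)\,\tau_0(\mathsf p)\,|\,\mathsf 1\rangle^{(E\times\PP^1)/\{0,\infty\}}$ carrying its point insertion, while the other factor is forced to be one of the trivial series of Lemma \ref{vanishing} (equal to $\tfrac1{u^2q}$), multiplied by the gluing factor $u^2q$ from the node, which cancels it to $1$. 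There are exactly two such configurations — putting the "alive" point insertion on the left piece versus the right piece — giving the factor $2$. I should be careful that the bubble configuration in the degeneration formula (insertion of extra $E\times\PP^1$ components) also contributes only the trivial term for the same reason, so that the sum genuinely truncates to these two configurations.

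\textbf{Main obstacle.} The routine parts are the existence and shape of the degeneration formula and the multiplicativity of the Hodge class; the subtle point, and the one I would check most carefully, is the bookkeeping of which relative boundary insertions ($\mathsf 1$ versus $\omega$) pair up at the node and the resulting combinatorial multiplicity. Specifically I must verify that the diagonal class in $H^*(E)\otimes H^*(E)$ forces exactly the pairing $(\omega\,|\,\mathsf 1)$ and $(\mathsf 1\,|\,\omega)$ between a "point-carrying" and a "trivial" factor, with no cross terms surviving, and that no extra factor of a $\lambda_g^2=0$ type or an abelian-surface-triviality argument (as used in Lemma \ref{vanishing}) is needed a second time. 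Once the surviving configurations are pinned down, the identity follows immediately with coefficient $2$.
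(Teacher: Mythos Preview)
Your approach is essentially the paper's: degenerate $E\times\PP^1$ into two copies along a fiber, send one $\tau_0(\mathsf p)$ to each side, use that the intersection number with the fiber is $1$ so only $(\mathsf 1,\omega)$ and $(\omega,\mathsf 1)$ appear in the diagonal splitting, and invoke Lemma~\ref{vanishing} to collapse the trivial factor. The symmetry of the two configurations gives the factor~$2$.

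There is one imprecision worth flagging. After a single degeneration $E\times\PP^1\rightsquigarrow(E\times\PP^1)\cup_E(E\times\PP^1)$, each factor is a \emph{single}-relative-divisor invariant of $(E\times\PP^1)/0$, not the double-relative invariant of $(E\times\PP^1)/\{0,\infty\}$ that appears on the right-hand side of the lemma. So what you actually obtain from Step~1 and the second identity of Lemma~\ref{vanishing} is
\[
\Big\langle \hodge^{\vee}(1)\ \tau_0(\mathsf{p})^{2} \Big\rangle^{E\times\PP^1}
= 2\,\Big\langle \omega\ \Big|\ \hodge^{\vee}(1)\ \tau_0(\mathsf{p})\Big\rangle^{(E \times \PP^{1})/0}.
\]
To reach the stated $/\{0,\infty\}$ invariant, the paper performs a \emph{second} degeneration of $(E\times\PP^1)/0$ along the fiber $E_\infty$ and uses the \emph{first} identity of Lemma~\ref{vanishing} (the one with relative condition $\omega$ and no interior point) to trivialize the new bubble. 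Your Step~2 jumps directly to the double-relative form, which is not what a single application of the degeneration formula produces. The fix is exactly the extra step just described; once you insert it, your argument matches the paper's.
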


\begin{proof}
We degenerate $E \times \PP^{1}$ to two copies 
of $E \times \PP^{1}$ attached along a fiber $E$ with
the two insertions $\tau_0(p)$ sent to different sides.  
Since we only consider curve classes intersecting the relative fiber once, 
the configurations in the degeneration formula
 are associated to the diagonal 
splittings of cohomology classes along the relative point.  
For parity reasons, we only need to consider even cohomology, 
so splittings must be of type $(\mathsf{1},\omega)$ or 
$(\omega,\mathsf{1})$.  The two 
configurations are clearly symmetric, 
and the second claim of Lemma \ref{vanishing} yields
$$\Big\langle \hodge^{\vee}(1)\ \tau_0(\mathsf{p})^{2} \Big
\rangle^{E\times\PP^1} = 
2 \Big\langle \omega\  \Big|\  \hodge^{\vee}(1)\ \tau_0(\mathsf{p})
\Big\rangle^{(E \times \PP^{1})/0}\,.$$
The Lemma then follows from applying the first claim of Lemma \ref{vanishing} to the degeneration of $E \times \PP^1$
along the divisor $E \times \infty$.
\end{proof}

\subsection{Proof of Theorem \ref{kkvpoints}}

Using Corollary \ref{ttt}, 
Proposition \ref{bubbleoffpoints}, and Lemma \ref{absoluterelative}, 
the proof of Theorem \ref{kkvpoints} is reduced to the following
two results.
\begin{lemma} We have
$$\Big\langle 
\hodge^{\vee}(1)\ \tau_0(\mathsf{p})^{2}\Big \rangle^{E\times\PP^1}
= \frac{2}{u^2q}\
 \sum_{m=1}^\infty  q^{m} \sum_{d | m} \frac{m}{d} 
\Big(2 \sin({du}/{2})\Big)^{2}\ 
.$$
\end{lemma}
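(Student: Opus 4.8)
The plan is to compute the absolute invariant $\langle \hodge^\vee(1)\,\tau_0(\mathsf p)^2\rangle^{E\times\PP^1}$ directly by reducing it to a Gromov-Witten calculation on an abelian threefold and then to a known local/genus-expansion formula. First I would observe that, for dimension reasons on $\Mbar_{g,2}(E\times\PP^1,\mathsf s+h\mathsf f)$, the Hodge insertion $\hodge^\vee(1)$ contributes only its top term $(-1)^g\lambda_g$, so we are really computing $\langle (-1)^g\lambda_g\,\tau_0(\mathsf p)^2\rangle_{g,\mathsf s+h\mathsf f}^{E\times\PP^1}$. I would then trade the $(-1)^g\lambda_g$ insertion for a genuine target direction: by the standard trick (used already in the proof of Lemma \ref{vanishing}), $(-1)^g\lambda_g$ is the Euler class of the obstruction bundle for maps with an extra degree-$0$ factor of an elliptic curve, so the generating series equals the Gromov-Witten series of the threefold $E\times E\times\PP^1$ in classes of degree $(0,h,1)$ with two point insertions. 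This converts the problem into Gromov-Witten theory of a product with one elliptic-curve factor, where the curve is a section over $\PP^1$ together with a degree-$h$ piece wrapping the second $E$.

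Next I would use the elliptic fibration structure: a stable map in class $\mathsf s+h\mathsf f$ is a fixed rigid section over $\PP^1$ with sprouted genus contributions of total fiber degree $h$ over finitely many points of $\PP^1$; the section contributes the point conditions and the fiber contributions are governed by the stationary Gromov-Witten theory of $E\times E$ (really, of the elliptic curve $E$, since the extra $E$-factor only supplies the $\lambda_g$ obstruction class). Summing the local fiber contribution over multicovers of a fixed elliptic fiber produces exactly the divisor sum $\sum_{d\mid m}\frac md$, while the $(2\sin(du/2))^2$ factors come from the standard genus-expansion of the TQFT/trigonometric weights attached to a degree-$d$ cover of an elliptic curve (the same combinatorial input behind the Bryan--Pandharipande local-curve formulas and the appearance of $\left(2\sin(u/2)\right)$-factors in \cite{BryP,OP6}). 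The two point insertions $\tau_0(\mathsf p)^2$ pin the section at two prescribed fibers, but since these fibers can carry the wrapping, one gets the factor $m/d$ weighting and an overall $q^{m}$ with the prefactor $1/(u^2q)$ absorbing the degree-$0$/genus-$0$ normalization fixed in Lemma \ref{vanishing}.

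Concretely, the key steps in order are: (1) reduce $\hodge^\vee(1)$ to $(-1)^g\lambda_g$ by dimension count; (2) apply the $\lambda_g$-to-$E$-factor degeneration/obstruction-bundle identity to pass to $E\times E\times\PP^1$; (3) use the elliptic fibration and a degeneration to the normal cone of a fiber to localize all fiber-wrapping onto multicovers of a single elliptic fiber, expressing the answer as a sum over $m$ and over covering degrees $d\mid m$; (4) evaluate the local multiple-cover contribution of an elliptic curve with the two point markings, getting $\frac md\,(2\sin(du/2))^2$, and normalize using Lemma \ref{vanishing} to fix the $2/(u^2q)$ prefactor. I expect step (4)---the precise evaluation of the local elliptic-fiber multiple-cover series with the point insertions, including the correct trigonometric weights and the combinatorial factor $m/d$---to be the main obstacle; everything else is bookkeeping with the degeneration formula and dimension constraints. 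This is essentially the same computation as in \cite{brl} for $\bF_g^S(\tau_0(p)^g)$ specialized to the relative $E\times\PP^1$ geometry, so I would import those trigonometric identities rather than rederive them.
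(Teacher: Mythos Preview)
Your dimension count in step (1) is wrong, and this breaks the whole strategy. On $\Mbar_{g,2}(E\times\PP^1,\mathsf s+h\mathsf f)$ the virtual dimension is
\[
\int_{\mathsf s+h\mathsf f}c_1(E\times\PP^1)+(1-g)(2-3)+2
=2+(g-1)+2=g+3,
\]
and the two point insertions cut this down by $4$, so the surviving Hodge class is $(-1)^{g-1}\lambda_{g-1}$, not $(-1)^g\lambda_g$. With $\lambda_{g-1}$ the passage in step (2) to $E\times E\times\PP^1$ via an obstruction-bundle identity does not apply: that trick produces exactly a top $\lambda_g$, and if it \emph{did} apply here, the abelian-surface vanishing you invoke from Lemma~\ref{vanishing} would force all $h>0$ contributions to be zero, contradicting the nontrivial $q$-series on the right-hand side of the Lemma. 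So the proposal as written would compute $0$ for $h\ge1$.

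The paper's argument is genuinely different. It degenerates the elliptic factor $E$ to a pair of $\PP^1$'s meeting at two points, so $E\times\PP^1$ breaks into two copies of $\PP^1\times\PP^1$ glued along two relative divisors. The key structural input is that $\lambda_{g-1}$ vanishes on boundary strata whose dual graph has first Betti number $\ge2$; this forces the dual graph of the broken map to be a single cycle, which in turn forces all relative multiplicities to equal a common divisor $d\mid h$ and produces the $h/d$ factor. All components except one are forced to genus $0$ by further $\lambda_{g'}^2=0$ arguments, and the remaining component is evaluated by an explicit $\PP^1\times\PP^1$ relative computation (Lemma~\ref{mm3}) using Mumford's relation and known $1$-pointed $\PP^1$ invariants. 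The $(2\sin(du/2))^2$ thus comes from $\PP^1\times\PP^1$ geometry, not from elliptic-curve multiple covers as you suggest.
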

\begin{proof}
Fix the genus $g$ and the curve class $\mathsf{s} + 
h\cdot\mathsf{f}$. 
 The Hodge class is determined 
by dimension constraints to be $(-1)^{g-1}\lambda_{g-1}$.

We degenerate $E$ to two rational curves $P_1$ and $P_2$ 
intersecting each other at $0$ and $\infty$ with the
two $\tau_0(\mathsf{p})$ insertions sent to different components. 
 The distribution of curve degree to $P_1 \times \PP^{1}$ and 
$P_2\times \PP^{1}$ is either of the form 
$$(h, 1) + (h, 0) \ \ \ \text{or} \ \ \ (h,0) + (h,1)\,.$$
We consider the first degree distribution, since the second case
 will yield the same answer.

In the degeneration formula, components of the domain 
map to $P_1\times\PP^1$ or $P_2\times\PP^1$.  
While the curve has a connected domain, the preimages $C_1$ 
and $C_2$ of the irreducible components of 
the target can each be disconnected. 
Let $\Gamma$ denote the dual graph with vertices given by 
the connected components of $C_1$ and $C_2$ and
edges determined by the intersections.  
Each vertex of $\Gamma$ is forced to have valence at least two since, 
for degree reasons, the corresponding subcurve
must intersect both relative divisors.  
The Hodge class $\lambda_{g-1}$ vanishes on the boundary cycles 
of $\Mbar_g$ for which the dual graph $\Gamma$ has Betti number
at least $2$, see \cite{FP}.  
As a result, the graph $\Gamma$ must be a single cycle with 
vertices alternating between $P_1\times \PP^1$ and $P_2\times \PP^1$.

There must exist a divisor $d$ of $h$ with the
following property:
each
connected component of each $C_i$ intersects the relative 
divisors at $0$ and $\infty$ at single points of multiplicity $d$.
Each $C_i$ then consists of $\frac{h}{d}$ connected components.
Let $C_0$ denote the connected component of $C_1$ which has degree 
$(d,1)$. Every other component has degree $(d,0)$.  
Dimension constraints force $C_0$ to contain one of the point insertions.  
There are $h/d$ choices for which connected component of $C_2$ contains the 
other point insertion.

We first consider the connected components $C'$ {\em other}
than $C_0$.  
If such a component has genus $g'$, the Hodge class $\lambda_{g-1}$ will restrict to $\lambda_{g'}$ on $C'$. 
 The contributions are given by the following evaluations:
\begin{eqnarray*}
\Big\langle (d, \omega) \ \Big|\ \lambda_{g'}\ \Big|\  (d,\mathsf{1})\ 
\Big\rangle^{(\PP^1 \times \PP^{1})/\{0,\infty\}}_{g', (d,0)}  = 
\frac{1}{d}\ \delta_{g',0}\ , \\
 \Big\langle (d, \mathsf{1}) \ \Big|\ \lambda_{g'}\ \tau_{0}(\mathsf{p})\ \Big|\
 (d,\mathsf{1})\ \Big\rangle^{(\PP^1 \times \PP^{1})/\{0,\infty\}}_{g', (d,0)}
  = \delta_{g',0}\,.
 \end{eqnarray*}
Here, the relative divisors have coordinates $0$ and $\infty$
on the first $\PP^1$ factor.
The  classes $\mathsf{1},\omega\in H^2(\proj^1,\mathbb{Z})$  
correspond to the identity and the point. 
The higher genus vanishings are obtained,
as before, from $\lambda_{g'}^2=0$.
In order to get a nontrivial invariant, the components $C'$
must have genus $0$. Hence,  $C_0$ must have genus $g-1.$

We are left with the evaluation of the connected component $C_0$ mapping to 
$\PP^1\times\PP^1$ with degree $(d,1)$.
The result then follows from Lemma \ref{mm3} below.
\end{proof}

\begin{lemma}\label{mm3} We have
$$\sum_{g=0}^\infty \Big\langle\  (d,\omega) \ \Big|\ (-1)^{g}\lambda_{g}
\tau_{0}(\mathsf{p})\ \Big|\  (d,\omega)\ \Big 
\rangle^{(\PP^1 \times \PP^{1})/\{0,\infty\}}_{g, (d,1)} u^{2g-2} = \frac{1}{u^2}
\calS(du)^{2},
$$
where
$$\calS(u) = \frac{\sin(u/2)}{u/2}\,.$$
\end{lemma}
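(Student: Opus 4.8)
The plan is to compute this relative invariant of $(\PP^1\times\PP^1)/\{0,\infty\}$ by virtual localization, reducing it to $\lambda_g$ Hodge integrals over moduli of curves. Write the surface as $\PP^1_a\times\PP^1_b$, with relative divisors $D_0=\{0\}\times\PP^1_b$ and $D_\infty=\{\infty\}\times\PP^1_b$, so that the class $(d,1)$ has degree $d$ over $\PP^1_a$ and degree $1$ over $\PP^1_b$. The first step is a rigidity observation: since the prescribed contact order along each relative divisor equals the whole degree $d$, the domain of any stable relative map meets $D_0$ and $D_\infty$ in a single point each, which forces the existence of a distinguished \emph{rail} component $C_0\cong\PP^1$ covering $\PP^1_a$ with degree $d$, totally ramified over $0$ and $\infty$, and hence (by Riemann--Hurwitz) of the form $z\mapsto z^d$ with no further ramification; all remaining components are contracted over $\PP^1_a$ and carry the genus. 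I would then localize with respect to the $\C^*$-action on the base $\PP^1_a$. The $\C^*$-fixed configurations have the rail covering an invariant horizontal line, the bidegree-$(0,1)$ piece lying over the two fixed points $0,\infty\in\PP^1_a$ (i.e.\ inside $D_0\cup D_\infty$), the contracted genus accumulating in tails over $z=0$ and over $z=\infty$ only, and the absolute marked point (constrained by $\tau_0(\mathsf p)$ to a generic point of the surface) forced onto the rail, which pins the $\PP^1_b$-coordinate of the rail.

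Next I would identify the surviving graphs and evaluate their contributions. Using $\lambda_g^2=0$ together with the vanishing of $\lambda_g$ on strata of non-separating type (Mumford, Faber--Pandharipande), and the dimension constraint coming from $(-1)^g\lambda_g\,\tau_0(\mathsf p)$, each fixed-point contribution collapses to a product of: two edge factors along the rail, through which the weight $t/d$ of the cotangent line of $z\mapsto z^d$ at the ramification points introduces all the $d$-dependence as a rescaling $u\mapsto du$; a genus-zero factor attached to the point and fiber data; and, at each of the two ramification points, a $\lambda_g$ Hodge-integral vertex sum of the shape $\sum_{h}u^{2h}\int_{\overline M_{h,\bullet}}(-1)^h\lambda_h\,\psi^{\bullet}/(\text{linear factors})$. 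The input I would then invoke is the standard evaluation of this cap series: it equals $\calS(u)$ (this is the $\lambda_g$-specialization of the Hodge-integral cap computations of Faber--Pandharipande and Bryan--Pandharipande, equivalently the stationary genus expansion of the Gromov--Witten theory of $\PP^1$ of Okounkov--Pandharipande), with the genus-zero base case handled directly via $\overline M_{0,3}=\mathrm{pt}$.

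Assembling the pieces, the two ramification tails contribute $\calS(du)\cdot\calS(du)$, the rail-plus-fiber genus-zero data supplies the overall $u^{-2}$, and the equivariant parameter $t$ cancels, as it must since the invariant of Lemma~\ref{mm3} is nonequivariant; this gives $\frac{1}{u^2}\calS(du)^2$. I would also sanity-check the two-factor shape by degenerating the base $\PP^1_a$ at an interior point, which presents $(\PP^1\times\PP^1)/\{0,\infty\}$ as two copies of $\PP^1\times\PP^1$ relative to two fibers glued along a fiber, with the degeneration formula (and $\lambda_g$-vanishing pruning the gluing sum to the profile $(d)$) displaying the answer in product form. The main obstacle is the localization bookkeeping: showing that, once $\lambda_g^2=0$ and the point constraint are imposed, only the claimed graphs survive, and matching their edge and vertex weights against the $\lambda_g$ cap series precisely enough that the power of $d$, the factor $\calS(du)^2$, and the $u^{-2}$ all come out exactly.
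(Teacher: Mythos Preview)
Your plan diverges from the paper's in a way that creates a genuine obstacle, not just a bookkeeping headache.

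The paper first removes $\tau_0(\mathsf p)$ by passing to the rubber invariant
\[
\Big\langle (d,\omega)\ \Big|\ \hodge^{\vee}(s)\ \Big|\ (d,\omega)\ \Big\rangle^{\sim}_{(d,1)}
\]
and then localizes with respect to the $\C^*$ acting on the \emph{vertical} factor $\PP^1_b$. The fixed loci of that action are the two horizontal lines, transverse to the relative divisors, so the picture splits cleanly: a curve $D_1$ of degree $d$ on the upper line (relative to $0,\infty$), a contracted curve $D_2$ on the lower line, and a single vertical $\PP^1$ joining them. After specializing $s=1,\ t=-1$ and using Mumford's relation $\hodge^\vee(1)\hodge^\vee(-1)=(-1)^g$, the two pieces are the Okounkov--Pandharipande $1$-point series for relative $\PP^1$ and the Faber--Pandharipande triple Hodge integral, whose product is $u^{-2}\calS(du)^2$.

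You instead localize on the \emph{horizontal} factor $\PP^1_a$. The fixed locus of that action is exactly $\{0,\infty\}\times\PP^1_b=D_0\cup D_\infty$, the relative divisors themselves. In Graber--Vakil relative localization, everything lying over those fixed fibers becomes a \emph{rubber} contribution over the relative divisor, not an ordinary vertex Hodge integral. Three consequences break your outline:
\begin{itemize}
\item The unique bidegree-$(0,1)$ component cannot sit in the rigid target (there is no $\C^*$-invariant vertical curve away from $D_0\cup D_\infty$), so it is forced into a rubber over one of the $D_i$. That rubber is a relative/rubber invariant of $\PP^1\times\PP^1_b$ in class $(d,1)$ with profile $(d)$ on both ends --- essentially the very invariant you are trying to compute.
\item Whatever equivariant lift of $\mathsf p$ you choose, the marked point is pushed over $D_0$ or $D_\infty$ and hence into a bubble; it is not on an interior point of the rail, so it does not ``pin the $\PP^1_b$-coordinate of the rail'' in the way you describe.
\item Nothing forces the inner profile of the rubber to be $(d)$ before you have already evaluated it, so the single-rail picture is not the description of a general fixed locus; it is what you would \emph{like} to survive, and justifying it requires computing the rubber contributions you have not yet controlled.
\end{itemize}
Your degeneration sanity check and the $u\mapsto du$ rescaling intuition are both correct and do explain the product shape of the answer. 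But to turn the outline into a proof you must either switch to the vertical localization, as the paper does, or independently evaluate the rubber invariants of $\PP^1\times\PP^1$ that your horizontal localization produces --- and those are not simpler than the original problem.
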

\begin{proof}
We replace $(-1)^{g}\lambda_{g}$ with $\hodge_{g}^{\vee}(s)$ with
 a formal variable $s$.  
We can remove the $\tau_0(\mathsf{p})$ insertion by the identity
$$\Big\langle (d,\omega)\ \Big|\  \hodge^{\vee}(s)\ 
\tau_0(\mathsf{p})\ \Big|\ (d,\omega)\ \Big\rangle_{(d,1)} = 
\Big\langle (d,\omega)\ \Big|\ \hodge^{\vee}(s)\ \Big|\ (d,\omega)\ 
\Big \rangle^{\sim}_{(d,1)}.$$
The tilde on the right side denotes 
the rubber moduli space of maps to $\PP^1\times\PP^1$ relative to $0$ and 
$\infty$ up to $\CC^*$-scaling of the first $\PP^1$
factor of the target.  
The identity follows by adding an insertion with the divisor equation 
and then using the marking to rigidify the $\CC^*$-scaling.

Let us orient $\PP^1 \times \PP^1$ so the first factor is a horizontal coordinate
and the second factor is vertical. Then the relative divisors
are on the left and right and the $\CC^*$-scaling acts horizontally.  
While there is no nontrivial horizontal torus action, there is a nontrivial 
$\CC^*$-action in the vertical direction.  
We fix the vertical $\com^*$-action to have weight $t$ for the normal direction 
along the upper edge and weight $-t$ for the normal direction along the lower edge.  
We choose equivariant lifts of the relative point insertions on each side 
corresponding to the upper fixed point on each vertical edge.

\begin{figure*}[htp]
\centering
\psfrag{m}{$D_1$}
\psfrag{a}{$D_2$}
\includegraphics[scale=0.50]{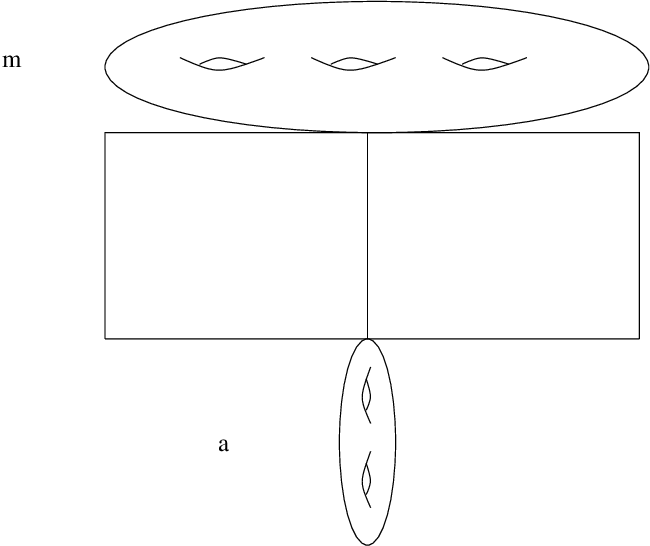}
\caption{Rubber localization}
\end{figure*}


If we localize, the $\com^*$-fixed stable maps have the following 
structure.{\footnote{The analysis follows the 
localization calculation of \cite{dmt}.}}
 First, there is an arbitrary curve $D_1$ mapping with degree $d$ to the upper 
horizontal $\PP^1$, relative to both $0$ and $\infty$, 
with a single relative point of multiplicity $d$.  
There is an arbitrary curve $D_2$ mapping with degree $0$ to the lower horizontal 
$\PP^1$ with no relative points.  
There is a single nonsingular rational curve mapping to 
$\PP^1\times\PP^1$ with degree $(0,1)$, connecting $D_1$ and $D_2$.  
We may use the latter rational curve to rigidify the $\CC^*$-scaling. 
 The fixed-point contribution is
\begin{multline*}
\sum_{g_1+g_2 = g}
t^{2}\cdot \Big\langle (d,\mathsf{1})\  \Big|\  
\hodge_{g_1}^{\vee}(s)\ \hodge_{g_1}^{\vee}(t)\
\frac{\text{ev}_1^*(\mathsf{p})}{t-\psi_{1}}\ 
\Big|\ (d,\mathsf{1})\Big\rangle^{\PP^{1}/\{0,\infty\}}_{g_1,d}\\
\cdot\frac{1}{t(-t)}\cdot
 \Big\langle \emptyset\  \Big| \ \hodge_{g_2}^{\vee}(s)\ 
\hodge_{g_2}^{\vee}(-t)\ \frac{\text{ev}^*_1(\mathsf{p})}{-t-\psi_{1}}\ \Big| \ 
\emptyset \Big\rangle^{\PP^{1}/\{0,\infty\}}_{g_2,0}.
\end{multline*}

Here, the first factor of $t^2$ comes from the relative insertions
and the intermediate term $\frac{1}{t(-t)}$
comes from the $\PP^1$ connecting $D_1$ and $D_2$.  
The point insertion in the first relative invariant comes from 
rigidifying the point where $D_1$ meets the connecting $\PP^1$.

To simplify this expression, we choose the substitution $s=1$ and $t=-1$
so we can apply the Mumford relation
$$\hodge_{g_1}^{\vee}(1)\hodge_{g_1}^{\vee}(-1) = (-1)^{g_{1}} \,.$$
The result is
\begin{align*}
\sum_{g_1+g_2=g}
(-1)^{g} \Big\langle (d,\mathsf{1})\ \Big| \ \tau_{2g_{1}}(p)\  \Big| \ 
(d,\mathsf{1}) \Big\rangle^{\PP^{1}/\{0,\infty\}}_{g_1,d} \cdot
\int_{\Mbar_{g_{2},1}} \frac{\hodge^{\vee}(1)\hodge^{\vee}(1)\hodge^{\vee}(0)}
{1 - \psi_{1}}\,.
\end{align*}
The 1-pointed relative invariants of $\PP^{1}$ are given in \cite{okpanp1}
$$\sum_{g} (-1)^{g}u^{2g-2}
\Big\langle (d,\mathsf{1})\ \Big|\  \tau_{2g}(\mathsf{p})\ \Big| 
\ (d,\mathsf{1})\ \Big\rangle^{\PP^{1}/\{0,\infty\}}_{g,d} = \frac{1}{u^2}
\frac{\calS(du)^{2}}{\calS(u)}\,,$$
and the triple Hodge integrals for $\Mbar_{g,1}$ are given in \cite{FP} as
$$\sum_{g}(-1)^{g}u^{2g-2} 
\int_{\Mbar_{g,1}} \frac{\hodge^{\vee}(1)\hodge^{\vee}(1)\hodge^{\vee}(0)}{1 - \psi_{1}}
= \frac{1}{u^2}\calS(u)\,.$$
Putting everything together completes the proof.
\end{proof}

\section{Quasimodular forms}
\label{qmod}

\subsection{Overview}
We follow the notation of Section \ref{qmforms}.
Let $S$ be an elliptically fibered $K3$ surface, and
let $$\gamma_{{1}},\dots,\gamma_{{r}}\in H^*(S,\mathbb{Z}) \ $$
be cohomology classes.
Our main result here, Theorem \ref{qqq}, states 
the
descendent series
$$\mathsf{F}^S_{g}(\tau_{k_{1}}(\gamma_{{1}})\cdots\tau_{k_{r}}
(\gamma_{{r}}))$$
is a quasimodular form with simple pole at $q=0$. 
The ring 
$$\QMod = \QQ[E_{2}(q), E_4(q), E_6(q)]$$
of holomorphic quasimodular forms (of level $1$) is the 
$\QQ$-algebra generated by Eisenstein series $E_{2k}$, see \cite{bghz}.  
The ring $\QMod$
is naturally graded by weight (where $E_{2k}$ has weight $2k$) 
and inherits an increasing filtration 
$$\QMod_{\leq 2k}\subset \QMod$$
given by forms of weight $\leq 2k$.
More precisely, we will prove the
descendent series are of the form
$$\mathsf{F}^S_{g}(\tau_{k_{1}}(\gamma_{{1}})\cdots\tau_{k_{r}}
(\gamma_{{r}}))
\in \frac{1}{\Delta(q)}\QMod_{\leq 2g+2r}\,.$$

Our results follow from 
an explicit (though difficult) algorithm for calculating 
descendent series which reduces the claims to the case of 
$g=0$ and the Gromov-Witten theory of elliptic curves.

\subsection{Elliptic curves}
We begin by explaining the quasimodularity statement for elliptic curves $E$. 
Given cohomology classes
$$\gamma_{1}, \dots, \gamma_{r} \in H^*(E,\mathbb{Z}),$$ 
consider the following descendent
series for connected Gromov-Witten invariants of $E$,
$$\mathsf{F}^E_{g}(\tau_{k_{1}}(\gamma_{1})\cdots\tau_{k_{r}}(\gamma_{r}))
= \sum_{d \geq 0} \Big\langle 
 \tau_{k_{1}}(\gamma_{1})\cdots\tau_{k_{r}}(\gamma_{r})
\Big\rangle^{E}_{g,d} \ q^{d}.$$

\begin{prop}\label{ellipticmodular} For $r>0$,
$\mathsf{F}^E_{g}(\tau_{k_{1}}(\gamma_{1})\cdots\tau_{k_{r}}(\gamma_{r}))$
is the Fourier expansion in $q$ of 
a holomorphic quasimodular form of weight 
$$2g-2+ 2\sum_{i=1}^{r} \deg(\gamma_{i}).$$
\end{prop}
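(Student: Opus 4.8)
The plan is to reduce the general descendent series to the \emph{stationary} series -- those in which every insertion is of the point class $\omega\in H^2(E,\mathbb{Z})$ -- and then to invoke the known quasimodularity of the stationary descendent theory of elliptic curves. Recall that by the dimension constraint on $\overline{M}_{g,r}(E,d)$, whose virtual dimension is $2g-2+r$, one has $\sum_i(k_i+\deg\gamma_i)=2g-2+r$ whenever $\langle\prod_i\tau_{k_i}(\gamma_i)\rangle^E_{g,d}$ is nonzero; in particular a stationary series $\mathsf{F}^E_g(\prod_i\tau_{k_i}(\omega))$ is nonzero only if $\sum_i k_i=2g-2$, and we shall show it is the Fourier expansion of a quasimodular form of weight $\sum_i(k_i+2)=2g-2+2r$, which is exactly the case $\deg\gamma_i\equiv1$ of the Proposition. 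The argument runs by induction on the genus $g$, with base case $g=0$: every stable map to $E$ of positive degree has constant image, so $\overline{M}_{0,r}(E,d)$ is empty for $d>0$ and the series is a pure $\psi$-integral over $\overline{M}_{0,r}$, hence a constant -- a quasimodular form of weight $0=2\cdot0-2+2\sum_i\deg\gamma_i$, the integral vanishing unless the $\gamma_i$ cup to the point class.

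For the inductive step I would first remove the insertions of $1\in H^0(E,\mathbb{Z})$ and of the odd classes in $H^1(E,\mathbb{Z})$. The string equation removes $\tau_0(1)$, the dilaton equation removes $\tau_1(1)$, and, since $\omega$ is the class of a point and hence a divisor on $E$, the divisor equation removes $\tau_0(\omega)$; each step lowers $(g,r)$ or a descendent index and shifts the prescribed weight by the correct amount, and the resulting invariants are quasimodular by induction. For $\tau_k(1)$ with $k\ge2$ I would push the monomial $\psi_i^k$ onto the boundary of $\overline{M}_{g,r}$ using the strong Getzler--Ionel vanishing of \cite{fpm} -- the same input used for the $K3$ statement of Theorem \ref{qqq} -- whereupon the boundary strata are products of moduli of stable maps to $E$ of strictly smaller total genus, to which the inductive hypothesis applies. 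An even number of $H^1$-insertions appears by parity, and these are reduced by the same boundary and topological-recursion machinery together with the $\mathbb{Z}/2$-graded sign bookkeeping; alternatively one quotes the infinite-wedge computation of Okounkov and Pandharipande, which incorporates odd cohomology through fermionic operators and still yields quasimodular $q$-series.

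The essential input, supplied by Okounkov and Pandharipande through the Gromov--Witten/Hurwitz correspondence -- realizing stationary invariants of $E$ as completed-cycle Hurwitz numbers -- together with the Bloch--Okounkov theorem on $n$-point functions of the infinite wedge, is that the stationary series of $E$ are Fourier expansions of quasimodular forms of weight $\sum_i(k_i+2)$. Combined with the reductions above and the dimension constraint, this shows that each insertion $\tau_k(\gamma)$ contributes weight $k-1+3\deg\gamma$, so that the total weight of $\mathsf{F}^E_g(\prod_i\tau_{k_i}(\gamma_i))$ is $\sum_i(k_i-1+3\deg\gamma_i)=2g-2+2\sum_i\deg(\gamma_i)$, and the form is holomorphic -- no pole at $q=0$, consistent with the target being $E$ rather than a $K3$. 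The hypothesis $r>0$ is used throughout to keep the reductions well posed, in particular the dilaton step in genus $1$ and the availability of a marked point to carry the $\psi$-class manipulations, and to dispose of the degree-$0$ contribution, which vanishes in any case since $\chi(E)=0$.

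The main obstacle, granting the Okounkov--Pandharipande stationary theorem, is the bookkeeping in the last two reductions: one must verify that the boundary and recursion procedures for $\tau_{\ge2}(1)$ and for the odd classes terminate, that quasimodular weight and holomorphy are preserved under the string, dilaton, divisor equations and the Getzler--Ionel pushforward, and that the weight shift at each step matches the formula $2g-2+2\sum_i\deg\gamma_i$. These are routine but lengthy verifications; the conceptual content is carried entirely by the quasimodularity of the stationary theory.
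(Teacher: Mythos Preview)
Your overall architecture matches the paper's: reduce to the stationary sector (all insertions $\tau_k(\omega)$), then invoke the quasimodularity result of Okounkov--Pandharipande \cite{gwhurwitz} for stationary descendents of $E$. Where you diverge is in the reduction step. The paper dispatches it in one line: the extended Virasoro relations for target curves proved in \cite{okpanvir} give explicit rules for removing any insertion $\tau_k(\gamma)$ with $\gamma\in H^{\le1}(E,\mathbb Z)$, and one simply checks these rules preserve quasimodularity and shift the weight correctly. This handles both $\tau_k(1)$ for all $k$ and the odd insertions uniformly, with no induction on genus or boundary combinatorics.

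Your alternative route via string/dilaton plus Getzler--Ionel has a genuine gap as written. You propose to ``push the monomial $\psi_i^k$ onto the boundary'' for a single insertion $\tau_k(1)$ with $k\ge2$, but a single $\psi_i^k$ is \emph{not} a boundary class on $\overline M_{g,r}$ unless $k\ge g$; for instance $\psi_1^2$ on $\overline M_{3,1}$ is not supported on the boundary. What Getzler--Ionel (in the strong form of \cite{fpm}) actually gives is that the \emph{full} $\psi$-monomial $\prod_j\psi_j^{k_j}$ is a boundary class once its total degree $\sum_j k_j$ is at least $g$. A dimension count does show that whenever a degree-0 insertion is present one has $\sum_j k_j\ge 2g-1\ge g$, so the corrected argument can be made to work, but the induction and boundary bookkeeping (including the weight accounting on each stratum) is substantially heavier than the Virasoro route. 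Your treatment of the $H^1$-insertions is also too vague to count as a proof; ``the same boundary and topological-recursion machinery'' does not substitute for an explicit reduction, and this is exactly where the Virasoro relations of \cite{okpanvir} are designed to apply. Finally, the aside that the degree-$0$ term vanishes ``since $\chi(E)=0$'' is not correct in general (e.g.\ $\langle\tau_{2g-2}(\omega)\rangle^E_{g,0}\ne0$); fortunately quasimodular forms may have nonzero constant terms, so this does not obstruct the statement.
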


We use $\deg(\gamma_i)$ here to denote half the cohomological degree. For 
instance, point classes in $H^2(E,\mathbb{Z})$ are degree $1$.
The $r=0$ case concerns the well-known counting of 
genus 1 covers of $E$. The resulting series is not in $\QMod$.

\begin{proof}
When all the $\gamma_i$ are
point classes in $H^2(E,\mathbb{Z})$, 
the result is stated as a corollary of Theorem 5 in \cite{gwhurwitz}.  
For the general case, we need only check that quasimodularity 
is preserved by the extended Virasoro relations for curves proved 
in \cite{okpanvir}  which provide rules for removing insertions 
$\tau_{k}(\gamma)$ for $\gamma \in H^{\leq 1}(E,\mathbb{Z})$.  
\end{proof}

\subsection{Tautological classes}
We will rephrase Theorem \ref{qqq} in terms of tautological classes.
For $2g-2+r >0$, let
$$R(\overline{M}_{g,r}) \subset H^*(\Mbar_{g,r},\QQ)$$
denote 
the subring of \emph{tautological} cohomology classes.  
The tautological ring $R(\overline{M}_{g,r})$ is
 spanned by the push-forwards of products of $\psi$-classes on 
boundary strata,  see \cite{icm} for an introduction.

Given $\alpha \in R(\overline{M}_{g,r})$ and 
$\gamma_{{1}},\dots,\gamma_{{r}}\in 
H^*(S,\mathbb{Z})$,
we can use the forgetful morphism
$$\pi: \Mbar_{g,r}(S,\beta) \rightarrow \Mbar_{g,r}$$
to define the reduced invariant
$$\Big \langle \pi^*(\alpha), 
\gamma_{{1}},\cdots,\gamma_{{r}}\Big\rangle^{S}_{g,\beta}
= \int_{[\Mbar_{g,r}(S,\beta)]^{\mathrm{red}}} \pi^*(\alpha)\cup \prod_{i=1}^{r} \ev_{i}^{*}(\gamma_{{i}})$$
when $2g-2+r >0$. Let
\begin{equation}\label{lkd9}
\mathsf{F}^S_{g}(\alpha; \gamma_{{1}},\dots,\gamma_{{r}}) = 
\sum_{h=0}^{\infty} \Big\langle 
\pi^*(\alpha), \gamma_{l_{1}},\cdots,\gamma_{l_{r}}\Big\rangle^{S}_{g,h}
q^{h-1}
\end{equation}
be the associated generating function.
The index $h$ of the sum 
 stands for a primitive effective
curve class $\beta\in H^2(S,\mathbb{Z})$ satisfying
$$\langle \beta , \beta \rangle = 2h-2\,.$$

\begin{prop} \label{pqqp}  We have
$$\mathsf{F}^S_{g}(\alpha; \gamma_{{1}},\dots,\gamma_{{r}}) 
\in \frac{1}{\Delta(q)}\QMod_{\leq 2g+2r}\,.$$
\end{prop}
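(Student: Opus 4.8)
The strategy is to reduce Proposition \ref{pqqp} — and hence Theorem \ref{qqq}, which is the special case $\alpha = \prod \psi_i^{k_i}$ after the dilaton/string type identities that express $\psi$-monomials in terms of the tautological ring — to the elliptic-curve statement Proposition \ref{ellipticmodular}. The mechanism is the degeneration to the normal cone of a nonsingular fiber $E \subset S$, exactly as in Section \ref{pint}: one degenerates $S$ to $S_0 = S \cup_E (E\times\PP^1)$ and applies the reduced degeneration formula (Theorem \ref{dmdm} in the algebraic version, or Lee--Leung). This expresses $\mathsf{F}^S_g(\alpha;\gamma_1,\dots,\gamma_r)$ as a finite sum of products of (i) reduced relative invariants of $(S,E)$, and (ii) standard relative invariants of $(E\times\PP^1, E)$ (or $(E\times\PP^1,\{0,\infty\})$ after further degenerating the $\PP^1$-bubbles). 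Iterating the degeneration of the $\PP^1$ factor reduces everything to relative invariants over a chain of $E\times\PP^1$'s, and these in turn — after stripping the trivial section $\mathsf{s}$ and contracting it — are governed by the fiberwise Gromov--Witten theory, i.e. by absolute and relative invariants of the elliptic curve $E$ together with rubber integrals and Hodge integrals over $\Mbar_{g,n}$.

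\textbf{Key steps, in order.} First, set up the reduced degeneration formula so that the section class $\mathsf{s}$ plays no role: as observed in Section \ref{gwd}, a curve in class $\mathsf{s}+h\mathsf{f}$ is a rigid genus-$0$ curve over $\mathsf{s}$ with fiber components attached, so the reduced class of $\Mbar_{g,r}(S,\mathsf{s}+h\mathsf{f})$ factors through moduli of stable maps to the \emph{elliptic fibration}, and the tautological insertion $\pi^*(\alpha)$ together with the $\ev_i^*(\gamma_i)$ pulls back accordingly. Second, run the degeneration $S \rightsquigarrow S\cup_E(E\times\PP^1)$ and then iterate on the $\PP^1$ direction, tracking the variable $q$: each bubbling contributes a factor of $q$ (as in \eqref{hhtt4}), and the crucial point is that the relative invariants of $(E\times\PP^1)/\{0,\infty\}$ in the degree-$d$ fiber direction produce, after localization in the vertical $\C^*$ (as in the proof of Lemma \ref{mm3}), products of: honest descendent invariants of $E$ (quasimodular by Proposition \ref{ellipticmodular}), one-pointed relative invariants of $\PP^1$ (\cite{okpanp1}, rational in $u$), and Hodge integrals over $\Mbar_{g,n}$ (\cite{FP}). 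Third, assemble the weight bookkeeping: each insertion $\tau_{k_i}(\gamma_i)$ on the $K3$ side transports to an insertion on the $E$ side whose cohomological degree is controlled by $\deg\gamma_i$, the tautological class $\alpha$ contributes its genus, and one checks the total weight of the resulting quasimodular form is $\le 2g+2r$ and that the only non-modular denominator produced is a single power of $\Delta(q)$ — this $\Delta(q)$ is precisely the $\frac1{\Delta}$ appearing in the base case $\mathsf{F}^S_g(1) = \langle\hodge^\vee(1)\rangle^S$-type computations, i.e. it comes from the reduced relative theory of $(S,E)$ with no fiber-direction insertions, which by the Yau--Zaslow/Bryan--Leung evaluation \eqref{htth} and Theorem \ref{ooo} is $\frac1{\Delta(q)}$ times something in $\QMod$.

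\textbf{Main obstacle.} The genuinely hard step is controlling the reduced relative theory of $(S,E)$ uniformly — that is, showing $\mathsf{F}^{S/E}_g(\text{anything})$ itself lies in $\frac1{\Delta(q)}\QMod$ with the right weight bound, since the degeneration formula only reduces $\mathsf{F}^S_g$ to a \emph{combination} of $\mathsf{F}^{S/E}_g$ factors and $(E\times\PP^1)$ factors, and one cannot escape analyzing $(S,E)$. The way to break this is the boundary induction in the tautological ring promised in the paper's plan: one uses the strong Getzler--Ionel vanishing of \cite{fpm} to express any $\alpha \in R(\Mbar_{g,r})$ with sufficiently high codimension as a boundary class, pushing the computation onto lower-genus (or lower-point) strata, and simultaneously one uses Theorem \ref{ooo}/Theorem \ref{kkvpoints} to anchor the base cases where $\alpha$ is (a power of) $\lambda_g$ or where all insertions are point classes. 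The induction terminates at $g=0$ — where the reduced theory of $S$ is the genus-$0$ theory computed in \cite{brl,gwyz} and is manifestly a ratio of $\Delta$ and a quasimodular form — and at the elliptic-curve theory via Proposition \ref{ellipticmodular}. Verifying that this induction is well-founded and that the weight filtration is preserved at every boundary pushforward (each node can raise the weight by a controlled amount, matching the $2g+2r$ bound) is where essentially all the work lies.
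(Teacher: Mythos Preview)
Your proposal assembles all the right ingredients --- the degeneration $S \rightsquigarrow S \cup_E (E\times\PP^1)$, Proposition \ref{ellipticmodular} for the elliptic curve, the upper-triangular relation between $\mathsf F^{S/E}$ and $\mathsf F^S$, and the Getzler--Ionel boundary expression from \cite{fpm} --- but you have not identified the logical dichotomy that makes the induction on $(g,r)$ well-founded, and without it the argument does not close.

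The paper's induction splits into two complementary cases. In Case (ii), some $\gamma_i$ is the point class $\mathsf p$; one degenerates with that point insertion specialized onto the bubble $E\times\PP^1$. Because the bubble then carries both a relative point condition $\omega$ and an absolute point insertion $\tau_0(\mathsf p)$, the bubble contribution has strictly positive genus, so the $S/E$ factor has genus $g' < g$ and one can invoke the inductive hypothesis (after the upper-triangular inversion you mention). In Case (i), no $\gamma_i$ is a point class; then the dimension constraint $\deg\alpha + \sum\deg\gamma_i = g+r$ forces $\deg\alpha \ge g$, and \emph{only then} does the strong Getzler--Ionel vanishing of \cite{fpm} guarantee that $\alpha$ is a boundary class, allowing the splitting-formula reduction. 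Neither mechanism works alone: Getzler--Ionel fails when $\deg\alpha < g$ (which happens precisely when there are point insertions), and the degeneration argument gives no genus drop on the $S$-side unless there is a point to anchor on the bubble. Your writeup mentions both techniques but treats them as parallel rather than complementary, and your phrase ``any $\alpha$ with sufficiently high codimension'' hides the fact that this hypothesis is not automatic.

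A smaller point: the paper does not use Theorem \ref{ooo} or Theorem \ref{kkvpoints} as base cases for this proposition. The sole base case is $(g,r)=(0,r\le 2)$, handled by Yau--Zaslow together with the string/dilaton/divisor equations; everything else is reached by the induction above. Invoking Theorems \ref{ooo}/\ref{kkvpoints} would be both unnecessary and insufficient, since they cover only $\lambda_g$ and pure point insertions, not general $\alpha \in R(\overline M_{g,r})$.
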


The splitting formula for standard Gromov-Witten theory
takes a slightly modified form for the reduced Gromov-Witten
theory of $K3$ surfaces.
Given a reducible boundary divisor 
$$\iota:\Mbar_{g_{1},r_{1}+1} \times \Mbar_{g_{2},r_{2}+1} \rightarrow \Mbar_{g,r},$$
let $\Delta$ denote the pushforward of the 
fundamental class of the left side.  The splitting
formula for reduced classes is
\begin{multline*}
\pi^*(\Delta)\cap[\Mbar_{g,r}(S,\beta)]^{\mathrm{red}} = \\  
\iota_*([\Mbar_{g_{1},r_{1}+1}(S,\beta)]^{\mathrm{red}}
\times_{S}[\Mbar_{g_{2},r_{2}+1}(S,0)]^{\mathrm{vir}} \\ 
+ [\Mbar_{g_{1},r_{1}+1}(S,0)]^{\mathrm{vir}}\times_{S}[\Mbar_{g_{2},r_{2}+1}(S,\beta)]^{\mathrm{red}})\ .
\end{multline*}
Similarly, for the irreducible boundary divisor given by
$$\iota:\Mbar_{g-1, r+2} \rightarrow\Mbar_{g,r}\,,$$
we have
$$\pi^*(\Delta)\cap[\Mbar_{g,r}(S,\beta)]^{\mathrm{red}} =
[\Mbar_{g-1,r+2}(S,\beta)]^{\mathrm{red}}.$$
See \cite{jleec1,jleec2} for proofs (again the methods
of Section \ref{gwd} could also be used).

Using the usual trading of cotangent line classes on
$\overline{M}_{g,r}(S,\beta)$  and
the above splitting formulas for the reduced class,
we can easily express the
descendent series
$$\mathsf{F}^S_{g}(\tau_{k_{1}}(\gamma_{{1}})\cdots\tau_{{r}}
(\gamma_{l_{r}}))$$
in terms of linear combinations of the series
$$\mathsf{F}^S_{g'}(\alpha'; \gamma'_{{1}},\dots,\gamma'_{{r'}}) 
$$
where $g'\leq g$ or
$g'=g$ and $r'\leq r$.
Hence, Proposition \ref{pqqp} implies Theorem \ref{qqq}.

\subsection{Proof of Proposition \ref{pqqp}}

We proceed by induction on the pair $(g,r)$ where 
$g$ is the genus of the domain curve and $r$ is the number of insertions.  
We order such pairs $(g,r)$ so $(g',r') < (g,r)$ if either
\begin{enumerate}
\item[$\bullet$]
$g'<g$ or
\item[$\bullet$]
$g'=g$ and $r'<r$.
\end{enumerate} 
We assume the Proposition is known for all series
 with $(g',r') < (g,r)$. We will
give a procedure for reducing all invariants of type $(g,r)$ 
to invariants of lower type in a manner preserving quasimodularity.

\vspace{9pt}
\noindent{\bf{Base case:}} $(g,r)=(0,r\leq 2)$.
\vspace{9pt}

If $(g,r)=(0,0)$, then 
the Proposition follows from the Yau-Zaslow formula. 
 If $g=0$ and $r\leq 2$, we are in the unstable range
$2g-2+r\leq 0$, so no $\alpha$ insertion appears. Then,  
for dimension reasons, there must be an 
insertion of the form $$\tau_0(1), \tau_1(1)\ \text{or}\ \tau_0(\gamma),$$
where $\gamma \in H^2(S,\mathbb{Z})$.  
Since the claim of the Proposition
is preserved by applying either the string, dilaton or divisor equation, 
we can remove one insertion and  strictly reduce $r$.

\vspace{9pt}
\noindent \textbf{Case (i):}  $\deg \gamma_{l_{i}} \leq 1$ for all $i$.
\vspace{9pt}

We again use $\deg(\gamma)$ to denote half the 
cohomological degree of $\gamma$. 
In case (i), there are no point insertions.
Since the reduced virtual dimension is $g+r$, the
dimension constraint then implies
$$\deg \alpha \geq g.$$

By a strong form of Getzler-Ionel  vanishing proved in 
Proposition 2 of \cite{fpm},
there exists a tautological class 
$\alpha' \in R(\partial\Mbar_{g,r})$ satisfying
$$\iota_*\alpha' = \alpha,$$ 
where $\iota$ denote the inclusion of the boundary,
$$\iota: \partial\Mbar_{g,r} \hookrightarrow \Mbar_{g,r}\,.$$ 
Using $\alpha'$ and the
splitting formula for the virtual class, we can express 
the series
$$\mathsf{F}_{g}(\alpha; \gamma_{{1}},\dots,\gamma_{{r}})\ $$
as a linear combination of series of type $(g',r') < (g,r)$.

\vspace{9pt}
\noindent \textbf{Case (ii):}  $\gamma_{{1}}=\mathsf{p} \in H^4(S,
\mathbb{Z})$.
\vspace{9pt}

We will degenerate $S$ to the normal cone of an elliptic fiber $E$,
\begin{equation}\label{nttt3}
S \ \rightsquigarrow \ S \times _E (E\times\PP^1)\ ,
\end{equation}
and use the degeneration formula for the
reduced virtual class proven by  Lee and Leung \cite{jleec1,jleec2}.
Following the notation of Section \ref{pint},
the required generating series are:
\begin{align*}
\mathsf{F}^{S/E}_{g}(\alpha; \gamma_{{1}},\dots,\gamma_{{r}}) &= 
\sum_{h=0}^{\infty} 
\Big\langle \pi^*\alpha \cup \prod \tau_0(\gamma_{{i}}) \ \Big|\ 
\mathsf{1}\Big\rangle^{S/E}_{g, h}q^{h-1}\ ,\\
\mathsf{F}^{(E\times\PP^1)/E}_g(\alpha; \gamma_{{1}},\dots,\gamma_{{r}}) &= 
\sum_{h=0}^{\infty} 
\Big\langle \pi^*\alpha \cup \prod \tau_0(\gamma_{{i}}) \ \Big|\ 
\omega \Big\rangle^{(E\times\PP^{1})/E}_{g, \mathsf{s}+ h\cdot \mathsf{f}} 
q^{h}\,.
\end{align*}
The Gromov-Witten invariants for $S/E$ are reduced.
The first step is to prove a quasimodularity result for the latter 
series.
\begin{lemma}\label{EP1}
$\mathsf{F}^{(E\times\PP^1)/E}_g(\alpha; \gamma_{{1}},\dots,\gamma_{{r}}) 
\in \QMod_{\leq 2g+2r}.$
\end{lemma}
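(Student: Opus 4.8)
The plan is to evaluate $\mathsf{F}^{(E\times\PP^1)/E}_g(\alpha;\gamma_1,\dots,\gamma_r)$ by $\CC^*$-equivariant virtual localisation for the action on $E\times\PP^1$ scaling the $\PP^1$-factor and fixing the two points $0,\infty\in\PP^1$, with $E\times\{0\}$ the relative divisor, and then to recognise the $q$-dependence of the fixed-point contributions as descendent Gromov-Witten series of the elliptic curve $E$, which are quasimodular by Proposition~\ref{ellipticmodular}. First I would analyse the $\CC^*$-fixed loci: since the class $\mathsf{s}+h\mathsf{f}$ has degree $1$ over $\PP^1$ and the section $\mathsf{s}$ is rigid (as in Section~\ref{relmapp}), every fixed stable relative map consists of a genus $0$ component mapping isomorphically onto a fiber $\{e\}\times\PP^1$ of the projection $E\times\PP^1\to E$ — carrying the relative marking over $0$, which the relative insertion $\omega$ pins to a fixed point of $E$ — together with trees of components contracted by the projection to $\PP^1$ and hence mapping into $E\times\{0\}$ or $E\times\{\infty\}$, of total fiber degree $h$, and with the $\tau_0(\gamma_i)$-markings distributed among the pieces; the factor $\pi^*(\alpha)$ is pulled back from $\overline M_{g,r}$ and splits according to the dual graph of the fixed locus by the standard tautological-boundary calculus.

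Next I would assemble the localisation formula. Each tree of contracted components over $0$ or $\infty$ contributes, after projection to $E\times\{0\}\cong E$ or $E\times\{\infty\}\cong E$, a genuine descendent Gromov-Witten series of $E$ — in degree a part of $h$, with the node markings inserted as components of the diagonal class of $E$ — and such series are holomorphic quasimodular forms of explicit weight by Proposition~\ref{ellipticmodular}. All the remaining factors — the normalisation and smoothing terms at the nodes, the genus $0$ section component, and the automorphism contributions — are rational functions of the equivariant parameter $t$ carrying no $q$. Since $\mathsf{F}^{(E\times\PP^1)/E}_g$ is non-equivariant, it equals the $t$-independent value of this sum; as a $\mathbb{Q}$-linear combination (with coefficients scalar in a suitable specialisation of $t$) of products of quasimodular forms by $q$-free factors, the $q$-series is quasimodular. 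The weight bound is then bookkeeping: in the projection to $E$, a class $\gamma_i$ of half-degree $d_i$ induces on the $E$-pieces insertions whose half-degrees sum to at most $\min(d_i,1)$, the $\PP^1$-component of $\gamma_i$ being absorbed into the $t$-dependent factors; combining the weight formula $2g'-2+2\sum(\text{half-degrees on }E)$ of Proposition~\ref{ellipticmodular} with Euler-characteristic additivity over the fixed graph and with the dimension constraint on the relative moduli space — which bounds $\deg\alpha+\sum_i\deg\gamma_i$ in terms of $g$ and $r$ — gives total weight at most $2g+2r$, with the usual conventions in the unstable cases. An alternative route, degenerating $E\times\PP^1$ along a second fiber and invoking the degeneration formula, reduces to the same elliptic-curve input but is less direct.

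The main obstacle I expect is organising the $\CC^*$-fixed loci of the relative (and rubber) moduli space cleanly enough to isolate the elliptic-curve series as the \emph{only} carriers of the $q$-expansion: one must check that no contracted component maps to a generic fiber $E\times\{t\}$, so that all $q$ is concentrated over $0$ and $\infty$; track how the splitting of $\pi^*(\alpha)$ and of the primary insertions interacts with the graph combinatorics; and handle the possible appearance of rubber factors over the relative divisor. The weight estimate, though routine, also needs care precisely because $\alpha$ ranges over all of $R(\overline M_{g,r})$, so one cannot argue degree by degree and must instead use the dimension constraint, exactly as in the relative analogues of Proposition~\ref{ellipticmodular} proved in \cite{okpanvir,gwhurwitz}.
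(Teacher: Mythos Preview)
Your proposal is correct and follows essentially the same approach as the paper: reduce the relative series for $(E\times\PP^1)/E$ to descendent series of $E$ via $\CC^*$-localisation on the $\PP^1$-factor, invoke Proposition~\ref{ellipticmodular} for quasimodularity, and extract the weight bound $2g+2r$ from an analysis of the fixed loci. The only minor difference is that the paper packages the passage from relative to absolute invariants through the absolute/relative correspondence of \cite{mptop} rather than localising directly on the relative moduli space as you do; your route is slightly more hands-on but arrives at the same combinatorial expression in terms of elliptic-curve descendents.
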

\begin{proof}

The relative invariants of
$(E\times\PP^1)/E$
 are algorithmically determined 
from the Gromov-Witten theory of $E$ using localization (via
the $\com^*$-action on $\PP^1$) and
the absolute/relative 
correspondence of \cite{mptop}.  The result is a combinatorial 
expression for the series
$$\mathsf{F}^{(E\times\PP^1)/E}_g(\alpha; \gamma_{{1}},\dots,\gamma_{{r}})$$
in terms of descendent series for $E$.  

Each insertion in a 
descendent series for $E$
contributes to the weight of the final answer
by Proposition \ref{ellipticmodular}.
The bound of $2g+2r$ is obtained by an elementary
analysis of the possible $\com^*$-fixed point loci of
the moduli space
$\overline{M}_{g,r}(E \times \PP^1, \mathsf{s} + h \mathsf{f})$.
\end{proof}

\begin{lemma}\label{Srel}
For $(g',r') < (g,r)$ and insertions 
$$\alpha' \in R(\Mbar_{g',r'}), \  \ \
\gamma'_{{1}},\dots,\gamma'_{{r'}} \in H^*(S,\mathbb{Z})\,, $$
we have  
$$\mathsf{F}^{S/E}_{g'}(\alpha'; \gamma'_{{1}},\dots,\gamma'_{{r'}}) 
\in \frac{1}{\Delta(q)}\QMod_{\leq 2g'+2r'}.$$
\end{lemma}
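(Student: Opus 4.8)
The plan is to \emph{invert} the Lee--Leung degeneration formula \cite{jleec1,jleec2} for the degeneration \eqref{nttt3} of $S$ to the normal cone of an elliptic fibre $E$, but applied to the \emph{absolute} series $\mathsf F^{S}_{g'}(\alpha';\gamma'_{1},\dots,\gamma'_{r'})$. First I would specialise cycle representatives of all the primary constraints $\gamma'_{i}$ to the interior $S\setminus E$, so that in the degeneration formula every marked point, together with $\pi^{*}(\alpha')$, remains on the $S$--component and no marking falls on the $(E\times\PP^{1})$--bubbles. Since $(\mathsf s+h\mathsf f)\cdot E=1$ the gluing node has multiplicity one, the diagonal of $H^{*}(E)$ contributes (by parity) only $\mathbf 1\otimes\omega+\omega\otimes\mathbf 1$, and a genus $0$ stable map to $E\times\PP^{1}$ of positive fibre degree is impossible because its composite with the projection to $E$ would be a nonconstant genus $0$ map. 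Hence the genus $0$ bubble is forced into the section class $\mathsf s$ and contributes exactly $1$, while every genuine bubble has genus $g_{2}\ge 1$. This yields a relation of the shape
\begin{multline*}
\mathsf F^{S}_{g'}(\alpha';\gamma'_{\bullet})
\ =\ \mathsf F^{S/E}_{g'}(\alpha';\gamma'_{\bullet})\ +\\
\sum_{g_{1}+g_{2}=g',\ g_{2}\ge 1}\ \sum_{j}\
\mathsf F^{S/E}_{g_{1}}(\alpha'_{1,j};\gamma'_{\bullet})\cdot
\mathsf F^{(E\times\PP^{1})/E}_{g_{2}}(\alpha'_{2,j})\,,
\end{multline*}
where $\gamma'_{\bullet}=(\gamma'_{1},\dots,\gamma'_{r'})$, the relative invariant on the left is the one defined in the proof of Proposition \ref{pqqp}, and $\alpha'$ restricts on the boundary divisor $\Mbar_{g_{1},r'+1}\times\Mbar_{g_{2},1}\subset\Mbar_{g',r'}$ to $\sum_{j}\alpha'_{1,j}\boxtimes\alpha'_{2,j}$ with tautological factors of degrees $\deg\alpha'_{1,j}+\deg\alpha'_{2,j}=\deg\alpha'$.

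With this relation in hand I would induct on $g'$, with $r'$ held fixed (note $r'$ never drops in the tail). The base case $g'=0$ is immediate: the tail is empty, so $\mathsf F^{S/E}_{0}(\alpha';\gamma'_{\bullet})=\mathsf F^{S}_{0}(\alpha';\gamma'_{\bullet})$, which lies in $\tfrac1{\Delta(q)}\QMod_{\le 2r'}$ by the induction hypothesis of Proposition \ref{pqqp} (applicable since $(0,r')<(g,r)$). For the inductive step: $\mathsf F^{S}_{g'}(\alpha';\gamma'_{\bullet})\in\tfrac1{\Delta(q)}\QMod_{\le 2g'+2r'}$ again by Proposition \ref{pqqp}; each tail factor $\mathsf F^{S/E}_{g_{1}}(\alpha'_{1,j};\gamma'_{\bullet})$ lies in $\tfrac1{\Delta(q)}\QMod_{\le 2g_{1}+2r'}$ by the inductive hypothesis of the present lemma ($g_{1}=g'-g_{2}<g'$); and each factor $\mathsf F^{(E\times\PP^{1})/E}_{g_{2}}(\alpha'_{2,j})$ is a \emph{holomorphic} quasimodular form of weight $\le 2g_{2}$ by Lemma \ref{EP1} (no interior markings on the bubble side). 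Multiplying, every tail term lies in $\tfrac1{\Delta(q)}\QMod_{\le 2g_{1}+2r'+2g_{2}}=\tfrac1{\Delta(q)}\QMod_{\le 2g'+2r'}$, and carries exactly one factor $\Delta(q)^{-1}$, so the pole at $q=0$ stays of order $\le 1$; subtracting the tail from $\mathsf F^{S}_{g'}(\alpha';\gamma'_{\bullet})$ therefore places $\mathsf F^{S/E}_{g'}(\alpha';\gamma'_{\bullet})$ in $\tfrac1{\Delta(q)}\QMod_{\le 2g'+2r'}$, as required.

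The hard part will be the fine structure of the degeneration formula: I must confirm that with all constraints specialised off $E$ the only surviving contributions are the genus $0$ identity term and the genus $\ge 1$ terms displayed above --- in particular the vanishing of bubble contributions in positive fibre degree and genus $0$, and that the identity bubble truly evaluates to $1$ --- and I must carry out the weight bookkeeping. The latter needs $\deg\alpha'_{1,j}+\deg\alpha'_{2,j}=\deg\alpha'$ (including any $\psi$--class excess at the node, which is tautological of controlled degree) together with the $K3$ dimension constraint $\deg\alpha'+\sum_{i}\deg\gamma'_{i}=g'+r'$, so that the combined weight of each tail product cannot exceed $2g'+2r'$; the weight of the bubble factor is controlled by the $2g_{2}$ bound of Lemma \ref{EP1} applied with its built--in relative $\omega$--insertion and any $\psi$ at the relative point. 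Everything else is a transcription of the arguments already used for $\mathsf F^{S}$ in Proposition \ref{pqqp} and for $(E\times\PP^{1})/E$ in Lemma \ref{EP1}.
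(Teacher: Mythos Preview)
Your approach is essentially the same as the paper's: both invert the Lee--Leung degeneration formula for \eqref{nttt3}, observe that the resulting system expressing $\mathsf F^{S}$ in terms of $\mathsf F^{S/E}$ is upper-triangular, and then appeal to Lemma~\ref{EP1} for the bubble coefficients together with the outer inductive hypothesis of Proposition~\ref{pqqp} for the absolute series. The paper's proof is terser---it simply records the triangular relation
\[
\mathsf F^{S}_{g'}(\alpha';\gamma'_\bullet)\ =\ \mathsf F^{S/E}_{g'}(\alpha';\gamma'_\bullet)\ +\!\!\sum_{(g'',r'')<(g',r')}\!\!\mathsf F^{S/E}_{g''}(\cdots)\cdot\mathsf F^{(E\times\PP^1)/E}_{g'-g''}(\cdots)
\]
and inverts along the pair ordering, allowing the $\gamma'_i$ to distribute to either side.

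Your refinement of specialising all primary insertions to $S\setminus E$ so that $r'$ never drops is a legitimate simplification; it reduces the inner induction to the single parameter $g'$ and makes the weight bookkeeping you sketch ($2g_1+2r'$ from the $S/E$ factor plus $\le 2g_2$ from the bubble) more transparent than the paper's one-line appeal to Lemma~\ref{EP1}. The only points you should pin down in a final version are the ones you already flag: that both diagonal splits $\mathbf 1\otimes\omega$ and $\omega\otimes\mathbf 1$ are covered (the second does occur, so your displayed relation should in principle also carry $\mathsf F^{S/E}$ with a relative $\omega$ insertion, handled identically), and that the genus~$0$, class~$\mathsf s$ bubble indeed evaluates to~$1$ with the correct normalisation.
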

\begin{proof}
The result follow from the degeneration formula and the inductive hypothesis.  
The degeneration formula of Lee and Leung applied to
\eqref{nttt3} yields
\begin{align*}
\mathsf{F}^S_{g'}(\alpha'; \gamma'_{{1}},\dots,\gamma'_{{r'}})\, =\ &
\mathsf{F}^{S/E}_{g'}(\alpha'; \gamma'_{{1}},\dots,\gamma_{{r'}})\\
&+ \sum_{(g'',r'')< (g',r')} \mathsf{F}^{S/E}_{g''}(\cdots) \cdot 
\mathsf{F}^{(E\times \PP^1)/E}_{g'-g''}(\cdots).
\end{align*}
The second sum is also over all distribution of insertions (both $\alpha'$ and
the $\gamma'_i$).
We conclude the relative series $\mathsf{F}^{S/E}$ can be
expressed in terms of the absolute series $\mathsf{F}^{S}$
by a change of basis which is
upper-triangular with respect to the ordering on pairs bounded by 
$(g',r')$.
Moreover, 
the  coefficients 
of the change of basis are 
given by the series $\mathsf{F}^{(E\times \PP^1)/E}$.
The Lemma then follows from Lemma \ref{EP1} and the inductive 
hypothesis on $\mathsf{F}^{S}$.
\end{proof}

We now complete
the analysis of Case (ii) by
explaining a genus reduction procedure
dependent upon the insertion $\tau_0(\mathsf{p})$. 
 We use the formula of Lee and Leung applied
to the degeneration \eqref{nttt3}.
The  insertion $\tau_0(\mathsf{p})$
is specialized to lie on the bubble $E\times \PP^1$, and the 
remaining insertions are specialized 
arbitrarily. We obtain
\begin{equation}\label{hmqq}
\mathsf{F}^S_{g}(\alpha; \mathsf{p}, \gamma_{{2}},\dots,\gamma_{{r}}) =
\sum_{(g',r') \leq (g,r)} \mathsf{F}^{S/E}_{g'}(\alpha',\cdots)\cdot 
\mathsf{F}_{g''}^{(E\times\PP^1)/E}( \alpha''; \mathsf{p}, \cdots).
\end{equation}
Again, we have suppressed the summation over  the splitting of
the
tautological class $\alpha$ and
distribution of the insertions.  

Since the relative invariants on $E\times\PP^1$ occurring
in \eqref{hmqq} have both 
relative and absolute point insertions, 
the domain genus must be positive. So 
 $g''>0$ and $g'<g$.  
By Lemmas \ref{EP1} and \ref{Srel}, every nonzero term 
on the right side of \eqref{hmqq}
is quasimodular.  
Therefore, the left side is quasimodular of the desired
form.
\qed

\subsection{Non-primitive classes} \label{ccllyy}

Let $S$ be an elliptically fibered $K3$ surface with 
section. Let
$$\mathsf{s}, \mathsf{f} \in H_2(S, \mathbb{Z})$$
denote the section and fiber classes as before.
A natural descendent potential function for the
reduced theory of $K3$ surfaces is defined
by
$$\bF^{S}_{g,m}\big(\tau_{k_1}(\gamma_{l_1}) \cdots
\tau_{k_r}(\gamma_{l_r})\big)=
\sum_{n=0}^\infty 
\Big\langle \tau_{k_1}(\gamma_{l_1}) \cdots
\tau_{k_r}(\gamma_{l_r})\Big\rangle^{\mathrm{red}}_{g,m\mathbf{s}+ n \mathbf{f} } 
\ q^{m(n-m)}
$$
for $g\geq 0$ and $m\geq 1$.  
The following conjecture{\footnote{The conjecture
was made earlier by two of us in \cite{clay}.} specializes to Theorem 4
in the primitive $(m=1)$ case.

\vspace{10pt}
\noindent{\bf Conjecture.}
{\em 
$\bF^{S}_{g,m}\big(\tau_{k_1}(\gamma_{l_1})
\cdots
\tau_{k_r}(\gamma_{l_r})\big)
$
is the Fourier expansion in $q$
of a quasimodular form of level $m^2$ with pole at $q=0$ of
order at most $m^2$.}
\vspace{10pt}

By the ring of quasimodular forms 
of level $m^2$
with possible poles at $q=0$, we mean the algebra generated by 
the Eisenstein series
$E_2$ over the ring of modular forms of level $m^2$.

\appendix
\section{Reduced theories revisited}
\label{oldnew}

\subsection{Reduced obstruction theories} \label{eg2}
Given a perfect obstruction theory for a scheme $P$, together with a surjection 
\begin{equation}\label{surj}
\text{Ob}\to\O_P\ ,
\end{equation}
we will attempt now to produce a reduced obstruction theory.

For our discussion, we assume the obstruction theory of $P$
arises from the following standard model (which always holds locally).
Let
$$P\subset A$$
be an embedding of $P$ in a nonsingular
ambient space $A$, cut out as the zero locus of a
section $s$ of a vector bundle $E\to A$. 
In other words, $$s\colon E^\vee\to\O_A$$
 generates the
ideal $I\subset\O_A$ defining $P\subset A$. We obtain
$$
\spreaddiagramrows{-.6pc}
\xymatrix{
E^\vee|_P \rto\dto^s & \Omega_A|_P \ar@{=}[d]\rto & \Omega_P\! \ar@{=}[d] \\
I/I^2 \rto^d & \Omega_A|_P\rto & \Omega_P,\!\!}
$$
where the lower row is the exact sequence of K\"ahler differentials for $P\subset A$.
The first horizontal arrow is the composition $ds$ of the other arrows in the first square.
We denote the resulting 2-term complex of locally free sheaves on $P$ by
$$E\udot=\{E^{-1}\Rt{ds}E^0\}\,.$$

Since $\{I/I^2\rt{d}\Omega_A|_P\}$ is 
quasi-isomorphic to the truncated cotangent
complex $\LL_P$ of $P$, we obtain a morphism of complexes
$$
\spreaddiagramrows{-.6pc}
\xymatrix{
E\udot\! \dto &\ar@{}^{=}[d]&& E^{-1} \rto^{ds}\dto^s & E^0\!\! \ar@{=}[d] \\
\LL_P\!\! &&& I/I^2 \rto^d & \Omega_A|_P,\!\!\!}
$$
which is surjective on $h^{-1}$ and an isomorphism on $h^0$. We have
constructed a perfect 
obstruction{\footnote{Of course, 
a different choice of $(E,s)$ generating the same $I\subset\O_A$ gives 
a \emph{different} obstruction theory for $P$, with a potentially different obstruction sheaf.}}
theory for $P$ with obstruction sheaf $\text{Ob}$ defined to be the cokernel of the \emph{dual}
map $$E_0\Rt{ds}E_1\ , $$ 
where $E_i$ denotes the dual of $E^{-i}$.

The surjection \eqref{surj}
yields a surjection $E_1\to\O_\P$ whose locally free kernel
we denote by $F_1$,
\begin{equation} \label{FEO}
0\to F_1\to E_1\to\O_\P\to0.
\end{equation}
The map $E_0\to E_1$ factors through $F_1$. After dualizing, we obtain the complex
\begin{equation} \label{Fdot}
F\udot=\{F^{-1}\to E^0\}.
\end{equation}
We would like to know when $F\udot$ gives a (smaller) 
obstruction theory for $P$.

A simple example is given by considering the scheme 
$$
P=P_n=\Spec\C[x]/(x^n), \quad n\ge2,
$$
cut out of the 
ambient space $A=\C$ by the section $s=(x^n,0)$ of the 
trivial rank 2 bundle $E$. Since
$$
\spreaddiagramcolumns{2pc}
E_0\Rt{ds}E_1 \quad\mathrm{is}\quad \xymatrix{\O_P \rto^{(nx^{n-1},0)\ } & \O_P^{\oplus2}}\ ,
$$
we find
\begin{equation} \label{obP}
\text{Ob}=\O_{P_{n-1}}\oplus\O_P
\end{equation}
is the direct sum of the structure sheaf of
$P_{n-1}\subset P$ and the structure sheaf of $P$. 
The second summand is locally free
so we have a surjection
\begin{equation} \label{mapP}
\xymatrix{\text{Ob}\rto^{(0,1)\ }& \O_\P\,.}
\end{equation}
Here, replacing $E_1$ by $F_1$ amounts to removing the 
second summand of the original
trivial rank 2 bundle $E$ and working in the first summand. 
Since the section
$s$ lies in the first summand, the result is another 
obstruction theory for $P$.


\label{simp}
\subsection{Questions}
The Kiem-Li construction of the reduced class, the powerful
$T^1$-lifting result of \cite[Proposition 6.13]{BuF}\footnote{Given a thickening $S_0\subset S$ with square-zero ideal $I$, Buchweitz-Flenner
show the obstructions to extending a map 
$$f_0\colon S_0\to P\ \ \ \text{to}\ \ \ f\colon S\to P$$
lie in the kernel of any surjection
$\text{Ob}\to\O_P$ so long as the Kodaira-Spencer class of the thickening lies in
$\Ext^1(\Omega_{S_0},I)$.
By \cite[Theorem 4.5]{BF}, Question 1 is equivalent to asking whether their result extends
to thickenings in the larger group $\Ext^1(\LL_{S_0},I)$.}
and the simple example discussed in
Section \ref{simp} suggest the following very natural question.

\vspace{7pt}
\begin{itemize}
\item[(Q1)] Given a perfect obstruction theory whose 
obstruction sheaf admits a locally
free quotient, can the quotient be removed 
to leave another perfect obstruction
theory ?
\end{itemize}
\vspace{7pt}

Since the problem is local we restrict ourselves to the standard model of
Section \ref{simp} for the obstruction theory of $P$,
restricted to the case where the 
locally free quotient is
trivial of rank one \eqref{surj}. 
We then ask if the obstruction theory $E\udot\to\LL_P$
factors through the complex $F\udot$ of \eqref{Fdot}. 

Dualizing the surjection $E_1 \rarr \O_P$ obtained
from \eqref{surj}, yields 
a subsheaf
$$0 \rarr \O_P \rarr E^{-1}\,.$$
Then,
question (Q1)  specializes as follows.
 
\vspace{7pt}
\begin{itemize}
\item[(Q$1'$)] Does the composition $\O_P \rarr E^{-1} \rarr I/I^2$
always vanish?
\end{itemize}
\vspace{7pt}

In general, the answer to (Q$1'$) is no. 
In the example of Section \ref{simp},
if we replace the surjection \eqref{mapP} by $(x,1)$, 
then the resulting
composition 
$$\O_P \rarr E^{-1} \rarr I/I^2$$
sends $1$ to
$x^{n+1}$, nonzero in $I/I^2$  even for $n=2$.

So we consider a weaker question.
Given a splitting of the sequence \eqref{FEO}, we write
$$E^{-1} = F^{-1} \oplus \O_P \ $$
after dualizing.
We consider the diagram
\begin{equation} 
\label{square}
\xymatrix{F^{-1}
\oplus\O_P \rto\dto_{s=(s_1,s_2)\!}^{\!} & E^0\!\! \ar@{=}[d] \\
I/I^2 \rto^d & \Omega_A|_P\ \!\!} 
\end{equation}
and ask instead whether $s_1$ is surjective.

\vspace{7pt}
\begin{itemize}
\item[(Q2)] Is the first component 
$$s_1\colon F^{-1}\to
I/I^2$$ of the arrow $s$ in \eqref{square} surjective ?
\end{itemize}
\vspace{7pt}

An affirmative answer to (Q1) implies an
affirmative answer to (Q2), so (Q2) is weaker.
However,
an affirmative answer to (Q2) leads to a reduced
obstruction theory (at least locally). 
We can also further weaken the questions.

\vspace{7pt}
\begin{itemize}
\item[(Q3)] Is the answer to Questions $1'$ 
or 2 positive if the obstruction theory $E\udot$
is symmetric \cite{BF2, bdt} ?
\end{itemize}
\vspace{7pt}

\noindent While (Q2) does hold for the example of Section \ref{simp},
the following example shows the answer to both (Q2) and (Q3)
is negative in general. 

\subsection{Counterexample I} \label{kkxx33}
Let $f$ be a polynomial in $x_1$ and $x_2$ which is {\em not}
homogeneous with respect to any grading of $x_1$ and $x_2$.
For example,
$$f= x_1^2 + x_1x_2 + x_2^3\,.$$
We view $f$ as polynomial in the ring
$\C[x_1,x_2,t]$
which does not depend on $t$. 
Define $P\subset A= \text{Spec}(\C[x_1,x_2,t])$
by the vanishing of the 1-form
$$
\sigma=
df+fdt=\frac{\partial f}{\partial x_1}dx_1+\frac{\partial f}{\partial x_2}dx_2
+fdt\,.
$$
In other words, $P$ is defined by the ideal
\begin{equation} \label{PidealP}
I=\left(\frac{\partial f}{\partial x_1},\frac{\partial f}{\partial x_2},f\right).
\end{equation}
Since $f$ is has no $t$ dependence, 
$d\sigma=df\wedge dt$ vanishes on $P$ (as
$df$ lies in the ideal $I$). 
Hence, $\sigma$ is \emph{almost closed} in the terminology of \cite{bdt},
and the resulting obstruction theory 
$$E\udot=\{TA|_P\Rt{D\sigma}\Omega_A|_P\}$$ 
is symmetric.

The scheme
$P$ is cut out by the section{\footnote{We use the notation $f_x$ for $\partial f/\partial x$.}}
$s=(f_{x_1},f_{x_2},f)$ of the 
trivial rank 3 vector bundle on $A$. 
The induced map $E_0\to E_1$ is
$$
\left(\!\! \begin{array}{ccc}
f_{x_1x_1} & f_{x_2x_1} & f_{tx_1} \\
f_{x_1x_2} & f_{x_2x_2} & f_{tx_2} \\
f_{x_1} & f_{x_2} & f_t
\end{array} \!\!\right)=
\left(\!\! \begin{array}{ccc}
f_{x_1x_1} & f_{x_2x_1} & 0 \\
f_{x_1x_2} & f_{x_2x_2} & 0 \\
0 & 0 & 0
\end{array} \!\right)
\colon\ \O_P^{\oplus3}\Rt{}\O_P^{\oplus3},
$$
since all functions are
 $t$-independent and the partial derivatives of $f$ vanish on $P$
by definition. The symmetry of the matrix reflects the
fact that the  obstruction theory is symmetric. 

The third summand of $E_1=\O_P^{\oplus3}$ is also a summand
of the obstruction sheaf $\text{Ob}$, 
so Questions 2 and 3 ask whether 
$s_1$ is surjective. But
$$
s_1\colon\O_P^{\oplus2}\to I/I^2 \quad\mathrm{is}\quad s_1=(f_{x_1},f_{x_2}),
$$
and the element $[f]\in I/I^2$ is not in the image of this map precisely because $f$
is not quasihomogeneous \cite{Sa}. 

\subsection{Critical locus condition}
An \emph{a priori} stronger condition than having a symmetric obstruction theory is that
$P\subset A$
should be the critical locus $\Crit(\phi)$ of 
a holomorphic function $\phi$ on $A$, with the
induced obstruction theory
$$
E\udot\ = \xymatrix{ \{TA|_P\rto^{D(d\phi)\ } & \Omega_A|_P\}.}
$$
The symmetry of the Hessian of $\phi$ implies that 
$E\udot$ is indeed symmetric. 
\begin{itemize}
\item[(Q4)] Is the answer to Questions $1'$ or 2 positive 
if there exists a holomorphic
function $\phi$ on $A$ for which $$P=\Crit(\phi)\subset A$$ with 
the induced symmetric obstruction
theory ?
\end{itemize}

In fact, the previous example \eqref{PidealP} provides a counterexample also to (Q4) as it is easily seen\footnote{We thank Dominic Joyce for this observation. The published version of this paper erroneously claimed to prove that the scheme defined by the ideal \eqref{PidealP} is not the critical locus of any holomorphic function on any smooth variety. A correct example of a zero scheme of an almost closed 1-form with this property is now constructed in \cite{1form}.} to be the critical locus of the holomorphic function
$$
\Phi=e^tf
$$
on $\C^3$. However, we will prove the answer to (Q4)
is \emph{yes} when the surjection $\text{Ob}\to\O_P$ (coming from a vector field on $P$ as in
\eqref{vfield}) is the restriction of a vector field on $A$ 
\emph{along which the holomorphic function is constant}. 
In our application to stable pairs, $A$ and $P$ are products with
$\C$, the vector field is $\partial_t$ pulled back from $\C$, and the potential $\phi$ is $\C$-invariant.

\begin{prop} \label{zero}
Let $\phi$ be a holomorphic function on $A$, and let
$v$ be a vector field on $A$ satisfying $v(\phi)=0$. 
Suppose $v$ does not vanish on 
$$P=\Crit(\phi)
\subset A\ ,$$ 
and let $\text{\em Ob}\to\O_P$ be the induced surjection \eqref{vfield}. 
Then, the composition
$$\O_P \rarr E^{-1} \rarr I/I^2$$
is zero. Therefore
$\{F^{-1}\to E^0\}$ defines a reduced perfect obstruction theory for $P$.
\end{prop}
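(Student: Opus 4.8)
The plan is to work locally in the standard model of Section \ref{eg2}. Choose local coordinates $x_1,\dots,x_N$ on $A$, and write $\phi$ for the holomorphic function with $P=\Crit(\phi)$, so $I=(f_1,\dots,f_N)$ with $f_i=\partial\phi/\partial x_i$, and $E^{-1}=T_A|_P$, $E^0=\Omega_A|_P$ with the map $ds=\mathrm{Hess}(\phi)|_P$. The surjection $\mathrm{Ob}\to\O_P$ is induced by the vector field $v$ via the recipe \eqref{vfield}: under $\mathrm{Ob}=\coker(E_0\Rt{ds}E_1)$ one has $E_1=\Omega_A|_P$, and dualizing, the inclusion $\O_P\hookrightarrow E^{-1}=T_A|_P$ sends $1$ to $v|_P$. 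So the composition in question is exactly
$$\O_P\xrightarrow{\ 1\mapsto v|_P\ }T_A|_P\xrightarrow{\ ds=\mathrm{Hess}(\phi)\ }\Omega_A|_P\twoheadrightarrow I/I^2,$$
and I must show $\mathrm{Hess}(\phi)(v)$ lands in $I\cdot\Omega_A|_P$, equivalently that its components $\sum_j v_j\,\partial^2\phi/\partial x_i\partial x_j$ lie in $I$ for each $i$.

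The key computation is the identity $v(f_i)=\sum_j v_j\,\partial_j\partial_i\phi=\partial_i(v(\phi))-\sum_j(\partial_i v_j)\,\partial_j\phi=\partial_i(v(\phi))-\sum_j(\partial_i v_j)f_j$. By hypothesis $v(\phi)=0$ identically on $A$, so the first term vanishes, and the second term is manifestly in $I$. Hence $v(f_i)\in I$ for every $i$, which says precisely that $ds(v|_P)=\big(v(f_1),\dots,v(f_N)\big)$ maps to $0$ in $I/I^2$. This is the whole content: the composition $\O_P\to E^{-1}\to I/I^2$ is zero, answering (Q$1'$) affirmatively in this situation.

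It remains to package this into the reduced obstruction theory. Since the composition $\O_P\to E^{-1}\to I/I^2$ vanishes, the map $s\colon E^{-1}\to I/I^2$ factors through the quotient $F^{-1}=E^{-1}/\O_P$ (equivalently, after a splitting $E^{-1}=F^{-1}\oplus\O_P$, the second component $s_2$ is zero, so $s_1\colon F^{-1}\to I/I^2$ surjects because $s$ does); likewise $ds\colon E^{-1}\to E^0$ kills the subsheaf $\O_P$ because $\mathrm{Hess}(\phi)(v)$ has entries in $I$ and hence... more carefully, $ds(v|_P)$ need not be zero in $E^0$, only in $I/I^2$, so one instead checks directly that the complex $F\udot=\{F^{-1}\to E^0\}$ still maps to $\LL_P=\{I/I^2\Rt{d}\Omega_A|_P\}$, is surjective on $h^{-1}$ (since $s_1$ is onto $I/I^2$) and an isomorphism on $h^0$ (unchanged, as $E^0$ is untouched). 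Therefore $\{F^{-1}\to E^0\}$ is a perfect obstruction theory for $P$, the reduced one. Finally I would remark that these constructions are canonical enough to glue: the vanishing is intrinsic (it does not depend on the splitting), so the local reduced theories patch to a global reduced perfect obstruction theory, and the argument is $\C^*$-equivariant when $\phi$ and $v$ are, as needed in the applications where $A,P$ are products with $\C$ and $v=\partial_t$. The only mild subtlety — the main thing to be careful about rather than a genuine obstacle — is keeping straight the dualizations relating $\mathrm{Ob}\to\O_P$, the cosection, and the inclusion $\O_P\hookrightarrow E^{-1}$, so that the composition one computes is really the one appearing in (Q$1'$); once that bookkeeping is fixed, the Leibniz identity above does everything.
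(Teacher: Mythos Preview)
Your identification of the composition is wrong. In the standard model of Section \ref{eg2}, the left vertical arrow $E^{-1}\to I/I^2$ is the section $s$ itself (viewed as $E^\vee|_P\to I/I^2$), \emph{not} $ds$ followed by a map $\Omega_A|_P\to I/I^2$; indeed no such natural map exists. In the critical--locus case $E=\Omega_A$, $s=d\phi$, so $E^{-1}=T_A|_P$ and $s\colon T_A|_P\to I/I^2$ sends $w\mapsto[d\phi(w)]=[w(\phi)]$. The composition in question therefore sends $1\mapsto v|_P\mapsto[v(\phi)]=0$, since $v(\phi)=0$ identically on $A$. That is the paper's entire proof.

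Your Leibniz computation is correct but establishes a different fact: that $v(f_i)\in I$ for each $i$, i.e.\ that $ds(v|_P)=0$ in $E^0=\Omega_A|_P$. This says the differential $ds\colon E^{-1}\to E^0$ kills $\O_P\cdot v$ (equivalently, dualizing, that the cosection $\text{Ob}\to\O_P$ induced by $v$ is well-defined on the cokernel), which is a prerequisite for the setup but not the vanishing asked in (Q$1'$). Your later hedge that ``$ds(v|_P)$ need not be zero in $E^0$, only in $I/I^2$'' is itself mistaken: your own computation shows it \emph{is} zero in $E^0$, and in any case there is no map from $E^0$ to $I/I^2$ along which to test this. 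Once you replace your mislabelled map by $s$, the packaging into a reduced theory proceeds exactly as you describe.
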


\begin{proof}
The map $\O_P\rarr E^{-1}=T_A|_P$ is defined
by the section $v$
of the tangent bundle.
The map $E^{-1} \rarr I/I^2$ takes $v$ to
 $$[d\phi(v)]\in I/I^2 \,.$$
 But, $d\phi(v)=v(\phi)=0$.
\end{proof}

The moduli space $\P/B$, obtained from families of nonsingular $K3$ surfaces, {\em is} the (relative) critical locus of a $\C$-invariant holomorphic
function, so we can use Proposition \ref{zero} to produce a reduced obstruction theory.

\begin{thm} 
\label{jj99}
The perfect obstruction theory $E\udot\to\LL_{\P/B}$ of \eqref{obsthy} factors
through $F\udot$. The reduced class of Section \ref{rc} is really
a virtual cycle for the obstruction theory $F\udot$.
\end{thm}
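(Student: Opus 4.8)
The strategy is to realize the relative moduli space $\P/B$ as a relative critical locus and then invoke Proposition \ref{zero}. First I would observe that over $B$, locally on $\P$, the obstruction theory $E\udot \to \LL_{\P/B}$ of \eqref{obsthy} is symmetric (by the discussion in Section \ref{spsym}), and that the vector field giving the co-section \eqref{tangent} is $\partial_t$, pulled back from the $\C$-factor of $\X = \calS \times \C$. The key geometric input is that moduli of stable pairs on a \emph{holomorphic symplectic} surface are critical loci: one can use the fact that $\P/B$ sits inside the moduli space of pairs on the ambient nonsingular threefold $\X/B$ (which, via the trivialization of $\omega_{\X/B}$ coming from $\sigma \wedge dt$, carries a $(-1)$-shifted symplectic or ``d-critical'' structure in the sense of the symmetric obstruction theory), and the local structure theorem for symmetric obstruction theories/d-critical loci presents $\P$ locally as $\Crit(\phi) \subset A$ for some ambient smooth $A$ and holomorphic $\phi$.

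The second step is to produce the function $\phi$ compatibly with the $\C$-action. Since $\X = \calS \times \C$ is a product and the $\C^*$-action scales the second factor, I would arrange (as suggested by the remark preceding Proposition \ref{zero}, by averaging over $S^1 \subset \C^*$) that $\phi$ is $\C$-invariant, i.e. constant along the vector field $v = \partial_t$ lifted to $A$. One must check that $v$ does not vanish on $P = \Crit(\phi) = \P$ and that the induced surjection $\text{Ob} \to \O_\P$ from \eqref{vfield} agrees with the one used in Section \ref{rc} (coming from the map \eqref{map} / \eqref{map2}); this identification is exactly Lemma \ref{same}, which says \eqref{map2} equals \eqref{map}, together with the observation that the vector field $\partial_t$ is the one in \eqref{tangent}.

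The third step is then purely formal: apply Proposition \ref{zero}. Its hypotheses — $\phi$ a holomorphic function on the ambient space, $v$ a vector field with $v(\phi)=0$, $v$ nonvanishing on $P = \Crit(\phi)$, and $\text{Ob}\to\O_P$ the induced surjection — are exactly what we have arranged. The conclusion is that the composition $\O_P \to E^{-1} \to I/I^2$ vanishes, so the two-term complex $F\udot = \{F^{-1}\to E^0\}$ of \eqref{Fdot} is a genuine perfect obstruction theory for $\P$ (the factorization $E\udot \to F\udot$ being automatic once the composition is zero). The reduced class of Section \ref{rc}, defined via intersecting the cone $C \subset F_1$ with the zero section, is then literally the Behrend--Fantechi virtual cycle attached to $F\udot$, since the Kiem--Li construction in this situation coincides with the standard one for the smaller obstruction theory.

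\textbf{Main obstacle.} The genuinely nontrivial step is the second one: constructing the function $\phi$ \emph{globally over $B$} (or at least consistently enough that the $\C$-invariance and the matching of co-sections both hold), rather than just étale-locally. The local presentation of a symmetric obstruction theory as a critical locus is classical, but checking that one may choose $\phi$ so that $\partial_t(\phi) = 0$ — equivalently, that the $\C$-translation symmetry of the pairs moduli problem on $\X = \calS \times \C$ descends to a symmetry of the local potential — requires care, and is really where the product geometry $\X = \calS \times \C$ (as opposed to an arbitrary family of threefolds with trivial canonical bundle) is used. Once that is in place, everything reduces to Proposition \ref{zero} and the bookkeeping of Section \ref{rc}.
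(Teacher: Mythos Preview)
Your proposal is correct and takes essentially the same approach as the paper. The paper's own discussion also reduces the theorem to expressing $\P \to B$ locally as $\Crit(\phi)$ for a $\C$-invariant holomorphic function $\phi$ and then applying Proposition~\ref{zero}; it cites the methods of Joyce--Song or the announced work of Behrend--Getzler for this local critical chart (what you call the local structure theorem for symmetric obstruction theories / d-critical loci) and, like you, flags this as the nontrivial step, omitting the details since the result is not needed elsewhere.
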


The only missing step in the proof of Theorem \ref{jj99} is the expression of
the moduli space $\P \rarr B$ locally as $\Crit(\phi)$ for
a $\C$-invariant holomorphic function $\phi$. 
There are two approaches to the question: 
via the methods of Joyce-Song \cite{joys} or via
the announced work of  Behrend-Getzler. Since we do not need 
Theorem \ref{jj99},
we omit the discussion here.

\section{Boundary expressions {\em by A. Pixton}}
\label{Pixton}

Let $\beta$ be a primitive effective class with self-intersection $2h-2$ on a $K3$ surface
$S$. We will compute here the Hodge integrals
\begin{equation*}
R_{g,h} = R_{g,\beta} = \int_{[\md_{g}(S,\beta)]^{\red}}(-1)^g\lambda_g
\end{equation*}
for $g\le 3$
via the boundary geometry of the moduli space of curves.
The results provide an alternate verification of the Katz-Klemm-Vafa conjecture in low genus.

For convenience, we  scale the Eisenstein series to
\begin{equation*}
C_{2g} = -\frac{B_{2g}}{2g\cdot(2g)!}E_{2g}.
\end{equation*}
By the proof of Corollary 2, we can rewrite
the KKV conjecture as 
\begin{equation*}
\sum_{g\ge 0}\sum_{h\ge 0}R_{g,h}t^{g}q^{h-1} = 
\frac{1}{\Delta(q)}\exp\left(\sum_{g\ge 1}(-1)^g2C_{2g}t^g\right).
\end{equation*}

Our approach has two steps. First, we write $\lambda_g$ as a linear combination of boundary strata of $\md_g$. This allows us to replace the Hodge integral with a linear combination of descendent integrals, which can be reduced for $g \leq 3$
to the purely stationary descendent integrals
\begin{equation*}
\int_{[\md_{g,n}(S,\beta)]^{\red}}\tau_{k_1}(\mathsf{p})\cdots\tau_{k_n}(\mathsf{p})\,.
\end{equation*}
Second, these integrals can be evaluated in terms of products of Gromov-Witten invariants of elliptic curves and the Yau-Zaslow formula. 

Define quasimodular forms $T_k$ in terms of the Gromov-Witten invariants of an elliptic curve $E$ by
\begin{equation*}
T_k = \sum_{\substack{i,j \ge 0 \\ 2i+j\le k}}(-1)^{i+j}\frac{C_2^i}{i!}\sum_{d\ge 0}q^d\int_{[\md_{g,2}(E,d[E])]^{\vir}}\lambda_j\tau_{k}(\mathsf{p})\tau_{k-2i-j}(\mathsf{p})\,.
\end{equation*}
By the degeneration formula of \cite{jleec1}, standard
localization arguments and the Yau-Zaslow formula, we find
\begin{multline}\label{stationary formula}
\sum_{h\ge 0}\left(\int_{[\md_{\sum_{i=1}^n k_i+n,n}(S,\beta)]^{\red}}
\tau_{k_1}(\mathsf{p})\cdots\tau_{k_n}(\mathsf{p})\right)q^{h-1} 
= \\ \frac{1}{\Delta(q)}T_{k_1}\cdots T_{k_n}\,.
\end{multline} 
Formula
(\ref{stationary formula}) in the case $k_1=\cdots=k_n=0$ 
was proven by Bryan and Leung \cite{brl}.


Faber and Pandharipande \cite{FP} describe how to replace Hodge insertions $\lambda_j$ with descendent insertions, and the work of Okounkov and Pandharipande in \cite{gwhurwitz}, \cite{okpanp1}, \cite{okpanvir} completely determines the descendent Gromov-Witten theory of target curves. Thus,
the quasimodular forms $T_k$ can be explicitly computed. 
We list the first two, expressed in terms of our scaled Eisenstein series $C_{2m}$:
\begin{align*}
T_0 &= q\frac{d}{dq} C_2 = -2C_2^2+10C_4\,, \\
T_1 &= q\frac{d}{dq}\left(\frac{2}{3}C_2^2-\frac{1}{3}C_4\right) = 
-\frac{8}{3}C_2^3+16C_2C_4-7C_6\,.
\end{align*}

We will also need the following formulas for the action of the differential operator $q\frac{d}{dq}$ on quasimodular forms:
\begin{equation*}
\begin{split}
q\frac{d}{dq}C_2 &= -2C_2^2+10C_4\,, \\
q\frac{d}{dq}C_4 &= -8C_2C_4+21C_6\,, \\
q\frac{d}{dq}C_6 &= -12C_2C_6+\frac{160}{7}C_4^2\,, \\
q\frac{d}{dq}\left(\frac{1}{\Delta(q)}\right) &= \frac{24C_2}{\Delta(q)}\,.
\end{split}
\end{equation*}


Since the $g = 0$ and $n=0$ case of (\ref{stationary formula}) is the
Yau-Zaslow formula, we have
\begin{equation*}
R_{0,h} = [q^{h-1}]\frac{1}{\Delta(q)} = [q^{h-1}][t^0]\frac{1}{\Delta(q)}\exp\left(\sum_{k\ge 0}(-1)^k2C_{2k}t^k\right).
\end{equation*}


The case $g = 1$ is more involved. We want to rewrite $\lambda_1$ in terms of 
$\mathbb{Q}$-classes of boundary strata on a moduli space of curves. 
However, $\md_1$ is not stable, so we add a marked point. By the divisor equation,
\begin{equation*}
R_{1,h} = \int_{[\md_{1,0}(S,\beta)]^{\red}}(-1)^1\lambda_1 = \int_{[\md_{1,1}(S,\beta)]^{\red}}(-1)^1\lambda_1\ev_1^*(\beta^{\vee}),
\end{equation*}
where $\beta^{\vee}\cdot\beta = 1$.

Now, let $\delta_0\in H^2(\md_{1,1})$ denote the $\mathbb{Q}$-class 
$[\Delta_0]$, where $\Delta_0$ is the boundary locus of genus $0$ curves with one node (and one marked point), which is just a single point. Since
 $$\lambda_1 = \frac{1}{12}\delta_0\ ,$$
 we can remove the $\lambda_1$ insertion, restrict to maps from $\Delta_0$, and resolve the node to obtain
\begin{multline*}
\int_{[\md_{1,1}(S,\beta)]^{\red}}(-1)^1\lambda_1\ev_1^*(\beta^{\vee}) = \\
-\frac{1}{12}\cdot\frac{1}{2}\int_{[\md_{0,3}(S,\beta)]^{\red}}\ev_1^*(\beta^{\vee})(\ev_2\times\ev_3)^*(\mathsf{D}),
\end{multline*}
where $\mathsf{D}\in H^4(S\times S)$ is the Poincar\'{e} dual of 
the diagonal embedding of $S$ in $S\times S$. The extra factor of $\frac{1}{2}$ 
appears because there are two different ways of labeling the two new marked points.
 If we choose a basis 
$$\gamma_0=1, \gamma_1,\ldots,\gamma_{22}\in H^2(S), \gamma_{23}=\mathsf{p}$$ 
for the cohomology of the $K3$ surface $S$, 
with dual basis $\{\gamma^{\vee}_i\}$, then
 $$\mathsf{D} = \sum_{i = 0}^{23}\gamma_i\times\gamma^{\vee}_i\ .$$
 Now, the genus zero invariants involving pull-backs of $\gamma_{23} = \mathsf{p}$ 
all vanish for reasons of dimension.
Therefore, we only have the terms
\begin{equation*}
-\frac{1}{24}\sum_{i=1}^{22}\int_{[\md_{0,3}(S,\beta)]^{\red}}\ev_1^*(\beta^{\vee})\ev_2^*(\gamma_i)\ev_3^*(\gamma_i^{\vee})\,.
\end{equation*}
Applying the divisor equation again yields
\begin{equation*}
-\frac{1}{24}\sum_{i=1}^{22}(\beta\cdot\gamma_i)(\beta\cdot\gamma_i^{\vee})\int_{[\md_{0,0}(S,\beta)]^{\red}}1\,.
\end{equation*}
The integral on the right is simply $R_{0,h}$, so
\begin{align*}
R_{1,h} &= -\frac{1}{24}(\beta\cdot\beta)R_{0,h} \\ &= -\frac{h-1}{12}[q^{h-1}]\frac{1}{\Delta(q)} \\ &= [q^{h-1}]\left(-\frac{1}{12}q\frac{d}{dq}\left(\frac{1}{\Delta(q)}\right)\right) \\ &= [q^{h-1}]\left(-\frac{2C_2}{\Delta(q)}\right),
\end{align*}
as predicted by the KKV conjecture.


In genus $2$,  we can write $\lambda_2$ in terms of boundary classes on $\md_2$. 
The relevant boundary strata are $\Delta_{00}$, 
the generic element of which is a genus $0$ curve with $2$ nodes, and $\Delta_{01}$, where the generic element is a genus $0$ curve with $1$ node intersecting a smooth genus $1$ curve 
in a single point. The corresponding $\mathbb{Q}$-classes 
are $\delta_{00},\delta_{01}\in H^4(\md_2)$. The relation
\begin{equation*}
\lambda_2 = \frac{1}{120}(\delta_{00}+\delta_{01})\,
\end{equation*} 
is well-known,
see \cite[Section 8]{mumford}.

Again, we can replace $\lambda_2$ by the classes $\delta_{00}$ and $\delta_{01}$ and then remove these classes by restricting to maps from curves in the corresponding boundary loci. 
After resolving the singularities of the source curves, we find
\begin{equation*}
\begin{split}
R_{2,h} &= \int_{[\md_{2}(S,\beta)]^{\red}}(-1)^2\lambda_2 \\ 
&= \frac{1}{120}\cdot\frac{1}{8}\int_{[\md_{0,4}(S,\beta)]^{\red}}
(\ev_1\times\ev_2)^*(\mathsf{D})(\ev_3\times\ev_4)^*(\mathsf{D}) 
\\ &\thinspace\thinspace
+ \frac{1}{120}\cdot\frac{1}{2}\int_{[\md_{1,1}(S,\beta)]^{\red}\times[\md_{0,3}(S,0)]^{\vir}}(\ev_1\times\ev_2)^*(\mathsf{D})(\ev_3\times\ev_4)^*(\mathsf{D}).
\end{split} 
\end{equation*}

In the second term, the curve class $\beta$ cannot split nontrivially between 
the two irreducible components because $\beta$ is primitive.
One of the two components must be contracted to a point. In fact,
only the rational component may be contracted because the
 moduli space $\md_{0,3}$ has dimension $0$.

We now compute the two terms of $R_{2,h}$. 
The first term is completely analogous to the calculation in genus $1$. We obtain
\begin{align*}
\frac{1}{120}\cdot\frac{1}{8}&\int_{[\md_{0,4}(S,\beta)]^{\red}}
(\ev_1\times\ev_2)^*(\mathsf{D})(\ev_3\times\ev_4)^*(\mathsf{D}) \\ &
= \frac{1}{960}(2h-2)^2[q^{h-1}]\frac{1}{\Delta(q)} \\
&= [q^{h-1}]\frac{1}{240}\left(q\frac{d}{dq}\right)^2\left(\frac{1}{\Delta(q)}\right) \\
&= [q^{h-1}]\frac{\frac{11}{5}C_2^2+C_4}{\Delta(q)}\,.
\end{align*}

For the second term we have $\md_{0,3}(S,0) = S$ and 
$$(\ev_3\times\ev_4)^*(\mathsf{D}) = 24\mathsf{p}\ ,$$ so the integral reduces to
\begin{equation*}
\frac{1}{10}\int_{[\md_{1,1}(S,\beta)]^{\red}}\ev^*(\mathsf{p})\,.
\end{equation*} 
Using (\ref{stationary formula}), we find
\begin{equation*}
\frac{1}{10}[q^{h-1}]\frac{T_0}{\Delta} = [q^{h-1}]\frac{-\frac{1}{5}C_2^2+C_4}{\Delta(q)}\,.
\end{equation*}
Adding the two terms of $R_{2,h}$ yields
\begin{equation*}
R_{2,h} = [q^{h-1}]\frac{2C_2^2+2C_4}{\Delta(q)}\,,
\end{equation*}
which agrees with the KKV conjecture.


The genus $3$ case is significantly more complicated. To start with, 
the tautological cohomology space containing $\lambda_3$ on $\md_3$ has rank $10$.
A basis has been found by 
 Faber in \cite{faber1}:
 $9$ of the $10$ generators can be chosen to be $\mathbb{Q}$-classes 
corresponding to boundary strata $(a),(b),\ldots,(i)$ depicted in Figure 6 of \cite{faber1}. 
For the last generator, we let $[(j)]_\Q$ be the $\mathbb{Q}$-class 
corresponding to a genus $0$ curve with $1$ node intersecting a smooth genus $2$ curve 
at a point, with a cotangent line class at the intersection point. 
Then, we can write 
\begin{equation*}
\begin{split}
(-1)^3\lambda_3 = -\frac{1}{504}\Big(&\frac{1}{2}[(a)]_\Q+[(b)]_\Q+[(c)]_\Q
+\frac{3}{10}[(d)]_\Q-\frac{2}{5}[(f)]_\Q \\
&+2[(g)]_\Q+2[(j)]_\Q\Big),
\end{split}
\end{equation*}
see \cite{faber2}.

Through arguments similar to those used in the genus 2 calculation, 
we can show that the integrals of all of these classes vanish except for those 
of $[(a)]_\Q,[(d)]_\Q,[(e)]_\Q$, and $[(j)]_\Q$. Since $[(e)]_\Q$ does not appear 
in the above formula for $\lambda_3$,  we need to calculate only three integrals.

First, the class $[(a)]_\Q$ can be handled analogously to $\delta_{00}$ in the genus $2$ case, since $(a)$ is just the locus of genus $0$ curves with $3$ nodes. We calculate:
\begin{align*}
\int_{[\md_{3,0}(S,\beta)]^{\red}}[(a)]_{\mathbb{Q}} &= \frac{1}{48}(\beta^2)^3[q^{h-1}]\frac{1}{\Delta(q)} \\
&= [q^{h-1}]\frac{1}{6}\left(q\frac{d}{dq}\right)^3\left(\frac{1}{\Delta(q)}\right) \\
&= [q^{h-1}]\frac{1760C_2^3+2400C_2C_4+840C_6}{\Delta(q)}\,.
\end{align*}

The class $[(d)]_\Q$ is similarly obtained by adding a node to the genus $2$ case $\delta_{01}$, so we can compute
\begin{align*}
&\int_{[\md_{3,0}(S,\beta)]^{\red}}[(d)]_\Q \\ &= \frac{1}{4}\int_{[\md_{1,3}(S,\beta)]^{\red}\times S}(\ev_1\times\ev_2)^*(\mathsf{D})(\ev_3\times\id)^*(\mathsf{D})(\id\times\id)^*(\mathsf{D}) \\
&= \frac{1}{4}\cdot 24(2h-2)\int_{[\md_{1,1}(S,\beta)]^{\red}}\tau_0(\mathsf{p}) \\
&= [q^{h-1}]12q\frac{d}{dq}\left(\frac{q\frac{d}{dq} C_2}{\Delta(q)}\right) \\
&= [q^{h-1}]\frac{-480C_2^3+1440C_2C_4+2520C_6}{\Delta(q)}\,.
\end{align*}
Finally, we calculate the integral of the $\psi$-class $[(j)]_\Q$:
\begin{align*}
\int_{[\md_{3,0}(S,\beta)]^{\red}}[(j)]_\Q &= \frac{1}{2}\int_{[\md_{2,1}(S,\beta)]^{\red}\times S}\psi_1(\ev_1\times\id)^*(\mathsf{D})(\id\times\id)^*(\mathsf{D}) \\
&= \frac{1}{2}\cdot 24\int_{[\md_{2,1}(S,\beta)]^{\red}}\tau_1(\mathsf{p}) \\
&= [q^{h-1}]\frac{12T_1}{\Delta(q)} \\
&= [q^{h-1}]\frac{-32C_2^3+192C_2C_4-84C_6}{\Delta(q)}\,.
\end{align*}

We now use the boundary formula for $\lambda_3$
 and the above three calculations to obtain
\begin{align*}
R_{3,h} &= \int_{[\md_{3,0}(S,\beta)]^{\red}}(-1)^3\lambda_3 \\ &= -\frac{1}{504}[q^{h-1}]\frac{1}{\Delta(q)}\Big(\frac{1}{2}(1760C_2^3+2400C_2C_4+840C_6) \\ &\thinspace\thinspace
\hspace{+80pt}
+\frac{3}{10}(-480C_2^3+1440C_2C_4+2520C_6)
\\ &\thinspace\thinspace \hspace{+80pt}
+2(-32C_2^3+192C_2C_4-84C_6)\Big) \\
&= [q^{h-1}]\left(-\frac{\frac{4}{3}C_2^3+4C_2C_4+2C_6}{\Delta(q)}\right),
\end{align*}
as predicted by the KKV conjecture.

In higher genus, the boundary expressions for $\lambda_g$ will likely
lead to nonstationary descendent invariants of $K3$ surfaces.
Using Theorem 4, the calculations can be, in principle, continued.
An interesting question is how the KKV conjecture, proven in Theorem 1,
constrains the possible boundary expressions for $\lambda_g$.

\vspace{+8 pt}
\noindent
Department of Mathematics \\
Massachusetts Institute of Technology \\
dmaulik@math.mit.edu

\vspace{+8 pt}
\noindent
Department of Mathematics\\
Princeton University\\
rahulp@math.princeton.edu

\vspace{+8 pt}
\noindent
Department of Mathematics \\
Imperial College \\
richard.thomas@imperial.ac.uk

\vspace{+8 pt}
\noindent
Department of Mathematics\\
Princeton University\\
apixton@math.princeton.edu

\end{document}